\documentclass[11pt,a4paper]{report}

\usepackage{titlesec}
\usepackage{fancyhdr}
\usepackage{a4wide}
\usepackage{graphicx}
\usepackage{float}
\usepackage{amssymb}
\usepackage{amsmath}
\usepackage{amsfonts}
\usepackage{dsfont}
\usepackage{amsthm}
\usepackage{color}
\usepackage{xcolor,colortbl}
\usepackage{mathrsfs}
\usepackage{array}
\usepackage{eucal}
\usepackage{tikz}
\usepackage[T1]{fontenc}
\usepackage{inputenc}
\usepackage[english]{babel}
\usepackage{lmodern}
\usepackage{hyperref}
\usepackage{mathtools}
\usepackage{geometry}
\usepackage{changepage}
\usepackage{bm}
\geometry{hmargin=2.3cm, vmargin=2.3cm }
\changepage{0pt}{}{}{}{}{0pt}{}{0pt}{6pt}
\usepackage[numbers]{natbib}
\setlength{\bibsep}{0.0pt}
\usepackage{bbm}
\usepackage{pgfplots}
\newcommand{\Indicator}{\rho}
\definecolor{RoyalBlue}{cmyk}{1, 0.50, 0, 0}
\definecolor{Gray}{gray}{0.90}

\counterwithout{section}{chapter}

\usepackage{hyperref}
\hypersetup{
pdfpagemode=none,
pdftoolbar=true,        
pdfmenubar=true,        
pdffitwindow=false,     
pdfstartview={Fit},    
pdftitle={A few techniques to achieve invisibility in waveguides},    
pdfauthor={L. Chesnel},     
pdfsubject={},  
pdfcreator={L. Chesnel},   
pdfproducer={L. Chesnel}, 
pdfkeywords={Waveguides, scattering, invisibility, asymptotic analysis, spectral theory, complex resonances, spectral theory, shape derivative}, 
pdfnewwindow=true,      
colorlinks=true,       
linkcolor=magenta,          
citecolor=red,        
filecolor=cyan,      
urlcolor=blue           
}


\newcommand{\dsp}{\displaystyle}

\newcommand{\eps}{\varepsilon}
\newcommand{\om}{\omega}
\newcommand{\Om}{\Omega}
\newcommand{\mrm}[1]{\mathrm{#1}}

\newcommand{\Cplx}{\mathbb{C}}
\newcommand{\N}{\mathbb{N}}
\newcommand{\R}{\mathbb{R}}
\newcommand{\Z}{\mathbb{Z}}

\newcommand{\mL}{\mrm{L}}

\newcommand{\mH}{\mrm{H}}
\newcommand{\mV}{\mrm{V}}

\newcommand{\mX}{\mrm{X}}
\newcommand{\mY}{\mrm{Y}}
\newcommand{\mZ}{\mrm{Z}}
\newcommand{\loc}{\mbox{\scriptsize loc}}

\renewcommand{\ker}{\mrm{ker}}

\renewcommand{\dim}{\mrm{dim}}

\newtheorem{theorem}{Theorem}[chapter]
\newtheorem{lemma}[theorem]{Lemma}
\newtheorem{remark}[theorem]{Remark}
\newtheorem{definition}[theorem]{Definition}

\newtheorem{proposition}[theorem]{Proposition}

\begin{document}

\thispagestyle{empty}

\noindent\rule{\linewidth}{.8pt}
\texttt{Summer school EUR MINT 2025 -- Control, Inverse Problems and Spectral Theory}\\[-8pt]
\rule{\linewidth}{.8pt}

\vspace{0.5cm}
\begin{center}
\includegraphics[angle=0,width=14cm]{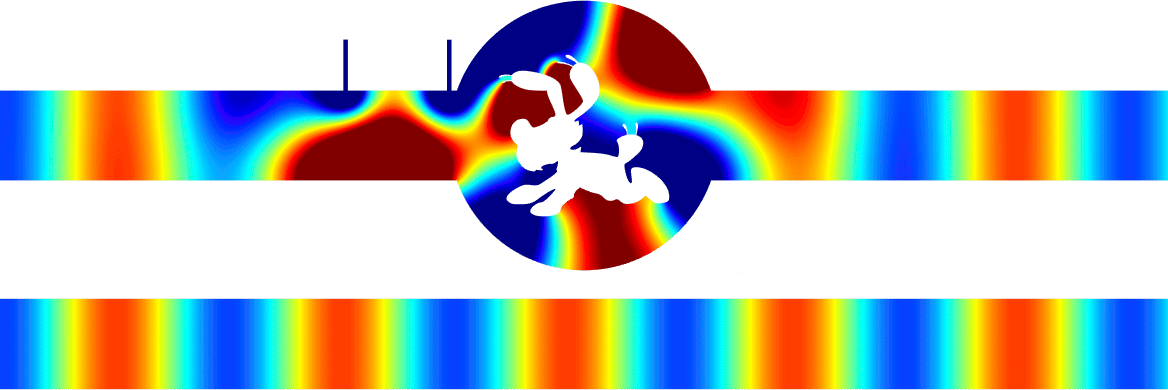}
\end{center}

\vspace{0.5cm}

\begin{center}
\tikz \node[rectangle,inner sep=12pt,fill=none,draw=black,line width=1mm,rounded corners=1mm]{
\begin{minipage}{0.8\textwidth} 
\begin{center}
{\sc \bf\fontsize{22}{22}\selectfont
A few techniques to achieve  \\[6pt]
 invisibility in waveguides }
\end{center}
\end{minipage}
};\\[16pt]

\textsc{\Large Lucas Chesnel}\\[10pt]
Version \today\\[10pt]
\begin{minipage}{0.95\textwidth}
{\small
\begin{tabular}{|l}
Inria, Ensta Paris, Institut Polytechnique de Paris \\[2pt]
E-mail: \texttt{lucas.chesnel@inria.fr} 
\end{tabular}\\[3pt]

}
\end{minipage}

\vspace{1cm}

\noindent \includegraphics[width=14cm]{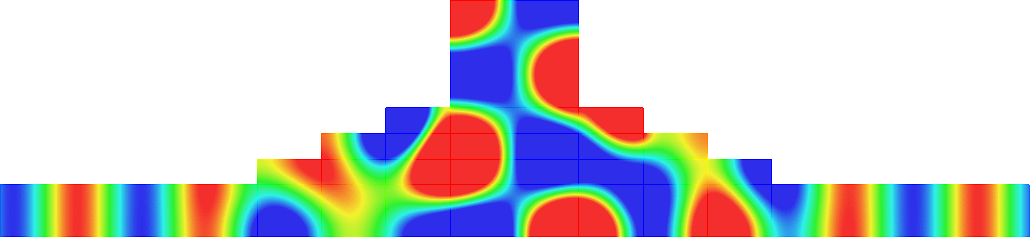}\\[10pt]
\includegraphics[width=14cm]{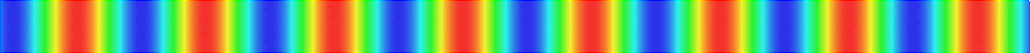}

\vspace{1.4cm}

\includegraphics[width=7.7cm]{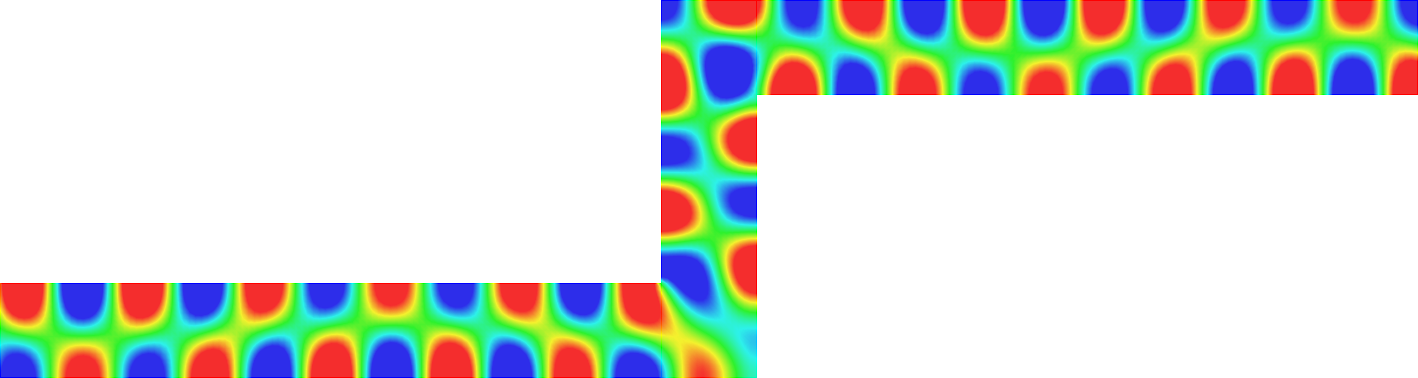}\qquad\includegraphics[trim={0cm 0.6cm 3cm 0.2cm},clip,width=7.7cm]{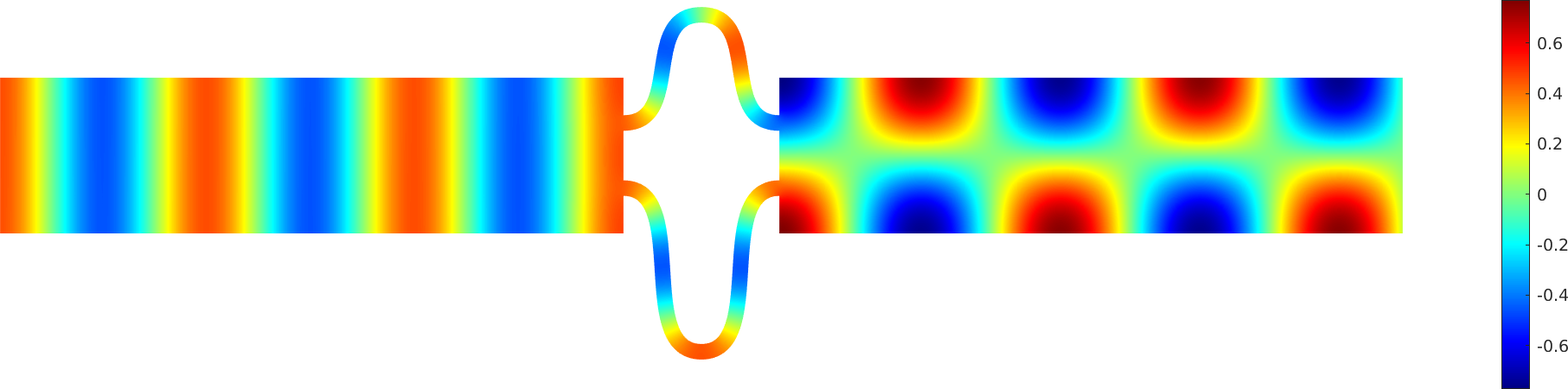}\\[20pt]\includegraphics[width=7.7cm]{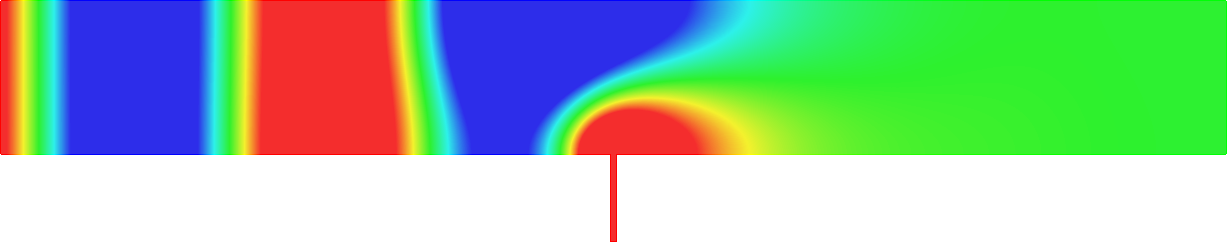}\qquad\includegraphics[trim={0cm 0.6cm 3cm 0.2cm},clip,width=7.7cm]{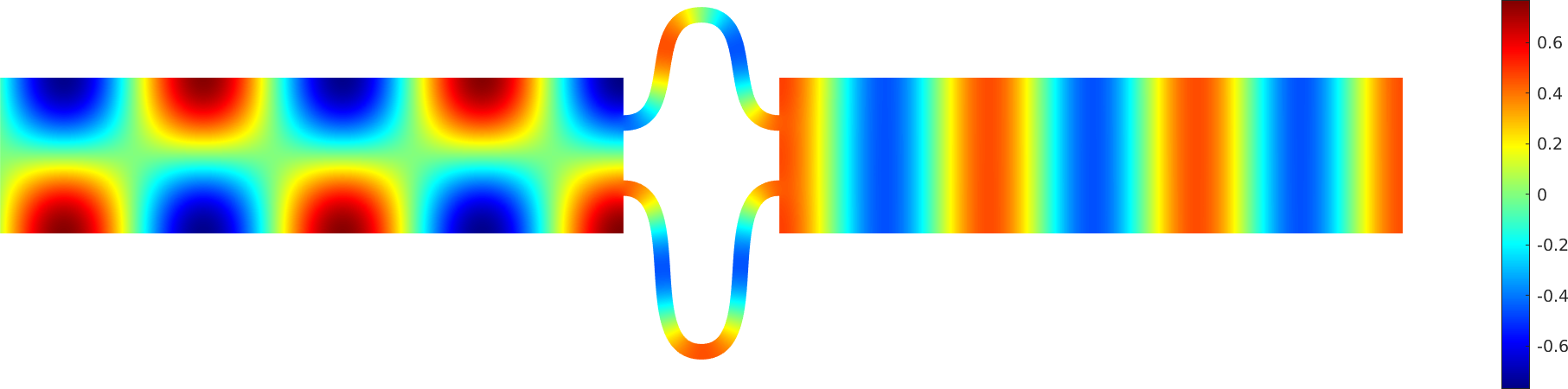}

\vspace{0.2cm}

\end{center}

\vspace{-0.4cm}

\newpage

\section*{Introduction}

\vspace{-0.7cm}

\begin{figure}[!ht]
\centering
\includegraphics[width=4.5cm]{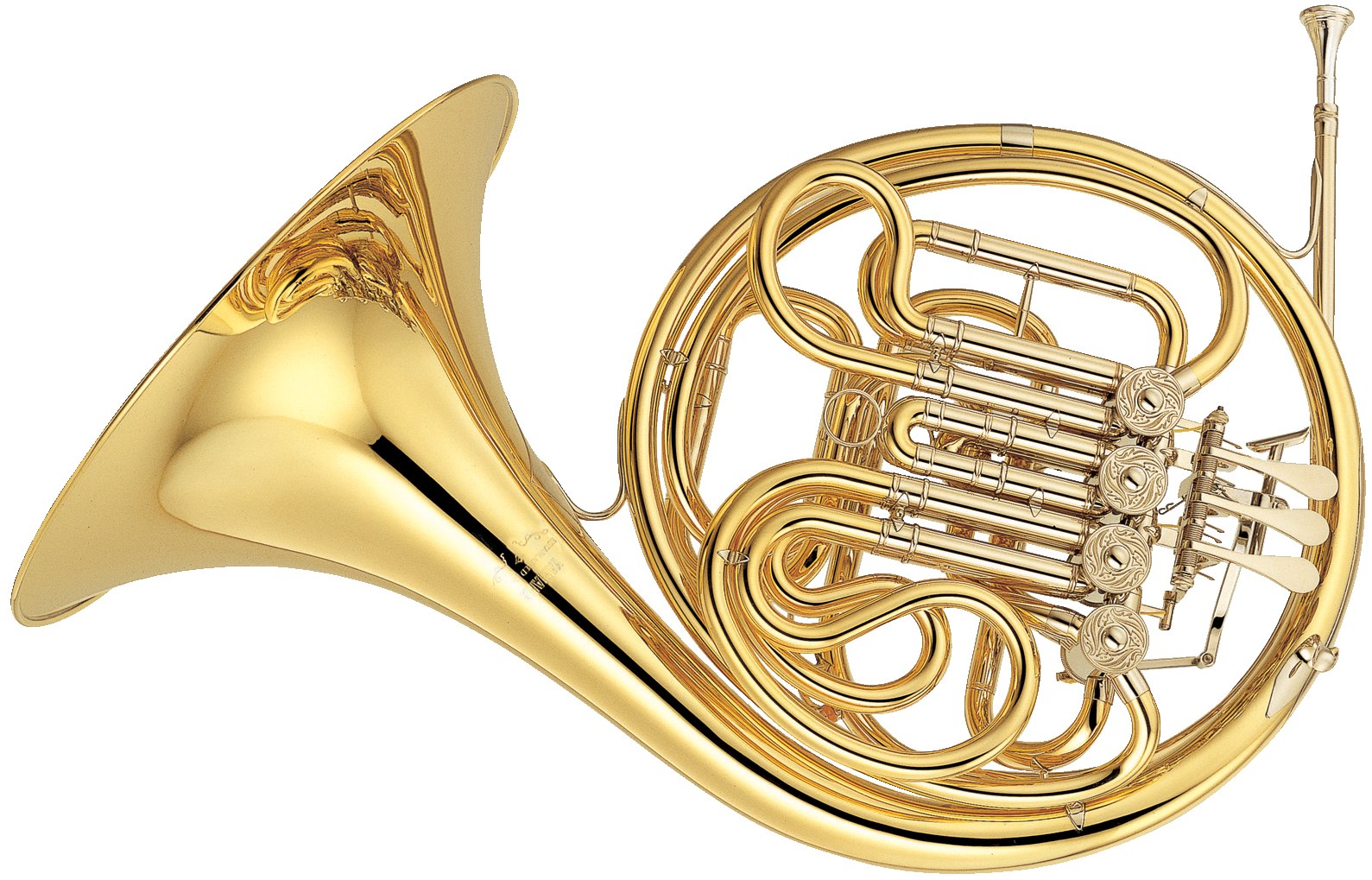}\qquad
\includegraphics[width=4.5cm]{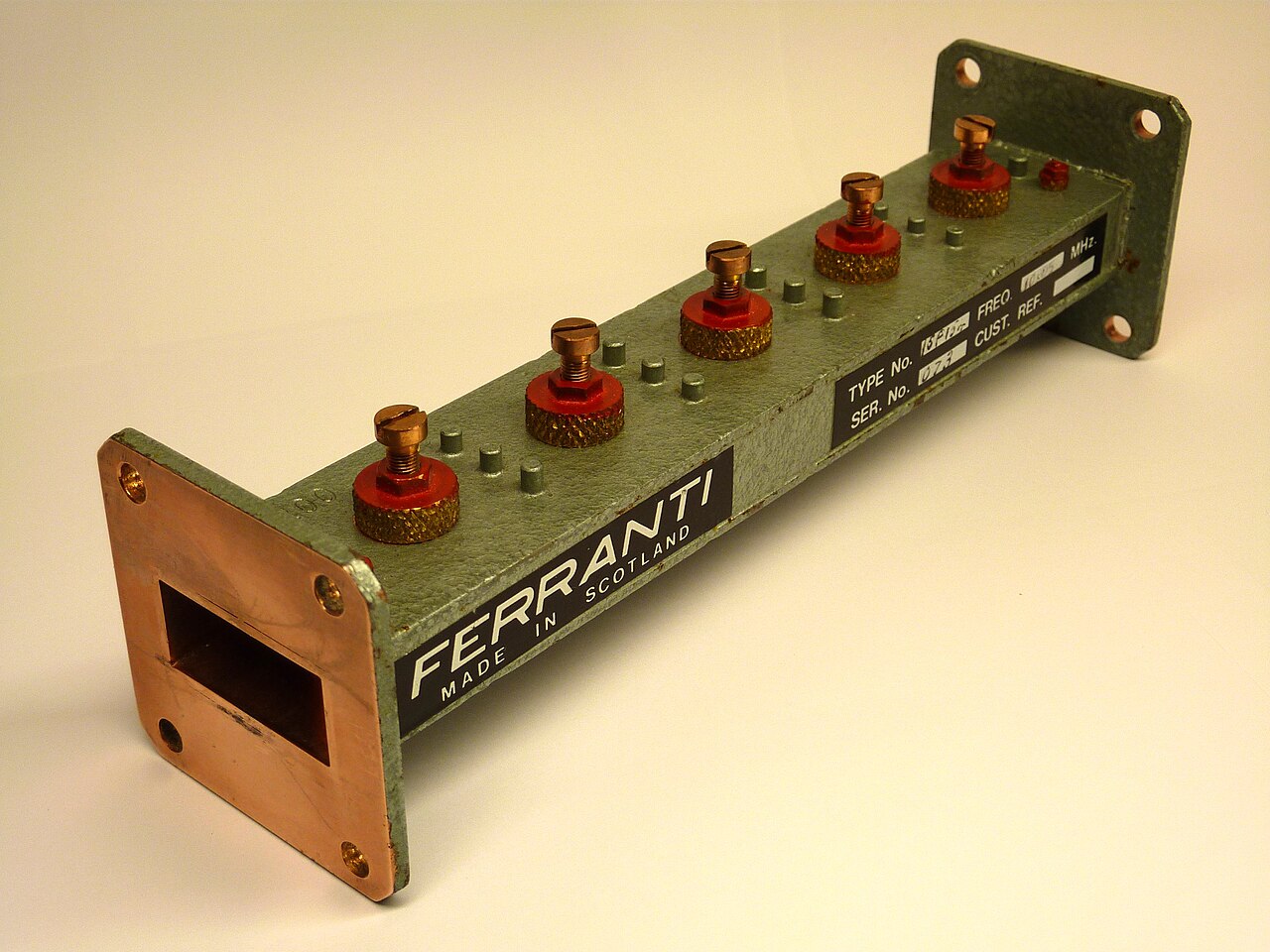}\quad\,
\includegraphics[width=4.5cm]{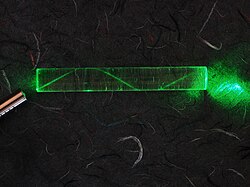}\\[-4pt]
\caption{Examples of waveguides (source Wikipedia). \label{WaveguideExamples}}
\end{figure}

\noindent The aim of these lecture notes is to consider a concrete problem, namely the identification of situations of invisibility in waveguides, to present techniques and tools of applied mathematics that can be useful in other contexts. We will be interested in the propagation of scalar waves in guides which are unbounded in one direction. Such problems arise in many fields of physics. For example air ducts and horns in acoustics carry sound waves in musical instruments as well as in loudspeakers.  Conductive metal pipes are exploited to propagate
high frequency radio waves while optical fibers serve as waveguides for light in electromagnetism. Waveguides problems also appear in water waves theory, in classical mechanics or in quantum mechanics. In general, the diffraction of an incident wave in such  structures in presence of an obstacle generates a reflection and a transmission characterized by some scattering coefficients. Broadly speaking, our goal is to play with the geometry, the frequency and/or the index material to control these scattering coefficients.\\
\newline
This document is divided in four chapters. In the first one, we present classical results concerning waveguide theory. This is a rather long story and the aim here is not to be exhaustive but instead to present the main ideas and ingredients that will be useful to address the invisibility problematic. In Chapter \ref{chapterContinuation}, we develop perturbative techniques based in particular on the use of shape derivatives to design invisible defects of the reference geometry. With these approaches, in principle we construct small amplitude invisible obstacles. In Chapter \ref{SectionResonance}, we exploit resonant phenomena to provide examples of larger invisible obstacles. There, we also propose a method to hid given objects by perturbing (in a singular way) the boundaries of the waveguide. Finally, in Chapter \ref{SectionSpectral} we change the point of view, assume that the obstacle is given, and construct a non self-adjoint operator whose eigenvalues coincide with frequencies such that there are incident fields which produce zero reflection.\\
\newline 
Our approaches mainly rely on techniques of asymptotic analysis as well as spectral theory for self-adjoint and non self-adjoint operators. Wherever possible, we will illustrate the results by numerical experiments.\\
\newline
The first chapter contains classical material. In the next three, some more recent results are presented. They have been obtained with different colleagues, among them, Antoine Bera, Anne-Sophie Bonnet-Ben Dhia, J\'er\'emy Heleine, Sergei Nazarov, Vincent Pagneux. I thank them warmly.\\
\newline
These lecture notes have been written as support material for a one-week course (5.5 hours in total) that I delivered at the Institut de Math\'ematiques de Toulouse in the period 23-27 June 2025 as part of the summer school Control, Inverse Problems and Spectral Theory. They have been proofread several times. However, it is always difficult to eliminate all typos. I would be grateful to anyone who finds any to send them to \url{lucas.chesnel@inria.fr}. Remarks, suggestions are also welcome. \\
\newline
\noindent\textbf{Key words.} Waveguides, scattering, invisibility, asymptotic analysis, spectral theory, complex resonances, spectral theory, shape derivative.

\newpage
\tableofcontents

\chapter{Waveguide problems}\label{ChapWaveguides}

\section{Setting}

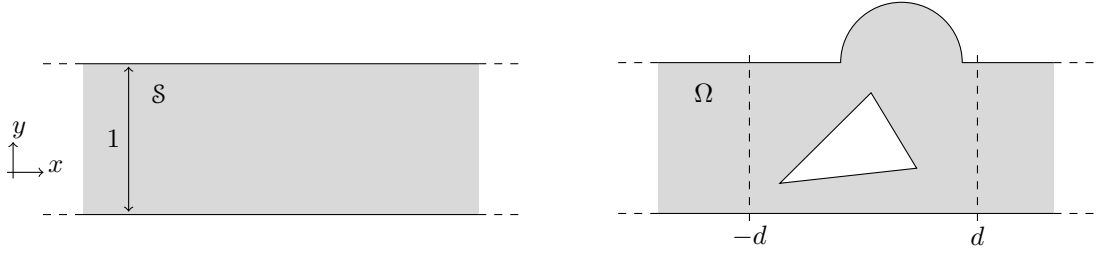
\begin{figure}[!ht]
\centering
\begin{tikzpicture}[scale=2]
\draw[fill=gray!30,draw=none](-1.3,0) rectangle (1.3,1);
\draw (-1.3,0)--(1.3,0);
\draw (-1.3,1)--(1.3,1);
\draw[dashed] (-1.3,0)--(-1.6,0);
\draw[dashed] (-1.3,1)--(-1.6,1);
\draw[dashed] (1.3,0)--(1.6,0);
\draw[dashed] (1.3,1)--(1.6,1);
\draw[<->] (-1,0.02)--(-1,0.98);
\node at (-1.1,0.5){\small $1$};
\begin{scope}[shift={(-1.8,0.2)},scale=0.4]
\draw[->] (0,0.2)--(0.6,0.2);
\draw[->] (0.1,0.1)--(0.1,0.7);
\node at (0.8,0.3){\small $x$};
\node at (0.2,0.9){\small $y$};
\end{scope}
\node at (-0.8,0.8){\small $\mathcal{S}$};
\phantom{\node at (0.8,-0.15){\small $d$};}
\end{tikzpicture}\qquad\quad
\begin{tikzpicture}[scale=2]
\draw[fill=gray!30] (0.3,1) circle (0.4) ;
\draw[fill=gray!30,draw=none](-1.3,0) rectangle (1.3,1);
\draw (-1.3,0)--(1.3,0);
\draw (-1.3,1)--(-0.1,1);
\draw (0.7,1)--(1.3,1);
\draw[fill=white] (-0.5,0.2)--(0.1,0.8)--(0.4,0.3)--cycle;
\draw[dashed] (-1.3,0)--(-1.6,0);
\draw[dashed] (-1.3,1)--(-1.6,1);
\draw[dashed] (1.3,0)--(1.6,0);
\draw[dashed] (1.3,1)--(1.6,1);
\node at (-1,0.8){\small $\Om$};
\draw[-,dashed] (-0.7,-0.05)--(-0.7,1.05);
\draw[-,dashed] (0.8,-0.05)--(0.8,1.05);
\node at (-0.7,-0.15){\small $-d$};
\node at (0.8,-0.15){\small $d$};
\end{tikzpicture}\vspace{-0.3cm}
\caption{Left: reference strip $\mathcal{S}$. Right: perturbed waveguide $\Om$.\label{PictureWaveguide}}
\end{figure}

\noindent In this chapter, we present general results concerning waveguide theory. To make it simple, we stick to a 2D scalar problem.\\
\newline
Set $I\coloneqq(0;1)$ and consider $\Om\subset\R^2$ a waveguide which coincides with the reference strip $\mathcal{S}\coloneqq\{(x,y)\in\R\times I\}$ outside of a compact region located in the zone $\{(x,y)\in\R^2\,|\,|x|<d\}$ for some $d>0$ (see Figure \ref{PictureWaveguide}). We assume that the domain $\Om$ is connected with Lipschitz boundary. Let us study the wave equation, for $t\ge0$,
\begin{equation}\label{WaveEquation}
\begin{array}{|rcll}
\cfrac{1}{c^2}\cfrac{\partial^2 \mrm{U}}{\partial t^2}-\Delta \mrm{U}&=&\mrm{F}&\mbox{ in }\Om \\[2pt]
\mrm{U}&=&0&\mbox{ on }\partial\Om,
\end{array}
\end{equation}
with some initial conditions. Here $c>0$, the celerity of waves in the homogeneous medium filling $\Om$, is assumed to be constant. 
The Dirichlet Boundary Conditions (BCs) are relevant in certain circumstances in electromagnetism when the Maxwell's problem has some invariance with respect to one spatial variable. Assume that the excitation $\mrm{F}$ is time harmonic, \textit{i.e.} of the form
\[
\mrm{F}(x,y,t)=f(x,y)e^{-i\om t},
\]
for some pulsation $\om>0$ corresponding to a temporal period $T\coloneqq2\pi/\om$. Then it is natural to look for solutions of (\ref{WaveEquation}) which are also harmonic for long times\footnote{This is the limiting amplitude principle, which holds in general, but can be violated in rare circumstances due to trapped modes that we will meet later.}. More precisely, we are led to search for $\mrm{U}$ solving (\ref{WaveEquation}) of the form
\begin{equation}\label{HarmoU}
\mrm{U}(x,y,t)=u(x,y)e^{-i\om t}.
\end{equation}
Inserting (\ref{HarmoU}) in (\ref{WaveEquation}), we find that $u$ satisfies the problem 
\begin{equation}\label{WaveguidePb}
\begin{array}{|rcll}
-\Delta u-k^2u&=&f&\mbox{ in }\Om \\[2pt]
u&=&0&\mbox{ on }\partial\Om
\end{array}
\end{equation}
where $k\coloneqq\om/c>0$ denotes the wavenumber.\\
\newline 
We wish to endow (\ref{WaveguidePb}) with a well-suited functional framework. This is not straightforward for two reasons. First, the form associated with (\ref{WaveguidePb}) is not coercive except when $k$ is small. Second, the domain $\Om$ is unbounded so that the term involving $k$ cannot be seen as a compact perturbation of the principal part.\\
\newline
Before proceeding further, we introduce a few spaces that will be useful in the analysis. Denote by $\mL^2(\Om)$ the usual Lebesgue space of square-integrable functions. It is a Hilbert space for the inner product 
\begin{equation}\label{DefInnerProdL2}
(u,v)_{\mL^2(\Om)}=\int_{\Om} u v\,dxdy,\qquad\forall u,v\in \mL^2(\Om).
\end{equation}
We will also work with the Sobolev spaces
\[
\begin{array}{l}
\mH^1(\Om)\coloneqq\{u\in\mL^2(\Om)\,|\,\nabla u\in (\mL^2(\Om))^2\}\\[2pt]
\mH^1_0(\Om)\coloneqq\{u\in\mH^1(\Om)\,|\,u=0\mbox{ on  }\partial\Om\}
\end{array}
\]
that we endow with the inner product
\begin{equation}\label{DefInnerProd}
(u,v)_{\mH^1(\Om)}=\int_{\Om}\nabla u\cdot\nabla v+uv\,dxdy.
\end{equation}
They also are Hilbert spaces. We define the norms
\[
\|\cdot\|_{\mL^2(\Om)}=(\cdot,\cdot)_{\mL^2(\Om)}^{1/2},\qquad \|\cdot\|_{\mH^1(\Om)}=(\cdot,\cdot)_{\mH^1(\Om)}^{1/2}.
\]
For a non-empty  set $\mathcal{O}$, $\mathscr{C}^{\infty}_0(\mathcal{O})$ refer to the space of infinitely differentiable functions whose support is bounded and in $\mathcal{O}$.

\section{Dirichlet problem for $0<k<\pi$}

Assume that $f$ in (\ref{WaveguidePb}) belongs to $\mL^2(\Om)$. The natural variational formulation of that problem writes
\begin{equation}\label{PbVariaLow}
\begin{array}{|l}
\mbox{Find }u\in\mrm{H}^1_0(\Om) \mbox{ such that }\\[3pt]
a(u,v)=\ell(v),\qquad \forall v\in\mH^1_0(\Om),
\end{array}
\end{equation}
with 
\[
a(u,v)=\int_{\Om}\nabla u\cdot\nabla v-k^2 uv\,dxdy,\qquad\qquad \ell(v)=\int_{\Om}fv\,dxdy.
\]
The bilinear form $a(\cdot,\cdot)$ is continuous in $\mH^1_0(\Om)$. Therefore, with the Riesz representation theorem, we can introduce the linear bounded operator $A(k):\mH^1_0(\Om)\to\mH^1_0(\Om)$ such that
\begin{equation}\label{DefOpAk}
(A(k)u,v)_{\mH^1(\Om)} =a(u,v),\qquad\forall u,v\in \mH^1_0(\Om).
\end{equation}
In this section, we establish the following statement.
\begin{theorem}\label{ThmDLowFreq}
Pick $k\in(0;\pi)$. The operator $A(k)$ decomposes as 
\[
A(k)=B+K
\]
where $B:\mH^1_0(\Om)\to\mH^1_0(\Om)$ is an isomorphism and $K:\mH^1_0(\Om)\to\mH^1_0(\Om)$ is compact ($B$ and $K$ are allowed to depend on $k$).
\end{theorem}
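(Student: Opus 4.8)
The plan is to exploit that, on the strip, the transverse Dirichlet operator $-\partial_y^2$ on $(0;1)$ has smallest eigenvalue $\pi^2$, so that the assumption $k<\pi$ makes the form $a(\cdot,\cdot)$ coercive \emph{at infinity}. The naive reading $a(u,v)=(u,v)_{\mH^1(\Om)}-(k^2+1)(u,v)_{\mL^2(\Om)}$, which would suggest $A(k)=I-(k^2+1)\iota$ with $\iota$ the bounded operator on $\mH^1_0(\Om)$ representing the $\mL^2(\Om)$ inner product, is of no use: since $\Om$ is unbounded, the embedding $\mH^1_0(\Om)\hookrightarrow\mL^2(\Om)$ is not compact, hence $\iota$ is not compact. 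So I would only peel off the zeroth-order term on a \emph{bounded} region, keeping it at infinity inside the invertible part. Concretely, set $\Om_b\coloneqq\{(x,y)\in\Om\,|\,|x|<d\}$, a bounded Lipschitz domain, and $\Om_\infty\coloneqq\Om\setminus\overline{\Om_b}$, on which $\Om$ coincides with the strip, and introduce the splitting $a=b+c$ with
\[
b(u,v)\coloneqq\int_\Om\nabla u\cdot\nabla v\,dxdy-k^2\int_{\Om_\infty}uv\,dxdy,\qquad c(u,v)\coloneqq-k^2\int_{\Om_b}uv\,dxdy.
\]
Letting $B,K:\mH^1_0(\Om)\to\mH^1_0(\Om)$ be the bounded operators these forms induce through the Riesz representation theorem, as in (\ref{DefOpAk}), one has $A(k)=B+K$.

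First I would prove $B$ is an isomorphism via the Lax--Milgram theorem, i.e. that $b$ is coercive. For $u\in\mH^1_0(\Om)$ and a.e. $x$ with $|x|>d$ one has $u(x,\cdot)\in\mH^1_0(0;1)$, so the one-dimensional Dirichlet--Poincaré inequality $\int_0^1|\partial_yu|^2\,dy\ge\pi^2\int_0^1|u|^2\,dy$ integrates to $\int_{\Om_\infty}|u|^2\,dxdy\le\pi^{-2}\int_{\Om_\infty}|\partial_yu|^2\,dxdy$. Putting $\delta\coloneqq1-k^2/\pi^2$, which is positive precisely because $k<\pi$, I obtain
\[
b(u,u)\ge\int_{\Om_b}|\nabla u|^2\,dxdy+\Big(1-\tfrac{k^2}{\pi^2}\Big)\int_{\Om_\infty}|\nabla u|^2\,dxdy\ge\delta\int_\Om|\nabla u|^2\,dxdy.
\]
Since $\Om$ is contained in a horizontal strip of finite height, a global Poincaré inequality $\|u\|_{\mL^2(\Om)}\le C\|\nabla u\|_{\mL^2(\Om)}$ holds on $\mH^1_0(\Om)$; hence $\int_\Om|\nabla u|^2\,dxdy$ is equivalent to $\|u\|_{\mH^1(\Om)}^2$ and $b$ is coercive.

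It then remains to show $K$ is compact. By construction $(Ku,v)_{\mH^1(\Om)}=-k^2\int_{\Om_b}uv\,dxdy$, so $K$ factors through the restriction map $\mH^1_0(\Om)\to\mL^2(\Om_b)$, $u\mapsto u|_{\Om_b}$. As $\Om_b$ is bounded with Lipschitz boundary, this map is compact by the Rellich--Kondrachov theorem, and composing a compact map with bounded ones yields that $K$ is compact.

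The main obstacle is conceptual rather than computational: one must resist treating the whole $k^2$-term as a lower-order perturbation, and instead localise so that the dangerous (propagative) part is confined to the bounded region $\Om_b$, while the spectral gap of the transverse problem secures coercivity on $\Om_\infty$. Pinning the threshold exactly at $k=\pi$, and correctly invoking the two Poincaré inequalities — the sharp transverse one on $\Om_\infty$ and the global one converting gradient control into a full $\mH^1$ bound — are the points that require care.
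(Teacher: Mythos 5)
Your proof is correct and takes essentially the same route as the paper: split off the zeroth-order term on the bounded region $\Om_d$ (your $\Om_b$), prove coercivity of the remaining form via the sharp transverse Poincar\'e inequality with constant $\pi^2$ on the unbounded part, apply Lax--Milgram, and deduce compactness of the remainder from Rellich's theorem on the bounded region. The only real difference is that the paper's invertible part keeps a zeroth-order term on $\Om_d$ (its coefficient is $(1+k^2)\mathbbm{1}_{\Om_d}-k^2$, so that $b(u,u)\ge (1-k^2/\pi^2)(1+\pi^2)^{-1}\|u\|^2_{\mH^1(\Om\setminus\overline{\Om_d})}+\|u\|^2_{\mH^1(\Om_d)}$ and $\mH^1$-coercivity is immediate), whereas your splitting only controls $\|\nabla u\|^2_{\mL^2(\Om)}$ and therefore needs the extra global Poincar\'e inequality on $\mH^1_0(\Om)$ --- a valid step, since the compactness of the perturbation keeps $\Om$ inside a horizontal strip of finite height (extension by zero plus the transverse inequality, exactly as in the paper's Proposition \ref{PropInjD}) --- while your factorization of $K$ through the compact restriction $\mH^1_0(\Om)\to\mL^2(\Om_d)$ cleanly replaces the paper's sequential Cauchy argument.
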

\begin{proof}
Define the bilinear form $b(\cdot,\cdot)$ such that 
\[
b(u,v)=\int_{\Om}\nabla u\cdot\nabla v+((1+k^2)\mathbbm{1}_{\Om_d}-k^2) uv\,dxdy,\qquad\forall u,v\in \mH^1_0(\Om),
\]
where $\mathbbm{1}_{\Om_d}$ stands for the indicator function of the set $\Om_d\coloneqq\{(x,y)\in\Om\,|\,|x|<d\}$. Since $b(\cdot,\cdot)$ is continuous in $\mH^1_0(\Om)$, we can define the bounded operator $B:\mH^1_0(\Om)\to\mH^1_0(\Om)$ such that 
\[
(Bu,v)_{\mH^1(\Om)}=b(u,v),\qquad\forall u,v\in \mH^1_0(\Om).
\]
From the Lax-Milgram theorem, to show that $B$ is an isomorphism, it suffices to prove that $b(\cdot,\cdot)$ is coercive in $\mH^1_0(\Om)$. Below we establish the 1D Poincar\'e inequality 
\begin{equation}\label{Poincare1D}
\pi^2\int_I \varphi^2\,dt \le \int_I (\partial_t\varphi)^2\,dt,\qquad\forall \varphi\in\mH^1_0(I)\coloneqq\{\psi\in\mH^1(I)\,|\,\psi(0)=\psi(1)=0\},
\end{equation}
where we recall that $I=(0;1)$. Integrating this estimate with respect to $x\in(-\infty;-d)\cup(d;+\infty)$ for $u\in\mathscr{C}^\infty_0(\Om)$ (the space of infinitely differentiable functions supported in $\Om$) and using the density of $\mathscr{C}^\infty_0(\Om)$ in $\mH^1_0(\Om)$, we obtain
\[
\pi^2\int_{\Om\setminus\overline{\Om_d}}u^2\,dxdy \le \int_{\Om\setminus\overline{\Om_d}}|\nabla u|^2\,dxdy,\qquad\forall u\in\mH^1_0(\Om).
\]
Therefore we can write, for all $u\in\mH^1_0(\Om)$,
\[
\begin{array}{rcl}
b(u,u)& = & \dsp\int_{\Om\setminus\overline{\Om_d}}|\nabla u|^2-k^2 u^2\,dxdy+\|u\|^2_{\mH^1(\Om_d)} \\[10pt]
& \ge & \dsp\bigg(1-\cfrac{k^2}{\pi^2}\bigg)\int_{\Om\setminus\overline{\Om_d}}|\nabla u|^2\,dxdy+\|u\|^2_{\mH^1(\Om_d)} \\[10pt]
& \ge & (1+\pi^2)^{-1}\bigg(1-\cfrac{k^2}{\pi^2}\bigg)\|u\|^2_{\mH^1(\Om\setminus\overline{\Om_d})}+\|u\|^2_{\mH^1(\Om_d)} \ge \alpha\,\|u\|^2_{\mH^1(\Om)}
\end{array}
\]
with $\alpha=(1+\pi^2)^{-1}(1-k^2/\pi^2)>0$. This shows that $b(\cdot,\cdot)$ is coercive in $\mH^1_0(\Om)$.\\
\newline
Now set $K=A(k)-B$. We have 
\begin{equation}\label{DefOpK}
(Ku,v)_{\mH^1(\Om)}=-(1+k^2)\int_{\Om_d}uv\,dxdy,\qquad\forall u,v\in \mH^1_0(\Om).
\end{equation}
To establish that $K:\mH^1_0(\Om)\to\mH^1_0(\Om)$ is compact, we have to prove that from any bounded sequence $(u_n)$ of functions of $\mH^1_0(\Om)$, we can extract a subsequence such that $(Ku_n)$ converges in $\mH^1_0(\Om)$. By taking $v=Ku$ in (\ref{DefOpK}), we obtain, for all $u\in\mH^1_0(\Om)$, 
\[
\|Ku\|_{\mH^1(\Om)} \le (1+k^2)\,\|u\|_{\mL^2(\Om_d)}. 
\]
In particular, for $u_{mn}\coloneqq u_m-u_n$, we obtain
\begin{equation}\label{EstimCauchy}
\|Ku_{mn}\|_{\mH^1(\Om)} \le (1+k^2)\,\|u_{mn}\|_{\mL^2(\Om_d)}. 
\end{equation}
Since $\Om_d$ is bounded, the Rellich theorem ensures that the embedding of $\mH^1(\Om_d)$ in $\mL^2(\Om_d)$ is compact. We deduce that we can extract from  $(u_n)$, which is bounded in $\mH^1(\Om)$ and so in $\mH^1(\Om_d)$, a subsequence, still denoted by $(u_n)$, such that $(u_n)$ converges in $\mL^2(\Om_d)$. Thus $(u_n)$ is a Cauchy sequence in $\mL^2(\Om_d)$. From (\ref{EstimCauchy}), we infer that $(Ku_n)$ is a Cauchy sequence in $\mH^1_0(\Om)$. Since this space is complete, we infer that $(Ku_n)$ indeed converges in $\mH^1_0(\Om)$.
\end{proof}
\noindent This shows that $A(k)$ satisfies the Fredholm alternative. Either $A(k)$ is injective and in this case it is an isomorphism of $\mH^1_0(\Om)$. Or $A(k)$ has a kernel of finite dimension $\mrm{span}(u_1,\dots,u_P)$ and in that case the equation 
\begin{equation}\label{PbCompa}
A(k)u=F\quad\mbox{ in }\mH^1_0(\Om)
\end{equation}
has a solution (defined up to $\mrm{span}(u_1,\dots,u_P)$) if and only if $F$ satisfies the compatibility conditions 
\begin{equation}\label{PbCompa1}
(F,u_p)_{\mH^1(\Om)}=0,\qquad p=1,\dots,P.
\end{equation}
Let us emphasize that by multiplying (\ref{PbCompa}) by $u_p$ and using the symmetry of $A(k)$, one easy finds that the conditions (\ref{PbCompa1}) are necessary for the existence of a solution.\\
\newline
We prove now the Poincar\'e inequality needed in (\ref{Poincare1D}). 
\begin{lemma}
We have 
\begin{equation}\label{EstimatePoinc1D0}
\inf_{\varphi\in\mH^1_0(I)\setminus\{0\}} \cfrac{\|\partial_t\varphi\|^2_{\mL^2(I)}}{\|\varphi\|^2_{\mL^2(I)}}=\pi^2
\end{equation}
so that there holds
\begin{equation}\label{EstimatePoinc1D}
\pi^2\,\|\varphi\|^2_{\mL^2(I)}\le \|\partial_t\varphi\|^2_{\mL^2(I)},\qquad\forall \varphi\in\mH^1_0(I).
\end{equation}
\end{lemma}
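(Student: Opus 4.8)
The plan is to prove the sharp Poincaré inequality on $I=(0;1)$ with optimal constant $\pi^2$. The inequality \eqref{EstimatePoinc1D} follows immediately from the infimum characterization \eqref{EstimatePoinc1D0}, so the real work is to establish that the infimum equals $\pi^2$. I would attack this via the Rayleigh quotient associated with the Dirichlet eigenvalue problem for $-\partial_t^2$ on $I$, namely
\[
-\varphi''=\lambda\varphi\ \mbox{ in }I,\qquad \varphi(0)=\varphi(1)=0.
\]
The eigenvalues of this problem are $\lambda_n=(n\pi)^2$ with eigenfunctions $\varphi_n(t)=\sin(n\pi t)$, for $n\ge 1$, and the smallest one is $\lambda_1=\pi^2$. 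The Fourier expansion of any $\varphi\in\mH^1_0(I)$ in this orthogonal basis then yields the inequality with the optimal constant.

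First I would verify the upper bound $\le\pi^2$ by exhibiting a minimizing element: taking the test function $\varphi(t)=\sin(\pi t)$, which indeed lies in $\mH^1_0(I)$ since it vanishes at both endpoints, a direct computation gives $\|\partial_t\varphi\|^2_{\mL^2(I)}=\pi^2\|\varphi\|^2_{\mL^2(I)}$, so the quotient equals exactly $\pi^2$ and the infimum is at most $\pi^2$. Second, for the lower bound $\ge\pi^2$, I would argue by density: it suffices to treat $\varphi\in\mathscr{C}^\infty_0(I)$ and pass to the limit. For such $\varphi$ I would expand $\varphi(t)=\sum_{n\ge1} c_n\sin(n\pi t)$ in the orthogonal basis $\{\sin(n\pi t)\}_{n\ge1}$ of $\mL^2(I)$. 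Using Parseval's identity, $\|\varphi\|^2_{\mL^2(I)}=\tfrac12\sum_{n\ge1} c_n^2$, while integrating by parts (legitimate because $\varphi$ vanishes at the endpoints) gives $\|\partial_t\varphi\|^2_{\mL^2(I)}=\sum_{n\ge1}(n\pi)^2\,\tfrac12 c_n^2$. Since $(n\pi)^2\ge\pi^2$ for every $n\ge1$, termwise comparison yields $\|\partial_t\varphi\|^2_{\mL^2(I)}\ge\pi^2\|\varphi\|^2_{\mL^2(I)}$.

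An alternative, perhaps more elementary route avoiding Fourier series, is to integrate by parts directly: writing $\varphi(t)=\int_0^t\varphi'(s)\,ds$ and applying Cauchy--Schwarz gives a Poincaré-type bound, but this produces a suboptimal constant and would not attain $\pi^2$. I would therefore prefer the spectral argument, which simultaneously proves the inequality and identifies the sharp constant with the attaining function. The main obstacle I anticipate is the justification of the termwise differentiation and the Parseval-type identity for the derivative: one must ensure the series $\sum c_n\sin(n\pi t)$ can be differentiated so that $\varphi'=\sum n\pi c_n\cos(n\pi t)$ holds in $\mL^2(I)$, which requires knowing $\sum (n\pi)^2 c_n^2<\infty$. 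This is precisely guaranteed by the assumption $\varphi\in\mH^1_0(I)$, so the finiteness of the right-hand side is built into the hypothesis, and the density of $\mathscr{C}^\infty_0(I)$ in $\mH^1_0(I)$ lets me reduce the rigorous expansion to the smooth case before passing to the limit.
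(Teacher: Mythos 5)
Your proof is correct, but it follows a genuinely different route from the paper. You establish the sharp constant by Fourier analysis: the upper bound comes from the explicit test function $\sin(\pi t)$, and the lower bound from expanding $\varphi=\sum_{n\ge1}c_n\sin(n\pi t)$, using Parseval for both $\varphi$ and $\varphi'$ (justified by integration by parts with vanishing boundary terms, reduced to $\mathscr{C}^\infty_0(I)$ by density) and the termwise bound $(n\pi)^2\ge\pi^2$. The paper instead runs the direct method of the calculus of variations: it takes a minimizing sequence for the constrained problem $\inf\{\int_I(\partial_t\varphi)^2\,dt : \|\varphi\|_{\mL^2(I)}=1\}$, extracts a weakly convergent subsequence, uses the compact embedding $\mH^1_0(I)\hookrightarrow\mL^2(I)$ and weak lower semicontinuity of the norm to show the infimum is attained, derives the Euler--Lagrange equation $-\partial_{tt}^2u=\lambda u$, and only then solves this ODE explicitly to identify $\lambda=\pi^2$. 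What your approach buys is economy and sharpness in one stroke: you never need to prove existence of a minimizer, only the classical completeness of the sine system in $\mL^2(0;1)$ (which is itself the spectral resolution of the Dirichlet Laplacian, so the two arguments are cousins at heart). What the paper's approach buys is generality and pedagogy: the minimizing-sequence machinery works verbatim in higher dimensions and on domains where no explicit eigenbasis is available, which is presumably why the author chose it in lecture notes meant to showcase reusable tools. Both proofs also exhibit the minimizer $\sin(\pi t)$, so attainment of the infimum is covered either way.
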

\begin{proof}
To obtain 1D Poincar\'e inequalities as (\ref{EstimatePoinc1D}), a classical approach consists in working with explicit representations. More precisely, for $\varphi\in\mathscr{C}^\infty_0(I)$, we can write, for $s\in(0;1)$,
\[
\varphi(s)=\int_{0}^s \partial_t\varphi(t)\,dt.
\]
According to the Cauchy-Schwarz inequality in $\mL^2$, this implies, for all $s\in(0;1/2)$,
\[
\varphi^2(s) \le \int_{0}^s1\,dt\int_{0}^s(\partial_t\varphi(t))^2\,dt \le s\,\|\partial_t\varphi\|^2_{\mL^2(0;1/2)}.
\]
Integrating this identity between $0$ and $1/2$, we obtain
\[
\|\varphi\|^2_{\mL^2(0;1/2)}\le \cfrac{1}{8}\,\|\partial_t\varphi\|^2_{\mL^2(0;1/2)}.
\]
By establishing a similar estimate on $(1/2;1)$ (note that $\varphi(1)=0$) and using the density of $\mathscr{C}^\infty_0(I)$ in $\mH^1_0(I)$, we find 
\[
\|\varphi\|^2_{\mL^2(I)}\le \cfrac{1}{8}\,\|\partial_t\varphi\|^2_{\mL^2(I)}\qquad\Leftrightarrow\qquad 8\,\|\varphi\|^2_{\mL^2(I)}\le \|\partial_t\varphi\|^2_{\mL^2(I)},\qquad\forall\varphi\in\mH^1_0(I).
\]
This is a nice Poincar\'e inequality but it is not optimal (observe that (\ref{EstimatePoinc1D}) is better because $8<\pi^2$). Looking for the best Poincar\'e inequality leads us to consider the minimization problem 
\begin{equation}\label{PbInf}
\inf_{\varphi\in\mH^1_0(I)\setminus\{0\}} \cfrac{\|\partial_t\varphi\|^2_{\mL^2(I)}}{\|\varphi\|^2_{\mL^2(I)}}\,.
\end{equation}
Below we prove that this infimum, equal to some $\lambda>0$, is actually a minimum because it is attained at some functions $u\in\mH^1_0(I)\setminus\{0\}$. Moreover we establish that these quantities  satisfy
\begin{equation}\label{PbSpectral}
-\partial_{tt}^2u=\lambda u\qquad\mbox{ in }I.
\end{equation}
In other words, $\lambda$ is an eigenvalue (the smallest) of the Dirichlet Laplacian and $u$ is a corresponding eigenfunction. Since $I=(0;1)$, a direct computation gives $\lambda=\pi^2$ with $u(t)=\sin(\pi t)$ (up to a multiplicative constant which does not change the ratio in (\ref{PbInf})). Thus we obtain (\ref{EstimatePoinc1D0}) and so (\ref{EstimatePoinc1D}).\\
\newline
To prove that the infimum in (\ref{PbInf}) is reached, let us first remark that solving (\ref{PbInf}) is equivalent to solve the constrained minimization problem 
\[
\inf_{\varphi\in \mathscr{B}}\left\{ J(\varphi)=\int_{I}\partial_t\varphi^2\,dt\right\}
\]
with $\mathscr{B}\coloneqq\{\varphi\in\mH^1_0(I)\,|\,\int_I \varphi^2\,dt=1\}$. The functional $J$ is positive in $\mathscr{B}$, therefore we have $\inf_\mathscr{B} J \ge0$. Consider $(u_n)\in \mathscr{B}^{\N}$ a minimizing sequence for $J$, \textit{i.e.} a sequence of functions of $\mathscr{B}$ such that 
\[
\lim_{n\to+\infty}J(u_n)=\inf_\mathscr{B} J.
\]
The sequence $(J(u_n))$ is bounded and so $(u_n)$ is bounded in $\mH^1_0(I)$. Then we know that we can extract a subsequence, still denoted by $(u_n)$, such that there is some $u\in \mH^1_0(I)$ such that
\[
u_n \rightharpoonup u \text{ weakly in } \mH^1_0(I),\qquad u_n \rightarrow u \text{ strongly in } \mL^2(I).
\]
Now, by writing 
\[
0\le (\partial_t(u-u_n),\partial_t(u-u_n))_{\mL^2(\Om)}=\|\partial_tu\|^2_{\mL^2(I)}+\|\partial_tu_n\|^2_{\mL^2(I)}-2(\partial_tu,\partial_tu_n)_{\mL^2(I)},
\]
we obtain $2(\partial_tu,\partial_tu_n)_{\mL^2(I)}-\|\partial_tu\|^2_{\mL^2(I)}\le \|\partial_t u_n\|^2_{\mL^2(I)}$. By passing to the inferior limit, we deduce
\[
\|\partial_tu\|^2_{\mL^2(I)} \leq \liminf\limits_{n\to+\infty}\|\partial_tu_n\|^2_{\mL^2(I)}
\]
and so
\[
J(u)\leq \liminf\limits_{n\to+\infty} J(u_n) = \inf_\mathscr{B} J.
\]
But, for all $n\in\N$, we have $ \|u_n\|_{\mL^2(I)}=1$. Since $(u_n)$ converges strongly to $u$ in $\mL^2(I)$, we deduce $\| u\|_{\mL^2(I)}=1$, which shows that $u$ belongs to $\mathscr{B}$. Thus $J$ attains its infimum in $\mathscr{B}$ at $u$.\\
\newline
Set $\lambda=J(u)$. For all $v\in \mH^1_0(I)\setminus\{0\}$, we have $v/\|v\|_{\mL^2(I)}\in \mathscr{B}$ and so 
\[
J(v/\|v\|_{\mL^2(I)}) \ge \lambda.
\]
This gives, for all $v\in \mH^1_0(I)\setminus\{0\}$,
\[
\int_I (\partial_t v)^2 - \lambda  v^2\,dt \ge0.
\]
Then we are led to consider the following minimization problem without constraint 
\[
\min_{v\in \mH^1_0(I)}\left\{\tilde J(v)=\int_I (\partial_t v)^2 - \lambda  v^2\,dt\right\}.
\]
According to what precedes, the functional $\tilde J$ is non negative in $\mH^1_0(I)$. Moreover, we have $\tilde J(u)=0$ and $u\in \mH^1_0(I)$. Therefore $u$  is a minimizer of $\tilde J$ in $\mH^1_0(I)$. Thus we must have, for all $v\in \mH^1_0(I)$, $s\in\R$,
\[
0=\tilde J(u) \le \tilde J(u+sv).
\]
This is equivalent to
\[
0 \le s\left(\int_I \partial_t u\,\partial_t v-\lambda   uv\,dt\right)+s^2\tilde J(v),\qquad\forall v\in  \mH^1_0(I),\,s\in\R,
\]
which holds if and only if
\[
\int_I \partial_t u\,\partial_t v\,dt=\lambda\int_I   uv\,dt,\qquad\forall v\in  \mH^1_0(I).
\]
We deduce that, in the sense of distributions, we must have the equation (\ref{PbSpectral}). 
\end{proof}

\noindent In Theorem \ref{ThmDLowFreq}, we proved that $A(k)$ satisfies the Fredholm alternative for $k\in(0;\pi)$. We give now a result of injectivity in certain geometries.

\begin{proposition}\label{PropInjD}
Assume that $k\in(0;\pi)$ and $\Om\subset \mathcal{S}=\R\times I$. Then the operator $A(k)$ defined in (\ref{DefOpAk}) is injective, and so is an isomorphism of $\mH^1_0(\Om)$. In that case, Problem (\ref{PbVariaLow}) admits a unique solution for all $f\in\mL^2(\Om)$.
\end{proposition}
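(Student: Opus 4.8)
The plan is to lean on the Fredholm alternative already obtained in Theorem~\ref{ThmDLowFreq}: since $A(k)$ was written there as an isomorphism plus a compact operator, it is itself an isomorphism as soon as it is injective. Thus the entire task reduces to showing that $A(k)u=0$ forces $u=0$; the existence and uniqueness of a solution to~(\ref{PbVariaLow}) for every $f\in\mL^2(\Om)$ is then an immediate corollary.

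First I would unwind the homogeneous equation. By the definition~(\ref{DefOpAk}), $A(k)u=0$ means $a(u,v)=0$ for all $v\in\mH^1_0(\Om)$; choosing the test function $v=u$ produces the energy identity
\[
\int_{\Om}|\nabla u|^2\,dxdy=k^2\int_{\Om}u^2\,dxdy.
\]
The aim is to contradict this with a Poincaré-type lower bound on the gradient. The natural instrument is the sharp $1$D inequality~(\ref{EstimatePoinc1D}), which lives on the slice $I$; to exploit it I must transfer $u$ onto the full strip $\mathcal{S}=\R\times I$, where a fiberwise slicing argument becomes available.

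The decisive step—and the only place the hypothesis $\Om\subset\mathcal{S}$ intervenes—is to extend $u$ by zero outside $\Om$ to a function $\tilde u$ on $\mathcal{S}$. Since $u\in\mH^1_0(\Om)$, it is the $\mH^1$-limit of functions in $\mathscr{C}^\infty_0(\Om)$, whose zero extensions belong to $\mathscr{C}^\infty_0(\mathcal{S})\subset\mH^1_0(\mathcal{S})$, and extension by zero is continuous for the $\mH^1$ norm; passing to the limit gives $\tilde u\in\mH^1_0(\mathcal{S})$ with $\nabla\tilde u$ the zero extension of $\nabla u$, so that $\|\tilde u\|_{\mL^2(\mathcal{S})}=\|u\|_{\mL^2(\Om)}$ and $\|\nabla\tilde u\|_{\mL^2(\mathcal{S})}=\|\nabla u\|_{\mL^2(\Om)}$. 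This is exactly where $\Om\subset\mathcal{S}$ is indispensable: it guarantees that $\tilde u$ lives on the \emph{width-one} strip $\mathcal{S}$, whose vertical fibers $y\mapsto\tilde u(x,y)$ lie in $\mH^1_0(I)$ and therefore obey~(\ref{EstimatePoinc1D}) with the sharp constant $\pi^2$; had $\Om$ protruded beyond $\mathcal{S}$, one would be forced onto a wider cross-section with a strictly smaller Poincaré constant, and injectivity could genuinely fail for $k$ close to $\pi$.

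Finally I would apply~(\ref{EstimatePoinc1D}) fiber by fiber: for almost every $x\in\R$ the slice $y\mapsto\tilde u(x,y)$ is in $\mH^1_0(I)$, hence $\pi^2\int_I\tilde u^2\,dy\le\int_I(\partial_y\tilde u)^2\,dy\le\int_I|\nabla\tilde u|^2\,dy$; integrating in $x$ and using the preservation of norms gives $\pi^2\|u\|^2_{\mL^2(\Om)}\le\|\nabla u\|^2_{\mL^2(\Om)}$. Combined with the energy identity, this yields $(\pi^2-k^2)\|u\|^2_{\mL^2(\Om)}\le0$, and since $k\in(0;\pi)$ the factor $\pi^2-k^2$ is strictly positive, forcing $u=0$. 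I expect the main obstacle to be the rigorous justification that the zero extension lands in $\mH^1_0(\mathcal{S})$ rather than merely in $\mH^1(\mathcal{S})$; the density argument settles it, but it is the single point where the geometric assumption $\Om\subset\mathcal{S}$ is truly needed.
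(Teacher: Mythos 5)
Your proposal is correct and follows essentially the same route as the paper: extend $u\in\mH^1_0(\Om)$ by zero to $\mH^1_0(\mathcal{S})$ (this is where $\Om\subset\mathcal{S}$ is used), invoke the sharp 1D Poincar\'e inequality~(\ref{EstimatePoinc1D}) across the strip to get $\pi^2\|u\|^2_{\mL^2(\Om)}\le\|\nabla u\|^2_{\mL^2(\Om)}$, combine with $a(u,u)=0$ to conclude $u\equiv0$, and then apply Theorem~\ref{ThmDLowFreq} with the Fredholm alternative. The only cosmetic difference is that you derive the strip inequality by slicing $\tilde u$ fiberwise, whereas the paper integrates the 1D inequality for functions in $\mathscr{C}^\infty_0(\mathcal{S})$ and passes to the limit by density; both are standard and equivalent.
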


\begin{figure}[!ht]
\centering
\begin{tikzpicture}[scale=2]
\draw[fill=gray!30,draw=none](-1.3,0) rectangle (1.3,1);
\draw (-1.3,0)--(1.3,0);
\draw (-1.3,1)--(-0.1,1);
\draw (0.7,1)--(1.3,1);
\draw[fill=white] (-0.8,0.2)--(-0.2,0.8)--(0.1,0.3)--cycle;
\draw[dashed] (-1.3,0)--(-1.6,0);
\draw[dashed] (-1.3,1)--(-1.6,1);
\draw[dashed] (1.3,0)--(1.6,0);
\draw[dashed] (1.3,1)--(1.6,1);
\clip (-0.1,1) rectangle  + (0.8,-0.5);
\draw[fill=white] (0.3,1) circle (0.4) ;
\end{tikzpicture}\qquad\quad
\begin{tikzpicture}[scale=2]
\draw[fill=gray!30] (0.3,1) circle (0.4) ;
\draw[fill=gray!30,draw=none](-1.3,0) rectangle (1.3,1);
\draw (-1.3,0)--(1.3,0);
\draw (-1.3,1)--(-0.1,1);
\draw (0.7,1)--(1.3,1);
\draw[dashed] (-1.3,0)--(-1.6,0);
\draw[dashed] (-1.3,1)--(-1.6,1);
\draw[dashed] (1.3,0)--(1.6,0);
\draw[dashed] (1.3,1)--(1.6,1);
\end{tikzpicture}
\caption{Left: example of domain where $A(k)$ is an isomorphism for all $k\in(0;\pi)$. Right: example of domain where $A(k)$ is not an isomorphism for all $k\in(0;\pi)$.\label{PictureWaveguideInj}}
\end{figure}
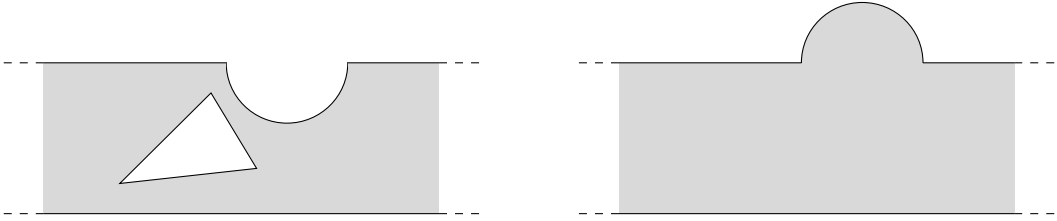

\begin{remark}
In Figure \ref{PictureWaveguideInj} left, we give an example of domain $\Om$ satisfying the assumption $\Om\subset \mathcal{S}$. We emphasize that the injectivity of $A(k)$ for all $k\in(0;\pi)$ does not always hold and in waveguides with exterior bumps (with respect to the reference strip $\mathcal{S}$) as in Figure \ref{PictureWaveguideInj} right, $A(k)$ can have a non zero kernel for certain $k\in(0;\pi)$. 
\end{remark}
\begin{proof}
When $\Om\subset\mathcal{S}$, if a function belongs to $\mH^1_0(\Om)$, then its extension by zero to $\mathcal{S}$ is an element of $\mH^1_0(\mathcal{S})$. Therefore we have
\begin{equation}\label{Compa}
\inf_{u \in\mH^1_0(\mathcal{S})\setminus\{0\}} \cfrac{\dsp\int_\Om |\nabla u|^2\,dxdy}{\dsp\int_{\Om} u^2\,dxdy}\le \inf_{u \in\mH^1_0(\Om)\setminus\{0\}} \cfrac{\dsp\int_\Om |\nabla u|^2\,dxdy}{\dsp\int_{\Om} u^2\,dxdy}\,.
\end{equation}
Now integrating the 1D Poincar\'e inequality (\ref{EstimatePoinc1D}) with respect to $x\in\R$ for $u\in\mathscr{C}^\infty_0(\mathcal{S})$ and using the density of $\mathscr{C}^\infty_0(\mathcal{S})$ in $\mH^1_0(\mathcal{S})$, we obtain
\[
\pi^2\int_{\mathcal{S}}u^2\,dxdy \le \int_{\mathcal{S}}|\nabla u|^2\,dxdy,\qquad\forall u\in\mH^1_0(\mathcal{S}).
\]
From (\ref{Compa}), this gives 
\[
\pi^2\int_{\Om}u^2\,dxdy \le \int_{\Om}|\nabla u|^2\,dxdy,\qquad\forall u\in\mH^1_0(\Om).
\]
Therefore, if $u\in\mH^1_0(\Om)$ is such that $A(k)u=0$, then we have
\[
\begin{array}{rcl}
0=a(u,u)&=&\dsp\int_{\Om}|\nabla u|^2-k^2u^2\,dxdy \\[6pt]
&\ge & (\pi^2-k^2)\dsp\int_{\Om}u^2\,dxdy,
\end{array}
\]
which ensures that $u\equiv0$ in $\Om$. This shows that $A(k)$ is injective and Theorem \ref{ThmDLowFreq} together with the Fredholm alternative guarantee that $A(k)$ is an isomorphism of $\mH^1_0(\Om)$.
\end{proof}

\section{Dirichlet problem for $k>\pi$}

In the previous paragraph, we studied Problem (\ref{WaveguidePb}) for $k<\pi$. Now we wish to understand what happens for $k\ge\pi$. To proceed, we first compute what one usually calls the ``modes'' of (\ref{WaveguidePb}). They play a key role in the physical phenomena and so in the mathematical properties of (\ref{WaveguidePb}).

\subsection{Computation of modes}\label{ParagraphModesD}

The modes are defined as the solutions with separate variables, \textit{i.e.} of the form
\begin{equation}\label{SepVaria}
u(x,y)=\alpha(x)\varphi(y),
\end{equation}
which solve Problem (\ref{WaveguidePb}) in the reference strip $\mathcal{S}$ for $f\equiv0$. Inserting (\ref{SepVaria}) in the equation $\Delta u+k^2u=0$ in $\mathcal{S}$, this yields
\[
\alpha''(x)\varphi(y)+\alpha(x)\varphi''(y)+k^2\alpha(x)\varphi(y)=0.
\]
Dividing this identity by $\alpha(x)\varphi(y)$, we find that we must have
\begin{equation}\label{PbFourierD}
\begin{array}{|l}
-\varphi''(y)=\lambda\,\varphi(y)\quad\mbox{ in }I\\[3pt]
\varphi(0)=\varphi(1)=0
\end{array}
\end{equation}
and 
\begin{equation}\label{ModeAlpha}
-\alpha''(x)=(k^2-\lambda)\,\alpha(x)\qquad \mbox{ in }\R
\end{equation}
for some constant $\lambda$ to be determined. Problem (\ref{PbFourierD}) is a spectral problem: we wish to find the values of $\lambda$ such that (\ref{PbFourierD}) admits a non zero solution $\varphi$. A direct calculation shows that the eigenpairs of (\ref{PbFourierD}) are given by
\begin{equation}\label{BaseFourierDirichlet}
\lambda_n=n\pi,\qquad\qquad \varphi_n(y)=\sqrt{2}\sin(n\pi y),\qquad n\in\N^\ast\coloneqq\{1,2,\dots\}.
\end{equation}
Note that the $\varphi_n$ have been chosen such that they satisfy the orthonormality conditions 
\[
(\varphi_m,\varphi_n)_{\mL^2(I)}=\delta_{m,n}
\]
where $\delta_{m,n}$ stands for the Kronecker symbol. Then solving the second order ODE (\ref{ModeAlpha}), finally we find that when $k\ne n\pi$ for all $n\in\N^\ast$, the modes 
coincide with the family $\{w^\pm_n\}_{n\in\N^\ast}$ where 
\begin{equation}\label{DefModes}
w^\pm_n(x,y)=e^{\pm i\beta_n x}\varphi_n(y),\qquad\quad\beta_n\coloneqq\sqrt{k^2-n^2\pi^2}.
\end{equation}
Here and below, the complex square root is chosen (this is a convention) such that if $z=r e^{i\theta}$ with $r\ge0$ and $\theta\in[0;2\pi)$, then $\sqrt{z}=\sqrt{r} e^{i \theta/2}$. As a consequence, for any $z\in\Cplx$, there holds $\Im m\,\sqrt{z}\ge0$.\\
\newline
Let us make a few observations concerning these modes. To set ideas, introduce $N\in\N$ such that $k\in(N\pi;(N+1)\pi)$.\\[4pt]
$\star$ For $n=1,...,N$ (ignore this case if $N=0$), we have
\[
w_n^\pm(x,y)=e^{\pm i\sqrt{k^2-n^2\pi^2}x}\varphi_n(y).
\] 
Since $\sqrt{k^2-n^2\pi^2}>0$ for $n=1,...,N$, these modes do not decay at infinity. They are called propagating modes. For a fixed $k>0$, there is always a finite number of propagating modes. Moreover they do not exist when $k\in(0;\pi)$ (the situation studied in the previous paragraph). On the contrary, for all $k>\pi$, the modes $w_1^\pm$ are propagating. Going back to time-domain, we observe that these modes lead to consider solutions of (\ref{WaveEquation}) of the form
\[
W_n^\pm(x,y,t)=e^{i(\pm \sqrt{k^2-n^2\pi^2}x-\om t)}\varphi_n(y).
\]
The waves $W_n^+$ propagate to the right while the $W_n^-$ propagate to the left. For this reason, we will say that the $w_n^+$ are rightgoing modes while the $w_n^-$ are leftgoing.\\[4pt]
$\star$ For $n=N+1,N+2,...$, we have
\[
w_n^\pm(x,y)=e^{\mp \sqrt{n^2\pi^2-k^2}x}\varphi_n(y).
\] 
Since $\sqrt{n^2\pi^2-k^2}>0$ for $n=N+1,N+2,\dots$, these modes  are exponentially decaying as $x\to\pm\infty$ and exponentially growing as $x\to\mp\infty$. There are an infinite number of them.\\
\newline
Though these modes have been computed for the problem in the reference strip $\mathcal{S}$, we will also use them in the analysis of Problem (\ref{WaveguidePb}) in the perturbed domain $\Om$.

\subsection{Ill-posedness in $\mH^1_0(\Om)$}

In this paragraph, our goal is to show that the existence of propagating modes for $k>\pi$ is responsible for the ill-posedness in the Fredholm sense of the operator 
$A(k):\mH^1_0(\Om)\to\mH^1_0(\Om)$ defined in (\ref{DefOpAk}).

\begin{definition}\label{definition fredholm op}
Let $\mrm{X}$ and $\mrm{Y}$ be two Banach spaces, and let
$T: \mrm{X}\to \mrm{Y}$ be a continuous linear map.
The operator $T$ is said to be a Fredholm operator if and 
only if the following two conditions are fulfilled
\begin{itemize}
\item[i)]  $\mrm{dim}(\mrm{ker}\,T ) <+\infty$ and $\mrm{range}\, T$ is closed;
\item[ii)] $\mrm{dim}(\mrm{coker}\,T ) <+\infty$ where $\mrm{coker}\,T  
\;\dsp{ \coloneqq\; \big(\mrm{Y}/\mrm{range}\, T\big) }$.\\[-5pt]
\end{itemize}
Besides, the index of a Fredholm operator $T$ 
is defined by $\mrm{ind}\,T = \mrm{dim}(\mrm{ker}\,T ) - 
\mrm{dim}(\mrm{coker}\,T)$.
\end{definition}

\noindent To prove that the range of $A(k)$ is not closed, we start by recalling a lemma due to  J. Peetre \cite{Peet61} (see also Theorem 12.12 in \cite{Wlok87}).
\begin{lemma}\label{lemme Peetre}
Let $\mX$, $\mY$, $\mZ$ be three reflexive Banach spaces, such that $\mX$ is compactly embedded into $\mZ$. Let
$T:\mX\to \mY$ be a continuous linear map. Then the assertions below are equivalent:\\[6pt]
\phantom{i}\quad ~ \hspace{1.2cm}~\phantom{i}i) $\dim(\ker\,T)<+\infty$ and $\mrm{range}\,T$ is closed in $\mY$;\\[5pt]
\phantom{i}\quad ~ \hspace{1.2cm}~ii) there exists $C>0$ such that $\|u\|_{\mX}\le C\,(\|Tu\|_{\mY}+\|u\|_{\mZ})$, $\forall u\in \mX$.
\end{lemma}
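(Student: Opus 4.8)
The plan is to prove the two implications separately; in both directions the engine is a perturbation-by-compactness argument that upgrades the weak control coming from $\|\cdot\|_{\mZ}$ into genuine control in $\mX$, exploiting that the embedding $\mX\hookrightarrow\mZ$ is compact (in particular continuous, so $\|u\|_{\mZ}\le c\,\|u\|_{\mX}$). Reflexivity will not really be needed beyond the fact that a compact embedding sends any bounded sequence to one admitting a $\mZ$-convergent subsequence.

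First I would prove ii) $\Rightarrow$ i). Since $T$ is continuous, $\ker T$ is closed. On $\ker T$ the estimate ii) reads $\|u\|_{\mX}\le C\,\|u\|_{\mZ}$, which together with continuity of the embedding makes $\|\cdot\|_{\mX}$ and $\|\cdot\|_{\mZ}$ equivalent on $\ker T$. Given a bounded sequence in $\ker T$, compactness of the embedding yields a $\mZ$-convergent (hence $\mZ$-Cauchy) subsequence, which by this equivalence is $\mX$-Cauchy and therefore $\mX$-convergent inside the closed space $\ker T$; thus the closed unit ball of $\ker T$ is compact and Riesz's theorem gives $\dim(\ker T)<+\infty$. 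Being finite-dimensional, $\ker T$ admits a closed complement, i.e. $\mX=\ker T\oplus \mX_0$ with $\mX_0$ closed. To show $\mrm{range}\,T$ is closed it suffices to show $T$ is bounded below on $\mX_0$: if not, pick $u_n\in\mX_0$ with $\|u_n\|_{\mX}=1$ and $\|Tu_n\|_{\mY}\to 0$, extract a $\mZ$-convergent (hence $\mZ$-Cauchy) subsequence, and feed $u_n-u_m$ into ii); since both $\|T(u_n-u_m)\|_{\mY}$ and $\|u_n-u_m\|_{\mZ}$ tend to $0$, the sequence is $\mX$-Cauchy, so $u_n\to u\in\mX_0$ with $\|u\|_{\mX}=1$ and $Tu=0$, contradicting $\ker T\cap\mX_0=\{0\}$. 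Hence $\|u\|_{\mX}\le C'\|Tu\|_{\mY}$ on $\mX_0$, so $T|_{\mX_0}$ has closed range, and as $\mrm{range}\,T=\mrm{range}\,(T|_{\mX_0})$ the range of $T$ is closed.

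For i) $\Rightarrow$ ii), I would again write $\mX=\ker T\oplus \mX_0$ with $\mX_0$ closed (licit since $\dim(\ker T)<+\infty$). Because $\mrm{range}\,T$ is closed it is a Banach space, and $T|_{\mX_0}:\mX_0\to\mrm{range}\,T$ is a continuous bijection; the bounded inverse theorem provides $C_0>0$ with $\|u_0\|_{\mX}\le C_0\,\|Tu_0\|_{\mY}$ for all $u_0\in\mX_0$. For a general $u=u_k+u_0$ (kernel part plus complement) one has $Tu=Tu_0$, whence $\|u_0\|_{\mX}\le C_0\,\|Tu\|_{\mY}$. For the kernel part I would use that $\ker T$ is finite-dimensional, so $\|\cdot\|_{\mX}$ and $\|\cdot\|_{\mZ}$ are equivalent there (the restriction of $\|\cdot\|_{\mZ}$ is a genuine norm because the embedding is injective); writing $u_k=u-u_0$ and using the continuous embedding gives $\|u_k\|_{\mX}\le C_2\,\|u_k\|_{\mZ}\le C_2\big(\|u\|_{\mZ}+c\,C_0\,\|Tu\|_{\mY}\big)$. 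Summing the two bounds yields ii) with a suitable constant $C$.

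The main obstacle is the closed-range half of ii) $\Rightarrow$ i): the a priori estimate only controls $\|u\|_{\mX}$ up to the $\mZ$-term, so one cannot bound $T$ from below directly and must argue by contradiction, using the compact embedding to manufacture an $\mX$-convergent subsequence and hence a nonzero kernel element lying in the complement. The finite-dimensionality of $\ker T$ is the prerequisite that makes the topological splitting $\mX=\ker T\oplus\mX_0$ available, which is why it is secured first in each direction.
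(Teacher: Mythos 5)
Your proof is correct and complete. Note first that the paper itself does not prove this lemma: it is recalled from the literature (Peetre, and Theorem 12.12 in Wloka), so there is no in-paper argument to compare against. What you have written is essentially the standard proof of Peetre's lemma, and both halves are sound. In ii) $\Rightarrow$ i), the combination ``estimate restricted to $\ker T$ $+$ compact embedding $+$ Riesz'' correctly yields $\dim(\ker T)<+\infty$, and your contradiction argument for boundedness below on a closed complement $\mX_0$ is exactly the right mechanism: the compact embedding turns the normalized sequence into a $\mZ$-Cauchy one, the estimate upgrades it to $\mX$-Cauchy, and the limit produces a unit-norm kernel element inside $\mX_0$, which is absurd; boundedness below then gives closedness of $\mrm{range}\,T=\mrm{range}(T|_{\mX_0})$. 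In i) $\Rightarrow$ ii), the open mapping theorem applied to the continuous bijection $T|_{\mX_0}:\mX_0\to\mrm{range}\,T$ (a Banach space, since the range is closed) controls the complement part, and equivalence of norms on the finite-dimensional kernel (using injectivity of the embedding so that $\|\cdot\|_{\mZ}$ is a norm there) controls the kernel part. Your side remark that reflexivity plays no role is also accurate: only compactness (and injectivity) of the embedding $\mX\hookrightarrow\mZ$ is used, which matches the hypotheses under which the lemma is classically stated.
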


\begin{proposition}
For $k>\pi$, the operator $A(k):\mH^1_0(\Om)\to\mH^1_0(\Om)$  defined in (\ref{DefOpAk}) is not Fredholm.
\end{proposition}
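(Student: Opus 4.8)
The plan is to show that the a priori estimate (ii) of Lemma \ref{lemme Peetre} fails for $T=A(k)$, which by the equivalence in that lemma prevents $A(k)$ from having simultaneously a finite-dimensional kernel and a closed range; in either failing case $A(k)$ is not Fredholm in the sense of Definition \ref{definition fredholm op}. I would apply Peetre's lemma with $\mX=\mY=\mH^1_0(\Om)$ and $\mZ=\mL^2(\Om_d)$, the relevant map being the restriction $u\mapsto u|_{\Om_d}$, which is compact by the Rellich theorem exactly as in the proof of Theorem \ref{ThmDLowFreq} (since $\Om_d$ is bounded). Concretely, I would exhibit a singular sequence $(\hat u_n)$ with $\|\hat u_n\|_{\mH^1(\Om)}=1$ for which $\|A(k)\hat u_n\|_{\mH^1(\Om)}\to 0$ and $\|\hat u_n\|_{\mL^2(\Om_d)}\to 0$, which directly contradicts (ii).

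\textbf{Construction of the singular sequence.} The mechanism responsible for the failure is precisely the propagating mode available for $k>\pi$. Since $\beta_1=\sqrt{k^2-\pi^2}>0$, the real function $w(x,y)=\cos(\beta_1 x)\varphi_1(y)$ solves $-\Delta w-k^2w=0$ in the strip $\mathcal{S}$ and vanishes on $\partial\mathcal{S}$, yet it is not square-integrable on $\mathcal{S}$ because it does not decay as $|x|\to+\infty$. I would localize it far to the right, where $\Om$ coincides with $\mathcal{S}$: choose a cutoff $\chi_n=\chi_n(x)$ equal to $1$ on $[d+1,d+1+L_n]$, equal to $0$ outside $[d,d+2+L_n]$, with two transition layers of fixed width and uniformly bounded derivatives, and with $L_n\to+\infty$. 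Then $u_n\coloneqq\chi_n\,w$ lies in $\mathscr{C}^\infty_0(\Om)$ (its support is contained in $\{x\ge d\}$, hence in the unperturbed part of the guide, and it vanishes on $\partial\Om$), so $u_n\in\mH^1_0(\Om)$, and its support is disjoint from $\Om_d$.

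\textbf{Key estimates.} Since $w$ solves the homogeneous equation, a direct computation gives $-\Delta u_n-k^2u_n=g_n$ with $g_n=-\chi_n''\,w-2\chi_n'\,\partial_x w$, supported only in the two transition layers. Integrating by parts in $a(u_n,v)$ and using $v|_{\partial\Om}=0$ gives $(A(k)u_n,v)_{\mH^1(\Om)}=a(u_n,v)=(g_n,v)_{\mL^2(\Om)}$ for all $v\in\mH^1_0(\Om)$, whence $\|A(k)u_n\|_{\mH^1(\Om)}\le\|g_n\|_{\mL^2(\Om)}$. Because the transition layers keep a fixed width and profile, $\|g_n\|_{\mL^2(\Om)}$ stays bounded uniformly in $n$. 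On the plateau one has $u_n=w$, and since $\int_I\varphi_1^2=1$ and $\int_I(\varphi_1')^2=\pi^2$, integrating over the plateau yields $\|u_n\|_{\mH^1(\Om)}^2\gtrsim L_n\to+\infty$. Setting $\hat u_n=u_n/\|u_n\|_{\mH^1(\Om)}$ then gives $\|\hat u_n\|_{\mH^1(\Om)}=1$, $\|A(k)\hat u_n\|_{\mH^1(\Om)}\le\|g_n\|_{\mL^2(\Om)}/\|u_n\|_{\mH^1(\Om)}\lesssim L_n^{-1/2}\to 0$, while $\|\hat u_n\|_{\mL^2(\Om_d)}=0$. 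This is the exact negation of (ii), so Lemma \ref{lemme Peetre} rules out that $A(k)$ has a finite-dimensional kernel together with a closed range, and therefore $A(k)$ is not Fredholm.

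\textbf{Main obstacle.} The delicate part is the quantitative balance in the last step: one must arrange the cutoff so that the commutator source $g_n$, which is the only surviving term thanks to the cancellation $\Delta w+k^2w=0$, becomes negligible relative to the growing $\mH^1$-mass of $u_n$. A growing plateau with fixed transition layers achieves this; letting the transition scale itself grow would work equally well. A secondary care is the treatment of the mode: using the real profile $\cos(\beta_1 x)\varphi_1$ (or $\sin$) avoids any conjugation issue with the bilinear $\mL^2$ pairing, the oscillatory factor contributing only harmless bounded constants to the estimates above.
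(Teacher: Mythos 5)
Your proof is correct and takes essentially the same route as the paper: both invoke Peetre's lemma with $\mZ=\mL^2(\Om_d)$ (compact embedding by Rellich) and defeat estimate (ii) by multiplying a propagating mode by a cutoff with a plateau of growing length, so that the commutator source remains bounded while the $\mH^1$-norm blows up — the paper uses $u_m=\psi_m e^{i\beta_1 x}\varphi_1(y)$, you use the real profile $\cos(\beta_1 x)\varphi_1(y)$ and normalize the sequence, which are only cosmetic differences.
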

\begin{remark}
We exclude the case $k=\pi$ because the computations are a bit different. However in that situation too one can prove that $A(k):\mH^1_0(\Om)\to\mH^1_0(\Om)$ is not Fredholm.
\end{remark}
\begin{proof}
Set again $\Om_d\coloneqq\{(x,y)\in\Om\,|\,|x|<d\}$ where $d$ appears before (\ref{WaveguidePb}). Our goal is to show that we cannot have the existence of $C>0$ such that there holds
\begin{equation}\label{EstimAP}
\|u\|_{\mH^1(\Om)}\le C\,(\|A(k)u\|_{\mH^1(\Om)}+\|u\|_{\mL^2(\Om_d)}),\qquad \forall u\in\mH^1_0(\Om).
\end{equation}
To proceed, consider some functions $\psi_+,\psi_-\in\mathscr{C}^\infty(\R)$ such that 
\[
\psi_+(x)=\begin{array}{|ll}
 1 & \mbox{ for }x> d+1 \\[2pt]
0 & \mbox{ for }x< d
\end{array}\qquad \quad\psi_-(x)=\begin{array}{|ll}
1 & \mbox{ for }x< 0\\[2pt]
0 & \mbox{ for }x>1.
\end{array}
\]
Then for $m\in\N$, set $\psi_m(x)=\psi_+(x)\psi_-(x-m)$ and 
\[
u_m(x,y)=\psi_m(x) w_1^+(x,y)=\psi_m(x) e^{i\beta_1 x}\varphi_1(y),
\]
where $w_1^+$ is the mode appearing in (\ref{DefModes}) which is propagating for $k>\pi$ (because then $\beta_1=\sqrt{k^2-\pi^2}\in\R$).

\begin{figure}[!ht]
\centering
\begin{tikzpicture}[scale=1]
\draw[gray,very thin,->] (-4,0)--(4,0);
\draw[gray,very thin,->] (-3,-0.1)--(-3,1.5);
\draw[thick] (-3.5,0)--(-2,0) .. controls (-1,0) and (-2,1) .. (-1,1) -- (1,1);
\draw[thick] (4.5,0)--(3,0) .. controls (2,0) and (3,1) .. (2,1) -- (1,1);
\draw (-2,-0.1)--(-2,0.1);
\draw (-1,-0.1)--(-1,0.1);
\draw (3,-0.1)--(3,0.1);
\draw (2,-0.1)--(2,0.1);
\node at (-2,-0.4){\small $d$};
\node at (-1,-0.4){\small $d+1$};
\node at (3,-0.4){\small $m+1$};
\node at (2,-0.4){\small $m$};
\end{tikzpicture}
\caption{Graphs of the cut-off function $\psi_m$.\label{CutOffFunction}} 
\end{figure}
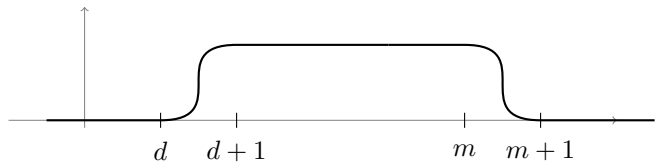

\noindent Exploiting that the support of $u_m$ becomes unbounded as $m\to+\infty$, it is straightforward to show that 
\[
\lim_{m\to+\infty}\|u_m\|_{\mH^1(\Om)}=+\infty.
\]
On the other hand, clearly  $(\|u_m\|_{\mL^2(\Om_d)})$ remains bounded as $m\to+\infty$. Now, for $v\in \mH^1_0(\Om)$, we have
\begin{equation}\label{ExpressionAkm}
(A(k)u_m,v)_{\mH^1(\Om)} = \int_{\Om}\nabla u_m\cdot\nabla v-k^2 u_mv\,dxdy=-\int_{\Om}(\Delta u_m +k^2 u_m)v\,dxdy.
\end{equation}
But there holds
\[
\begin{array}{rcl}
\Delta u_m +k^2 u_m&=&\psi_m(\Delta w_1^+ +k^2 w_1^+)+2\nabla \psi_m\cdot\nabla w_1^++w_1^+\Delta\psi_m \\[3pt]
&=&2\nabla \psi_m\cdot\nabla w_1^++w_1^+\Delta\psi_m.
\end{array}
\]
By observing that $\psi_m$, $\Delta\psi_m$ are non zero only in $(d;d+1)\times(0;1)\cup  (m;m+1)\times(0;1)$ and that their norms in $\mL^\infty(\Om)$ remain bounded independently of $m\in\N$, we find that there exists $C>0$ independent of $m$ such that we have
\[
\|\Delta u_m +k^2 u_m\|_{\mL^2(\Om)} \le C.
\]
By taking $v=A(k)u_m$ in (\ref{ExpressionAkm}), we conclude that $(A(k)u_m)$ remains bounded in  $\mH^1_0(\Om)$ as $m\to+\infty$. This shows that Estimate (\ref{EstimAP}) does not hold.\\
Finally, since $\Om_d$ is bounded, the embedding of $\mH^1_0(\Om)$ in  $\mL^2(\Om_d)$ is compact. From Lemma \ref{lemme Peetre}, we deduce that $A(k):\mH^1_0(\Om)\to\mH^1_0(\Om)$ is not Fredholm when $k>\pi$.
\end{proof}
\noindent By working in weighted Sobolev spaces, one can show that $A(k)$ has a kernel of finite dimension for all $k>\pi$. Therefore, the loss of Fredholmness is due to the fact that the range of $A(k)$ is not closed then $k>\pi$. Thus, even by removing the kernel if there exists one, we cannot create an operator in $\mH^1_0(\Om)$ which admits a continuous inverse when propagating modes exist. This leads us to think that we have to take them into account in the functional framework.\\
\newline
To proceed, we will apply some strategy which is classical in applied mathematics related to physics: we will add a bit of dissipation in the medium characterized by some parameter $\eta>0$ and then take the limit as $\eta\to0$. More precisely, with dissipation the definition of the physical solution, the one in $\mH^1_0(\Om)$, becomes obvious. Then we will define the solution without dissipation as the limit as $\eta\to0$ of the solution with dissipation. This is called the limiting absorption  principle in scattering theory (mind the difference with the limiting \textit{amplitude} principle mentioned before (\ref{HarmoU})). In fluid mechanics, dissipation is more often refereed to as viscosity but the idea is the same.

\subsection{Problem with dissipation}\label{ParaPbDissip}

To model dissipation, let us work on the problem 
\begin{equation}\label{WaveguidePbDissipation}
\begin{array}{|rcll}
-\Delta u_\eta-(k^2+ik\eta)u_\eta&=&f&\mbox{ in }\Om \\[2pt]
u_\eta&=&0&\mbox{ on }\partial\Om
\end{array}
\end{equation}
with $\eta>0$. To get an idea of why this is a relevant way to model dissipation, let us come back to time domain. With the time harmonic convention $\mrm{U}_\eta(x,y,t)=u_\eta(x,y)e^{-i\om t}$, Problem (\ref{WaveguidePbDissipation}) originates from the study of the wave equation, for $t\ge0$,
\begin{equation}\label{WaveguidePbDissipationTime}
\begin{array}{|rcll}
\cfrac{\partial^2 \mrm{U}_\eta}{\partial t^2}+\eta\cfrac{\partial \mrm{U}_\eta}{\partial t}-\cfrac{1}{c^2}\,\Delta \mrm{U}_\eta&=&\mrm{F}&\mbox{ in }\Om \\[2pt]
\mrm{U}_\eta&=&0&\mbox{ on }\partial\Om,
\end{array}
\end{equation}
with some initial conditions. Assume that the forcing term $\mrm{F}$ is null. Then multiplying (\ref{WaveguidePbDissipationTime}) by $\partial_t\overline{\mrm{U}_\eta}$ and integrating in $\Om$, we obtain
\[
\cfrac{\partial }{\partial t}\,\cfrac{1}{2}\,\int_{\Om}\bigg|\cfrac{\partial \mrm{U}_\eta}{\partial t}\bigg|^2+\cfrac{1}{c^2}\,|\nabla \mrm{U}_\eta|^2\,dxdy =-\eta \int_{\Om}\bigg|\cfrac{\partial \mrm{U}_\eta}{\partial t}\bigg|^2\,dxdy.
\]
Therefore, the energy
\[
E(t)=\cfrac{1}{2}\,\int_{\Om}\bigg|\cfrac{\partial \mrm{U}_\eta}{\partial t}\bigg|^2+\cfrac{1}{c^2}\,|\nabla \mrm{U}_\eta|^2\,dxdy
\]
indeed decreases, due to the term $\eta\partial_t\mrm{U}_\eta$, when $\eta>0$.\\
\newline
The variational formulation associated with (\ref{WaveguidePbDissipation}) writes 
\begin{equation}\label{WaveguidePbDissipationVaria}
\begin{array}{|l}
\mbox{Find }u_\eta\in\mrm{H}^1_0(\Om) \mbox{ such that }\\[3pt]
a_\eta(u_\eta,v)=\ell(v),\qquad \forall v\in\mH^1_0(\Om),
\end{array}
\end{equation}
where the sesquilinear (resp. antilinear) forms $a_\eta(\cdot,\cdot)$ (resp. $\ell(\cdot)$) are such that  
\[
a_\eta(w,v)=\int_{\Om}\nabla w\cdot\nabla \overline{v}-(k^2+ik\eta )w \overline{v}\,dxdy,\qquad\qquad \ell(v)=\int_{\Om}f\overline{v}\,dxdy.
\]
Note that the functions are now assumed to be complex valued and the inner products introduced in (\ref{DefInnerProdL2}), (\ref{DefInnerProd}) are changed accordingly. 

\begin{theorem}\label{ThmDissip}
For all $k>0$, for all $\eta>0$, Problem (\ref{WaveguidePbDissipationVaria}) admits a unique solution $u_\eta\in\mrm{H}^1_0(\Om)$.
\end{theorem}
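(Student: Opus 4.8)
The plan is to obtain existence and uniqueness from the Lax--Milgram theorem, exactly as in the proof of Theorem \ref{ThmDLowFreq}, but now working with the complex sesquilinear form $a_\eta$. Continuity of $a_\eta(\cdot,\cdot)$ on $\mH^1_0(\Om)$ and of the antilinear form $\ell(\cdot)$ is immediate from the Cauchy--Schwarz inequality and the boundedness of the coefficients, so the whole difficulty is concentrated in establishing coercivity. The point I would stress is that, in contrast with the case $0<k<\pi$, we can no longer hope for coercivity of the real part alone: the quantity $\|\nabla w\|_{\mL^2(\Om)}^2-k^2\|w\|_{\mL^2(\Om)}^2$ is in general not sign-definite (and, when $\Om\not\subset\mathcal{S}$, we cannot even invoke the Poincar\'e inequality of the previous sections). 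Coercivity must instead be recovered from the dissipative term $ik\eta$.

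Concretely, I would first compute, for $w\in\mH^1_0(\Om)$,
\[
a_\eta(w,w)=\|\nabla w\|_{\mL^2(\Om)}^2-k^2\|w\|_{\mL^2(\Om)}^2-ik\eta\,\|w\|_{\mL^2(\Om)}^2,
\]
so that $\Re e\,a_\eta(w,w)=\|\nabla w\|_{\mL^2(\Om)}^2-k^2\|w\|_{\mL^2(\Om)}^2$ while $\Im m\,a_\eta(w,w)=-k\eta\,\|w\|_{\mL^2(\Om)}^2$. The key idea is then a complex rotation: introduce $b_\eta(w,v)\coloneqq e^{i\theta}a_\eta(w,v)$ for a well-chosen angle $\theta\in(0;\pi/2)$. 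A direct calculation gives
\[
\Re e\,b_\eta(w,w)=\cos\theta\,\|\nabla w\|_{\mL^2(\Om)}^2+(k\eta\sin\theta-k^2\cos\theta)\,\|w\|_{\mL^2(\Om)}^2.
\]
Choosing $\theta$ such that $\tan\theta>k/\eta$ (which is possible since $k/\eta$ is finite) makes both coefficients strictly positive, whence $\Re e\,b_\eta(w,w)\ge\alpha\,\|w\|_{\mH^1(\Om)}^2$ with $\alpha\coloneqq\min\{\cos\theta,\,k\eta\sin\theta-k^2\cos\theta\}>0$. Thus $b_\eta$ is coercive on $\mH^1_0(\Om)$.

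To conclude, since $b_\eta(\cdot,\cdot)$ is continuous and coercive and $v\mapsto e^{i\theta}\ell(v)$ is continuous and antilinear, the Lax--Milgram theorem provides a unique $u_\eta\in\mH^1_0(\Om)$ with $b_\eta(u_\eta,v)=e^{i\theta}\ell(v)$ for all $v\in\mH^1_0(\Om)$. Dividing by $e^{i\theta}\neq0$ shows this is exactly Problem (\ref{WaveguidePbDissipationVaria}), which therefore admits a unique solution for every $k>0$ and every $\eta>0$.

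I expect the only genuine obstacle to be the realization that real-part coercivity fails and must be restored through the rotation by $e^{i\theta}$; once the correct range of angles ($\tan\theta>k/\eta$) is identified, verifying positivity of the two coefficients and invoking Lax--Milgram is routine. An alternative I would keep in reserve, avoiding the explicit rotation, is to prove the modulus estimate $|a_\eta(w,w)|\ge c\,\|w\|_{\mH^1(\Om)}^2$ (combining the real and imaginary parts) and to invoke the version of Lax--Milgram valid when the numerical range stays uniformly away from the origin.
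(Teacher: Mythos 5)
Your proof is correct and follows essentially the same route as the paper: the paper multiplies $a_\eta$ by $(1+i\gamma)$ with $\gamma>k/\eta$ to make the real part coercive (using the dissipative term $-k\eta\|v\|^2_{\mL^2(\Om)}$ in the imaginary part), which is exactly your rotation by $e^{i\theta}$ with $\tan\theta>k/\eta$ up to a positive scaling factor. The subsequent application of the complex Lax--Milgram theorem and the division by the nonzero complex factor are identical in both arguments.
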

\begin{proof}
For $v\in\mH^1_0(\Om)$, we have 
\[
\Re e\,a_\eta(v,v)=\int_{\Om}|\nabla v|^2-k^2|v|^2 \,dxdy,\qquad \quad\Im  m\,a_\eta(v,v)=-k\eta\int_{\Om}|v|^2 \,dxdy,
\]
Therefore, we obtain, for $\gamma>0$, 
\[
\Re e\,\big( (1+i\gamma)a_\eta(v,v)\big)=\Re e\,a_\eta(v,v)-\gamma\,\Im  m\,a_\eta(v,v)=\int_{\Om}|\nabla v|^2+(\gamma k\eta-k^2)\,|v|^2 \,dxdy.
\]
Thus for any $\eta>0$, for $\gamma>0$ large enough, the form $(1+i\gamma)a_\eta(\cdot,\cdot)$ is coercive in $\mH^1_0(\Om)$. With the complex version of the Lax-Milgram theorem, this is enough to conclude that Problem (\ref{WaveguidePbDissipationVaria}) admits a unique solution in that case.
\end{proof}

\begin{figure}[!ht]
\centering
\begin{tikzpicture}[scale=2.2]
\draw[fill=gray!30] (0.3,1) circle (0.4) ;
\draw[fill=gray!30,draw=none](-1,0) rectangle (1.1,1);
\draw (-1.3,0)--(1.3,0);
\draw (-1.3,1)--(-0.1,1);
\draw (0.7,1)--(1.3,1);
\draw[fill=white] (-0.5,0.2)--(0.1,0.8)--(0.4,0.3)--cycle;
\draw[dashed] (-1.3,0)--(-1.6,0);
\draw[dashed] (-1.3,1)--(-1.6,1);
\draw[dashed] (1.3,0)--(1.6,0);
\draw[dashed] (1.3,1)--(1.6,1);
\node at (-0.4,0.8){ $\Om_L$};
\draw[-,dashed] (-0.7,-0.05)--(-0.7,1);
\draw[-,dashed] (0.8,-0.05)--(0.8,1);
\node at (-0.7,-0.15){\small $-d$};
\node at (0.8,-0.15){\small $d$};
\draw[-] (-1,-0.05)--(-1,1);
\draw[-] (1.1,-0.05)--(1.1,1);
\node at (-1,-0.15){\small $-L$};
\node at (1.1,-0.15){\small $L$};
\node at (1.3,0.5){ $\Sigma_L$};
\node at (-1.2,0.5){ $\Sigma_{-L}$};
\node at (0.35,0.6){ $\Gamma$};
\node at (-0.2,1.2){ $\Gamma$};
\node at (0.05,-0.15){ $\Gamma$};
\end{tikzpicture}\vspace{-0.3cm}
\caption{Domain $\Om_L$.\label{PictureDtN}}
\end{figure}

\noindent Now assume that $f$ in (\ref{WaveguidePbDissipation}) is given in $\mL^2(\Om)$ and supported in $\Om_d$. Introduce some $L>d$ and define the bounded domain 
\[
\Om_L\coloneqq\{(x,y)\in\Om\,|\,|x|<L\}
\]
(see Figure \ref{PictureDtN}). To take the limit $\eta$ tends to zero in (\ref{WaveguidePbDissipationVaria}), we first derive a problem set in $\Om_L$ whose solution coincides with $u_\eta|_{\Om_L}$. To proceed, we must impose \textit{ad hoc} transparent conditions on the artificial boundaries 
\[
\Sigma_{\pm}\coloneqq\{\pm L\}\times I
\]
that do not create spurious reflections. Set $\mathcal{S}_+\coloneqq(L;+\infty)\times I$ and define the Dirichlet-to-Neumann operator such that
\[
\begin{array}{lccc}
\Lambda_+^\eta:& \mH^{1/2}_{00}(\Sigma_L) & \to& \mH^{-1/2}(\Sigma_L) \\[6pt]
 & \varphi & \mapsto  & \cfrac{\partial v_{\varphi}}{\partial \nu}\,,
\end{array}
\]
where $\partial_\nu=\partial_x$ on $\Sigma_L$ and $v_{\varphi}\in \mH^1(\mathcal{S}_+)$ is the function such that
\begin{equation}\label{PbSP}
\begin{array}{|rcll}
\Delta v_{\varphi}+(k^2+ik\eta)v_{\varphi}&=&0&\mbox{ in }\mathcal{S}_+ \\[2pt]
v_{\varphi}&=&0&\mbox{ on }\partial\Om\cap\partial \mathcal{S}_+\\[2pt]
v_{\varphi}&=&\varphi&\mbox{ on }\Sigma_L.
\end{array}
\end{equation}
Here $\mH^{1/2}_{00}(\Sigma_L)$ stands for the space of traces on $\Sigma_L$ of elements of $\mH^1_0(\Om_L)$. It coincides with the functions which belong to $\mH^{1/2}(\{L\}\times \R)$ when extended by zero. Moreover $\mH^{-1/2}(\Sigma_L)$ denotes the dual space of $\mH^{1/2}_{00}(\Sigma_L)$. Classically, one shows that the linear operator $\Lambda_+^\eta:\mH^{1/2}_{00}(\Sigma_L)  \to \mH^{-1/2}(\Sigma_L)$ is continuous. If $u_\eta$ solves (\ref{WaveguidePbDissipationVaria}), then it satisfies
\begin{equation}\label{CondTransP}
\cfrac{\partial u_\eta}{\partial \nu} =\Lambda_+^\eta( u_\eta)\quad\mbox{ on }\Sigma_L,
\end{equation}
where again $\partial_\nu=\partial_x$ on $\Sigma_L$.\\
\newline
In the following, it will be useful to have an explicit representation of the action of $\Lambda_+^\eta$. We will obtain it by working with the modes of (\ref{PbSP}). Assume that $k$ is not equal to one of the $n\pi$, $n\in\N^\ast$. In every transverse section $\{x\}\times I$ of $\mathcal{S}_+$, we have the decomposition in Fourier series
\begin{equation}\label{DecompoDissip}
v_{\varphi}(x,y)=\sum_{n=1}^{+\infty} \alpha_n(x)\,\varphi_n(y)
\end{equation}
where the $\varphi_n$ are the ones introduced in (\ref{BaseFourierDirichlet}) and the $\alpha_n$ are to be determined. Inserting (\ref{DecompoDissip}) into (\ref{WaveguidePbDissipation}), similarly to (\ref{ModeAlpha}), we find that the $\alpha_n$ must be of the form
\[
\alpha_n(x)=A_n\,e^{i\beta_n^\eta x}+B_n\,e^{-i\beta_n^\eta  x}
\]
for some constants $A_n$, $B_n\in\Cplx$ with 
\[
\beta_n^\eta=\sqrt{k^2+i\eta-n^2\pi^2}.
\]
But according to our convention for the complex square root after (\ref{DefModes}), for $\eta>0$, the imaginary part of $\beta_n^\eta$ is positive for all $n\in\N^\ast$. As a consequence, $x\mapsto e^{i\beta_n^\eta x}$ is exponentially decaying at $+\infty$ while $x\mapsto e^{-i\beta_n^\eta x}$ is exponentially growing. Since $v_{\varphi}$ belongs to $\mH^1(\mathcal{S}_+)$, we must impose $B_n=0$ for all $n\in\N^\ast$. Thus for $x>L$, we have the expansion
\[
v_{\varphi}(x,y)=\sum_{n=1}^{+\infty} (\varphi,\varphi_n)_{\mL^2(\Sigma_L)}\,e^{i\beta_n^\eta (x-L)}\,\varphi_n(y)
\]
so that there holds
\[
\Lambda_+^\eta(\varphi)=\dsp\sum_{n=1}^{+\infty} i\beta_n^\eta\,(\varphi,\varphi_n)_{\mL^2(\Sigma_L)}\,\varphi_n(y).
\]
We work completely similarly in $\mathcal{S}_-\coloneqq(-\infty;-L)\times I$ and define the Dirichlet-to-Neumann operator such that
\[
\begin{array}{lccc}
\Lambda_-^\eta:& \mH^{1/2}_{00}(\Sigma_{-L}) & \to& \mH^{-1/2}(\Sigma_{-L}) \\[6pt]
 & \varphi & \mapsto  & \cfrac{\partial v_{\varphi}}{\partial \nu}\,,
\end{array}
\]
where this time $\partial_\nu=-\partial_x$ on $\Sigma_{-L}$ and $v_{\varphi}\in \mH^1(\mathcal{S}_-)$ is the function such that
\[
\begin{array}{|rcll}
\Delta v_{\varphi}+(k^2+ik\eta)v_{\varphi}&=&0&\mbox{ in }\mathcal{S}_- \\[2pt]
v_{\varphi}&=&0&\mbox{ on }\partial\Om\cap\partial \mathcal{S}_-\\[2pt]
v_{\varphi}&=&\varphi&\mbox{ on }\Sigma_{-L}.
\end{array}
\]
The operator $\Lambda_-^\eta:\mH^{1/2}_{00}(\Sigma_{-L})  \to \mH^{-1/2}(\Sigma_{-L})$ is continuous and if $u_\eta$ solves (\ref{WaveguidePbDissipationVaria}), then it satisfies
\begin{equation}\label{CondTransM}
\cfrac{\partial u_\eta}{\partial \nu} =\Lambda_-^\eta( u_\eta)\quad\mbox{ on }\Sigma_{-L},
\end{equation}
with $\partial_\nu=-\partial_x$ on $\Sigma_{-L}$. Besides, we have the representation 
\[
\Lambda_-^\eta(\varphi)=\dsp\sum_{n=1}^{+\infty} i\beta_n^\eta\,(\varphi,\varphi_n)_{\mL^2(\Sigma_{-L})}\,\varphi_n(y).
\]
Now we have everything to write our problem in $\Om_L$. Set $\Gamma\coloneqq\partial\Om\cap\partial\Om_L$ and $\mH^1_0(\Om_L;\Gamma)\coloneqq\{v\in\mH^1(\Om_L)\,|\,v=0\mbox{ on }\Gamma\}$. 

\begin{proposition}
If $u_\eta\in\mH^1_0(\Om)$ solves (\ref{WaveguidePbDissipation}), then its restriction to $\Om_L$ satisfies
\begin{equation}\label{WaveguidePbDissipationBounded}
\begin{array}{|rcll}
\multicolumn{4}{|l}{\mbox{Find }u_\eta\in\mH^1_0(\Om_L;\Gamma)\mbox{ such that }}\\[2pt]
-\Delta u_\eta-(k^2+ik\eta)u_\eta&=&f&\mbox{ in }\Om_L \\[2pt]
\cfrac{\partial u_\eta}{\partial \nu} & = & \Lambda_\pm^\eta(u_\eta) &\mbox{ on }\Sigma_{\pm L}
\end{array}
\end{equation} 
where $\partial_{\nu}=\pm\partial_x$ at $x=\pm L$. Conversely, if  $u_\eta\in\mH^1_0(\Om)$ satisfies (\ref{WaveguidePbDissipationBounded}), it can be extended as a solution in $\mH^1_0(\Om)$ of (\ref{WaveguidePbDissipation}).
\end{proposition}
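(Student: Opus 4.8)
The plan is to prove the two implications separately, in both cases exploiting that $f$ is supported in $\Om_d$ with $d<L$, so that the equation is homogeneous on the half-strips $\mathcal{S}_\pm$. Everything then reduces to a transmission/gluing statement across the artificial sections $\Sigma_{\pm L}$.

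\emph{Direct implication.} Suppose $u_\eta\in\mH^1_0(\Om)$ solves (\ref{WaveguidePbDissipation}). Its restriction to $\Om_L$ clearly lies in $\mH^1_0(\Om_L;\Gamma)$ (it vanishes on $\Gamma\subset\partial\Om$) and satisfies the PDE with right-hand side $f$ there, so only the two DtN conditions remain to be checked. I would restrict $u_\eta$ to $\mathcal{S}_+$: since $f\equiv0$ on $\mathcal{S}_+$, this restriction belongs to $\mH^1(\mathcal{S}_+)$, solves the homogeneous equation, vanishes on $\partial\Om\cap\partial\mathcal{S}_+$, and has trace $\varphi:=u_\eta|_{\Sigma_L}$ on $\Sigma_L$; in other words it solves exactly problem (\ref{PbSP}). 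Because $\eta>0$ forces every mode $e^{i\beta_n^\eta(x-L)}$ to decay (the imaginary part of $\beta_n^\eta$ is positive), problem (\ref{PbSP}) has a unique $\mH^1(\mathcal{S}_+)$ solution, namely $v_\varphi$. Hence $u_\eta|_{\mathcal{S}_+}=v_\varphi$, and taking normal traces on $\Sigma_L$ gives $\partial_\nu u_\eta=\partial_\nu v_\varphi=\Lambda_+^\eta(u_\eta|_{\Sigma_L})$. The same reasoning on $\mathcal{S}_-$ yields the condition on $\Sigma_{-L}$, so $u_\eta|_{\Om_L}$ solves (\ref{WaveguidePbDissipationBounded}).

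\emph{Converse implication.} Conversely, let $u_\eta\in\mH^1_0(\Om_L;\Gamma)$ satisfy (\ref{WaveguidePbDissipationBounded}), and write $\varphi_\pm:=u_\eta|_{\Sigma_{\pm L}}$. I would define the extension $\tilde u$ to equal $u_\eta$ on $\Om_L$ and $v_{\varphi_\pm}$ on $\mathcal{S}_\pm$. By construction the Dirichlet traces of the two pieces coincide on each $\Sigma_{\pm L}$, so $\tilde u$ is globally $\mH^1$; it vanishes on $\partial\Om$ (on $\Gamma$ by hypothesis, on $\partial\Om\cap\partial\mathcal{S}_\pm$ by (\ref{PbSP})), whence $\tilde u\in\mH^1_0(\Om)$. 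To see that $\tilde u$ solves (\ref{WaveguidePbDissipation}), I would test against $\phi\in\mathscr{C}^\infty_0(\Om)$ and integrate by parts on $\Om_L$, $\mathcal{S}_+$ and $\mathcal{S}_-$ separately. On each piece the volume contributions combine to $\int_\Om f\phi$ (recall $f\equiv0$ off $\Om_d$), while the boundary terms on $\Sigma_L$ add up to $\int_{\Sigma_L}(\partial_\nu u_\eta-\Lambda_+^\eta(\varphi_+))\phi$, which vanishes precisely by the DtN condition in (\ref{WaveguidePbDissipationBounded}); likewise on $\Sigma_{-L}$. Thus $-\Delta\tilde u-(k^2+ik\eta)\tilde u=f$ in $\mathcal{D}'(\Om)$, so $\tilde u$ is the desired solution.

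\emph{Main obstacle.} The only delicate point is the gluing across $\Sigma_{\pm L}$: one must make sense of the normal traces (using that $\Delta u_\eta\in\mL^2$ near the sections, so the Neumann traces are well defined in $\mH^{-1/2}(\Sigma_{\pm L})$) and verify that matching both Dirichlet \emph{and} Neumann traces is exactly what removes the spurious surface distribution from the distributional Laplacian of $\tilde u$. The orientation convention ($\partial_\nu=\partial_x$ in the definition of $\Lambda_+^\eta$, i.e. the outward normal to $\Om_L$) is what makes the interior and exterior boundary contributions cancel, and the uniqueness of $v_\varphi$ in $\mH^1(\mathcal{S}_\pm)$ — guaranteed by the dissipation $\eta>0$ — underlies the identification used in the direct implication.
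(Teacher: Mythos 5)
Your proof is correct and follows essentially the same route as the paper: the direct implication is the identification $u_\eta|_{\mathcal{S}_\pm}=v_{\varphi_\pm}$ via uniqueness of the dissipative half-strip problem (this is exactly the content of (\ref{CondTransP})--(\ref{CondTransM})), and the converse is the gluing argument, where your extension by $v_{\varphi_\pm}$ coincides with the paper's explicit modal series and your distributional test-function computation is just the detailed version of the paper's check that the Dirichlet and Neumann jumps across $\Sigma_{\pm L}$ vanish.
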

\begin{proof}
The first part of the statement comes from (\ref{CondTransP}), (\ref{CondTransM}). Now if $u_\eta\in\mH^1_0(\Om)$ solves (\ref{WaveguidePbDissipationBounded}), define $\hat{u}_\eta$ such that
\[
\hat{u}_\eta(x,y)=\begin{array}{|ll}
u_\eta(x,y) & \mbox{ in }\Om_L\\[2pt]
\dsp\sum_{n=1}^{+\infty} (u_\eta,\varphi_n)_{\mL^2(\Sigma_{\pm L})}\,e^{\pm i\beta_n^\eta (x\mp L)}\,\varphi_n(y)
 & \mbox{ in }\mathcal{S}_{\pm}.
\end{array}
\]
The function $\hat{u}_\eta$ satisfies $\Delta \hat{u}_\eta+(k^2+ik\eta)\hat{u}_\eta=f$ in $\Om_L\cup \mathcal{S}_+\cup \mathcal{S}_-$. Moreover we have $[\hat{u}_\eta]|_{\Sigma_{\pm L}}=0$ as well as $[\partial_x \hat{u}_\eta]|_{\Sigma_{\pm L}}=0$ where $[\cdot]|_{\Sigma_{\pm L}}$ stands for the jump at $x=\pm L$. This is enough to conclude that $\hat{u}_\eta$ is the solution of (\ref{WaveguidePbDissipationBounded}) in $\mH^1_0(\Om)$.
\end{proof}

\subsection{Problem without dissipation}\label{ParaLimiting}
Taking the limit $\eta$ tends to zero in (\ref{WaveguidePbDissipationBounded}), we are led to consider the problem 
\begin{equation}\label{WaveguidePbBounded}
\begin{array}{|rcll}
\multicolumn{4}{|l}{\mbox{Find }u\in\mH^1_0(\Om_L;\Gamma)\mbox{ such that }}\\[2pt]
-\Delta u-k^2u&=&f&\mbox{ in }\Om_L \\[2pt]
\cfrac{\partial u}{\partial \nu} & = & \Lambda_\pm(u) &\mbox{ on }\Sigma_{\pm L}
\end{array}
\end{equation} 
where $\Lambda_\pm$ are the operators such that
\begin{equation}\label{DefDTN}
\begin{array}{lrcl}
\Lambda_\pm:& \mH^{1/2}_{00}(\Sigma_{\pm L}) & \to& \mH^{-1/2}(\Sigma_{\pm L}) \\[6pt]
 & \varphi & \mapsto  & \dsp\sum_{n=1}^{+\infty} i\beta_n\, (\varphi,\varphi_n)_{\mL^2(\Sigma_{\pm L})}\,\varphi_n(y).
\end{array}
\end{equation}
The study of Problem (\ref{WaveguidePbBounded}) leads to consider the following variational problem 
\begin{equation}\label{PbNumBounded}
\begin{array}{|l}
\mbox{Find }u\in\mH^1_0(\Om_L;\Gamma) \mbox{ such that }\\[3pt]
a^{\mrm{out}}(u,v)=\ell(v),\qquad \forall v\in\mH^1_0(\Om_L;\Gamma),
\end{array}
\end{equation}
with 
\begin{equation}\label{DefForms}
a^{\mrm{out}}(u,v)=\int_{\Om_L}\hspace{-0.2cm}\nabla u\cdot\nabla\overline{v}-k^2 u\overline{v}\,dxdy-\langle\Lambda_+(u),v \rangle_{\Sigma_L}-\langle\Lambda_-(u),v \rangle_{\Sigma_{-L}}
,\quad \ell(v)=\int_{\Om_L}\hspace{-0.2cm}f\overline{v}\,dxdy.
\end{equation}
It is important to understand that the Dirichlet-to-Neumann operators $\Lambda_\pm$ in (\ref{WaveguidePbBounded}) on $\Sigma_{\pm L}$ have a double action. First, they allow to look for a solution which decompose on propagating modes which are not in $\mH^1_0(\Om)$. However we cannot allow for all propagating modes in the functional space because otherwise by combining them, we could create non zero functions satisfying the homogeneous problem, \textit{i.e} we would obtain for all frequencies a non zero kernel. By working with the $\Lambda_\pm$, we also impose   radiation conditions and select outgoing behaviors (this explains the choice of the index ${}^{\mrm{out}}$ for ``outgoing''). To sum up, for $\pm x>L$ the solution, which is the physical one, is searched as a superposition of outgoing propagating modes and evanescent modes.\\
\newline
In (\ref{DefForms}) $\langle\cdot,\cdot \rangle_{\Sigma_{\pm L}}$ stands for the antilinear duality pairing between $\mH^{-1/2}(\Sigma_{\pm L})$ and $\mH^{1/2}_{00}(\Sigma_{\pm L})$. By exploiting that the operators $\Lambda_\pm: \mH^{1/2}_{00}(\Sigma_{\pm L})  \to \mH^{-1/2}(\Sigma_{\pm L})$ are continuous and that the trace mappings from $\mH^1_0(\Om_L)$ to $\mH^{1/2}_{00}(\Sigma_{\pm L})$ are also continuous, we deduce that the sesquilinear form $a^{\mrm{out}}(\cdot,\cdot)$ is continuous in $\mH^1_0(\Om_L)$. Therefore, with the Riesz representation theorem, we can introduce the linear operator $A^{\mrm{out}}(k):\mH^1_0(\Om_L;\Gamma)\to\mH^1_0(\Om_L;\Gamma)$ such that
\begin{equation}\label{DefOpAkOut}
(A^{\mrm{out}}(k)u,v)_{\mH^1(\Om_L)} =a^{\mrm{out}}(u,v),\qquad\forall u,v\in \mH^1_0(\Om_L;\Gamma).
\end{equation}
One has the following statement.
\begin{theorem}\label{ThmFredholm}
For $k\in(\pi;+\infty)\setminus\{\N\pi\}$, the operator $A^{\mrm{out}}(k)$ decomposes as 
\[
A^{\mrm{out}}(k)=B^{\mrm{out}}+K
\]
where $B^{\mrm{out}}:\mH^1_0(\Om_L;\Gamma)\to\mH^1_0(\Om_L;\Gamma)$ is an isomorphism and $K:\mH^1_0(\Om_L;\Gamma)\to\mH^1_0(\Om_L;\Gamma)$ is compact ($B^{\mrm{out}}$ and $K$ are allowed to depend on $k$).
\end{theorem}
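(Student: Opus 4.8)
The plan is to mimic the decomposition strategy already used in Theorem~\ref{ThmDLowFreq}, splitting $a^{\mrm{out}}(\cdot,\cdot)$ into a coercive principal part plus a compact remainder, but now on the bounded domain $\Om_L$ with the Dirichlet-to-Neumann operators $\Lambda_\pm$ built in. First I would define a sesquilinear form $b^{\mrm{out}}(\cdot,\cdot)$ that keeps the gradient term, the boundary terms involving $\Lambda_\pm$, and adds a positive zeroth-order term $+(1+k^2)u\overline{v}$ to compensate the sign-indefinite $-k^2 u\overline{v}$, exactly as in the earlier proof. Associating to it the operator $B^{\mrm{out}}$ via Riesz, the compact remainder $K$ would again be defined by $(Ku,v)_{\mH^1(\Om_L)}=-(1+k^2)\int_{\Om_L}u\overline{v}\,dxdy$; this $K$ is compact because the embedding of $\mH^1(\Om_L)$ into $\mL^2(\Om_L)$ is compact by the Rellich theorem ($\Om_L$ being bounded), and the argument is verbatim that of Theorem~\ref{ThmDLowFreq} up to complex conjugation.

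The real work is proving that $B^{\mrm{out}}$ is an isomorphism, and here the boundary terms $-\langle\Lambda_\pm(u),u\rangle_{\Sigma_{\pm L}}$ are the crux, because for $k>\pi$ the operators $\Lambda_\pm$ are genuinely complex: the propagating modes $n=1,\dots,N$ contribute coefficients $i\beta_n$ with $\beta_n\in\R$ real and positive. Taking $v=u$ in $b^{\mrm{out}}$, the boundary contribution from each mode is $-i\beta_n|(u,\varphi_n)_{\mL^2(\Sigma_{\pm L})}|^2$ for propagating $n$, which is purely imaginary, and $+|\beta_n|\,|(u,\varphi_n)|^2 \ge 0$ for evanescent $n$ (since then $i\beta_n=i\cdot i\sqrt{n^2\pi^2-k^2}=-\sqrt{n^2\pi^2-k^2}$, giving a real \emph{negative} contribution to $\Lambda_\pm$ but a sign-favourable term after the minus in $b^{\mrm{out}}$). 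So $b^{\mrm{out}}$ is \emph{not} coercive in the naive real sense; its real part controls the evanescent tail and the interior, while the propagating modes only show up in the imaginary part.

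To handle this I would not try to prove coercivity of $b^{\mrm{out}}$ directly but instead combine real and imaginary parts of a rotated form, as in the proof of Theorem~\ref{ThmDissip}. Concretely, I would show there exists $\gamma>0$ such that $\Re e\big((1+i\gamma)b^{\mrm{out}}(u,u)\big)\ge \alpha\|u\|^2_{\mH^1(\Om_L)}$: the term $-\gamma\,\Im m\,b^{\mrm{out}}(u,u)$ produces $\gamma\sum_{n\le N}\beta_n|(u,\varphi_n)_{\mL^2(\Sigma_{\pm L})}|^2\ge 0$ from the propagating modes, while the real part already furnishes the gradient plus the positive zeroth-order term plus the favourable evanescent boundary contributions. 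Since the propagating-mode boundary terms enter with a definite sign, adding them via $\gamma>0$ cannot hurt, and one obtains coercivity of $(1+i\gamma)b^{\mrm{out}}$, hence of $b^{\mrm{out}}$ itself as far as invertibility is concerned; the complex Lax--Milgram theorem then yields that $B^{\mrm{out}}$ is an isomorphism. The main obstacle, therefore, is not the compactness of $K$ but verifying this sign bookkeeping carefully: one must check that the evanescent contributions to $\Re e\,b^{\mrm{out}}$ are nonnegative and that the $\gamma$-rotation converts the propagating (imaginary) contributions into a nonnegative real term, so that no mode spoils the lower bound. Once $B^{\mrm{out}}$ is an isomorphism and $K$ is compact, the decomposition $A^{\mrm{out}}(k)=B^{\mrm{out}}+K$ is established and the theorem follows.
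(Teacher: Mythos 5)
Your proof is correct, and apart from one detour it is the paper's proof: the same splitting of $a^{\mrm{out}}$ into $b^{\mrm{out}}$ plus the zeroth-order remainder, the same compact $K$ via Rellich, and the same key computation that in $\langle\Lambda_\pm(u),u\rangle_{\Sigma_{\pm L}}=\sum_{n\ge1} i\beta_n|(u,\varphi_n)_{\mL^2(\Sigma_{\pm L})}|^2$ the propagating modes give purely imaginary terms while the evanescent modes give real negative ones. The detour is the rotation $(1+i\gamma)$: you introduce it because you claim $b^{\mrm{out}}$ is ``not coercive in the naive real sense'', but that claim is mistaken. The complex Lax--Milgram theorem asks only for a lower bound on $\Re e\,b^{\mrm{out}}(u,u)$, and your own sign bookkeeping delivers it with $\gamma=0$: the purely imaginary propagating contributions drop out of the real part, and the evanescent contributions enter with a favourable sign, so that
\[
\Re e\, b^{\mrm{out}}(u,u)=\|u\|^2_{\mH^1(\Om_L)}+\sum_{\pm}\,\sum_{n=N+1}^{+\infty}\sqrt{n^2\pi^2-k^2}\,\big|(u,\varphi_n)_{\mL^2(\Sigma_{\pm L})}\big|^2\ \ge\ \|u\|^2_{\mH^1(\Om_L)},
\]
which is exactly how the paper concludes. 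Your rotated argument is still valid---for any $\gamma>0$ the extra term $-\gamma\,\Im m\,b^{\mrm{out}}(u,u)=\gamma\sum_{n\le N}\beta_n\big(|(u,\varphi_n)_{\mL^2(\Sigma_L)}|^2+|(u,\varphi_n)_{\mL^2(\Sigma_{-L})}|^2\big)$ is nonnegative, and invertibility of $(1+i\gamma)B^{\mrm{out}}$ gives invertibility of $B^{\mrm{out}}$---so nothing breaks; the rotation is merely superfluous. Note the contrast with Theorem \ref{ThmDissip}, where the rotation is genuinely needed: there the sign-indefinite term $-k^2|v|^2$ sits in the real part and the good (dissipative) term sits in the imaginary part, so one must mix the two; here the indefinite zeroth-order term has already been moved into $K$, and the only imaginary terms left in $b^{\mrm{out}}(u,u)$ are the propagating Dirichlet-to-Neumann contributions, which cost nothing in the real part.
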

\begin{proof}
Define the continuous operator  $B^{\mrm{out}}:\mH^1_0(\Om_L;\Gamma)\to\mH^1_0(\Om_L;\Gamma)$ such that 
\[
(B^{\mrm{out}}u,v)_{\mH^1(\Om_L)}=b^{\mrm{out}}(u,v),\qquad\forall u,v\in \mH^1_0(\Om_L;\Gamma),
\]
with
\[
b^{\mrm{out}}(u,v)=\int_{\Om_L}\nabla u\cdot\nabla\overline{v}+u\overline{v}\,dxdy-\langle\Lambda_+(u),v \rangle_{\Sigma_L}-\langle\Lambda_-(u),v\rangle_{\Sigma_{-L}}.
\]
For $u\in\mH^1_0(\Om_L;\Gamma)$, we have 
\begin{equation}\label{EstimCoer1}
\Re e\,b^{\mrm{out}}(u,u)=\|u\|^2_{\mH^1(\Om_L)}-\Re e\,\langle\Lambda_+(u),u\rangle_{\Sigma_L}-\Re e\,\langle\Lambda_-(u),u \rangle_{\Sigma_{-L}}.
\end{equation}
But (\ref{DefDTN}) provides
\[
\langle\Lambda_\pm(u),u\rangle_{\Sigma_{\pm L}} = \dsp\sum_{n=1}^{+\infty} i\beta_n\, |(u,\varphi_n)_{\mL^2(\Sigma_{\pm L})}|^2.
\]
Introduce $N\in\N^\ast$ such that $k\in(N\pi;(N+1)\pi)$. The $i\beta_n$ for $n=1,\dots,N$ are purely imaginary. On the other hand, the $i\beta_n$ for $n>N$ are real negative. Thus we have
\[
\Re e\,\langle\Lambda_\pm(u),u\rangle_{\Sigma_{\pm L}}=\sum_{n=N+1}^{+\infty} i\beta_n\, |(u,\varphi_n)_{\mL^2(\Sigma_{\pm L})}|^2=\sum_{n=N+1}^{+\infty} -\sqrt{n^2\pi^2-k^2}\, |(u,\varphi_n)_{\mL^2(\Sigma_{\pm L})}|^2<0.
\]
With (\ref{EstimCoer1}), this gives
\[
\Re e\,b^{\mrm{out}}(u,u) \ge \|u\|^2_{\mH^1(\Om_L)}.
\]
From the complex version of the Lax-Milgram theorem, we deduce that $B^{\mrm{out}}:\mH^1_0(\Om_L;\Gamma)\to\mH^1_0(\Om_L;\Gamma)$ is an isomorphism.\\
\newline
Now set $K=A^{\mrm{out}}(k)-B^{\mrm{out}}$. We have 
\[
(Ku,v)_{\mH^1(\Om_L)}=-(1+k^2)\int_{\Om_L}u\overline{v}\,dxdy,\qquad\forall u,v\in \mH^1_0(\Om_L;\Gamma).
\]
Since $\Om_L$ is bounded, the embedding of $\mH^1_0(\Om_L;\Gamma)$ in $\mL^2(\Om_L)$ is compact and we can show that $K:\mH^1_0(\Om_L;\Gamma)\to\mH^1_0(\Om_L;\Gamma)$ is compact by working as for the operator $K$ appearing in the proof of Theorem \ref{ThmDLowFreq}.
\end{proof}
\noindent This shows that $A^{\mrm{out}}(k)$ satisfies the Fredholm alternative. Either $A^{\mrm{out}}(k)$ is injective and in this case it is an isomorphism of $\mH^1_0(\Om_L;\Gamma)$. Or $A^{\mrm{out}}(k)$ has a kernel of finite dimension $\mrm{span}(u_1,\dots,u_P)$ and in that case the equation 
\[
A^{\mrm{out}}(k)u=F\quad\mbox{ in }\mH^1_0(\Om_L;\Gamma)
\]
has a solution (defined up to $\mrm{span}(u_1,\dots,u_P)$) if and only if $F$ satisfies the compatibility conditions 
\begin{equation}\label{PbCompa1Bis}
(F,u_p)_{\mH^1(\Om_L)}=0,\qquad p=1,\dots,P.
\end{equation}
For a given geometry, one can show that the set of $k\in(\pi;+\infty)\setminus\{\N\pi\}$ such that $A^{\mrm{out}}(k)$ is not injective is discrete and accumulates only at $+\infty$. \\
\newline
If $u$ solves (\ref{WaveguidePbBounded}), then by defining $\hat{u}$ such that 
\begin{equation}\label{defExten}
\hat{u}(x,y)=\begin{array}{|ll}
u(x,y) & \mbox{ in }\Om_L \\[3pt]
\dsp\sum_{n=1}^{+\infty} (u,\varphi_n)_{\mL^2(\Sigma_{\pm L})}\,e^{\pm i\beta_n (x\mp L)}\,\varphi_n(y) & \mbox{ for }\pm x>L,
\end{array}
\end{equation}
we obtain a solution of (\ref{WaveguidePb}). In general, this $\hat{u}$ does not belong to $\mH^1_0(\Om)$ because it involves propagating modes.\\
\newline
In the (rare) cases where $A^{\mrm{out}}(k)$ is not injective, we show now that the element $u$ of its kernel do not decompose on the propagating modes. As a consequence, the corresponding extensions $\hat{u}$ are exponentially decaying as $x\to\pm\infty$ and so are localized in a neighborhood of the perturbation in the geometry (see an example in Figure \ref{TrappedD}). For this reason one usually call them trapped modes. 

\begin{proposition}\label{PropoTrappedModes}
Pick $k\in(\pi;+\infty)\setminus\{\N\pi\}$ and consider some  $u$ in $\ker\,A^{\mrm{out}}(k)$. Then its corresponding extension $\hat{u}$ defined via (\ref{defExten}) decays as $O(e^{-\sqrt{(N+1)^2\pi^2-k^2}|x|})$ as $x\to\pm\infty$ and so belongs to $\mH^1_0(\Om)$. 
\end{proposition}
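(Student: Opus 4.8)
The plan is to extract the vanishing of the propagating-mode coefficients from an energy-flux (imaginary-part) argument, after which the claimed decay rate of $\hat u$ follows by direct inspection of the remaining evanescent modes.

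First, since $u\in\ker\,A^{\mrm{out}}(k)$, the definition (\ref{DefOpAkOut}) gives $a^{\mrm{out}}(u,v)=0$ for all $v\in\mH^1_0(\Om_L;\Gamma)$; in particular I would take $v=u$ to obtain $a^{\mrm{out}}(u,u)=0$. In the expression (\ref{DefForms}) the volume contribution $\int_{\Om_L}|\nabla u|^2-k^2|u|^2\,dxdy$ is real, so taking the imaginary part of the identity $a^{\mrm{out}}(u,u)=0$ annihilates this term and leaves only the Dirichlet-to-Neumann contributions on $\Sigma_{\pm L}$.

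Next I would use the explicit representation (\ref{DefDTN}), exactly as in the proof of Theorem \ref{ThmFredholm}, to write $\langle\Lambda_\pm(u),u\rangle_{\Sigma_{\pm L}}=\sum_{n\ge1} i\beta_n\,|(u,\varphi_n)_{\mL^2(\Sigma_{\pm L})}|^2$. With $N$ such that $k\in(N\pi;(N+1)\pi)$, the numbers $i\beta_n$ are purely imaginary (with $\beta_n=\sqrt{k^2-n^2\pi^2}>0$) for $n=1,\dots,N$ and real negative for $n\ge N+1$, so only the first $N$ terms contribute to the imaginary part. Taking the imaginary part of $a^{\mrm{out}}(u,u)=0$ then yields
\[
\sum_{n=1}^{N}\sqrt{k^2-n^2\pi^2}\,\Big(|(u,\varphi_n)_{\mL^2(\Sigma_{L})}|^2+|(u,\varphi_n)_{\mL^2(\Sigma_{-L})}|^2\Big)=0.
\]
Because each square root is strictly positive for $n\le N$, every summand must vanish, and therefore $(u,\varphi_n)_{\mL^2(\Sigma_{\pm L})}=0$ for $n=1,\dots,N$.

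Finally, I would feed this back into the extension formula (\ref{defExten}). The vanishing of the propagating coefficients means the series defining $\hat u$ for $\pm x>L$ range only over $n\ge N+1$, for which $\beta_n=i\sqrt{n^2\pi^2-k^2}$, so that $e^{\pm i\beta_n(x\mp L)}$ decays exponentially as $\pm x\to+\infty$ with rate $\sqrt{n^2\pi^2-k^2}$. The slowest such rate is attained at $n=N+1$, giving $\hat u=O(e^{-\sqrt{(N+1)^2\pi^2-k^2}\,|x|})$ as $x\to\pm\infty$. Exponential decay of each mode, together with the square-summability of the coefficients $(u,\varphi_n)_{\mL^2(\Sigma_{\pm L})}$ (which holds since $u|_{\Sigma_{\pm L}}\in\mL^2(\Sigma_{\pm L})$), makes the series converge in $\mH^1$ on each half-strip; combined with $u\in\mH^1_0(\Om_L;\Gamma)$ and the homogeneous Dirichlet conditions carried by the $\varphi_n$, this shows $\hat u\in\mH^1_0(\Om)$.

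The crux is the energy-flux identity: the one genuinely conceptual step is recognizing that the imaginary part of the sesquilinear form isolates precisely the flux carried by the propagating modes, whose sign-definiteness forces their coefficients to vanish. Everything after that is routine bookkeeping on the explicit modal expansion.
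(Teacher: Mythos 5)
Your proposal is correct and follows exactly the paper's own argument: test with $v=u$, take the imaginary part of $a^{\mrm{out}}(u,u)=0$ to isolate the flux terms $\sum_{n=1}^{N}\beta_n\,|(u,\varphi_n)_{\mL^2(\Sigma_{\pm L})}|^2$, and conclude from $\beta_n>0$ that all propagating coefficients vanish, so that $\hat{u}$ decomposes only on evanescent modes with slowest rate $\sqrt{(N+1)^2\pi^2-k^2}$. The only difference is that you spell out the bookkeeping (the DtN representation and the $\mH^1$ convergence of the evanescent series) that the paper leaves implicit.
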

\begin{proof}
Introduce again $N\in\N^\ast$ such that $k\in(N\pi;(N+1)\pi)$. If $u$ is an element of $\ker\,A^{\mrm{out}}(k)$, we have $a^{\mrm{out}}(u,u)=0$ and so 
\[
0=\Im m\,a^{\mrm{out}}(u,u)=-\sum_{n=1}^{N} \beta_n\, (|(u,\varphi_n)_{\mL^2(\Sigma_{+L})}|^2+|(u,\varphi_n)_{\mL^2(\Sigma_{-L})}|^2).
\]
Since the $\beta_1,\dots,\beta_N$ are all positive, this proves that the corresponding $\hat{u}$ in (\ref{defExten}) decomposes only on the evanescent modes.
\end{proof}

\begin{figure}[!ht]
\centering
\includegraphics[height=2cm]{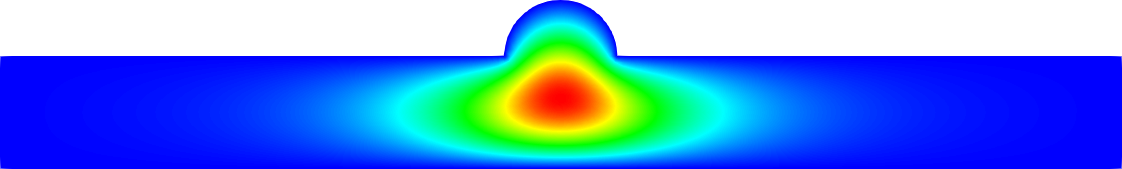}
\caption{Example of trapped mode for Problem (\ref{WaveguidePb}). 
The above function is an eigenfunction of the Dirichlet Laplacian in $\Om$, \textit{i.e.} it is an element of $\mH^1_0(\Om)\setminus\{0\}$ which satisfies $\Delta u+k^2u=0$ in $\Om$ for a particular $k>0$.  \label{TrappedD}}
\end{figure}

\subsection{Limiting absorption principle}\label{ParaLimitingAbs}

In this paragraph, we prove that the dissipative solution converges to the solution of Problem (\ref{WaveguidePbBounded}) without dissipation as $\eta$ tends to zero.

\begin{theorem}
Fix $k\in(\pi;+\infty)\setminus\{\N\pi\}$ and assume that the operator $A^{\mrm{out}}(k)$ defined in (\ref{DefOpAkOut}) is injective. Then there is $\eta_0$ such that we have
\[
\|u_\eta-u\|_{\mH^1(\Om_L)} \le C\eta\,\|f\|_{\mL^2(\Om_L)},\qquad \forall \eta\in(0;\eta_0],
\]
with a constant $C>0$ which is independent of $\eta$. Here $u$, $u_\eta$ are the functions solving respectively (\ref{WaveguidePbBounded}), (\ref{WaveguidePbDissipationBounded}).
\end{theorem}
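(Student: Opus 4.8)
The plan is to compare the two problems set in the bounded domain $\Om_L$ by writing an equation for the difference $e_\eta\coloneqq u_\eta-u$, and then to establish a uniform stability estimate for the limiting operator $A^{\out}(k)$. First I would subtract the variational formulations: $u_\eta$ satisfies the dissipative problem with the operators $\Lambda_\pm^\eta$ and the shifted wavenumber $k^2+ik\eta$, while $u$ satisfies the outgoing problem with $\Lambda_\pm$ and wavenumber $k^2$. Testing both against a common $v\in\mH^1_0(\Om_L;\Gamma)$ and subtracting gives, for all such $v$,
\[
a^{\out}(e_\eta,v)=ik\eta\int_{\Om_L}u_\eta\overline{v}\,dxdy+\langle(\Lambda_+^\eta-\Lambda_+)(u_\eta),v\rangle_{\Sigma_L}+\langle(\Lambda_-^\eta-\Lambda_-)(u_\eta),v\rangle_{\Sigma_{-L}}.
\]
So $e_\eta$ solves the outgoing problem with a right-hand side that is $O(\eta)$, provided I can control the discrepancy between the dissipative and the limiting Dirichlet-to-Neumann operators.

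The two analytic inputs I would assemble are as follows. Since $A^{\out}(k)$ is injective and, by Theorem \ref{ThmFredholm}, Fredholm of index zero, it is an isomorphism of $\mH^1_0(\Om_L;\Gamma)$; hence there is $C>0$ with $\|w\|_{\mH^1(\Om_L)}\le C\,\|A^{\out}(k)w\|_{\mH^1(\Om_L)}$ for all $w$, which converts a bound on the right-hand side above into the desired bound on $\|e_\eta\|_{\mH^1(\Om_L)}$. The second input is the continuity in $\eta$ of the symbol $i\beta_n^\eta$: since $\beta_n^\eta=\sqrt{k^2+i\eta-n^2\pi^2}$ and $\beta_n=\sqrt{k^2-n^2\pi^2}$, a first-order expansion gives $|\beta_n^\eta-\beta_n|\le C\eta/|\beta_n^\eta+\beta_n|$ uniformly in $n$, which yields an operator-norm bound $\|\Lambda_\pm^\eta-\Lambda_\pm\|\le C\eta$ on the relevant trace space. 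I would also need that $(u_\eta)$ stays bounded in $\mH^1(\Om_L)$ uniformly in small $\eta$; this is itself a consequence of injectivity of $A^{\out}(k)$ together with the $O(\eta)$ perturbation estimate, obtained by a standard contradiction/uniform-coercivity argument (normalize $\|u_\eta\|=1$, extract a weak limit solving the homogeneous outgoing problem, conclude it vanishes, contradiction).

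Putting these together, I would bound the right-hand side of the displayed identity by $C\eta(\|u_\eta\|_{\mH^1(\Om_L)}+\|u_\eta\|_{\mH^{1/2}(\Sigma_{\pm L})})\|v\|_{\mH^1(\Om_L)}$, use the trace inequality and the uniform bound $\|u_\eta\|_{\mH^1(\Om_L)}\le C\|f\|_{\mL^2(\Om_L)}$, take $v=A^{\out}(k)e_\eta$, and invoke the isomorphism estimate to conclude $\|e_\eta\|_{\mH^1(\Om_L)}\le C\eta\|f\|_{\mL^2(\Om_L)}$ for $\eta\in(0;\eta_0]$.

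I expect the main obstacle to be the uniform bound on $(u_\eta)$ for small $\eta$, and relatedly the threshold choice of $\eta_0$: near the propagating indices $n\le N$ the symbols $i\beta_n^\eta$ move off the imaginary axis as $\eta>0$, and one must check that the real part they acquire has the correct (dissipative) sign so that the dissipative problem does not lose the uniform control one gets from injectivity of the limit operator. Making the perturbation estimate $\|\Lambda_\pm^\eta-\Lambda_\pm\|\le C\eta$ genuinely uniform in the mode index $n$ — including the transition modes where $\beta_n$ is small — is the delicate quantitative point; everything else is a routine application of the Fredholm/isomorphism machinery already developed above.
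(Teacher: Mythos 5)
Your proposal follows essentially the same route as the paper's proof: write the difference equation $A^{\mrm{out}}(k)(u_\eta-u)=(A^\eta-A^{\mrm{out}}(k))u_\eta$, use that injectivity plus Fredholmness (Theorem \ref{ThmFredholm}) makes $A^{\mrm{out}}(k)$ an isomorphism, and combine this with the operator-norm bound $\|A^\eta-A^{\mrm{out}}(k)\|\le C\eta$ coming from the $O(\eta)$ perturbation of the zeroth-order term and of the symbols $i\beta_n^\eta$ in the Dirichlet-to-Neumann maps. The only deviation is your treatment of the uniform bound on $\|u_\eta\|_{\mH^1(\Om_L)}$, for which you invoke a contradiction/compactness argument: the paper gets it for free by inserting $\|u_\eta\|_{\mH^1(\Om_L)}\le\|u-u_\eta\|_{\mH^1(\Om_L)}+\|u\|_{\mH^1(\Om_L)}$ into the estimate $\|u-u_\eta\|_{\mH^1(\Om_L)}\le C\eta\,\|u_\eta\|_{\mH^1(\Om_L)}$, absorbing the small term for $\eta\le\eta_0$, and then bounding $\|u\|_{\mH^1(\Om_L)}=\|(A^{\mrm{out}})^{-1}F\|_{\mH^1(\Om_L)}\le C\,\|f\|_{\mL^2(\Om_L)}$, which also renders your worries about the sign of $\Re e\,(i\beta_n^\eta)$ and uniformity in $n$ near thresholds moot (the latter is harmless anyway since $k\notin\N\pi$ is fixed).
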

\begin{proof}
Let $F$ be the element of $\mH^1_0(\Om_L;\Gamma)$ such that 
\[
(F,v)_{\mH^1(\Om_L)}=\ell(v),\qquad\forall v\in\mH^1_0(\Om_L;\Gamma).
\]
Denote also by $A^\eta:\mH^1_0(\Om_L;\Gamma)\to\mH^1_0(\Om_L;\Gamma)$ the operator such that
\[
(A^\eta w,v)_{\mH^1(\Om_L)}=a_\eta(w,v),\qquad\forall w,v\in\mH^1_0(\Om_L;\Gamma). 
\]
We have, for all $\eta>0$,
\[
A^\eta u_\eta=F=A^{\mrm{out}} u. 
\]
This gives $A^{\mrm{out}} (u-u_\eta)=(A^\eta-A^{\mrm{out}})u_\eta$. Since $A^{\mrm{out}}$ is assumed to be injective, Theorem \ref{ThmFredholm} together with the Fredholm alternative guarantee that $A^{\mrm{out}}$ is an isomorphism of $\mH^1_0(\Om_L;\Gamma)$. Thus we can write
\begin{equation}\label{Relation1}
u-u_\eta=(A^{\mrm{out}})^{-1} (A^\eta-A^{\mrm{out}})u_\eta.
\end{equation}
Now from the definition of $A^{\mrm{out}}$, $A^\eta$, one establishes, for $\eta$ small,
\begin{equation}\label{Relation2}
\|A^\eta-A^{\mrm{out}}\| \le C \eta,
\end{equation}
where $C>0$ is a constant which may change from one line to another below, but remains independent of $\eta$. Gathering (\ref{Relation1}) and (\ref{Relation2}), we find
\begin{equation}\label{FirstEstimate}
\|u-u_\eta\|_{\mH^1(\Om_L)} \le C \eta\, \|u_\eta\|_{\mH^1(\Om_L)}.
\end{equation}
By using the inequality $\|u_\eta\|_{\mH^1(\Om_L)}\le \|u-u_\eta\|_{\mH^1(\Om_L)}+\|u\|_{\mH^1(\Om_L)}  $ in (\ref{FirstEstimate}), we obtain, for $\eta$ small enough,
\[
\|u-u_\eta\|_{\mH^1(\Om_L)} \le C\eta\,\|u\|_{\mH^1(\Om_L)}\le C\eta\,\|(A^{\mrm{out}})^{-1}f\|_{\mH^1(\Om_L)} \le C\eta\,\|f\|_{\mL^2(\Om_L)}.
\]
\end{proof}

\subsection{Scattering problem}\label{ParaScaPb}

In (\ref{WaveguidePbBounded}), we considered a problem with a source term $f$. In the following, we will be mostly interested in scattering problems for incident waves. To keep things as simple as possible, we assume all through this paragraph that $k\in(\pi;2\pi)$ so that only the modes $w^\pm_1$ can propagate. To make short, we denote them by $w_\pm$ so that
\begin{equation}\label{Defwpm}
w_\pm(x,y)=e^{\pm i\beta_1 x}\varphi_1(y)=e^{\pm i\sqrt{k^2-\pi^2} x}\varphi_1(y).
\end{equation}
The scattering of the rightgoing wave $w_+$ coming from the left branch of the waveguide leads us to consider the problem
\begin{equation}\label{WaveguidePbSca}
\begin{array}{|rcll}
\multicolumn{4}{|l}{\mbox{Find }u_+\in\mH^1_{0,\loc}(\Om)\mbox{ such that }u_+-w_+\mbox{ is outgoing and } }\\[3pt]
\Delta u_++k^2u_+&=&0&\mbox{ in }\Om \\[2pt]
u_+&=&0&\mbox{ on }\partial\Om.
\end{array}
\end{equation}
Here $\mH^1_{0,\loc}(\Om)$ denotes the set of measurable functions $v$ such that $\zeta v$ belongs to $\mH^1_0(\Om)$ for all $\zeta\in\mathscr{C}^{\infty}_0(\R^2)$. Moreover, the sentence $u_+-w_+$ is outgoing means that we impose the decomposition
\[
u_+-w_+=\left\{\begin{array}{ll}
\dsp\sum_{n=1}^{+\infty} \alpha^+_ne^{ i\beta_n x}\,\varphi_n(y) & \mbox{ for }x>L\\[10pt]
\dsp\sum_{n=1}^{+\infty} \alpha^-_ne^{-i\beta_n x}\,\varphi_n(y) & \mbox{ for } x<-L,
\end{array}\right.
\]
for some $\alpha^\pm_n\in\Cplx$. In this context, $u_+$ is usually called the total field associated with the incident field $w_+$ while the quantity $u^s_+\coloneqq u_+-w_+$ is the scattered field. 
\begin{proposition}\label{PropoWPDirichletSca}
For all $k\in(\pi;2\pi)$, (\ref{WaveguidePbSca}) admits a solution. It is unique if trapped modes do not exist.
\end{proposition}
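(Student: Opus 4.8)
The plan is to reduce the scattering problem (\ref{WaveguidePbSca}) to the bounded–domain formulation governed by the operator $A^{\mrm{out}}(k)$ of Theorem \ref{ThmFredholm}, exactly as (\ref{WaveguidePbBounded}) was extracted from (\ref{WaveguidePb}), the only novelty being the presence of the incident field. First I would restrict the sought total field $u_+$ to $\Om_L$: it solves $\Delta u_++k^2u_+=0$ in $\Om_L$ with $u_+=0$ on $\Gamma$. On $\Sigma_L$ the incident mode $w_+=e^{i\beta_1 x}\varphi_1$ is rightgoing, hence outgoing, so the whole trace is outgoing and $\partial_\nu u_+=\Lambda_+(u_+)$ there. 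On $\Sigma_{-L}$, by contrast, $w_+$ is incoming: writing $u_+=w_++u_+^s$ with $u_+^s$ outgoing and applying $\Lambda_-$ only to the scattered part gives $\partial_\nu u_+-\Lambda_-(u_+)=g$ on $\Sigma_{-L}$, with $g\coloneqq\partial_\nu w_+-\Lambda_-(w_+)$. Multiplying by a test function and integrating by parts then turns the problem into the variational equation $a^{\mrm{out}}(u_+,v)=\langle g,v\rangle_{\Sigma_{-L}}$ for all $v\in\mH^1_0(\Om_L;\Gamma)$, that is $A^{\mrm{out}}(k)u_+=F$, where $F\in\mH^1_0(\Om_L;\Gamma)$ is the Riesz representative of $v\mapsto\langle g,v\rangle_{\Sigma_{-L}}$. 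Conversely, any solution of this bounded problem extends through (\ref{defExten}) to a solution of (\ref{WaveguidePbSca}).

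Next I would invoke Theorem \ref{ThmFredholm}: for $k\in(\pi;2\pi)$ the operator $A^{\mrm{out}}(k)$ satisfies the Fredholm alternative. If trapped modes do not exist, $A^{\mrm{out}}(k)$ is injective, hence an isomorphism of $\mH^1_0(\Om_L;\Gamma)$, so $A^{\mrm{out}}(k)u_+=F$ has a unique solution. This settles at once both existence and the uniqueness claim in the absence of trapped modes.

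It remains to obtain existence when $\ker A^{\mrm{out}}(k)=\mrm{span}(u_1,\dots,u_P)\ne\{0\}$. By the Fredholm alternative the equation is solvable if and only if $F$ satisfies the compatibility conditions (\ref{PbCompa1Bis}), i.e. $\langle g,u_p\rangle_{\Sigma_{-L}}=0$ for $p=1,\dots,P$. Here the structure of the data is decisive: since $w_+$ involves only the first transverse profile $\varphi_1$, the datum $g$ is a scalar multiple of $\varphi_1$ on $\Sigma_{-L}$, so $\langle g,u_p\rangle_{\Sigma_{-L}}$ is proportional to the $\varphi_1$–component $(u_p,\varphi_1)_{\mL^2(\Sigma_{-L})}$. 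On the other hand, Proposition \ref{PropoTrappedModes} shows that every trapped mode decomposes on the \emph{evanescent} modes only; for $k\in(\pi;2\pi)$ this means precisely that its $\varphi_1$–component on $\Sigma_{\pm L}$ vanishes. Hence $(u_p,\varphi_1)_{\mL^2(\Sigma_{-L})}=0$ and (\ref{PbCompa1Bis}) holds automatically, yielding a (non unique) solution. I expect the main obstacle to be exactly this last step — checking that the incident datum is orthogonal to the trapped modes — and the clean way around it rests on the dichotomy \emph{propagating versus evanescent}: the incident field couples only to the propagating profile $\varphi_1$, whereas a trapped mode, being an $\mL^2$ eigenfunction, carries no propagating component, so the orthogonality of the $\varphi_n$ forces the pairing to vanish without any further computation.
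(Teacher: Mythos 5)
Your proof is correct, and it differs from the paper's own proof in one genuine, if modest, respect: how the incident wave is folded into the bounded-domain problem. The paper truncates $w_+$ with a cut-off $\zeta$ (equal to $1$ for $x\le-L$ and $0$ for $x\ge-d$) and works with the scattered field $u_+^s=u_+-\zeta w_+$, which solves exactly (\ref{WaveguidePbBounded}) with the \emph{volume} datum $f=\Delta(\zeta w_+)+k^2(\zeta w_+)\in\mL^2(\Om_L)$; the compatibility conditions (\ref{PbCompa1Bis}) are then checked by integrating $(f,u_p)_{\mL^2(\Om_L)}$ by parts twice, which leaves a boundary term on $\Sigma_{-L}$ involving $w_+$ and $u_p$, killed by Proposition \ref{PropoTrappedModes} and the orthogonality of the $\varphi_n$. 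You instead keep the total field and place the datum on the boundary, $\partial_\nu u_+-\Lambda_-(u_+)=g=-2i\beta_1 w_+$ on $\Sigma_{-L}$, which is precisely the formulation (\ref{DefVariaD}) that the paper only derives later, for numerical purposes. Both routes rest on the same two pillars, Theorem \ref{ThmFredholm} plus the Fredholm alternative, and Proposition \ref{PropoTrappedModes} plus orthogonality of the transverse profiles. What your route buys is that the compatibility check becomes immediate: the datum is supported on $\Sigma_{-L}$ and proportional to $\varphi_1$, so its pairing with a trapped mode, which carries no $\varphi_1$-component, vanishes with no integration by parts at all. What the paper's route buys is that the data is exactly of the form assumed in (\ref{WaveguidePbBounded}), namely $f\in\mL^2(\Om_L)$, so the earlier Fredholm machinery applies verbatim. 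One small point to tighten in your write-up: the converse extension step is not literally (\ref{defExten}) --- on the left half-strip you must extend $u_+$ as $w_+$ plus the outgoing expansion of $u_+-w_+$, not as a purely outgoing expansion of $u_+$ itself, and then verify that the Dirichlet and Neumann traces match across $\Sigma_{\pm L}$; with that adjustment the equivalence between the two formulations is complete.
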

\begin{proof}
Introduce some cut-off functions $\zeta\in\mathscr{C}^{\infty}(\R)$ such that  
\[
\zeta(x)=\begin{array}{|ll}
1 & \mbox{ for }x\le-L\\[2pt]
0 & \mbox{ for }x\ge-d,
\end{array}
\] 
If $u_+$ satisfies (\ref{WaveguidePbSca}), then $u^s_+=u_+-\zeta w_+$ solves Problem (\ref{WaveguidePbBounded}) with 
\begin{equation}\label{DefFSca}
f=\Delta(\zeta w_+)+k^2(\zeta w_+)\in\mL^2(\Om_L).
\end{equation}
Conversely, if $u^s_+$ satisfies (\ref{WaveguidePbBounded}) with the above $f$, then $u_+\coloneqq u^s_++\zeta w_+$ is a solution of (\ref{WaveguidePbSca}). Therefore it is sufficient to focus our attention on the study of (\ref{WaveguidePbBounded}) with $f$ defined in (\ref{DefFSca}).\\ 
\newline
If trapped modes do not exist, Theorem \ref{ThmFredholm} together with the Fredholm alternative ensure that (\ref{WaveguidePbBounded}) admits a unique solution.\\
Now if trapped modes $\mrm{span}(u_1,\dots,u_P)$, $P\ge1$, exist, let us prove that for the particular $f$ considered in (\ref{DefFSca}) related to the incident mode, Problem (\ref{WaveguidePbBounded}) still have a solution. To proceed, we have to show that $f$ satisfies the compatibility conditions appearing in (\ref{PbCompa1Bis}). For $p=1,\dots,P$, integrating twice by parts in $\Om_L$ and using that $\Delta u_p+k^2 u_p=0$ in $\Om$, we obtain
\[
(f,u_p)_{\mL^2(\Om_L)} =\dsp -\int_{\Om_L}\left(\Delta(\zeta w_+)+k^2(\zeta w_+)\right)u_p\,dxdy =-\int_{\Sigma_{-L}}\cfrac{\partial w_+}{\partial \nu}\,u_p-w_+\cfrac{\partial u_p}{\partial \nu}\,dy.
\]
Above we used the fact that $\zeta =1$ on $\Sigma_{-L}$. Now we observe that $w_+$ is propagating while Proposition \ref{PropoTrappedModes} ensures that $u_p$ decomposes only on the evanescent modes. From the orthogonality of the family $(\varphi_n)_{n\in\N^\ast}$ in $\mL^2(I)$, we conclude that $(f,u_p)_{\mL^2(\Om_L)} =0$.
\end{proof}
\noindent Set $R_+\coloneqq\alpha_n^-$, $T_+\coloneqq1+\alpha_n^+$ so that for $u_+$ we have the representation
\begin{equation}\label{RepresentationupD}
u_+=\begin{array}{|ll}
w_++R_+w_-+\tilde{u}_+ & \mbox{ for }x < - L \\[3pt]
\phantom{w_+mi}T_+w_++\tilde{u}_+ & \mbox{ for }x>L,
\end{array}
\end{equation}
with $\tilde{u}_+\in\mH^1_0(\Om)$. The quantities $R_+$, $T_+$ are usually called the reflection and transmission coefficients. Let us emphasize that they are uniquely defined, even if trapped modes  exist a certain $k$. Indeed since trapped modes decay as $O(e^{-\sqrt{4\pi^2-k^2}|x|})$ as $x\to\pm\infty$ according to Proposition \ref{PropoTrappedModes}, the scattering coefficients $R_+$, $T_+$ are insensitive to their existence.\\
\newline
In the same way, one shows that Problem (\ref{WaveguidePb}) admits a solution $u_-\in\mH^1_{0,\loc}(\Om)$ with the decomposition 
\begin{equation}\label{RepresentationumD}
u_-=\begin{array}{|ll}
\phantom{w_+mi}T_-w_-+\tilde{u}_- & \mbox{ for }x < - L \\[3pt]
w_-+R_-w_++\tilde{u}_- & \mbox{ for }x>L,
\end{array}
\end{equation}
where $R_-$, $T_-\in\Cplx$, $\tilde{u}_-\in\mH^1_0(\Om)$. It corresponds to the scattering of the leftgoing wave $w_-$ coming from the right branch of the waveguide.\\
\newline
With the coefficients $R_\pm$, $T_\pm$ appearing in the decompositions (\ref{RepresentationupD}), (\ref{RepresentationumD}), we form the scattering matrix
\[
\mathbb{S}\coloneqq\left( 
\begin{array}{cc}
R_+ & T_+ \\[2pt]
T_- & R_- 
\end{array}
\right)\in\Cplx^{2\times2}.
\]
Due to physics, the matrix $\mathbb{S}$ has a very rigid structure. More precisely, we have the following statement:
\begin{proposition}\label{PropoStructureS}
The scattering matrix $\mathbb{S}$ is symmetric ($T_+=T_-$) and unitary ($\mathbb{S}\,\overline{\mathbb{S}}^{\top}=\mrm{Id}_{2\times2}$).
\end{proposition}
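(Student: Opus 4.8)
The plan is to derive both properties from Green's second identity applied on the truncated guide $\Om_R\coloneqq\{(x,y)\in\Om:|x|<R\}$ for $R>L$, letting $R\to+\infty$. Since $u_+$ and $u_-$ both solve $\Delta u+k^2u=0$ in $\Om$ with homogeneous Dirichlet data on $\partial\Om$, every boundary contribution on the guide walls $\partial\Om\cap\partial\Om_R$ vanishes, and only the two artificial sections $\Sigma_{\pm R}=\{\pm R\}\times I$ survive. The whole argument then reduces to evaluating transverse integrals of the modal expansions (\ref{RepresentationupD})--(\ref{RepresentationumD}) on $\Sigma_{\pm R}$ and passing to the limit.

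For the symmetry $T_+=T_-$, I would use the bilinear concomitant (no complex conjugation). Because $u_+\Delta u_- - u_-\Delta u_+ = -k^2u_+u_-+k^2u_-u_+=0$, Green's formula gives
\[
0=\int_{\Sigma_R}\!\big(u_+\partial_x u_- - u_-\partial_x u_+\big)\,dy-\int_{\Sigma_{-R}}\!\big(u_+\partial_x u_- - u_-\partial_x u_+\big)\,dy,
\]
the sign on $\Sigma_{-R}$ coming from the outward normal $-\partial_x$. Inserting the expansions and using that the evanescent tails $\tilde u_\pm$ decay exponentially while the cross terms between $\varphi_1$ and $\varphi_n$ ($n\ge2$) vanish by the $\mL^2(I)$-orthogonality (\ref{BaseFourierDirichlet}), the two section integrals converge as $R\to+\infty$ to $-2i\beta_1 T_+$ and $-2i\beta_1 T_-$ respectively. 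The identity becomes $-2i\beta_1(T_+-T_-)=0$, and since $\beta_1=\sqrt{k^2-\pi^2}>0$ this forces $T_+=T_-$.

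For unitarity, I would repeat the argument with the sesquilinear concomitant, exploiting that $k$ is real so that $\overline{u_\pm}$ again solve the Helmholtz equation; hence $u\,\Delta\overline v-\overline v\,\Delta u=0$ and
\[
0=\int_{\Sigma_R}\!\big(u\,\partial_x\overline v-\overline v\,\partial_x u\big)\,dy-\int_{\Sigma_{-R}}\!\big(u\,\partial_x\overline v-\overline v\,\partial_x u\big)\,dy
\]
for any pair $u,v\in\{u_+,u_-\}$. The crucial difference is that $|e^{\pm i\beta_1 x}|=1$ for the propagating mode, so the surviving transverse integrals now produce moduli and Hermitian cross products rather than pure bilinear terms. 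Taking $(u,v)=(u_+,u_+)$ yields $|R_+|^2+|T_+|^2=1$, taking $(u,v)=(u_-,u_-)$ yields $|R_-|^2+|T_-|^2=1$, and taking $(u,v)=(u_+,u_-)$ yields $R_+\overline{T_-}+T_+\overline{R_-}=0$; the remaining relation is its complex conjugate. These four identities are precisely the entries of $\mathbb{S}\,\overline{\mathbb{S}}^{\top}=\mrm{Id}_{2\times2}$.

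The genuinely delicate step, the one deserving care rather than the routine algebra, is the passage to the limit $R\to+\infty$: I must justify that the evanescent parts of $\tilde u_\pm$ contribute nothing, both because each term decays like $e^{-\sqrt{n^2\pi^2-k^2}\,|x|}$ and because every product of the propagating profile $\varphi_1$ with an evanescent $\varphi_n$ integrates to zero over $I$. One also has to keep the orientation of the normals and the branch convention for the complex square root fixed after (\ref{DefModes}) consistent throughout, since a sign slip there would spoil both conclusions. Once these bookkeeping points are settled, both statements follow at once from the two flux identities above.
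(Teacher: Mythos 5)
Your proposal is correct and follows essentially the same route as the paper: Green's identity on the truncated guide $\Om_l$, with the bilinear (unconjugated) pairing of $u_+$ and $u_-$ giving $T_+=T_-$, and the sesquilinear pairings $(u_\pm,u_\pm)$ and $(u_\pm,u_\mp)$ giving $|R_\pm|^2+|T_\pm|^2=1$ and $R_\pm\overline{T_\mp}+T_\pm\overline{R_\mp}=0$, hence unitarity. The points you flag as delicate (exponential decay of the evanescent tails, transverse orthogonality of the $\varphi_n$, orientation of the normals) are exactly the ingredients the paper relies on when passing to the limit $l\to+\infty$.
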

\begin{proof}
For $l>0$, set $\Om_{l}\coloneqq\{(x,y)\in\Om\,|\,|x|<l\}$, $\Sigma_{\pm l}\coloneqq\{\pm l\}\times I$. We have
\[
0=\int_{\Om_{l}} (\Delta u_++k^2u_+)u_--u_+(\Delta u_-+k^2u_-)\,dxdy=\int_{\Sigma_{l}\cup\Sigma_{-l}}\cfrac{\partial u_+}{\partial \nu}\,u_--u_+\cfrac{\partial u_-}{\partial \nu}\,dy.
\]
Using decompositions (\ref{RepresentationupD}), (\ref{RepresentationumD}) in the above identity and taking the limit $l\to+\infty$, we obtain $0=2i\beta_1(T_+-T_-)$, which gives $T_+=T_-$. Then working similarly from the identities
\[
0=\int_{\Om_{l}} (\Delta u_\pm+k^2u_\pm)\overline{u_\pm}-u_\pm(\Delta \overline{u_\pm}+k^2\overline{u_\pm})\,dxdy=\int_{\Sigma_{l}\cup\Sigma_{-l}}\cfrac{\partial u_\pm}{\partial \nu}\,\overline{u_\pm}-u_\pm\cfrac{\partial \overline{u_\pm}}{\partial \nu}\,dy,
\]
one establishes the relations of conservation of energy
\begin{equation}\label{ConservationNRJ_Dirichlet}
|R_\pm|^2+|T_\pm|^2=1.
\end{equation}
Finally, by using that 
\[
0=\int_{\Om_{l}} (\Delta u_\pm+k^2u_\pm)\overline{u_\mp}-u_\pm(\Delta \overline{u_\mp}+k^2\overline{u_\mp})\,dxdy=\int_{\Sigma_{l}\cup\Sigma_{-l}}\cfrac{\partial u_\pm}{\partial \nu}\,\overline{u_\mp}-u_\pm\cfrac{\partial \overline{u_\mp}}{\partial \nu}\,dy,
\]
one gets 
\begin{equation}\label{CheckUnitarity}
R_{\pm}\overline{T_{\mp}}+T_{\pm}\overline{R_{\mp}}=0.
\end{equation}
Identities (\ref{ConservationNRJ_Dirichlet}) and (\ref{CheckUnitarity}) ensure that $\mathbb{S}\,\overline{\mathbb{S}}^{\top}=\mrm{Id}_{2\times2}$.

\end{proof}
\noindent In the following, we simply set $T\coloneqq T_+=T_-$ so that we have
\[
\mathbb{S}=\left( 
\begin{array}{cc}
R_+ & T \\[2pt]
T & R_- 
\end{array}
\right).
\]
In our study, we will need some formulas expressing the values of the scattering coefficients $R_\pm$, $T$ with respect to $u_\pm$.
\begin{proposition}\label{PropoRepresentationD}
For $k\in(\pi;2\pi)$, the coefficients $R_{\pm}$, $T$ appearing in (\ref{RepresentationupD}), (\ref{RepresentationumD}) in the decompositions of $u_+$, $u_-$ satisfy
\[
R_{\pm}=\cfrac{1}{2i\beta_1}\dsp\int_{\Sigma_{L}\cup\Sigma_{-L}}\cfrac{\partial u_\pm}{\partial \nu}\,\overline{w_\mp}-u_\pm\cfrac{\partial \overline{w_\mp}}{\partial \nu}\,dy\quad \mbox{ and }\quad T-1=\cfrac{1}{2i\beta_1}\dsp\int_{\Sigma_{L}\cup\Sigma_{-L}}\cfrac{\partial u_\pm}{\partial \nu}\,\overline{w_\pm}-u_\pm\cfrac{\partial \overline{w_\pm}}{\partial \nu}\,dy.
\]
\end{proposition}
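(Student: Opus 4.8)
The plan is to prove the four formulas by directly inserting the modal decompositions (\ref{RepresentationupD}) and (\ref{RepresentationumD}) into the boundary integrals over $\Sigma_{\pm L}$ and exploiting the orthonormality of the transverse basis $(\varphi_n)_{n\in\N^\ast}$. First I would record two elementary facts. Since $\varphi_1$ is real valued and $\beta_1=\sqrt{k^2-\pi^2}\in\R$ for $k\in(\pi;2\pi)$, the conjugates of the propagating modes are again propagating modes, namely $\overline{w_\pm}=w_\mp$, and they still solve $\Delta\,\cdot+k^2\,\cdot=0$ in $\mathcal{S}$. In particular the integrals in the statement pair $u_\pm$ only against the \emph{propagating} transverse profile $\varphi_1$.

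Then I would introduce, for two functions $a,b$ defined on a transverse section $\{x_0\}\times I$, the Wronskian-type pairing
\[
q_{x_0}(a,b)=\int_I \big(\partial_x a\,b-a\,\partial_x b\big)\,dy.
\]
A one-line computation based on $(\varphi_m,\varphi_n)_{\mL^2(I)}=\delta_{m,n}$ shows that $q_{x_0}\big(e^{i\gamma x}\varphi_m,e^{i\delta x}\varphi_n\big)=i(\gamma-\delta)\,e^{i(\gamma+\delta)x_0}\,\delta_{m,n}$. Hence the pairing couples only equal transverse indices, so all evanescent contributions carried by $\tilde u_\pm$ (which involve the $\varphi_n$ with $n\ge2$, whereas $w_\pm$ involves only $\varphi_1$) drop out. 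Moreover, for the propagating mode one gets $q(w_+,w_+)=q(w_-,w_-)=0$, while $q(w_+,w_-)=2i\beta_1$ and $q(w_-,w_+)=-2i\beta_1$, each independently of $x_0$.

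With these ingredients the computation is immediate. Keeping in mind that $\partial_\nu=\partial_x$ on $\Sigma_L$ and $\partial_\nu=-\partial_x$ on $\Sigma_{-L}$, so that the two section integrals equal $q_L(u_+,\cdot)$ and $-q_{-L}(u_+,\cdot)$ respectively, I would plug the representation (\ref{RepresentationupD}) of $u_+$ into the integrand. For $R_+$ one tests against $\overline{w_-}=w_+$: on $\Sigma_L$ only the term $T_+w_+$ survives and gives $q(w_+,w_+)=0$, whereas on $\Sigma_{-L}$ the surviving part $w_++R_+w_-$ produces $-\big(q(w_+,w_+)+R_+q(w_-,w_+)\big)=2i\beta_1R_+$, which yields the announced formula after dividing by $2i\beta_1$. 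For $T-1$ one tests against $\overline{w_+}=w_-$: now $\Sigma_L$ contributes $T_+q(w_+,w_-)=2i\beta_1T_+$ and $\Sigma_{-L}$ contributes $-\big(q(w_+,w_-)+R_+q(w_-,w_-)\big)=-2i\beta_1$, whose sum is $2i\beta_1(T_+-1)$. The formulas for $R_-$ and for $T-1$ in terms of $u_-$ follow identically from (\ref{RepresentationumD}) (or by the reflection $x\mapsto-x$), using $T=T_+=T_-$ from Proposition \ref{PropoStructureS}.

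The computation is essentially bookkeeping, so the only points deserving care are the two genuine mechanisms that make it work: the cancellation $q(w_\pm,w_\pm)=0$ of the same-direction pairing (which is precisely why the reflected/transmitted amplitude rather than its travelling counterpart is selected) and the orthogonality of the $(\varphi_n)$, which both discards the evanescent part $\tilde u_\pm$ and removes any spurious $e^{\pm 2i\beta_1 L}$ factors, so that the result is independent of $L$, as it must be. The main thing to watch — the likeliest source of error — is the sign flip of $\partial_\nu$ between $\Sigma_L$ and $\Sigma_{-L}$; mishandling it would interchange the roles of reflection and transmission.
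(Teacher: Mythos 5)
Your proposal is correct and follows essentially the same route as the paper: both insert the modal decompositions (\ref{RepresentationupD}), (\ref{RepresentationumD}) into the boundary integrals, use $\overline{w_\pm}=w_\mp$ and the orthonormality of $(\varphi_n)$ to discard the evanescent part $\tilde u_\pm$, and evaluate the surviving propagating-mode pairings with the correct sign of $\partial_\nu$ on $\Sigma_{\pm L}$. Your Wronskian pairing $q_{x_0}(\cdot,\cdot)$ is merely a tidier bookkeeping device for the explicit computation the paper writes out term by term.
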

\begin{proof}
From (\ref{RepresentationupD}), we find on $\Sigma_{-L}$
\[
\cfrac{\partial u_+}{\partial \nu}=-\cfrac{\partial u_+}{\partial x}= -i\beta_1 (w_+-R_+\,w_-)-\partial_x\tilde{u}_x,\qquad\quad \cfrac{\partial \overline{w_-}}{\partial \nu}=-\cfrac{\partial w_+}{\partial x}= -i\beta_1 w_+,
\]
and on $\Sigma_{L}$ 
\[
\cfrac{\partial u_+}{\partial \nu}=\cfrac{\partial u_+}{\partial x}= i\beta_1 T \,w_++\partial_x\tilde{u}_x,\qquad\quad \cfrac{\partial \overline{w_-}}{\partial \nu}=\cfrac{\partial w_+}{\partial x}= i\beta_1 w_+.
\]
By exploiting that the exponentially decaying modes appearing in the decomposition of $\tilde{u}$ are orthogonal to $w_\pm$ in $\mL^2(\Sigma_{\pm L})$, we obtain
\[
\begin{array}{ll}
&\dsp\cfrac{1}{2i\beta_1}\dsp\dsp\int_{\Sigma_{L}\cup\Sigma_{-L}}\cfrac{\partial u_+}{\partial \nu}\,\overline{w_-}-u_+\cfrac{\partial \overline{w_-}}{\partial \nu}\,dy \\[10pt]
=&\dsp\cfrac{1}{2}\dsp\dsp\int_{\Sigma_{-L}}- (w_+-R_+\,w_-)w_++(w_++R_+\,w_-) w_+\,dy+\cfrac{1}{2}\dsp\dsp\int_{\Sigma_{L}} T\,(w_+^2)- T\,(w_+^2)\,dy=R_+.
\end{array}
\]
Formulas for $R_-$ and $T$ are obtained in a similar manner. 
\end{proof}

\subsection{Numerical approximation}

Below we will have to compute numerical approximations of the quantities $u_\pm$, in particular to obtain $\mathbb{S}$. There are ``three infinities''  which are unpleasant to solve (\ref{WaveguidePbSca}) with a computer. First, the domain $\Om$ is unbounded. To face this difficulty, we will exploit the above analysis and consider a formulation set in $\Om_L$ involving the Dirichlet-to-Neumann operators similar to (\ref{PbNumBounded}). Second, $\mH^1_0(\Om_L;\Gamma)$ is a space of infinite dimension. 
We will work with a finite element method and solve a variational formulation in a space of finite dimension. Third, the radiation condition involves an infinite number of terms. Quite naturally, we will truncate the series appearing in the Dirichlet-to-Neumann operators at rank $M>N$ where $N$ is the number of propagating modes.\\
\newline
To set ideas, assume that $k\in(\pi;2\pi)$ and consider the approximation of $u_+$. Since $u_+-w_+$ is outgoing, we have the conditions
\[
\cfrac{\partial (u_+-w_+)}{\partial \nu} =  \Lambda_\pm(u_+-w_+) \quad \mbox{ on }\Sigma_{\pm L}.
\]
This gives 
\[
\cfrac{\partial u_+}{\partial \nu} =  \Lambda_\pm(u_+)+\cfrac{\partial w_+}{\partial \nu}-\Lambda_\pm(w_+) \quad \mbox{ on }\Sigma_{\pm L}.
\]
Since $w_+$ is rightgoing, using the definitions (\ref{DefDTN}) of $\Lambda_\pm$, we obtain 
\[
\partial_\nu w_+-\Lambda_+(w_+)=0\quad\mbox{ on }\Sigma_{L},\qquad\qquad \partial_\nu w_+-\Lambda_-(w_+)=-2i\beta_1w_+\quad\mbox{ on }\Sigma_{-L}.
\]
Thus $u_+$ solves the problem
\begin{equation}\label{DefVariaD}
\begin{array}{|l}
\mbox{Find }u_+\in\mH^1_0(\Om_L;\Gamma) \mbox{ such that for all }v\in\mH^1_0(\Om_L;\Gamma),\\[3pt]
\dsp\int_{\Om_L}\hspace{-0.2cm}\nabla u_+\cdot\nabla\overline{v}-k^2 u\overline{v}\,dxdy-\langle\Lambda_+(u_+),v \rangle_{\Sigma_L}-\langle\Lambda_-(u_+),v \rangle_{\Sigma_{-L}}=-2i\beta_1\int_{\Sigma_{-L}}\hspace{-0.2cm}w_+\overline{v}\,dy.
\end{array}
\end{equation}
Note that (\ref{DefDTN}) yields
\[
\langle\Lambda_\pm(u),v\rangle_{\Sigma_{\pm L}} = \dsp\sum_{n=1}^{+\infty} i\beta_n\, (u,\varphi_n)_{\mL^2(\Sigma_{\pm L})}\,\overline{(v,\varphi_n)_{\mL^2(\Sigma_{\pm L})}}.
\]
Now introduce $(\mathcal{T}_h)_h$ a shape regular family of triangulations of $\overline{\Omega_L}$ (in other words, we mesh the domain $\overline{\Omega_L}$ with triangles). Define the family of Lagrange finite element spaces
\begin{equation}\label{DefVh}
\mV_h \coloneqq \left\{v\in \mH^1_0(\Om_L;\Gamma)\mbox{ such that }v|_{\tau}\in \mathbb{P}_q(\tau)\mbox{ for all }\tau\in\mathcal{T}_h\right\},
\end{equation}
where $\mathbb{P}_q(\tau)$ is the space of polynomials of degree at most $q$ on the triangle $\tau$. Finally, the problem we solve writes
\begin{equation}\label{FormuNumD}
\begin{array}{|l}
\mbox{Find }u^h_+\in\mV_h \mbox{ such that for all }v^h\in\mV_h,\\[3pt]
\dsp\int_{\Om_L}\hspace{-0.2cm}\nabla u^h_+\cdot\nabla\overline{v^h}-k^2 u^h_+\overline{v^h}\,dxdy-
 \dsp\sum_{n=1}^{M} i\beta_n\, (u^h_+,\varphi_n)_{\mL^2(\Sigma_{ L})}\,\overline{(v^h,\varphi_n)_{\mL^2(\Sigma_{ L})}}\\[10pt]
\hspace{4.95cm}-\dsp\sum_{n=1}^{M} i\beta_n\, (u^h_+,\varphi_n)_{\mL^2(\Sigma_{ -L})}\,\overline{(v^h,\varphi_n)_{\mL^2(\Sigma_{-L})}}
=-2i\beta_1\int_{\Sigma_{-L}}\hspace{-0.3cm}w_+\overline{v^h}\,dy.
\end{array}\hspace{-0.8cm}
\end{equation}
One can show that for $h$ small enough, $L$ large enough (actually one has exponential convergence with respect to $L$), $u^h_+$ yields a good approximation of $u_+$. Then replacing $u_+$ by $u^h_+$ in the exact formulas 
\begin{equation}\label{ScndDefRT}
R_+ = \int_{\Sigma_{-L}} (u_+-w_+)\,w_+\,dy,\qquad T = \int_{\Sigma_{L}} u_+w_-\,dy,
\end{equation}
we obtain good approximations of the scattering coefficients. Let us emphasize that numerically, it is more interesting to use expressions (\ref{ScndDefRT}) than the ones provided by Proposition \ref{PropoRepresentationD} for $R_+$, $T$ because they offer better precision. Identities of Proposition \ref{PropoRepresentationD} will be useful for theoretical purposes.

\section{Neumann problem}

In acoustics, we are led to study the same Helmholtz equation as in (\ref{WaveguidePb}) but with homogeneous Neumann BCs which model sound hard walls. This yields the problem 
\begin{equation}\label{WaveguidePbNeumann}
\begin{array}{|rcll}
\Delta u+k^2u&=&0&\mbox{ in }\Om \\[2pt]
\partial_{\nu}u&=&0&\mbox{ on }\partial\Om,
\end{array}
\end{equation}
where $u$ stands for the pressure of the fluid in $\Om$ and $\partial_{\nu}$ refers to the outward normal derivative on $\partial\Om$. As seen in the study of the Dirichlet case, the modes play a key role in the analysis. This time, we need to compute the solutions of (\ref{WaveguidePbNeumann}) with separate variables in the reference strip $\mathcal{S}$. Reproducing what has been done in \S\ref{ParagraphModesD}, we find that they coincide with the family $\{w^\pm_n\}_{n\in\N}$ where 
\begin{equation}\label{ModesNeumann}
w^\pm_n(x,y)=e^{\pm i\beta_n x}\varphi_n(y),\qquad\beta_n\coloneqq\sqrt{k^2-n^2\pi^2},\quad\varphi_n(y)=\begin{array}{|ll}
1 & \mbox{ if }n=0\\
\sqrt{2}\cos(n\pi y) & \mbox{ for }n>0.
\end{array}
\end{equation}
In particular for all $k>0$, we have $w^\pm_0(x,y)=e^{\pm ikx}$, which shows that unlike the Dirichlet case, propagating modes always exist (this is also related to the fact that we have no Poincar\'e inequality as (\ref{EstimatePoinc1D}) in $\mH^1(I)$). As a consequence, for all $k>0$, $\mH^1(\Om)$ is not an adapted functional framework to study (\ref{ModesNeumann}). The solution must decompose on the propagating modes and radiation conditions must be imposed to select the outgoing behavior. The method, based in particular on the limiting absorption principle, is completely similar to what has been done in \S\ref{ParaPbDissip}, \ref{ParaLimiting}, \ref{ParaLimitingAbs}. We do not detail it and instead simply present the main results concerning the corresponding scattering problem. \\
\newline 
To stick to the simplest setting, we assume that $k$ belongs to $(0;\pi)$ so that only the modes $w^\pm_0$ can propagate. We denote them by $w_\pm$, so that
\begin{equation}\label{Defwpm}
w_\pm(x,y)=e^{\pm i k x}.
\end{equation}
Note that $w_\pm$ are plane waves, they do not depend on the variable $y$ (which was not the case for the Dirichlet problem). The scattering of the rightgoing wave $w_+$ in $\Om$ leads to consider the solution $u_+\in\mH^1_{\loc}(\Om)$ of (\ref{WaveguidePbNeumann}) admitting the expansion
\begin{equation}\label{RepresentationNeumannP}
u_+=\begin{array}{|ll}
w_++R_+w_-+\tilde{u}_+ & \mbox{ for }x < - L \\[3pt]
\phantom{w_+mi}T_+w_++\tilde{u}_+ & \mbox{ for }x>L,
\end{array}
\end{equation}
with $R_+$, $T_+\in\Cplx$ and $\tilde{u}_+\in\mH^1(\Om)$. More precisely, by adapting what has been done in \S\ref{ParaScaPb}, one can show that (\ref{WaveguidePbNeumann}) always admits a solution with the expansion (\ref{RepresentationNeumannP}). This solution is uniquely defined if trapped modes (solutions of (\ref{WaveguidePbNeumann}) in $\mH^1(\Om)$) do not exist. Besides, by working as in Proposition \ref{PropoTrappedModes}, one shows that trapped modes, if they exist, decay as $O(e^{-\sqrt{\pi^2-k^2}|x|})$ as $x\to\pm\infty$ so that the scattering coefficients $R_+$, $T_+$ in (\ref{RepresentationNeumannP}) are always uniquely defined.\\
\newline
Similarly, the scattering of the leftgoing plane wave $w_-$ in $\Om$ leads to consider the solution $u_-\in\mH^1_{\loc}(\Om)$ of (\ref{WaveguidePbNeumann}) admitting the expansion
\begin{equation}\label{RepresentationNeumannM}
u_-=\begin{array}{|ll}
\phantom{w_+mi}T_-w_-+\tilde{u}_- & \mbox{ for }x < - L \\[3pt]
w_-+R_-w_++\tilde{u}_- & \mbox{ for }x>L,
\end{array}
\end{equation}
with $R_-$, $T_-\in\Cplx$ and $\tilde{u}_-\in\mH^1(\Om)$.\\
\newline
As in Proposition \ref{PropoStructureS}, one proves that $T_+=T_-$ and we set $T\coloneqq T_+=T_-$. The scattering matrix 
\begin{equation}\label{DefScaMatN}
\mathbb{S}=\left( 
\begin{array}{cc}
R_+ & T \\[2pt]
T & R_- 
\end{array}
\right)\in\Cplx^{2\times2}
\end{equation}
is symmetric and unitary ($\mathbb{S}\,\overline{\mathbb{S}}^{\top}=\mrm{Id}_{2\times2}$). In particular, we have the relations of conservation of energy
\begin{equation}\label{ConservationNRJ_Neumann}
|R_\pm|^2+|T|^2=1.
\end{equation}
As in the proof of Proposition \ref{PropoRepresentationD}, one establishes the following statement:
\begin{proposition}\label{PropoRepresentationN}
For $k\in(0;\pi)$, the coefficients $R_{\pm}$, $T$ appearing in (\ref{RepresentationNeumannP}), (\ref{RepresentationNeumannM}) in the decompositions of $u_+$, $u_-$ satisfy
\[
R_{\pm}=\cfrac{1}{2ik}\dsp\int_{\Sigma_{L}\cup\Sigma_{-L}}\cfrac{\partial u_\pm}{\partial \nu}\,\overline{w_\mp}-u_\pm\cfrac{\partial \overline{w_\mp}}{\partial \nu}\,dy\quad \mbox{ and }\quad T-1=\cfrac{1}{2ik}\dsp\int_{\Sigma_{L}\cup\Sigma_{-L}}\cfrac{\partial u_\pm}{\partial \nu}\,\overline{w_\pm}-u_\pm\cfrac{\partial \overline{w_\pm}}{\partial \nu}\,dy.
\]
\end{proposition}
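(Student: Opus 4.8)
The plan is to mimic the proof of Proposition~\ref{PropoRepresentationD}, the only structural change being that the unique propagating mode is now carried by the constant transverse profile $\varphi_0\equiv1$ (so that $\beta_0=k$ plays the role of $\beta_1$), while the remainder $\tilde u_\pm$ expands on the evanescent profiles $\varphi_n=\sqrt{2}\cos(n\pi y)$, $n\ge1$. I would establish the formula for $R_+$ first, starting from the decomposition (\ref{RepresentationNeumannP}) of $u_+$; the expressions for $R_-$ and for $T-1$ then follow from the same computation, using (\ref{RepresentationNeumannM}) where $u_-$ is involved.

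First I would evaluate the integrand section by section, using $\partial_\nu=-\partial_x$ on $\Sigma_{-L}$ and $\partial_\nu=+\partial_x$ on $\Sigma_{L}$, together with $\partial_x w_\pm=\pm ik\,w_\pm$ and the conjugation rules $\overline{w_\pm}=w_\mp$ (valid since $k\in\R$). Inserting into
\[
\frac{\partial u_+}{\partial\nu}\,\overline{w_-}-u_+\,\frac{\partial\overline{w_-}}{\partial\nu}
\]
the two local forms of $u_+$ (namely $w_++R_+w_-+\tilde u_+$ on $\Sigma_{-L}$ and $T w_++\tilde u_+$ on $\Sigma_{L}$), the key algebraic simplification is that the self-interaction terms proportional to $w_\pm^2=e^{\pm2ikx}$ cancel identically, whereas the mixed products obey $w_+w_-\equiv1$ and therefore survive. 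A direct bookkeeping yields a contribution $2ik\,R_+$ from $\Sigma_{-L}$ and a purely $\tilde u_+$-dependent contribution from $\Sigma_{L}$.

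The only genuinely non-routine point is the treatment of the remainder $\tilde u_+$, and I expect this to be the main (though mild) obstacle. Here I would invoke that $\tilde u_+$ expands on the transverse profiles $\varphi_n$ with $n\ge1$ only, each of which is $\mL^2(I)$-orthogonal to the constant mode $\varphi_0\equiv1$; since $w_\pm$ equals, up to the $x$-dependent scalar $e^{\pm ikx}$, exactly this constant mode, every term of the form $\int_I\tilde u_+\,w_\pm\,dy$ and $\int_I\partial_x\tilde u_+\,w_\pm\,dy$ vanishes on each section. After this orthogonality cleanup, integrating over $\Sigma_{-L}\cup\Sigma_L$ and dividing by $2ik$ produces exactly $R_+$. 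Running the same computation with $\overline{w_+}=w_-$ in place of $\overline{w_-}$ gives a contribution $-2ik$ from $\Sigma_{-L}$ and $2ik\,T$ from $\Sigma_{L}$, hence the stated formula for $T-1$; the identities involving $u_-$ are obtained verbatim from (\ref{RepresentationNeumannM}) by exchanging the roles of the two sections. Apart from careful sign tracking and this orthogonality argument, the computation is entirely parallel to the Dirichlet case.
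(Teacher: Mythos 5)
Your proposal is correct and follows essentially the same route as the paper, which proves this proposition exactly by transposing the argument of Proposition \ref{PropoRepresentationD}: evaluate the integrand on each section using the local decompositions of $u_\pm$, observe that the self-interaction terms cancel while the mixed products $w_+w_-\equiv1$ survive, and kill the remainder via $\mL^2(I)$-orthogonality of the evanescent profiles $\varphi_n$, $n\ge1$, to the propagating one. Your bookkeeping (contribution $2ik\,R_+$ from $\Sigma_{-L}$ and $0$ from $\Sigma_L$ for the reflection formula; $-2ik$ and $2ik\,T$ for the transmission formula) checks out.
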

\noindent Similarly to (\ref{DefVariaD}), one finds that $u_+$ solves the problem
\[
\begin{array}{|l}
\mbox{Find }u_+\in\mH^1(\Om_L) \mbox{ such that for all }v\in\mH^1(\Om_L),\\[3pt]
\dsp\int_{\Om_L}\hspace{-0.2cm}\nabla u_+\cdot\nabla\overline{v}-k^2 u\overline{v}\,dxdy-\langle\Lambda_+(u_+),v \rangle_{\Sigma_L}-\langle\Lambda_-(u_+),v \rangle_{\Sigma_{-L}}=-2ik\int_{\Sigma_{-L}}\hspace{-0.2cm}w_+\overline{v}\,dy
\end{array}
\]
with 
\[
\langle\Lambda_\pm(u),v\rangle_{\Sigma_{\pm L}} = \dsp\sum_{n=0}^{+\infty} i\beta_n\, (u,\varphi_n)_{\mL^2(\Sigma_{\pm L})}\,\overline{(v,\varphi_n)_{\mL^2(\Sigma_{\pm L})}}.
\]
\noindent For the numerics, we work with a formulation as in (\ref{FormuNumD}), namely
\[
\begin{array}{|l}
\mbox{Find }u^h_+\in\mV_h \mbox{ such that for all }v^h\in\mV_h,\\[3pt]
\dsp\int_{\Om_L}\hspace{-0.2cm}\nabla u^h_+\cdot\nabla\overline{v^h}-k^2 u^h_+\overline{v^h}\,dxdy-
 \dsp\sum_{n=0}^{M} i\beta_n\, (u^h_+,\varphi_n)_{\mL^2(\Sigma_{ L})}\,\overline{(v^h,\varphi_n)_{\mL^2(\Sigma_{ L})}}\\[10pt]
\hspace{4.95cm}-\dsp\sum_{n=0}^{M} i\beta_n\, (u^h_+,\varphi_n)_{\mL^2(\Sigma_{ -L})}\,\overline{(v^h,\varphi_n)_{\mL^2(\Sigma_{-L})}}
=-2ik\int_{\Sigma_{-L}}\hspace{-0.3cm}w_+\overline{v^h}\,dy,
\end{array}\hspace{-0.8cm}
\]
where this time $\mV_h$ is a space of the form 
\[
\mV_h \coloneqq \left\{v\in \mH^1(\Om_L)\mbox{ such that }v|_{\tau}\in \mathbb{P}_q(\tau)\mbox{ for all }\tau\in\mathcal{T}_h\right\}.
\]
Finally, for approximating the scattering coefficients, we replace $u_+$ by $u^h_+$ in the exact formulas 
\[
R_+ = \int_{\Sigma_{-L}} (u_+-w_+)\,w_+\,dy,\qquad T = \int_{\Sigma_{L}} u_+w_-\,dy.
\]
We emphasize that for all the results presented in this section, the modes are the ones obtained in (\ref{ModesNeumann}).

\section{Invisibility questions}

\noindent In the following, our general goal will be to find situations, by playing with the geometry, the wavenumber $k$, ... where we have some sort of invisibility, that is scattering features as if there were no obstacle in the waveguide.\\
\newline 
The weakest invisibility that one can look for is $R_\pm=0$. In the sequel, we shall say that one has zero reflection or that the defect is non reflecting. In such a situation, an observer generating an incident plane wave, located a bit far from the defect in the geometry and measuring the resulting backscattering field will only measure the evanescent component. Due to noise, one will get something similar to the field in the reference strip and therefore will be unable to detect the presence of the obstacle. Note that due to conservation of energy ((\ref{ConservationNRJ_Dirichlet}) or (\ref{ConservationNRJ_Neumann})), the fact that $R_\pm=0$ implies $|T|=1$ and so $T=e^{i\theta}$ for a certain $\theta\in\R$. As a consequence in general there is a phase shift in the transmitted field which can reveal the presence of the defect if one probes the field in that part of the guide.\\
\newline
A more demanding definition of invisibility is to have $T=1$. In that situation, we shall say that the defect is perfectly invisible or that we have perfect transmission without phase shift.

\begin{figure}[!ht]
\centering
\includegraphics[height=4cm]{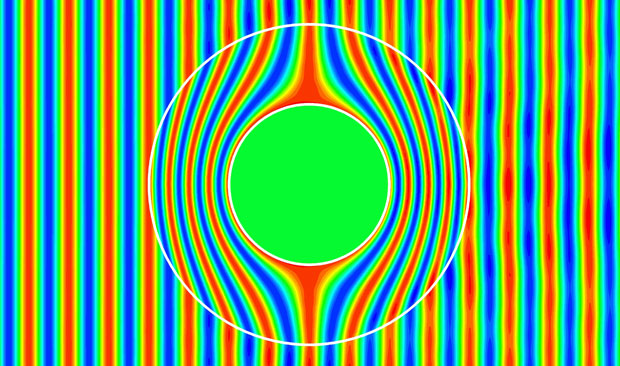}
\caption{Cloaking by transformation optics. \label{TOcloaking}}
\end{figure}

\noindent  The problem of cloaking an object has a large number of applications and has been the subject of intense studies over the last decade in the theory of waves propagation. Different notions of invisibility exist in literature. Our approach for example is different from the cloaking via transformation optics pictured in Figure \ref{TOcloaking} (see \cite{PeSS06,Leon06}). Let us describe this latter device. Imagine that one wish to hide a given obstacle. One technique consists in surrounding it by a well chosen penetrable material, localized in the annulus around the green region on the picture, so that an incoming wave leaves the whole device as if there were nothing. Said differently, on the picture, an observer probing the field outside of the larger disk marked by the white thin circle obtains the same measurements as in free space and so cannot detect the presence of the obstacle. Mathematically, this idea is quite simple to implement, it boils down to a change of variable. However this change of variable is singular and for this reason the physical parameters of the \textit{ad hoc} material in the cloaking device should take infinite values. For this reason, for the moment designing such structures, even working with metamaterials, is still unreachable, in particular due to the presence of important losses. \\
\newline
What we propose is less ambitious because we only wish to control the scattering coefficients and not to act on the evanescent component of the field. In short, what we aim for is only cloaking at infinity. For this reason, it is more easily doable. Actually, we will not even need to play with penetrable materials. We will see that by working with a homogeneous material and acting only on the geometry is sufficient. On the other hand, though our setting is less ambitious, it is still relevant in numerous applications. Indeed, the evanescent part of the field that we neglect is exponentially decaying at infinity and therefore is really difficult to distinguish from noise a few wavelengths far from the obstacle. Finally, though we simply wish to control a finite number of complex coefficients, this problem is not trivial because the link between the variation of the parameters (geometry, $k$, ...) and the variation of the scattering coefficients is non linear and not explicit. Additionally, let us emphasize that due to the fact that there is no coercivity in the problem, optimization methods fail due to the presence of local minima.\\
\newline
In the next three chapters, we present several ideas to reach invisibility.

\chapter{Invisible perturbations of the reference geometry}\label{chapterContinuation}
\setcounter {section} {0}

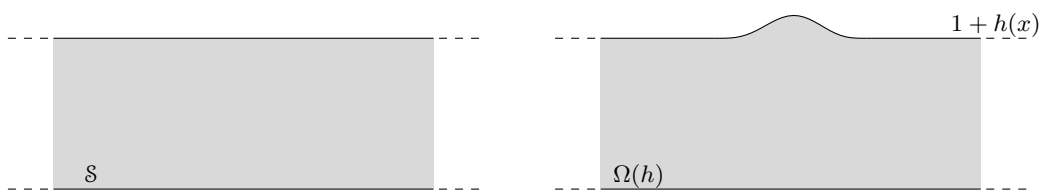
\begin{figure}[!ht]
\centering
\begin{tikzpicture}[scale=1]
\draw[fill=gray!30,draw=none](-2.5,0) rectangle (2.5,2);
\draw (-2.5,0)--(2.5,0);
\draw (-2.5,2)--(2.5,2);
\draw [dashed](-3.1,0)--(-2.5,0);
\draw [dashed](3.1,0)--(2.5,0);
\draw [dashed](-3.1,2)--(-2.5,2);
\draw [dashed](3.1,2)--(2.5,2);
\node at (-2,0.1) [anchor=base] {\footnotesize$\mathcal{S}$};
\begin{scope}[xshift=7.2cm]
\draw[fill=gray!30,draw=none](-2.5,0) rectangle (2.5,2);
\draw (-2.5,0)--(2.5,0);
\draw (-2.5,2)--(-1,2);
\draw (1,2)--(2.5,2);
\draw [dashed](-3.1,0)--(-2.5,0);
\draw [dashed](3.1,0)--(2.5,0);
\draw [dashed](-3.1,2)--(-2.5,2);
\draw [dashed](3.1,2)--(2.5,2);
\draw[gray!30] (-0.8,2)--(0.8,2);
\draw[samples=30,domain=-1:1,draw=black,fill=gray!30] plot(\x,{2+0.1*(\x+1)^4*(\x-1)^4*(\x+3)});
\node at (2.7,2.1) [anchor=base] {\footnotesize$1+ h(x)$};
\node at (-2,0.1) [anchor=base] {\footnotesize$\Om(h)$};
\end{scope}
\end{tikzpicture}
\caption{Reference strip $\mathcal{S}$ (left) and perturbed waveguide $\Om(h)$ (right). \label{PerturbedOm}}
\end{figure}

\noindent In this chapter, we work with techniques of perturbations to construct invisible obstacles. The idea, proposed in the article \cite{BoNa13}, is as follows: in the reference strip $\mathcal{S}=\R\times I$, we have no reflection and perfect transmission, the total field is equal to the incident field, and so $R_\pm=0$, $T=1$. How to slightly modify the geometry while keeping these values for the scattering coefficients? To proceed, we will adapt the proof of the implicit function theorem.

\section{General scheme}\label{ParaGeneScheme}

Let us describe the method for the simplest problem, namely obtaining $R_\pm=0$. At this stage, the approach is the same whether one considers Problem (\ref{WaveguidePbSca}) with Dirichlet BCs or Problem (\ref{WaveguidePbNeumann}) with Neumann BCs (pick $k\in(\pi;2\pi)$  in the first case, $k\in(0;\pi)$ in the second situation). 
Let us focus our attention on $R_+$ and simply write $R$ instead of $R_+$. Note that from conservation of energy (see relations (\ref{ConservationNRJ_Dirichlet})), $R+=0$ implies $R_-=0$. Consider some real valued profile function $h\in\mathscr{C}^{\infty}_0(\R)$ and let $\Om(h)$ be the waveguide whose upper boundary coincides with the graph of the function $1+h$ (see Figure \ref{PerturbedOm} right). Note that we make some assumption of smoothness here for simplicity but $\mathscr{C}^{2}_0(\R)$ would be enough. Let $R(h)$ be the reflection coefficient of the scattering solution $u_+$ in the geometry $\Om(h)$. Importantly, we have $R(0)=0$ because when $h\equiv0$, $\Om(h)$ is simply the reference strip $\mathcal{S}$. With this notation, the problem we consider writes
\[
\begin{array}{|l}
\mbox{Find }h\not\equiv0 \mbox{ such that }\\[2pt]
R(h)=0.
\end{array}
\]
Let us look for non reflecting geometries which are small perturbations of $\mathcal{S}$. To proceed, let us look for $h=\eps \mu$ with $\eps>0$ small and $\mu\in\mathscr{C}^{\infty}_0(\R)$ to be determined. Since $\eps$ is small, we can write an asymptotic expansion of $R$ with respect to $\eps$. We obtain
\begin{equation}\label{TaylorR}
\begin{array}{rcl}
R(\eps\mu)&=&R(0)+\eps dR(0)(\mu)+\eps^2 \tilde{R}(\eps\mu)\\[3pt]
&=& \eps dR(0)(\mu)+\eps^2 \tilde{R}(\eps\mu)
\end{array}
\end{equation}
where $dR(0)(\mu)$ stands for the differential of $R$ at zero in the direction $\mu$ and $\tilde{R}(\eps\mu)$ is an abstract remainder. Since $dR(0):\mathscr{C}^\infty_0(\R)\to\Cplx$ is a linear map from a space of infinite dimension to a space of dimension two (remember that we work with real valued functions $h$), we have $\dim\,\ker\,dR(0)=+\infty$. Therefore, we can pick $\mu_0\not\equiv0$ such that 
\begin{equation}\label{DefBasis0}
dR(0)(\mu_0)=0.
\end{equation}
By choosing $h=\eps\mu=\eps\mu_0$, we obtain a perturbation of order $\eps$ which produces a reflection in $O(\eps^2)$. This is interesting because this is almost zero reflection but not completely satisfactory yet. To compensate for the remainder, we need to work a bit more. Below we will explain how to compute $dR(0)$ and prove that $dR(0):\mathscr{C}^\infty_0(\R)\to\Cplx$ is onto. This allows us to introduce $\mu_1\in\mathscr{C}^\infty_0(\R)$ and $\mu_2\in\mathscr{C}^\infty_0(\R)$ such that 
\begin{equation}\label{DefBasis}
dR(0)(\mu_1)=1\qquad\mbox{ and }\qquad dR(0)(\mu_2)=i.
\end{equation}
Finally, we look for $\mu$ of the form
\[
\mu=\mu_0+\tau_1\mu_1+\tau_2\mu_2
\]
where $\tau_1$, $\tau_2\in\R$ are parameters to tune. Inserting this $\mu$ in the expansion (\ref{TaylorR}), to get $R(\eps \mu)=0$, we see that we must have
\[
0= \eps dR(0)(\mu_0+\tau_1\mu_1+\tau_2\mu_2)+\eps^2 \tilde{R}(\eps(\mu_0+\tau_1\mu_1+\tau_2\mu_2)).
\]
By exploiting (\ref{DefBasis}), this yields 
\[
0=\tau_1+i\tau+\eps \tilde{R}(\eps(\mu_0+\tau_1\mu_1+\tau_2\mu_2)).
\]
In other words, we find that the vector $\vec{\tau}\coloneqq(\tau_1,\tau_2)^\top\in\R^2$ must satisfy the fixed point equation
\begin{equation}\label{FixedPointEqn}
\vec{\tau}=G^\eps(\vec{\tau})\quad\mbox{ with }\quad G^\eps(\vec{\tau})=-\eps(\Re e\,\tilde{R}(\eps(\mu_0+\tau_1\mu_1+\tau_2\mu_2)),\Im m\,\tilde{R}(\eps(\mu_0+\tau_1\mu_1+\tau_2\mu_2)))^\top.
\end{equation}
Now by proving uniform (with respect to $\vec{\tau}$) error estimates in the asymptotic expansions as $\eps$ tends to zero, one can show that for any $r>0$, there is $\eps_0>0$ such that the map $\vec{\tau}\mapsto G^\eps(\vec{\tau})$ is a contraction from $\overline{B(O,r)}$ to $\overline{B(O,r)}$ for all $\eps\in(0;\eps_0]$. Here $B(O,r)$ denotes the open ball of $\R^2$ centered at $O$ of radius $r$. Therefore the Banach fixed point theorem guarantees that (\ref{FixedPointEqn}) admits a unique solution $\vec{\tau}^{\,\mrm{sol}}\in\overline{B(O,r)}$. Then for $h^{\,\mrm{sol}}=\eps(\mu_0+\tau_1^{\mrm{sol}}\mu_1+\tau_2^{\mrm{sol}}\mu_2)$, we have $R(h^{\mrm{sol}})=0$. This proves the existence of non reflecting geometries.\\
\newline
Let us comment a bit this method.\\[4pt]
1) First, it is important to prove that the constructed $h^{\,\mrm{sol}}$ is not trivial. To proceed, let us work by contradiction and assume that $h^{\,\mrm{sol}}\equiv0$. Then we get
\[
0=dR(0)(h^{\,\mrm{sol}})=dR(0)(\eps(\mu_0+\tau_1^{\mrm{sol}}\mu_1+\tau_2^{\mrm{sol}}\mu_2))=\eps(\tau_1^{\mrm{sol}}+i\tau_2^{\mrm{sol}}),
\]
and so $\tau_1^{\mrm{sol}}=\tau_2^{\mrm{sol}}=0$. This implies $h^{\,\mrm{sol}}=\eps\mu_0\equiv0$ which is not true due to our choice for $\mu_0$.\\[4pt]
2) Observe that this technique guarantees the existence of an infinite number of non reflecting perturbations. Indeed, first, $h^{\,\mrm{sol}}$ depends on $\eps\in(0;\eps_0]$ (remark that it is not only a scaling because there are non linear terms involved). Additionally, for $\mu_0$   we have some freedom because $\dim\,\ker\,dR(0)=+\infty$.\\[4pt] 
3) We can establish that there is a constant $C>0$ independent of $\eps$ such that we have $|G^\eps(\vec{\tau})| \le C\eps$ in $\overline{B(O,r)}$. As a consequence, as $\eps$ tends to zero, we have $|\vec{\tau}^{\,\mrm{sol}}|=O(\eps)$ and the shape of the perturbation $\eps\mu=\eps(\mu_0+\tau_1\mu_1+\tau_2\mu_2)$ is mainly characterized by $\mu_0$.\\[4pt]
4) Let us clarify the connection with the implicit function theorem. Introduce the functional
\[
\begin{array}{ccc}
\mathscr{F}  :  \R\times\R^2  & \to & \R^2 \\
\qquad (\tau_0,\vec{\tau}) & \mapsto & (\Re e\,R(\tau_0\mu_0+\tau_1\mu_1+\tau_2\mu_2),\Im m\,R(\tau_0\mu_0+\tau_1\mu_1+\tau_2\mu_2))
\end{array}
\]
which is of class $\mathscr{C}^1$ in a neighborhood of $0_{\R^3}$. From what will be shown below, we will be able to deduce that $\partial_{\vec{\tau}}\mathscr{F}(0,0): \R^2 \to \R^2$ is well-defined and bijective. On the other hand, we remark that $\mathscr{F}(0_{\R^3})=0_{\R^2}$. The implicit function theorem applies: there are some neighborhoods $\mathscr{U} \subset \R$, $\mathscr{V} \subset  \R^2 $ of $0$, $0_{\R^2}$ and a unique function $\varphi : \mathscr{U} \to \mathscr{V}$ of class $\mathscr{C}^1$ such that 
\[
[\,(\tau_0,\vec{\tau})\in \mathscr{U} \times \mathscr{V}\ \mbox{ and }\ \mathscr{F}(\tau_0,\vec{\tau})=0\,] \qquad\Leftrightarrow\qquad \vec{\tau} = \varphi (\tau_0).
\]

\section{Zero reflection for the Dirichlet problem}

Let us apply the generic strategy described above to Problem (\ref{WaveguidePbSca}) with Dirichlet BCs. Pick some $k\in(\pi;2\pi)$. First, we need to compute $dR(0)$. 

\begin{proposition}\label{PropoDiffR_Dirichlet}
For $\mu\in\mathscr{C}^\infty_0(\R)$, we have
\[
\begin{array}{rcl}
dR(0)(\mu)&=&\dsp\cfrac{i \pi^2}{\beta_1}\,\int_{\R}\mu(x) e^{2i\beta_1x}\,dx.
\end{array}
\]
As a consequence, $dR(0):\mathscr{C}^\infty_0(\R)\to\Cplx$ is onto.
\end{proposition}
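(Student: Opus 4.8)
The plan is to first derive an \emph{exact} closed-form expression for $R(\eps\mu)$ as an integral over the perturbed wall, and only then differentiate it in $\eps$ at $\eps=0$; the point is that the differentiation will turn out not to require the shape derivative of the solution itself. Write $u_+^\eps$ for the scattering solution of (\ref{WaveguidePbSca}) in $\Om(\eps\mu)$ and let $\Gamma_{\eps\mu}$ be the graph of $x\mapsto 1+\eps\mu(x)$, i.e.\ the perturbed part of $\partial\Om(\eps\mu)$. Since $w_+=e^{i\beta_1 x}\varphi_1(y)=\overline{w_-}$ solves $\Delta w_++k^2w_+=0$ in the whole strip and vanishes on $\{y=0\}$ and $\{y=1\}$, I would apply Green's identity to the pair $(u_+^\eps,w_+)$ on the truncated domain $\Om(\eps\mu)\cap\{|x|<L\}$. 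The lower-wall contribution vanishes (both functions are zero there); on $\Sigma_{\pm L}$ the modal decomposition (\ref{RepresentationupD}) and the orthonormality of the $\varphi_n$ yield exactly $2i\beta_1R(\eps\mu)$, as in Proposition \ref{PropoRepresentationD}; and on $\Gamma_{\eps\mu}$ only the term $\partial_\nu u_+^\eps\,w_+$ survives because $u_+^\eps=0$ there. This gives the exact formula
\[
R(\eps\mu)=-\cfrac{1}{2i\beta_1}\int_{\Gamma_{\eps\mu}}\cfrac{\partial u_+^\eps}{\partial\nu}\,w_+\,ds .
\]

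Now I would differentiate at $\eps=0$. The crucial observation is that $w_+$ vanishes on the reference wall $\{y=1\}$, so $w_+|_{\Gamma_{\eps\mu}}=O(\eps)$ while every other factor in the integrand stays $O(1)$. Parametrising $\Gamma_{\eps\mu}$ by $x\mapsto(x,1+\eps\mu(x))$ and using $\varphi_1(1)=0$, $\varphi_1'(1)=-\sqrt2\,\pi$, one gets $w_+(x,1+\eps\mu(x))=-\sqrt2\,\pi\,\eps\mu(x)e^{i\beta_1 x}+O(\eps^2)$, whereas $\partial_\nu u_+^\eps\to\partial_y w_+(x,1)=-\sqrt2\,\pi\,e^{i\beta_1 x}$ and $ds=dx+O(\eps^2)$. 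Because the difference $\partial_\nu u_+^\eps-\partial_\nu w_+$ is itself $O(\eps)$ and multiplies the $O(\eps)$ factor $w_+$, it enters only at order $\eps^2$, so the shape derivative of the solution never appears. Collecting the leading term (and using $-1/i=i$) yields
\[
R(\eps\mu)=\cfrac{i\pi^2}{\beta_1}\,\eps\int_{\R}\mu(x)e^{2i\beta_1 x}\,dx+O(\eps^2),
\]
which is precisely the announced value of $dR(0)(\mu)$.

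For surjectivity, I would write $dR(0)(\mu)=\tfrac{i\pi^2}{\beta_1}\big(\int_\R\mu\cos(2\beta_1x)\,dx+i\int_\R\mu\sin(2\beta_1x)\,dx\big)$. Since $\beta_1=\sqrt{k^2-\pi^2}>0$, the functions $\cos(2\beta_1\cdot)$ and $\sin(2\beta_1\cdot)$ are linearly independent, hence the two real functionals $\mu\mapsto\int_\R\mu\cos(2\beta_1x)\,dx$ and $\mu\mapsto\int_\R\mu\sin(2\beta_1x)\,dx$ on the real space $\mathscr{C}^\infty_0(\R)$ are linearly independent (otherwise a nontrivial combination $a\cos+b\sin$ would annihilate all test functions, forcing $a\cos+b\sin\equiv0$). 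Consequently $\mu\mapsto(\Re e\,dR(0)(\mu),\Im m\,dR(0)(\mu))$ maps onto $\R^2$, i.e.\ $dR(0):\mathscr{C}^\infty_0(\R)\to\Cplx$ is onto.

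The one genuinely analytic obstacle is justifying the passage to the limit $\eps\to0$ in $\partial_\nu u_+^\eps|_{\Gamma_{\eps\mu}}$, namely that the normal trace of the scattering solution converges to $\partial_\nu w_+|_{\{y=1\}}$ with an $O(\eps)$ remainder (this is all that the algebra above needs). I would obtain it by pulling everything back to the fixed strip through $(x,y)\mapsto(x,y/(1+\eps\mu(x)))$, which is the identity near $\Sigma_{\pm L}$ since $\mu$ is compactly supported in $\{|x|<d\}$ with $L>d$, so the DtN conditions of (\ref{WaveguidePbBounded}) are untouched. The transported solution then solves a variational problem on the fixed truncated strip whose sesquilinear form depends smoothly on $\eps$ and equals the reference form at $\eps=0$; well-posedness away from trapped-mode frequencies (Theorem \ref{ThmFredholm}) together with elliptic regularity near the wall give differentiability of $\eps\mapsto u_+^\eps$ in $\mH^2$ locally, and hence the required convergence of the normal traces, making the computation rigorous.
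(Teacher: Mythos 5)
Your proof is correct, but it takes a genuinely different route from the paper's. The paper linearizes the solution first: it writes the ansatz $u^\eps=u_0+\eps u_1+\dots$, derives the corrector problem (\ref{PbAsympto2}) with Dirichlet datum $-\mu\partial_y u_0$ on $\R\times\{1\}$, inserts the expansion into the far-field representation of Proposition \ref{PropoRepresentationD} (an integral over $\Sigma_{\pm L}$), and only then integrates by parts in $\mathcal{S}_L$ to trade the far-field integral of $u_1$ for its boundary datum on the wall. You instead perform the Green's identity once, exactly, in the perturbed domain, obtaining $R(\eps\mu)=-\tfrac{1}{2i\beta_1}\int_{\Gamma_{\eps\mu}}\partial_\nu u_+^\eps\,w_+\,ds$, and linearize afterwards; since $w_+$ vanishes on the reference wall, the integrand is already $O(\eps)$, so the first-order variation of $\partial_\nu u_+^\eps$ enters only at $O(\eps^2)$ and the shape derivative $u_1$ of the solution never has to be introduced. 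The two computations are dual --- Green's formula applied to $(u_1,w_+)$ in $\mathcal{S}$ versus to $(u_+^\eps,w_+)$ in $\Om(\eps\mu)$ --- and produce the same integrand $2\pi^2\mu(x)e^{2i\beta_1 x}$, the paper's $\mu\,(\partial_y w_+)^2(x,1)$. What your route buys is a weaker rigor requirement: you only need a Lipschitz bound of the type $\|u_+^\eps-w_+\|_{\mH^2}=O(\eps)$ locally near the wall (to control the normal traces), obtained by your pull-back to the fixed strip, rather than the validity of a two-term expansion with $O(\eps^2)$ remainder, which is what the paper's scheme rests on (and which it justifies by the same kind of ``almost identical'' diffeomorphism argument). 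What the paper's route buys is the corrector $u_1$ itself, the natural object if one wants higher-order terms, and it is reused verbatim for $dT(0)$ in Proposition \ref{PropoDiffT_Dirichlet}; note, though, that your argument extends just as easily to $T$ by pairing with $w_-$ instead of $w_+$. Your surjectivity argument (linear independence of $\cos(2\beta_1\cdot)$ and $\sin(2\beta_1\cdot)$ as functionals on real-valued $\mathscr{C}^\infty_0(\R)$) is sound and equivalent to the paper's parity argument.
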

\begin{proof}
The quantity $dR(0)$ corresponds to the derivative of $R$ with respect to the geometry. To identify it, we have to understand how $R$ is changed when the boundary of the waveguide is perturbed around the reference situation ($\Om=\mathcal{S}$). Such results are met classically in shape optimization. To obtain $dR(0)$, we work with techniques of asymptotic analysis.\\
\newline
For $\eps>0$ small and a given $\mu\in\mathscr{C}^\infty_0(\R)$ supported in $(-L;L)$ (change $L$ if necessary), denote by $\Om^\eps$ the domain $\Om(h)$ with $h=\eps\mu$. Let $u^\eps$ stand for the solution $u_+$ introduced in (\ref{WaveguidePbSca}) in the geometry $\Om^\eps$. It satisfies 
\begin{equation}\label{fortEpsDirichlet} 
\begin{array}{|rcll}
\multicolumn{4}{|l}{\mbox{Find }u^\eps\in\mH^1_{0,\loc}(\Om)\mbox{ such that }u^\eps-w_+\mbox{ is outgoing and } }\\[3pt]
\Delta u^\eps+k^2u^\eps&=&0&\mbox{ in }\Om^\eps \\[2pt]
u^\eps&=&0&\mbox{ on }\partial\Om^\eps.
\end{array}
\end{equation}
For $u^\eps$, as $\eps$ tends to zero, we consider the ansatz 
\begin{equation}\label{ExpansionD}
u^{\eps}=u_0+\eps u_1+\dots
\end{equation}
where $u_0$, $u_1$ are functions to be determined and the dots correspond to higher order terms. On the upper part of $\partial\Om^\eps$, we have, formally, 
\begin{equation}\label{TaylorExpansionDirichlet}
\begin{array}{rcl}
0=u^\eps(x,1+h(x))&=&u^\eps(x,1)+\eps h(x)\partial_yu^\eps(x,1)+\dots \\[3pt]
&=& u_0(x,1)+\eps \left(u_1(x,1)+h(x)\partial_yu_0(x,1)\right)+\dots .
\end{array}
\end{equation}
Now by inserting (\ref{ExpansionD}) in (\ref{fortEpsDirichlet}) and by exploiting (\ref{TaylorExpansionDirichlet}), collecting the terms of orders $\eps^0$, $\eps^1$, we find that $u_0$, $u_1$ satisfy respectively the problems
\begin{equation}\label{PbAsympto1}
\begin{array}{|rccl}
\multicolumn{4}{|l}{\mbox{Find }u_0\in\mH^1_{0,\loc}(\Om)\mbox{ such that }u_0-w_+\mbox{ is outgoing and }\hspace{0.4cm}~ }\\[3pt]
\Delta u_0 + k^2u_0 &=& 0&  \mbox{in }\mathcal{S}\\[3pt]
 u_0&=&0 & \mbox{on } \partial\mathcal{S}
\end{array}
\end{equation}
\begin{equation}\label{PbAsympto2}
\begin{array}{|rccl}
\multicolumn{4}{|l}{\mbox{Find }u_1\in\mH^1_{0,\loc}(\Om)\mbox{ such that }u_1\mbox{ is outgoing and } }\\[3pt]
\Delta u_1 + k^2u_1 &=& 0&  \mbox{in } \mathcal{S}\\[3pt]
 u_1&=&0 & \mbox{on } \R\times\{0\}\\[3pt]
 u_1&=& -\mu\partial_y u_0 & \mbox{on }\R\times\{1\}.
\end{array}\hspace{1.35cm}
\end{equation}
Solving (\ref{PbAsympto1}), we obtain 
\begin{equation}\label{Identifiu0}
u_0(x,y)=w_+(x,y)=e^{i\beta_1x}\varphi_1(y).
\end{equation} 
Denote by $R^\eps$ the reflection coefficient of $u^\eps$. According to Proposition \ref{PropoRepresentationD}, we have 
\[
R^\eps=\cfrac{1}{2i\beta_1}\dsp\int_{\Sigma_{L}\cup \Sigma_{-L}}\cfrac{\partial u^\eps}{\partial \nu}\,w_+-u^\eps\cfrac{\partial w_+}{\partial \nu}\,dy.
\]
Inserting the expansion (\ref{ExpansionD}) of $u^\eps$ in the above identity and using that $u_0=w_+$, this gives
\[
\begin{array}{rcl}
R^\eps&=&\dsp\cfrac{1}{2i\beta_1}\dsp\int_{\Sigma_{L}\cup \Sigma_{-L}}\cfrac{\partial u_0}{\partial \nu}\,w_+-u_0\cfrac{\partial w_+}{\partial \nu}\,dy+\cfrac{\eps}{2i\beta_1}\dsp\int_{\Sigma_{L}\cup \Sigma_{-L}}\cfrac{\partial u_1}{\partial \nu}\,w_+-u_1\cfrac{\partial w_+}{\partial \nu}\,dy+\dots \\[8pt]
&=& 0+\cfrac{\eps}{2i\beta_1}\dsp\int_{\Sigma_{L}\cup \Sigma_{-L}}\cfrac{\partial u_1}{\partial \nu}\,w_+-u_1\cfrac{\partial w_+}{\partial \nu}\,dy+\dots ,
\end{array}
\]
where again the dots correspond to higher order terms. We deduce that 
\[
dR(0)(\mu)=\cfrac{1}{2i\beta_1}\dsp\int_{\Sigma_{L}\cup \Sigma_{-L}}\cfrac{\partial u_1}{\partial \nu}\,w_+-u_1\cfrac{\partial w_+}{\partial \nu}\,dy.
\]
Integrating by parts in $\mathcal{S}_L\coloneqq(-L;L)\times I$, and using the third line of (\ref{PbAsympto2}), this gives
\[
\begin{array}{rcl}
dR(0)(\mu)&=&\dsp \cfrac{1}{2i\beta_1}\dsp\int_{\partial\mathcal{S}} u_1\cfrac{\partial w_+}{\partial y}\,dx = \cfrac{i}{2\beta_1}\dsp\int_{\R } \mu(x) \bigg(\cfrac{\partial w_+}{\partial y}\bigg)^2(x,1)\,dx.
\end{array}
\]
Finally, from (\ref{Identifiu0}) (remember that $\varphi_1(y)=\sqrt{2}\sin(\pi y)$), we obtain the formula 
\[
\begin{array}{rcl}
dR(0)(\mu)&=&\dsp\cfrac{i \pi^2}{\beta_1}\,\int_{\R}\mu(x) e^{2i\beta_1x}\,dx.
\end{array}
\]
Since the real part of $x\mapsto e^{2i\beta_1 x}$ is even while its imaginary part is odd, we see that it is easy to find functions $\mu_1$, $\mu_2$ satisfying the relations (\ref{DefBasis}), which ensures that $dR(0):\mathscr{C}^\infty_0(\R)\to\Cplx$ is onto.\\ 
\newline
The formal calculus above can be rigorously justified by proving error estimates. To proceed, one method consists in rectifying the boundary of $\Om^\eps$ using ``almost identical'' diffeomorphisms to transform the perturbed domain into the reference strip $\mathcal{S}$ (see \textit{e.g.} \cite[Chap.\,7,\,\S6.5]{Kato95}). Then one can prove the estimate, for $\eps$ small enough,
\[
\|u^\eps-(u_0+\eps u_1)\|_{\mH^1(\mathcal{S}_L\setminus\overline{\mathcal{V}})} \le C\eps^2,
\]
where $C$ is a constant independent of $\eps$ and $\mathcal{V}$ is a neighborhood of $\mrm{supp}(\mu)\times\{1\}$.
\end{proof}

\section{Numerical implementation}
The theoretical approach presented above leads very naturally to an algorithm to construct numerically non reflecting perturbations. Let us describe the strategy.\\
\newline
The main idea consists in solving the fixed point equation
\[
\vec{\tau}=G^\eps(\vec{\tau})
\]
(see (\ref{FixedPointEqn})) using an iterative procedure. First, we choose $\mu_0$, $\mu_1$, $\mu_2$ once for all. Then we start with $\vec{\tau}^{\,0}=(0,0)$ and for $p\in\N$, we set $\vec{\tau}^{\,p+1}=G^\eps(\vec{\tau}^{\,p})$. Denote $\mu^p\coloneqq\mu_0+\tau_1^p\mu_1+\tau_2^p\mu_2$. From (\ref{FixedPointEqn}), we have
\[
\begin{array}{rcl}
G^\eps(\vec{\tau}^{\,p})&=&-\eps(\Re e\,\tilde{R}(\eps\mu^p),\Im m\,\tilde{R}(\eps\mu^p))^\top \\[4pt]
&=& \vec{\tau}^{\,p}-\eps^{-1}(\Re e\,R(\eps\mu^p),\Im m\,R(\eps\mu^p)).
\end{array}
\]
Therefore, for $\vec{\tau}^{\,p}$ we obtain the recursive equation
\begin{equation}\label{IterativeProc}
\vec{\tau}^{\,p+1}=\vec{\tau}^{\,p}-\eps^{-1}(\Re e\,R(\eps\mu^p),\Im m\,R(\eps\mu^p)).
\end{equation}
We stop the loop when we have $|R(\eps\mu^p)| \le \eta$ where $\eta>0$ is a small given criterion. We then define $\vec{\tau}^{\,\mrm{sol}}$ as the last value of $\vec{\tau}^{\,p}$. If the iterative process does not converge, we try again with a smaller value of $\eps>0$. Note that at each step $p\ge0$, we need to solve a scattering problem of the form
\begin{equation}\label{PbNum}
\begin{array}{|rl}
\multicolumn{2}{|l}{\mbox{Find }u\mbox{ such that }u-w_+\mbox{ is outgoing and } }\\[3pt]
\Delta u + k^2 u = 0 & \mbox{ in }\Om^p\coloneqq\Om(\eps\mu^p) \\[3pt]
 u=0 & \mbox{ on }\partial\Om^p.
\end{array}
\end{equation}
To proceed, we approximate the solution of (\ref{PbNum})
by working with Formulation (\ref{FormuNumD}). We use a P2 finite element method in $\Om^p_5\coloneqq\{(x,y)\in\Om^p\,|\,|x|<5\}$. At $x=\pm 5$, a truncated Dirichlet-to-Neumann map with 10 terms serves as a transparent boundary condition. In other words, we take $q=2$, $L=5$, $M=10$ in (\ref{FormuNumD}). Note that at each step, it is necessary to mesh a new domain. For the computations, we use the \texttt{FreeFem++}\footnote{\texttt{FreeFem++}, \url{https://freefem.org/}.} software while we display the results with \texttt{Paraview}\footnote{\texttt{Paraview}, \url{http://www.paraview.org/}.}.
\begin{figure}[!ht]
\centering
\includegraphics[width=0.85\textwidth]{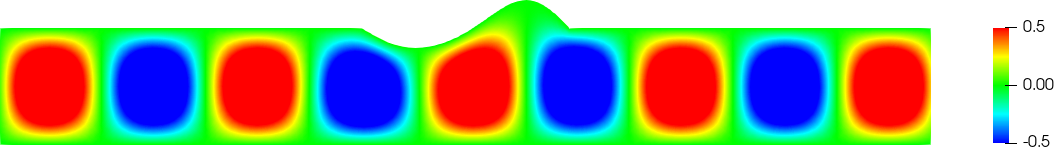}\\[2pt]
\includegraphics[width=0.85\textwidth]{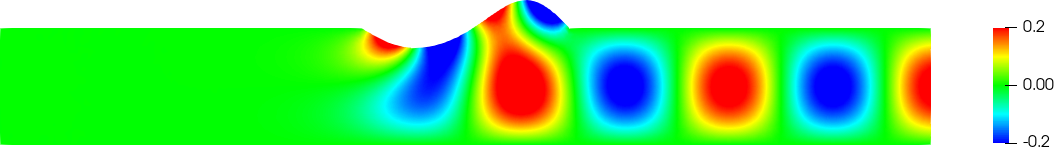}\\[2pt]
\includegraphics[width=0.85\textwidth]{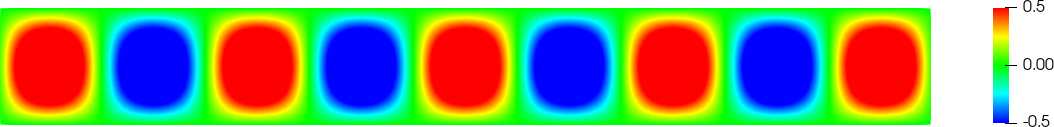}
\caption{Example of non reflecting obstacle for the problem (\ref{WaveguidePbSca}) with Dirichlet BCs. Top: $\Re e\,u$. Middle: $\Re e\,u_s$ with $u_s\coloneqq u-w_+$. Bottom: $\Re e\,w_+$ in the reference strip. Here $k=1.5\pi$.
\label{ZeroRgeom}}
\end{figure}

\noindent Let us give a concrete application. Set $\delta\coloneqq\pi/\beta_1$, 
$I_{\mrm{defect}}\coloneqq(-\delta;\delta)$ and for $\mu_0$, $\mu_1$, $\mu_2$, let us work with the functions such that 
\[
\mu_0(x)=\sin(\beta_1 x),\quad \mu_1(x)=-\cfrac{\beta_1^2}{\pi^3}\,\sin(2\beta_1 x),\quad \mu_2(x)=\cfrac{7\beta_1^2}{12\pi^2}\,\cos(3\beta_1 x/2),\quad \mbox{ for }x\in I_{\mrm{defect}},
\]
$\mu_0=\mu_1=\mu_2=0$ in $\R\setminus\overline{I_{\mrm{defect}}}$. These functions are continuous and compactly supported but do not belong to $\mathscr{C}^{\infty}_0(\R)$. However this is not actually needed for the above theory. On the other hand, one can check that they indeed satisfy relations (\ref{DefBasis0}), (\ref{DefBasis}) (remark in particular that $\mu_0$, $\mu_1$ are odd while $\mu_2$ is even). We set $\eta=10^{-4}$, $k=1.5\pi$. In Figure \ref{ZeroRgeom}, we display the geometry at the end of the iterative procedure for $\eps=0.2$. We have obtained $|R|\approx 8.10^{-5}$ in 24 iterations. As expected, we observe that the scattered field is exponentially decaying as $x\to-\infty$ (the incident wave comes from the left). We note that for the transmitted field, there is a small shift of phase. This is not surprising because $R=0$ only implies $|T|=1$ and not $T=1$.  
Interestingly, since there is only one complex coefficient to cancel, the algorithm converges though $\eps$ is not that small. This allows us to get not so small non reflecting perturbations of the reference strip.

\section{Perfect transmission for the Dirichlet problem}\label{ParaT1_Dirichlet}

Can we hope for more and obtain $T=1$ with the above approach? To proceed, a natural idea is to work with the quantity $T-1$. More precisely, in the reference strip we have $T-1=0$. Is it possible to perturb the geometry while keeping $T-1=0$? Let us compute the differential of $T$ with respect to the geometry.  

\begin{proposition}\label{PropoDiffT_Dirichlet}
For $\mu\in\mathscr{C}^\infty_0(\R)$, we have
\[
\begin{array}{rcl}
dT(0)(\mu)&=&\dsp\cfrac{i \pi^2}{\beta_1}\,\int_{\R}\mu(x)\,dx.
\end{array}
\]
As a consequence, $dT(0):\mathscr{C}^\infty_0(\R)\to\Cplx$ is not onto.
\end{proposition}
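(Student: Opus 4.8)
The plan is to recycle wholesale the asymptotic machinery already set up in the proof of Proposition \ref{PropoDiffR_Dirichlet}. First I would take the same perturbed domain $\Om^\eps=\Om(\eps\mu)$, the same ansatz $u^\eps=u_0+\eps u_1+\dots$, and the same cascade of problems (\ref{PbAsympto1})--(\ref{PbAsympto2}) governing $u_0$ and $u_1$, together with the identification $u_0=w_+$ from (\ref{Identifiu0}). The only genuinely new ingredient is that I would now feed this expansion into the representation formula for $T-1$ supplied by Proposition \ref{PropoRepresentationD}, rather than the one for $R_+$.

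Writing $T^\eps$ for the transmission coefficient in $\Om^\eps$, Proposition \ref{PropoRepresentationD} gives
\[
T^\eps-1=\cfrac{1}{2i\beta_1}\int_{\Sigma_L\cup\Sigma_{-L}}\cfrac{\partial u^\eps}{\partial\nu}\,\overline{w_+}-u^\eps\cfrac{\partial\overline{w_+}}{\partial\nu}\,dy .
\]
Inserting the ansatz, the $O(1)$ term vanishes: with $u^\eps\to u_0=w_+$ the two artificial sections contribute $\pm 2i\beta_1\|\varphi_1\|^2_{\mL^2(I)}$ and cancel, consistently with $T=1$ in the reference strip. The coefficient of $\eps$ then isolates
\[
dT(0)(\mu)=\cfrac{1}{2i\beta_1}\int_{\Sigma_L\cup\Sigma_{-L}}\cfrac{\partial u_1}{\partial\nu}\,\overline{w_+}-u_1\cfrac{\partial\overline{w_+}}{\partial\nu}\,dy .
\]
Next I would integrate by parts over $\mathcal{S}_L=(-L;L)\times I$, exploiting that both $u_1$ and $\overline{w_+}=w_-$ solve $\Delta\cdot+k^2\cdot=0$ in $\mathcal{S}$. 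The horizontal wall $y=0$ contributes nothing since $u_1$ and $w_-$ both vanish there, and on $y=1$ one has $w_-=0$ (because $\varphi_1(1)=0$) while $u_1=-\mu\,\partial_y u_0$ from (\ref{PbAsympto2}); what survives is $dT(0)(\mu)=-\frac{1}{2i\beta_1}\int_{\R}\mu\,(\partial_y u_0)(\partial_y w_-)|_{y=1}\,dx$.

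The decisive point --- and the one that makes $T$ behave so differently from $R$ --- is the evaluation of $(\partial_y u_0)(\partial_y w_-)$ at $y=1$. With $u_0=w_+=e^{i\beta_1 x}\varphi_1$ and $w_-=e^{-i\beta_1 x}\varphi_1$, the two propagating phases cancel, so this product equals the \emph{constant} $(\varphi_1'(1))^2=2\pi^2$, with no residual oscillation in $x$. This is precisely the mechanism that in the reflection computation produced the surviving factor $e^{2i\beta_1 x}$ (there one pairs $w_+$ against $\overline{w_-}=w_+$); here, pairing $w_+$ against $\overline{w_+}=w_-$ kills the oscillation. Substituting gives $dT(0)(\mu)=\frac{i\pi^2}{\beta_1}\int_{\R}\mu(x)\,dx$, as claimed.

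Finally, since $\mu$ is real valued, $\int_{\R}\mu\,dx\in\R$, so the image of $dT(0)$ is contained in the line $i\R\subset\Cplx$; being a one-dimensional real subspace of the two-dimensional target, it cannot be all of $\Cplx$, and $dT(0)$ is not onto. I do not expect any serious analytic obstacle: the error estimates justifying the ansatz are identical to those quoted at the end of the proof of Proposition \ref{PropoDiffR_Dirichlet} (rectification of the boundary by an almost-identity diffeomorphism), so no new input is needed --- the entire content of the statement reduces to the phase-cancellation observation above, which also explains structurally why perfect transmission $T=1$ cannot be reached by this first-order scheme and will require working at second order.
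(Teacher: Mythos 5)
Your proof is correct and follows essentially the same route as the paper's: insert the expansion $u^\eps=u_0+\eps u_1+\dots$ into the representation formula of Proposition \ref{PropoRepresentationD} for $T-1$, integrate by parts in $\mathcal{S}_L$ using the boundary condition $u_1=-\mu\,\partial_y u_0$ on $\R\times\{1\}$, and observe that pairing $w_+$ against $\overline{w_+}=w_-$ cancels the oscillating phase, leaving $(\varphi_1'(1))^2=2\pi^2$ and hence a purely imaginary $dT(0)(\mu)$, so $dT(0)$ is not onto. Your phase-cancellation remark is exactly the mechanism at work in the paper's computation, and no gap remains.
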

\begin{proof}
To show this result, one works as for $R$ in Proposition \ref{PropoDiffR_Dirichlet}. Let us keep the same notation. 
According to Proposition \ref{PropoRepresentationD}, we have 
\[
T^\eps-1=\cfrac{1}{2i\beta_1}\dsp\int_{\Sigma_{L}\cup \Sigma_{-L}}\cfrac{\partial u^\eps}{\partial \nu}\,w_--u^\eps\cfrac{\partial w_-}{\partial \nu}\,dy.
\]
Inserting the expansion (\ref{ExpansionD}) of $u^\eps$ in the above identity and using that $u_0=w_+$, this gives
\[
\begin{array}{rcl}
T^\eps-1&=&\dsp\cfrac{1}{2i\beta_1}\dsp\int_{\Sigma_{L}\cup \Sigma_{-L}}\cfrac{\partial u_0}{\partial \nu}\,w_--u_0\cfrac{\partial w_-}{\partial \nu}\,dy+\cfrac{\eps}{2i\beta_1}\dsp\int_{\Sigma_{L}\cup \Sigma_{-L}}\cfrac{\partial u_1}{\partial \nu}\,w_--u_1\cfrac{\partial w_-}{\partial \nu}\,dy+\dots \\[8pt]
&=& 0+\cfrac{\eps}{2i\beta_1}\dsp\int_{\Sigma_{L}\cup \Sigma_{-L}}\cfrac{\partial u_1}{\partial \nu}\,w_--u_1\cfrac{\partial w_-}{\partial \nu}\,dy+\dots ,
\end{array}
\]
where the dots correspond to higher order terms. We deduce that 
\[
dT(0)(\mu)=\cfrac{1}{2i\beta_1}\dsp\int_{\Sigma_{L}\cup \Sigma_{-L}}\cfrac{\partial u_1}{\partial \nu}\,w_--u_1\cfrac{\partial w_-}{\partial \nu}\,dy.
\]
Integrating by parts in $\mathcal{S}_L=(-L;L)\times I$, and using the third line of (\ref{PbAsympto2}), this gives 
\[
dT(0)(\mu)=\dsp \cfrac{1}{2i\beta_1}\dsp\int_{\partial\mathcal{S}} u_1\cfrac{\partial w_-}{\partial y}\,dx =\dsp \cfrac{i}{2\beta_1}\dsp\int_{\R} \mu(x)\cfrac{\partial w_+}{\partial y}(x,1)\cfrac{\partial w_-}{\partial y}(x,1)\,dx = \cfrac{i\pi^2}{\beta_1}\dsp\int_{\R} \mu(x)\,dx.
\]
Since the real part of $dT(0)(\mu)$ is null for all $\mu\in\mathscr{C}^\infty_0(\R)$, this shows that $dT(0):\mathscr{C}^\infty_0(\R)\to\Cplx$ is not onto.
\end{proof}
\begin{remark}
The fact that the real part of $dT(0)$ is necessarily null could have been guessed from conservation of conservation. Indeed, otherwise by linearity of $dT(0)$, we could find some $\mu\in \mathscr{C}^\infty_0(\R)$ such that $dT(0)(\mu)>0$. Then for $\eps>0$ small enough, $T(\eps\mu)$ would have a real part larger than one. This is impossible due to conservation of energy.
\end{remark}
\noindent Though $dT(0):\mathscr{C}^\infty_0(\R)\to\Cplx$ is not onto, Proposition \ref{PropoDiffT_Dirichlet} proves that we can control the imaginary part of $T$. Let us exploit this property.\\
\newline
Define the map
\begin{equation}\label{DefFT1D}
\begin{array}{rccl}
\mathscr{F}: & \mathscr{C}^\infty_0(\R) &\to &\R^3 \\[4pt] 
 & h & \mapsto & (\Re e\,R(h),\Im m\,R(h),\Im m\,T(h))^{\top}.
\end{array}
\end{equation}
We have $\mathscr{F}(0)=0$ and we wish to find some $h\not\equiv 0$ such that $\mathscr{F}(h)=0$. From Propositions \ref{PropoDiffR_Dirichlet} and \ref{PropoDiffT_Dirichlet}, we know that $d\mathscr{F}(0):\mathscr{C}^\infty_0(\R)\to\R^3$ is onto. Therefore there are $\mu_0,\dots,\mu_3\in \mathscr{C}^\infty_0(\R)$ such that
\[
d\mathscr{F}(0)(\mu_0)=0,\qquad [d\mathscr{F}(0)(\mu_1),d\mathscr{F}(0)(\mu_2),d\mathscr{F}(0)(\mu_3)]=\mrm{Id}_{3\times3}.
\]
Set $\vec{\tau}\coloneqq(\tau_1,\tau_2,\tau_3)^\top\in\R^3$ and consider the new fixed point equation
\begin{equation}\label{FixedPointEqnNew}
\vec{\tau}=F^\eps(\vec{\tau})
\end{equation}
with 
\[
\begin{array}{rcl}
F^\eps(\vec{\tau})&\coloneqq&-\eps(\Re e\,\tilde{R}(\eps\mu(\vec{\tau})),\Im m\,\tilde{R}(\eps\mu(\vec{\tau})),\Im m\,\tilde{T}(\eps\mu(\vec{\tau}))^\top \\[4pt]
\mu(\vec{\tau})&\coloneqq&\mu_0+\tau_1\mu_1+\tau_2\mu_2+\tau_3\mu_3.
\end{array}
\]
Here $\tilde{R}$, $\tilde{T}$ are the remainders in the expansions 
\[
\begin{array}{rcl}
R(\eps \mu)&=&  \eps dR(0)(\mu)+\eps^2 \tilde{R}(\eps \mu)\\[2pt]
T(\eps \mu)&=& 1+ \eps dT(0)(\mu)+\eps^2 \tilde{T}(\eps \mu).
\end{array}
\]
Then for any given $r>0$, one can show that (\ref{FixedPointEqnNew}) admits a unique solution $\vec{\tau}^{\,\mrm{sol}}\in\overline{B(O,r)}$ for $\eps$ small enough (here $B(O,r)$ denotes the open ball of $\R^3$ centered at $O$ of radius $r$). Defining 
\[
h^{\,\mrm{sol}}=\eps\bigg(\mu_0+\sum_{p=1}^3\tau_p^{\mrm{sol}}\mu_p\bigg),
\]
we obtain $\mathscr{F}(h^{\,\mrm{sol}})=0$.\\ 
\newline
Why does this imply $T^\eps=1$? Conservation of energy imposes $|R^\eps|^2+|T^\eps|^2=1$. Therefore when $R^\eps=0$ and $\Im m\,T^\eps=0$, the only possibility is to have either $T^\eps=-1$ or $T^\eps=1$. Since we made a small perturbation of the reference strip, error estimates for the asymptotic expansion of $u^\eps$ imply that for $\eps$ small, $T^\eps$ is close too one. Therefore, for $\eps$ small enough, necessarily we must have $T^\eps=1$ exactly.\\
\newline
In Figure \ref{T1geom}, we give an example of perfectly invisible perturbation of the reference strip for the problem (\ref{WaveguidePbSca}). This time, compared to Figure \ref{ZeroRgeom} where we imposed only zero reflection, we observe that the scattered field is indeed exponentially decaying both as $x\to-\infty$ and as $x\to+\infty$.

\begin{figure}[!ht]
\centering
\includegraphics[width=0.85\textwidth]{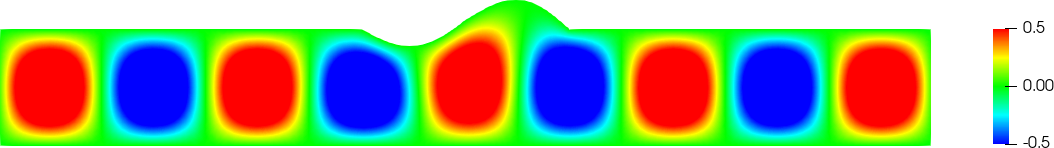}\\[2pt]
\includegraphics[width=0.85\textwidth]{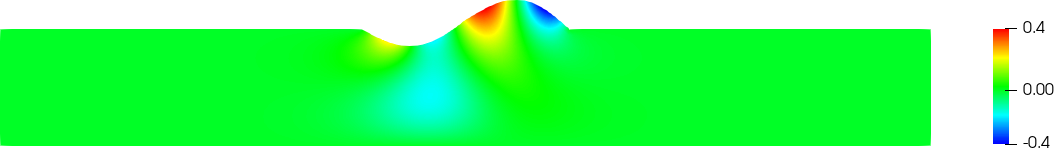}\\[2pt]
\includegraphics[width=0.85\textwidth]{uRef.png}
\caption{Example of perfectly invisible obstacle for the problem (\ref{WaveguidePbSca}) with Dirichlet BCs. Top: $\Re e\,u$. Middle: $\Re e\,u_s$. Bottom: $\Re e\,w_+$ in the reference strip. Here $k=1.5\pi$.
\label{T1geom}}
\end{figure}

\section{Study of the Neumann problem}

Assume now that we wish to apply the strategy described in \S\ref{ParaGeneScheme} to Problem (\ref{WaveguidePbNeumann}) with Neumann BCs. Pick some $k\in(0;\pi)$ so that only $w_\pm(x,y)=e^{\pm ikx}$ are propagating modes. \\
\newline
Let us compute $dR(0)$ and $dT(0)$. As above, for $\eps>0$ and a given $\mu\in\mathscr{C}^\infty_0(\R)$, denote by $\Om^\eps$ the domain $\Om(\eps\mu)$. Let $u^\eps$ stand for the solution introduced in (\ref{RepresentationNeumannP}) corresponding to the scattering of the incident rightgoing wave $w_+$ in the geometry $\Om^\eps$. It satisfies  
\begin{equation} \label{fortEps} 
\begin{array}{|rccl}
\Delta u^\eps + k^2u^\eps &=& 0&  \mbox{in }\Om^\eps\\[3pt]
\partial_{\nu^\eps} u^\eps&=&0 & \mbox{on } \partial\Om^\eps.
\end{array}	
\end{equation}
Let us emphasize that $\nu^\eps$, the outward unit normal vector to $\partial\Om^\eps$, depends on $\eps$. For $u^\eps$, as $\eps$ tends to zero, we consider the ansatz 
\begin{equation}\label{ExpansionN}
u^{\eps}=u_0+\eps u_1+\dots
\end{equation}
where $u_0$, $u_1$ are functions to be determined and the dots correspond to higher order terms. On $\partial\Om^\eps$, we have the expansions
\begin{equation}\label{ExpansionNormal}
\nu^\eps = \cfrac{1}{\sqrt{1+\eps^2(\mu'(x))^2}}\,\left( \begin{array}{c}
-\eps \mu'(x)\\
1
\end{array}
\right)=\left( \begin{array}{c}
0\\
1
\end{array}
\right)+\eps\left( \begin{array}{c}
-\mu'(x)\\
0
\end{array}
\right)+\dots
\end{equation}
\begin{equation}\label{TaylorExpansion}
\begin{array}{rcl}
\nabla u^\eps(x,1+\eps \mu(x))&=&\nabla u^\eps(x,1)+\eps \mu(x)\left( \begin{array}{c}
\partial^2_{xy}u_\eps(x,1)\\[2pt]
\partial^2_{yy}u_\eps(x,1)
\end{array}
\right)+\dots\,.
\end{array}
\end{equation}
Now we insert (\ref{ExpansionN}) in (\ref{fortEps}) and exploit  (\ref{ExpansionNormal}), (\ref{TaylorExpansion}). Collecting the terms of orders $\eps^0$, $\eps^1$, we find that $u_0$, $u_1$ satisfy respectively the problems
\[
\begin{array}{|rccl}
\Delta u_0 + k^2u_0 &=& 0&  \mbox{in }\mathcal{S}\\[3pt]
\partial_{\nu} u_0&=&0 & \mbox{on } \partial\mathcal{S}
\end{array}\qquad \begin{array}{|rccl}
\Delta u_1 + k^2u_1 &=& 0&  \mbox{in } \mathcal{S}\\[3pt]
\partial_{\nu} u_1&=&0 & \mbox{on } \R\times\{0\}\\[3pt]
\partial_{\nu} u_1&=& \mu'(x)\partial_x u_0-\mu(x)\partial^2_{yy}u_0 & \mbox{on }\R\times\{1\}.
\end{array}
\]
Using additionally that $u^\eps-w_+$ is outgoing, we get first 
\[
u_0(x,y)=w_+(x,y)=e^{ikx}.
\]
Since $w_+$ is independent of $y$, we deduce that $u_1$ satisfies the condition
\begin{equation}\label{ReecritureBC}
\partial_{\nu} u_1= \mu'(x)\partial_x w_+\quad \mbox{on }\R\times\{1\}.
\end{equation}
Denote by $R^\eps$, $T^\eps$ the scattering coefficients of $u^\eps$. Assuming that $\mu$ is supported in $(-L;L)$, from Proposition \ref{PropoRepresentationN}, we know that 
\begin{equation}\label{GeneralFormulaRT}
2ikR^\eps=\int_{\Sigma_{-L}\cup \Sigma_{L}}\cfrac{\partial u^\eps}{\partial\nu}\,u^\eps w_+-u^\eps \cfrac{\partial w_+}{\partial\nu}\,dy,\qquad 2ik(T^\eps-1)=\int_{\Sigma_{-L}\cup \Sigma_{L}}\cfrac{\partial u^\eps}{\partial\nu}\,w_--u^\eps \cfrac{\partial w_-}{\partial\nu}\,dy.
\end{equation}
Inserting the expansion (\ref{ExpansionN}) of $u^\eps$ in (\ref{GeneralFormulaRT}), integrating by parts and exploiting (\ref{ReecritureBC}), we obtain
\[
R^\eps=0-\cfrac{\eps}{2ik}\,\int_{\R}\mu'(x)\partial_xw_+(x) w_+(x)\,dx,\qquad T^\eps=1+\cfrac{\eps}{2ik}\,\int_{\R}\mu'(x)\partial_xw_+(x)w_-(x)\,dx.
\]
This gives the formulas
\begin{equation}\label{ExpressDiffSNeumann}
\begin{array}{rcl}
dR(0)(\mu)&=&\dsp\cfrac{-1}{2ik}\,\int_{\R}\mu'(x)\partial_xw_+ w_+\,dx=\cfrac{-1}{2}\,\int_{\R}\mu'(x) e^{2ikx}\,dx=ik\,\int_{\R}\mu(x)e^{2ikx}\,dx\\[12pt]
dT(0)(\mu)&=&\dsp\cfrac{1}{2ik}\,\int_{\R}\mu'(x)\partial_xw_+ w_-\,dx=\cfrac{1}{2}\,\int_{\R}\mu'(x)\,dx=0.
\end{array}
\end{equation}
As in the Dirichlet case, exploiting that the real part of $x\mapsto e^{2ikx}$ is even while its imaginary part is odd, we see that it is easy to find functions $\mu_1$, $\mu_2$ satisfying the relations (\ref{DefBasis}), which ensures that $dR(0):\mathscr{C}^\infty_0(\R)\to\Cplx$ is onto. Then by applying what has been done in \S\ref{ParaGeneScheme}, one can construct perturbations of the reference strip which are non reflecting at a given $k\in(0;\pi)$.\\
\newline
\noindent Let us give two examples. Set $\delta\coloneqq\pi/k$, 
$I_{\mrm{defect}}\coloneqq(-\delta;\delta)$ and for $\mu_0$, $\mu_1$, $\mu_2$, let us work first with the functions such that 
\begin{equation}\label{DefMuNeumann}
\mu_0(x)=\sin(k x),\quad \mu_1(x)=-\cfrac{1}{\pi}\,\sin(2k x),\quad \mu_2(x)=\cfrac{7}{12}\,\cos(3k x/2),\quad \mbox{ for }x\in I_{\mrm{defect}},
\end{equation}
$\mu_0=\mu_1=\mu_2=0$ in $\R\setminus\overline{I_{\mrm{defect}}}$. One can check that they indeed satisfy relations (\ref{DefBasis0}), (\ref{DefBasis}). We set $\eta=10^{-4}$, $k=0.8\pi$. In Figure \ref{ZeroRgeomNeumann}, we display the geometry at the end of the iterative procedure for $\eps=0.4$. For this choice of functions $\mu_0$, $\mu_1$, $\mu_2$, we are able to obtain a rather large non reflecting perturbation of the reference strip. Here $|R|\approx 4.10^{-5}$ in 15 iterations. 

\begin{figure}[!ht]
\centering
\includegraphics[width=0.85\textwidth]{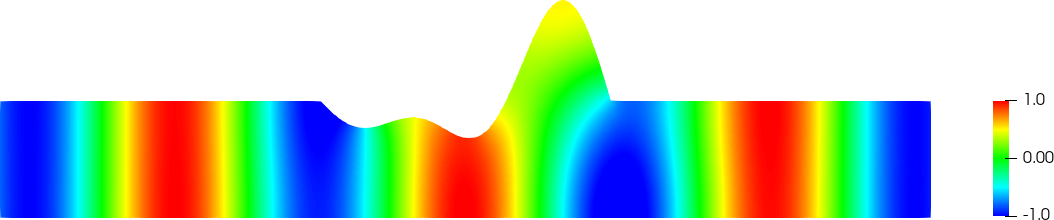}\\[2pt]
\includegraphics[width=0.85\textwidth]{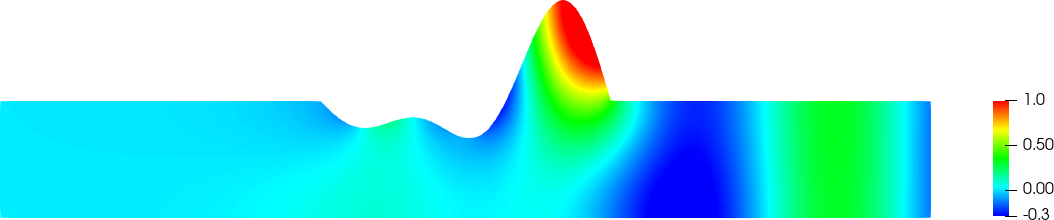}\\[2pt]
\includegraphics[width=0.85\textwidth]{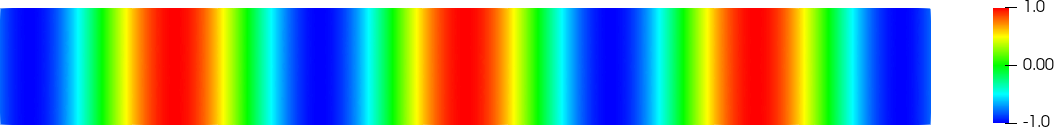}
\caption{Example of non reflecting geometry for Problem (\ref{WaveguidePbNeumann}) with Neumann BCs. Top: $\Re e\,u$. Middle: $\Re e\,u_s$. Bottom: $\Re e\,w_+$ in the reference strip. Here $k=0.8\pi$.
\label{ZeroRgeomNeumann}}
\end{figure}

\noindent In Figure \ref{ZeroRgeomNeumannSym}, we display another example of non reflecting defect. It has been obtained by changing the $\mu_0$ in (\ref{DefMuNeumann}) to 
\[
\mu_0(x)=|x|-\delta,\quad \mbox{ for }x\in I_{\mrm{defect}},
\]
$\mu_0=0$ in $\R\setminus\overline{I_{\mrm{defect}}}$. Here the defect lies entirely in the region $y<1$. Because of this property, we can use symmetry with respect to the line $y=1$ to create a non reflecting obstacle completely embedded in the waveguide (see Figure \ref{ZeroREmbedded}).\\ 
\newline
In \S\ref{ParaT1_Dirichlet} for the Dirichlet problem, by exploiting conservation of energy, we explained how to construct waveguide where $T=1$. Can we adapt this to the Neumann problem? Well, from the computation of $dT(0)$ in (\ref{ExpressDiffSNeumann}) we see that it is impossible because $dT(0)$ is null. It means that a perturbation of order $\eps$ of the reference strip gives a transmission coefficient $T^\eps$ such that $T^\eps-1$ is in $O(\eps^2)$. This looks appealing for perfect transmission. The problem is that since $dT(0)$ is null, one cannot use this term which has a linear dependence with respect to the perturbation of the reference strip to cancel the whole (non linear) expansion of $T^\eps$ via the resolution of the fixed point problem. As a consequence, our technique fails to design perfectly invisible defects in the Neumann case.

\begin{figure}[!ht]
\centering
\includegraphics[width=0.85\textwidth]{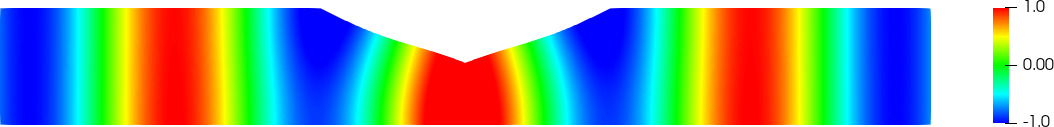}\\[2pt]
\includegraphics[width=0.85\textwidth]{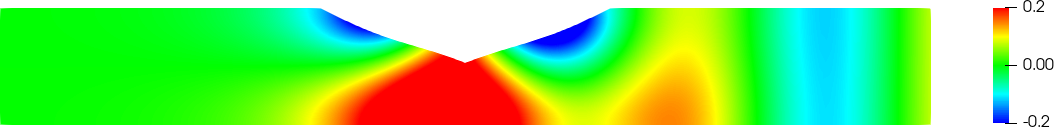}\\[2pt]
\includegraphics[width=0.85\textwidth]{uRef_Neumann.png}
\caption{Example of non reflecting obstacle for Problem (\ref{WaveguidePbNeumann}) with Neumann BCs. Top: $\Re e\,u$. Middle: $\Re e\,u_s$. Bottom: $\Re e\,w_+$ in the reference strip. Here $k=0.8\pi$.
\label{ZeroRgeomNeumannSym}}
\end{figure}

\begin{figure}[!ht]
\centering
\includegraphics[width=0.85\textwidth]{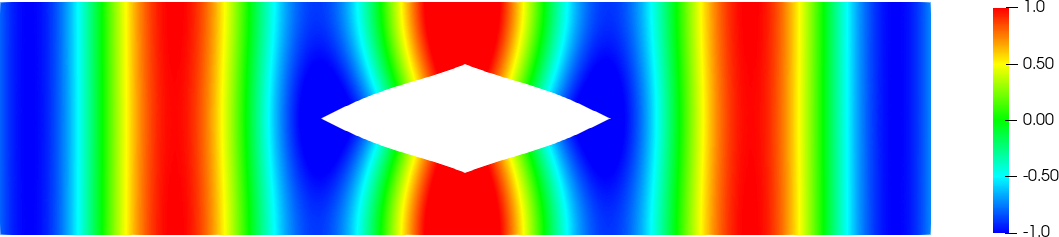}
\caption{Example of non reflecting obstacle for Problem (\ref{WaveguidePbNeumann}) with Neumann BCs. Here $k=0.8\pi$.
\label{ZeroREmbedded}}
\end{figure}

\newpage

\noindent Above we showed how to construct invisible smooth perturbations of the reference strip $\mathcal{S}$. In the remaining part of this chapter, we wish to explain how to design invisible non smooth perturbations of $\mathcal{S}$. The terminology smooth/not smooth here does not refer to the regularity of the domain but to the form of the asymptotic expansion involved in the asymptotic procedure. In the non smooth case, the field $u^\eps$ exhibits rapid variations in a neighborhood of the perturbation that must be caught with adapted variables. In that situation, the asymptotics of the scattering coefficients is in general more complicated to obtain. The interesting point is that it can bring new useful terms, for example non zero differentials for $T$ for the Neumann problem. Below we consider two types of non smooth perturbations of the reference strip.

\section{Non reflecting clouds of small obstacles}\label{SectionFlies}

\begin{figure}[!ht]
\centering
\begin{tikzpicture}[scale=0.9]
\draw[dashed] (-3,1) ellipse (0.4 and 1);
\draw[fill=gray!5] (-2,1) ellipse (0.4 and 1);
\draw[fill=gray!5,draw=none](-2,0) rectangle (2,2);
\draw (-2,0)--(2,0);
\draw[dashed](-3,0)--(-2,0);
\draw[dashed](3,0)--(2,0);
\draw[dashed](-3,2)--(-2,2);
\draw[dashed](3,2)--(2,2);
\draw(-2,2)--(2,2);
\draw[fill=gray!50,fill opacity=0.4](0:-2 and 0) arc (90:270:0.4 and -1)--(2,2)--(2,0)--(-2,0);
\draw[fill=gray!40] (2,1) ellipse (0.4 and 1);
\node at (-1.5,0.3){\small$\Omega^0$};
\draw[dashed] (3,1) ellipse (0.4 and 1);
\end{tikzpicture}\hspace{0.9cm}\begin{tikzpicture}[scale=0.9]
\node[inner sep=0pt] at (0.7,0.4)  {\includegraphics[width=8mm]{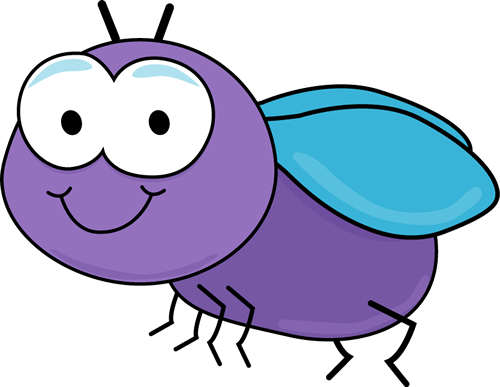}};
\node[inner sep=0pt] at (-1,1.2)  {\includegraphics[width=8mm]{images/mouche1}};
\draw[dashed] (-3,1) ellipse (0.4 and 1);
\draw[fill=gray!50,fill opacity=0.4](0:-2 and 0) arc (90:270:0.4 and -1)--(2,2)--(2,0)--(-2,0);
\node at (-1,1.65){\small$\mathcal{O}^\eps_1$};
\node at (0.7,0.85){\small$\mathcal{O}^\eps_2$};
\draw (-2,0)--(2,0);
\draw (-2,2)--(2,2);
\draw [dashed](-3,0)--(-2,0);
\draw [dashed](3,0)--(2,0);
\draw [dashed](-3,2)--(-2,2);
\draw [dashed](3,2)--(2,2);
\draw[fill=gray!40] (2,1) ellipse (0.4 and 1);
\draw[dashed] (3,1) ellipse (0.4 and 1);
\node at (-1.5,0.3){\small$\Omega^{\eps}$};
\end{tikzpicture}\hspace{0.9cm}\begin{tikzpicture}
\node[inner sep=0pt] at (0,1.6)  {\includegraphics[width=2cm]{images/mouche1}};
\node at (0.3,2.4){\small$\mathcal{O}$};
\node at (0,1){\small \phantom{espa}};
\end{tikzpicture}\vspace{0.4cm}
\caption{Unperturbed waveguide (left), perturbed waveguide with two small obstacles (middle), set $\mathcal{O}$ (right).\label{Domain}} 
\end{figure}

\noindent Let us work first with clouds of small obstacles. To simplify the asymptotic analysis, we  work in 3D with Dirichlet boundary conditions. Note that in 2D, the Green function of the Laplace operator, whose behavior dictates the form of the asymptotic expansions as $\eps$ tends to zero, has a logarithmic singularity which does not appear in 3D. Let 
\[
\Om^0\coloneqq\{z=(x,y)\,|\,x\in\R\mbox{ and }y\in\Theta\}
\]
be a cylinder of $\R^3$ whose transverse section $\Theta\subset\R^2$ is a bounded domain with Lipschitz boundary (see Figure \ref{Domain} left). Consider $\mathcal{O}\subset\R^3$ a bounded domain with Lipschitz boundary and for $M_1$ a point located in $\Om^0$, set, for $\eps$ small,
\[
\mathcal{O}^\eps_1\coloneqq\{z\in\R^3\,|\,\eps^{-1}(z-M_1)\in\mathcal{O}\}.
\]
Finally define the perturbed waveguide 
\[
\Om^\eps\coloneqq\Om^0\setminus\overline{\mathcal{O}^\eps_1}.
\] 
To begin with, for the moment we assume that there is only one obstacle and not a cloud. The problem we consider writes
\begin{equation} \label{fortEpsFly} 
\begin{array}{|rccl}
\Delta u^\eps + k^2u^\eps &=& 0&  \mbox{in }\Om^\eps\\[3pt]
 u^\eps&=&0 & \mbox{on } \partial\Om^\eps.
\end{array}	
\end{equation}
We fix the wavenumber $k>0$ such that only two modes $w_\pm$ can propagate in $\Om^\eps$. These $w_\pm$ are not exactly the same as the ones in (\ref{Defwpm}), they involve the eigenfunctions associated with the first eigenvalue of the Dirichlet Laplacian in $\Theta$, but the situation is similar. As in \S\ref{ParaScaPb}, Problem (\ref{fortEpsFly}) admits a solution with the expansion
\[
u^\eps=\begin{array}{|ll}
w_++R^\eps w_-+\tilde{u}^\eps & \mbox{ for }x\le - d \\[3pt]
\phantom{w_+mi}T^\eps w_++\tilde{u}^\eps & \mbox{ for }x\ge d,
\end{array}
\]
where $R^\eps$, $T^\eps\in\Cplx$ and $\tilde{u}^\eps$ decays exponentially at infinity (we denote $R^\eps$ instead of $R^\eps_+$ to simplify). Again $d>0$ is such that $\Om^\eps=\Om^0$ for $|x|>d$. The first step in the approach is to compute an asymptotic expansion of $u^\eps$ as $\eps$ tends to zero. This is a rather long work that we will not present here (one may consult the  reference \cite[\S2.2]{MaNP00} for more details). Let us simply stress that it appears that $u^\eps$ has a rapid variation in a neighborhood of the obstacle to satisfy the constraint of being null on $\partial\mathcal{O}^\eps_1$. This rapid variation must be caught with adapted variables. Then it is necessary to work both with some inner field and outer field expansions of $u^\eps$ that we match in some intermediate region. This is the method of matched asymptotic expansions which is well documented in the literature. At the end of the procedure, when $\eps$ tends to zero, we obtain
\[
R^\eps=0+\eps\,4i\pi\mrm{cap}(\mathcal{O})w_+(M_1)^2+O(\eps^2),\qquad\quad T^\eps=1+\eps\,4i\pi\mrm{cap}(\mathcal{O})|w_+(M_1)|^2+O(\eps^2).
\]
Here $\mrm{cap}(\mathcal{O})$ stands for the capacity of the domain $\mathcal{O}$, a constant which appears classically in asymptotic analysis. An important point for our study is that one has always $\mrm{cap}(\mathcal{O})>0$. Additionally, one finds that $w_+$ does not vanish in $\Om^0$. From these two properties, we infer that one single small obstacle cannot even be non reflecting: whatever the choice for $M_1$ or for the shape $\mathcal{O}$, $\mathcal{O}^\eps_1$ will always generate a reflection whose amplitude is of order $\eps$.\\
\newline
Let us add a second small obstacle 
\[
\mathcal{O}^\eps_2\coloneqq\{z\in\R^3\,|\,\eps^{-1}(z-M_2)\in\mathcal{O}\}
\]
in the waveguide, centred at the point $M_2\in\Om^0$ with $M_2\ne M_1$. Still denoting by $R^\eps$, $T^\eps$ the scattering coefficients in this new geometry, we obtain the expansions, as $\eps\to0^+$,
\begin{equation}\label{AsyExpansionsFlies}
R^\eps=0+\eps\,4i\pi\mrm{cap}(\mathcal{O})\sum_{j=1}^2w_+(M_j)^2+O(\eps^2),\quad T^\eps=1+\eps\,4i\pi\mrm{cap}(\mathcal{O})\sum_{j=1}^2|w_+(M_j)|^2+O(\eps^2).
\end{equation}
Observe that the interactions between the small objects do not appear at order $\eps$, only at higher orders. The interesting point is that now, using the known expression of $w_+$, we can find positions $M_1$, $M_2$ of the obstacles such that
\begin{equation}\label{Constraint1}
\sum_{j=1}^2w_+(M_j)^2=0.
\end{equation}
In that case, we have a perturbation of $\Om^0$ of order $\eps$ which produces a reflection in $O(\eps^2)$. As mentioned above, this is almost no reflection. But by working a bit harder, we can achieve more. Pick $M_1$, $M_2$ such that relation (\ref{Constraint1}) is satisfied. Then by slightly perturbing the position of one obstacle, we can get $R^\eps=0$ exactly. More precisely, for $\tau\in\R^3$, define $M^{\eps\tau}_1=M_1+\eps\tau$ and 
\[
\mathcal{O}^{\eps\tau}_1\coloneqq\{z\in\R^3\,|\,\eps^{-1}(z-M^{\eps\tau}_1)\in\mathcal{O}\}.
\]
One can show that there is $\tau$, which is defined as the solution of a fixed point problem similar to (\ref{FixedPointEqn}), such that the reflection coefficient in the corresponding waveguide is zero (for more details, see \cite{ChNa16}). Note that since $\tau\in\R^3$, we have enough degrees of freedom to cancel one single complex number.\\
\newline
Can we get perfect invisibility, \textit{i.e.} $T^\eps=1$, with this approach? From (\ref{AsyExpansionsFlies}) we see that the answer is no. Indeed, whatever the position of the small obstacles or their number, we always have a phase shift for the transmitted wave.\\
\newline
One can also study what happens at higher wavenumbers $k$. In that situation, as can be seen in the particular case  (\ref{DefModes}), more modes can propagate. The reflection and transmission coefficients then become respectively reflection and transmission matrices $\mathcal{R}^\eps$, $\mathcal{T}^\eps$. One can prove that by working with a sufficiently large number of small obstacles, we can cancel exactly the whole reflection matrix. The idea is the same as above. First we compute an asymptotic expansion of $\mathcal{R}^\eps$ as $\eps\to0$. Then we find positions $M_1$, $M_2$,... , $M_k$ of the obstacles to cancel the term of order $\eps$ in the expansion of $\mathcal{R}^\eps$. Then by slightly perturbing the position of one group of obstacles by solving a fixed point problem in $\R^N$, for a certain $N$ depending on the number of propagating modes, we get $\mathcal{R}^\eps=0$. The higher the number of propagating modes, the more obstacles we need.\\ 
\newline
To conclude this section, let us mention that in this work, the main difficulty mathematically consists in proving error estimates in the asymptotic expansions which are uniform with respect to the parameter $\tau$ in a closed ball to justify that the map appearing in the fixed point problem is indeed a contraction for $\eps$ small enough. We will not elaborate more on that topic here and refer the interesting reader to \cite{ChNa16}.

\section{Perfect transmission for the Neumann problem}\label{SectionTOne}

The different strategies presented above fail to provide examples of waveguides where $T = 1$ (perfect transmission without phase shift) for the problem with Neumann BCs. Is there some fundamental obstruction to get $T = 1$ in that case? In this section, we answer negatively to that question. To proceed, we work with another singular perturbation of the reference strip. \\

\begin{figure}[!ht]
\centering
\begin{tikzpicture}[scale=2]
\draw[fill=gray!30,draw=none](-2,1) rectangle (2,2);
\draw (-2,1)--(2,1); 
\draw[dashed] (-2.5,1)--(-2,1); 
\draw[dashed] (2.5,1)--(2,1); 
\draw (-2,2)--(2,2);  
\draw[dashed] (2.5,2)--(2,2);
\draw[dashed] (-2.5,2)--(-2,2);

\draw[dotted,>-<] (-0.2,3.9)--(0.2,3.9);
\draw[dotted,>-<] (-0.8,3.4)--(-1.2,3.4);
\draw[dotted,>-<] (0.8,3.6)--(1.2,3.6);

\draw[dotted,>-<] (0.3,1.95)--(0.3,3.85);
\draw[dotted,>-<] (-0.7,1.95)--(-0.7,3.35);
\draw[dotted,>-<] (1.3,1.95)--(1.3,3.55);

\node at (0,4){\small $\eps$};
\node at (-1,3.5){\small $\eps$};
\node at (1,3.7){\small $\eps$};

\node at (-0.55,2.65){\small $h_1$};
\node at (0.45,2.9){\small $h_2$};
\node at (1.45,2.75){\small $h_3$};

\draw[fill=gray!30](-0.1,2) rectangle (0.1,3.8);
\draw[fill=gray!30](-0.9,2) rectangle (-1.1,3.3);
\draw[fill=gray!30](0.9,2) rectangle (1.1,3.5);

\draw[thick,draw=gray!30](-0.095,2)--(0.095,2);
\draw[thick,draw=gray!30](-0.905,2)--(-1.095,2);
\draw[thick,draw=gray!30](0.905,2)--(1.095,2);

\node[mark size=1pt,color=black] at (-1,2) {\pgfuseplotmark{*}};
\node[color=black] at (-1,1.8) {\small $M_1$};
\node[mark size=1pt,color=black] at (0,2) {\pgfuseplotmark{*}};
\node[color=black] at (0,1.8) {\small $M_2$};
\node[mark size=1pt,color=black] at (1,2) {\pgfuseplotmark{*}};
\node[color=black] at (1,1.8) {\small $M_3$};
\node at (-1.8,1.1){\small$\Omega^{\eps}$};
\end{tikzpicture}
\caption{Geometry of $\Om^{\eps}$.\label{DomainOriginal2D}} 
\end{figure}

\noindent Consider the waveguide $\Om^\eps$ pictured in Figure \ref{DomainOriginal2D}. It is made of the reference strip $\mathcal{S}$ to which we have glued at the points $M_1\coloneqq(x_1,1)$, $M_2\coloneqq(x_2,1)$, $M_3\coloneqq(x_3,1)$, thin chimneys of width $\eps>0$ small and heights respectively equal to $h_1$, $h_2$, $h_3$. We fix $k\in(0;\pi)$ so that only the modes $w_\pm(x,y)=e^{ikx}$ can propagate. The first step is to compute an asymptotic expansion of the scattering coefficients as $\eps$, the width of the thin rectangles, tends to zero. Again, this is a rather long work that we will not present here, the reason being that the fields vary rapidly in some zone around the $M_n$, $n=1,2,3$. We refer the reader to \cite{BoCN18} for more details. In this article, by using the method of matched asymptotic expansions, it is shown that when the $h_n$ are such that $kh_n\not\in\{\pi/2+\pi\N\}$, when $\eps$ tends to zero, we have
\begin{equation}\label{ExpansionTrumpet}
\begin{array}{rcl}
R^\eps &=& \dsp0+\eps\Big(ik\sum_{n=1}^3(w_+(M_n))^2\tan(kh_n)\Big)+O(\eps^2) \\[10pt]
T^\eps &=& \dsp1+\eps\Big(ik\sum_{n=1}^3\tan(kh_n)\Big)+O(\eps^2).
\end{array}
\end{equation}

\begin{remark}
When $kh_n\in\{\pi/2+\pi\N\}$, resonant phenomena appears in the chimneys and the asymptotics (\ref{ExpansionTrumpet}) is no more valid. This regime will be exploited in Chapter III (see Section \ref{ParagCloaking}).
\end{remark}

\noindent The crucial point to observe in (\ref{ExpansionTrumpet}) compared to the other perturbations of the reference strip above is the appearance of the term of order $\eps$ in the expansion of $T^\eps$. Its imaginary part is non zero and can change sign according to the value of the $h_n$. Thus by considering another type of perturbation of $\mathcal{S}$, we have been able to obtain a non zero $dT(0)$.\\
\newline
By using that $(w_+(M_n))^2=e^{2ikx_n}$, we can find positions and heights of the chimneys such that $R^\eps=O(\eps^2)$ and $T^\eps-1=O(\eps^2)$. Then perturbing slightly the $h_n$ around these particular values, by solving a fixed point problem in $\R^3$ similar to (\ref{FixedPointEqn}), we can achieve 
\[
R^\eps=0\qquad\mbox{ and }\qquad\Im m\,T^\eps=0
\]
in the new geometry. Finally, by exploiting the relation of conservation of energy $|R^\eps|^2+|T^\eps|^2=1$ as in \S\ref{ParaT1_Dirichlet}, one shows that this implies $T^\eps=1$ for $\eps>0$ sufficiently small.\\
\newline
Initially we have to control two complex numbers ($R^\eps$ and $T^\eps$), so a priori we need four real degrees of freedom. But due to the constraint of conservation of energy, three parameters are sufficient. This explains why three chimneys are involved here. We could also have probably worked with only two chimneys, by perturbing their heights and the distance between them.

\begin{figure}[!ht]
\centering
\includegraphics[width=0.82\textwidth]{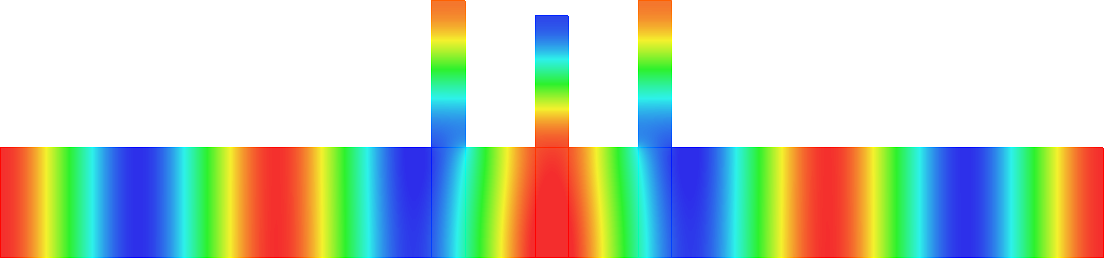}\\[15pt]
\includegraphics[width=0.82\textwidth]{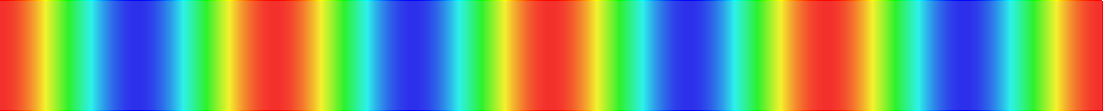}\\[15pt]
\caption{Top: real part of $u^\eps$ in the geometry obtained at the end of the iterative procedure. Bottom: real part of $w_+$ in the reference strip. \label{Trumpet}}
\end{figure}

\noindent This approach can be implemented numerically very naturally. First we set the $M_n$ and the $h_n$ to kill the terms of order $\eps$ in (\ref{ExpansionTrumpet}). Then we tune slightly the length of the ligaments by solving the corresponding fixed point problem iteratively as in (\ref{IterativeProc}). At each step, we solve a scattering problem in a new geometry. One can see this procedure as acting on the pistons on a trumpet to achieve $T^\eps=1$. In Figure \ref{Trumpet} we display a geometry obtained with this method. A bit far from the chimneys so that evanescent terms can be decently neglected, we remark that the field is the same as in the reference strip. Again, numerically one observes that the algorithm converges though $\eps$, the width of the ligaments, is not that small.

\section{Concluding remarks}\label{SectionContinuation}

In this chapter, we constructed smooth and non-smooth perturbations of the reference geometry which are invisible, in a broad sense (non reflecting or perfectly invisible). We worked at a given wavenumber $k$. We could proceed similarly to impose invisibility at given $k_1,\dots,k_N$. However we emphasize that the set of wavenumbers must be discrete and finite. Imposing zero reflection for a continuum of $k$ is probably impossible in general due to the analyticity of the map $k\mapsto R(k)$.
In the numerical results we presented, we were interested in controlling only $R$ and sometimes $\Im m\,T$ at one $k$. As a consequence, we had very few constraints and for this reason, the fixed point algorithm converges with not so small values of $\eps$, which allowed us to obtain rather large invisible defects. When the number of constraints increases, for example when working at higher $k$ so that there are more scattering coefficients to control, in practice we find that $\eps$ must be chosen smaller to have convergence of the method. A natural idea then is to try to reiterate the procedure to obtain larger invisible defects: once an invisible obstacle has been constructed, we can consider it as a new starting configuration and perturb it while keeping the same scattering coefficients. This is usually called a continuation method which allows one to explore the variety (of infinite dimension) of invisible obstacles. We will not implement it here (see \cite{BeBC21} for more details). Instead, we simply illustrate the method in Figure \ref{FigContMeth} in finite dimension. More precisely, instead of working with $\mathscr{F}:\mathscr{C}^{\infty}_0(\R)\to\R^3$ as in the case of perfect invisibility (see (\ref{DefFT1D})), we consider some smooth
\[
\begin{array}{rcll}
\mathscr{F}:&\R^2&\to&\R \\[2pt]
 &  (h_1,h_2) &\mapsto& \mathscr{F}(h_1,h_2)
\end{array}
\]
such that $\mathscr{F}(0,0)=0$ and $d\mathscr{F}(0,0)\ne(0,0)\Leftrightarrow\nabla\mathscr{F}(0,0)\ne(0,0)$.

\begin{figure}[!ht]
\centering
\begin{tikzpicture}[scale=1.9]
\node at (-2,0){ \phantom{$h_1$}};
\draw[->] (-1,0)--(3,0);
\draw[->] (0,-1)--(0,2.5);
\draw [draw=blue,line width=0.3mm] plot [smooth cycle, tension=1] coordinates {(0,0)(1,-0.8) (3,0.5) (2,2.2) (0.5,1.8) (-0.1,1)};
\draw[dashed] (0,0)--(-0.7,1);
\node at (-0.2,-0.2){\small $O$};
\node at (3.2,0){ $h_1$};
\node at (0.2,2.35){ $h_2$};
\node at (4,2.2){ \textcolor{blue}{\begin{tabular}{c}
Variety of invisible $h$\\[2pt]
$\{(h_1,h_2)\in\R^2\,|\,\mathscr{F}(h_1,h_2)=0\}$
\end{tabular}}};
\draw[fill=red,draw=none](-0.22,0.55) circle (0.05);
\draw[fill=gray!95,draw=none](-0.33,0.47) circle (0.05);
\node at (-0.65,0.5){ \textcolor{gray!95}{$\eps\,\mu^0_0$}};
\node at (1,0.55){ $h^{\mrm{sol},1}=\eps\,(\mu^0_0+\tau^{\mrm{sol},0}\mu^0_1)$};
\draw[fill=red,draw=none](0,1.4) circle (0.05);
\node at (1.5,1.4){ $h^{\mrm{sol},2}=h^{\mrm{sol},1}+\eps\,(\mu^1_0+\tau^{\mrm{sol},1}\mu^1_1)$};
\draw[fill=red,draw=none](0.8,2) circle (0.05);
\node at (0.85,1.8){ $h^{\mrm{sol},3}$};
\draw[fill=red,draw=none](0,0) circle (0.05);
\node at (0.4,0){ $h^{\mrm{sol},0}$};
\end{tikzpicture}
\caption{Illustration of the continuation method. For $n\in\N$, $\mu^{n}_0,\mu^{n}_1\in\R^2$ are such that $d\mathscr{F}(h^{\mrm{sol},n})(\mu^{n}_0)=0$ and $d\mathscr{F}(h^{\mrm{sol},n})(\mu^{n}_1)=1$ (we can take $\mu^{n}_1=\nabla\mathscr{F}(h^{\mrm{sol},n})/|\nabla\mathscr{F}(h^{\mrm{sol},n})|^2$). \label{FigContMeth}} 
\end{figure}

\chapter{Playing with resonances to reach invisibility}\label{SectionResonance}
\setcounter {section} {0}

\noindent In the previous chapter, we showed how to construct small non reflecting or invisible defects of the reference strip by using variants of the implicit functions theorem. The goal of the present chapter is to create larger invisible obstacles. To proceed, we have to act strongly on scattering coefficients. We will do that by working with resonant phenomena. More precisely, in the first section we explain how to exploit the Fano resonance phenomenon together with symmetry considerations to construct large non reflecting obstacles in monomode regime. Then, we modify a bit the point of view and for a given waveguide in acoustics, we show how to perturb its boundary with resonant ligaments to get approximately $T=1$ in the new geometry.

\section{Playing with the Fano resonance}\label{SectionFano}

\begin{figure}[!ht]
\centering
\begin{tikzpicture}[scale=2.4]
\draw[fill=gray!30,draw=none](-1,0) rectangle (1,1);
\begin{scope}[scale=0.5]
\draw [fill=white,draw=black] plot [smooth cycle, tension=1] coordinates {(-0.6,0.9) (0,0.5) (0.7,1) (0.5,1.5) (-0.2,1.4)};
\end{scope}
\draw (-1,1)--(1,1);
\draw (-1,0)--(1,0);
\draw[dashed] (1,1)--(1.2,1);
\draw[dashed] (-1,1)--(-1.2,1);  
\draw[dashed] (1,0)--(1.2,0);
\draw[dashed] (-1,0)--(-1.2,0);  
\node at (0.8,0.15){\small $\Om^0$};
\end{tikzpicture}\qquad\quad\begin{tikzpicture}[scale=2.4]
\draw[fill=gray!30,draw=none](-1,0) rectangle (1,1);
\draw[samples=30,domain=-0.25:0.25,draw=black,fill=gray!30] plot(\x-0.5,{1+0.05*(4*\x+1)^4*(4*\x-1)^4*(4*\x+3)});
\begin{scope}[scale=0.5]
\draw [fill=white,draw=black] plot [smooth cycle, tension=1] coordinates {(-0.6,0.9) (0,0.5) (0.7,1) (0.5,1.5) (-0.2,1.4)};
\end{scope}
\draw (-1,1)--(-0.75,1);
\draw (1,1)--(-0.25,1); 
\draw (-1,0)--(1,0);
\draw[dashed] (1,1)--(1.2,1);
\draw[dashed] (-1,1)--(-1.2,1);  
\draw[dashed] (1,0)--(1.2,0);
\draw[dashed] (-1,0)--(-1.2,0);  
\node at (0.8,0.15){\small $\Om^{\eps}$};
\draw[samples=30,domain=-0.25:0.25,draw=black,fill=gray!30] plot(\x-0.5,{1+0.05*(4*\x+1)^4*(4*\x-1)^4*(4*\x+3)});
\node[above] at (0.1,1.15){\footnotesize  $\partial \Om^{\eps}=(x,1+\eps H(x))$};
\end{tikzpicture}
\caption{Original waveguide $\Om^0$ (left) and perturbed geometry $\Om^{\eps}$ (right). \label{GeometriesFano}} 
\end{figure}

The Fano resonance, named after the physicist Ugo Fano (1912-2001), is a classical phenomenon that arises in many situations in physics. For our particular concern, it appears as follows. Assume that the geometry of our waveguide is characterized by a real parameter $\eps$. Below, $\eps$ will be the amplitude of a local perturbation of the walls (see Figure \ref{GeometriesFano} right). To simplify notation, set $\lambda\coloneqq k^2$ so that the acoustic problem (\ref{WaveguidePbNeumann}) writes
\begin{equation}\label{WaveguidePbFano}
\begin{array}{|rcll}
\Delta u+\lambda u&=&0&\mbox{ in }\Om^\eps \\[2pt]
\partial_\nu u&=&0&\mbox{ on }\partial\Om^\eps.
\end{array}
\end{equation}
Assume that trapped modes exist for Problem (\ref{WaveguidePbFano}) with $\eps=0$ and $\lambda=\lambda^0\in\R$. We remind the reader that trapped modes are non zero solutions of the homogeneous problem which belong to $\mH^1$. We will see that for $\eps\ne0$ small, the scattering matrix $\mathbb{S}$, which is of size $2\times2$ in monomode regime according to (\ref{DefScaMatN}), exhibits a rapid change for real $\lambda$ varying in a neighborhood of $\lambda^0$. Then our goal will be to exploit this Fano resonance phenomenon together with symmetry considerations of the geometry to provide examples of waveguides where $R=0$ or $T=0$. Note that the case $T=0$, that we will call zero transmission, is not related to invisibility but can be interesting for other applications. It corresponds to a situation where the energy of an incident wave is completely backscattered, like for a mirror.  \\
\newline
To understand more this Fano resonance phenomenon, let us work on a 1D toy problem.

\subsection{A $\mrm{1D}$ toy problem}\label{SectionToyPb}

\subsubsection{Unperturbed case}

\begin{figure}[!ht]
\centering
\begin{tikzpicture}[scale=1.5]
\begin{scope}[shift={(-1,0.1)}]
\draw[->] (3,0.2)--(3.6,0.2);
\draw[->] (3.1,0.1)--(3.1,0.7);
\node at (3.65,0.3){\small $x$};
\node at (3.25,0.6){\small $y$};
\end{scope}
\draw[dashed] (-4.5,0)--(-4,0);
\draw (-4,0)--(1,0);
\draw (0,0)--(0,1);
\node at (-2.5,-0.3){\small $\Om_1$};
\node at (0.25,0.5){\small $\Om_2$};
\node at (0.5,-0.3){\small $\Om_3$};
\node at (0,-0.25){\small $O$};
\draw[fill=gray!80,draw=none](0,0) circle (0.05);
\end{tikzpicture}
\caption{A $\mrm{1D}$ geometry. \label{Domain1D}} 
\end{figure}

\noindent Consider the geometry 
\[
\Om\coloneqq\Om_1\cup\Om_2\cup\Om_3\qquad\mbox{ with }\Om_1\coloneqq(-\infty;0)\times\{0\},\ \Om_2\coloneqq\{0\}\times(0;1),\,\Om_3\coloneqq(0;1)\times\{0\}
\]
(see Figure \ref{Domain1D}). For a function $\varphi$ defined in $\Om$, set $\varphi_j\coloneqq\varphi|_{\Om_j}$, $j=1,2,3$. Working in suitable coordinates, we can see the $\Om_j$ as $\mrm{1D}$ domains. Similarly to (\ref{WaveguidePbFano}), we study the problem 
\begin{equation}\label{Problem1D1}
-\varphi_j''=k^2\varphi_j\quad\mbox{ in }\Om_j,\qquad j=1,2,3,
\end{equation}
with the conditions 
\begin{equation}\label{Problem1D2}
\begin{array}{|l}
\varphi_1(0)=\varphi_2(0)=\varphi_3(0),\\[2pt]
\varphi'_1(0)=\varphi'_2(0)+\varphi'_3(0),\\[2pt]
\varphi'_2(1)=\varphi'_3(1)=0.
\end{array}
\end{equation}
Thus at the junction point $O$, we impose continuity of the field and conservation of the flux (Kirchhoff law) and we work with Neumann boundary conditions at the ends of the ligaments. It can be shown that this is a good model to describe the properties of the acoustic problem in the 2D domain obtained by thickening the above 1D graph. We are interested in the scattering of the rightgoing incident wave $\varphi_i(x)=e^{ikx}$. We denote by $\varphi$ and $\varphi_s=\varphi-\varphi_i$ the corresponding total and  scattered fields. We impose that $\varphi_s$  is outgoing at infinity. For the simple problem considered here, the radiation condition boils down to assume that $\varphi_s$ writes as $\varphi_s(x)=R\,e^{-ikx}$ where $R\in\Cplx$ is the reflection coefficient. Using the two boundary conditions of (\ref{Problem1D1})--(\ref{Problem1D2}), we are led to look for a solution $\varphi$ such that
\[
\varphi_1(x)=e^{ikx}+R\,e^{-ikx},\qquad \varphi_2(y)=a\cos(k(y-1)),\qquad \varphi_3(x)=b\cos(k(x-1)),
\]
where $a$, $b\in\Cplx$. Writing the transmission conditions at the junction point $O$, we obtain that $R$, $a$, $b$ must solve the system
\begin{equation}\label{system33}
\mathbb{M}(k)\Phi=F\quad\mbox{ with }\mathbb{M}(k)\coloneqq\left(\begin{array}{ccc}
1 & -\cos k & 0\\
0 & \cos k & -\cos k\\
i & \sin k & \sin k
\end{array}\right),\  \Phi\coloneqq\left(\begin{array}{c}
R \\
a \\
b
\end{array}\right),\  F\coloneqq\left(\begin{array}{c}
-1 \\
0 \\
i
\end{array}\right).
\end{equation}
One finds 
\[
\mrm{det}\,\mathbb{M}(k)=(2\sin k+i\cos k)\cos k.
\]
Therefore this system (and so Problem (\ref{Problem1D1})--(\ref{Problem1D2}) with the above mentioned radiation condition) is uniquely solvable if and only $k\not\in (2\N+1)\pi/2$. When $k\in (2\N+1)\pi/2$, the kernel of Problem (\ref{Problem1D1})--(\ref{Problem1D2}) coincides with $\mrm{span}(\varphi_{\mrm{tr}})$ where $\varphi_{\mrm{tr}}$ is the trapped mode such that 
\[
\varphi_{\mrm{tr}}(x)=\begin{array}{|cl}
0 & \mbox{ in }\Om_1\\[2pt]
\phantom{-}\sin(ky) & \mbox{ in }\Om_2\\[2pt]
-\sin(kx) & \mbox{ in }\Om_3.
\end{array}
\]
On the other hand, for any $k>0$, one can check that System (\ref{system33}) (and so Problem (\ref{Problem1D1})--(\ref{Problem1D2})) admits a solution because $F\in(\ker\,{}^t\mathbb{M})^{\perp}$ (this can be verified by an explicit calculus). Moreover, as in (\ref{RepresentationupD}) the coefficient $R$ is always uniquely defined (even when $k\in (2\N+1)\pi/2$) and such that
\begin{equation}\label{DefR1D}
R=\cfrac{\cos k+2i\sin k}{\cos k-2i\sin k}\,.
\end{equation}
The map $k\mapsto R(k)$ is $\pi$-periodic and $|R(k)|=1$. The latter relation, which is due to conservation of energy, guarantees that $R(k)=e^{i\theta(k)}$ for some phase $\theta(k)\in\R/(2\pi\Z)$. In Figure \ref{FigPhaseRef}, we represent the map $k\mapsto\theta(k)$ for $k\in(0;\pi)$. We observe that it has a smooth behavior, also around the value $k=\pi/2$ for which trapped modes exist for Problem (\ref{Problem1D1})--(\ref{Problem1D2}). Thus we see that the scattering is insensitive to the existence of trapped modes. 

\begin{figure}[!ht]
\centering
\includegraphics[width=0.47\textwidth]{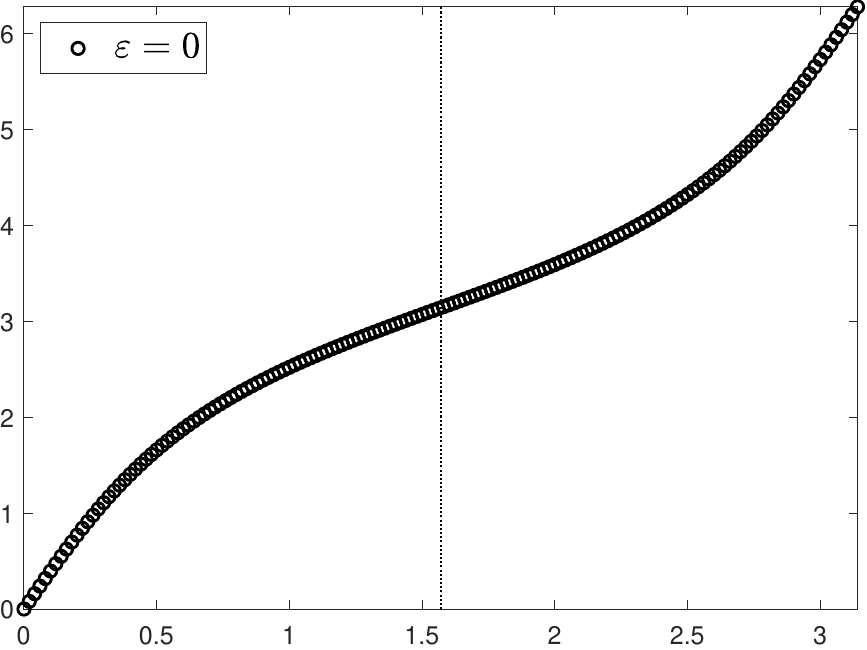}
\caption{Map $k\mapsto\theta(k)$. The vertical black dotted line indicates the value $k=\pi/2$. \label{FigPhaseRef}}
\end{figure}

\noindent It is also interesting to give a spectral description of the properties of the problem. Denote by $A$ the unbounded operator of $\mL^2(\Om)$ such that 
\begin{equation}\label{DefA1}
A\varphi=-\varphi''_j\quad\mbox{ in }\Om_j,\qquad j=1,2,3,
\end{equation}
with domain 
\begin{equation}\label{DefA2}
D(A)=\{\varphi\in\mL^2(\Om)\,|\,\varphi_j\in\mH^2(\Om_j)\mbox{ for }j=1,2,3\mbox{ and }\varphi\mbox{ satisfies }\mrm{(\ref{Problem1D2})}\}.
\end{equation}
Classically, see \textit{e.g.} \cite{BiSo87}, one shows that $A$ is a selfadjoint operator. Therefore its spectrum $\sigma(A)$ is real. One can prove that $\sigma(A)$ coincides with $[0;+\infty)$. More precisely, we have $\sigma_{\mrm{ess}}(A)=[0;+\infty)$ where $\sigma_{\mrm{ess}}(A)$ denotes the essential spectrum of $A$. By definition, $\sigma_{\mrm{ess}}(A)$ corresponds to the set of $\lambda \in\Cplx$ for which there exists a so-called singular sequence, that is a sequence $(u^{(m)})$ of functions of $D(A)$ such that $\|u_m\|_{\mL^2(\Om)}=1$, $(u^{(m)})$ converges weakly to 0 in $\mL^2(\Omega)$ and $((A-\lambda)u^{(m)})$ converges strongly to 0 in $\mL^2(\Omega)$. The fact that $\sigma_{\mrm{ess}}(A)=[0;+\infty)$ is directly related to the existence of the propagating modes $e^{\pm ikx}$ for all $k>0$. On the other hand, the above calculations ensure that $A$ also has point spectrum corresponding to eigenvalues, \textit{i.e.} values of $\lambda$ such that $\ker\,(A-\lambda\mrm{Id})\ne\{0\}$. More precisely, one has $\sigma_{\mrm{p}}(A)= \{(2n+1)^2\pi^2/4,\, n\in\N\}$. Let us emphasize that these eigenvalues are embedded in the essential spectrum (see Figure \ref{SpectrumA}).

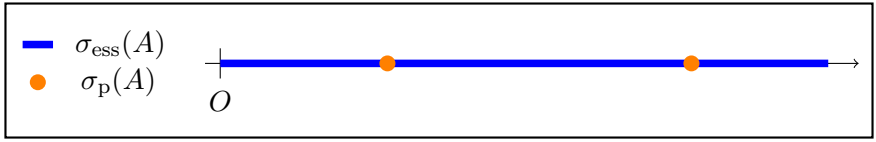
\begin{figure}[!ht]
\centering
\tikz \node[rectangle,inner sep=5pt,fill=none,draw=black,thick]{
\begin{tikzpicture}[scale=2]
\draw[->] (-1.1,0)--(3.2,0);
\draw[-] (-1,-0.1)--(-1,0.1);
\node at (-1,-0.25){ $O$};
\draw[-,line width=1mm,blue] (-1,0)--(3,0);
\fill[orange] (2.1,0) circle (1.5pt) ;
\fill[orange] (0.1,0) circle (1.5pt) ;
\draw[-,line width=1mm,blue] (-2.3,0.125)--(-2.1,0.125)node [anchor=west,black]{\,$\sigma_{\mrm{ess}}(A)$};
\fill[orange] (-2.2,-0.125) circle (1.5pt) node [anchor=west,black]{\quad $\sigma_{\mrm{p}}(A)$};
\end{tikzpicture}\qquad
};
\caption{Spectrum of $A$ in the complex plane. \label{SpectrumA}} 
\end{figure}

\subsubsection{Perturbed case}
Now we consider the same problem in the perturbed geometry $\Om^{\eps}\coloneqq\Om_1\cup\Om_2\cup\Om^{\eps}_3$ with $\Om^{\eps}_3\coloneqq(0;1+\eps)\times\{0\}$ and $\eps\in\R$ small. We denote with a superscript $\eps$ all the above quantities. In $\Om^{\eps}$, the resolution of the previous scattering problem leads to solve the system
\begin{equation}\label{BreakingSYm}
\mathbb{M}^{\eps}(k)\Phi^{\eps}=F\qquad\mbox{ with }\mathbb{M}^{\eps}(k)\coloneqq\left(\begin{array}{ccc}
1 & -\cos k & 0\\
0 & \cos k & -\cos(k(1+\eps))\\
i & \sin k & \sin(k(1+\eps))
\end{array}\right),\  \Phi^{\eps}\coloneqq\left(\begin{array}{c}
R^{\eps} \\
a^{\eps} \\
b^{\eps}
\end{array}\right).
\end{equation}
The vector $F$ is the same as in (\ref{system33}). We find
\[
\mrm{det}\,\mathbb{M}^{\eps}(k)=\sin(k(2+\eps))+i\cos k\cos(k(1+\eps)).
\]
Therefore we find that for $\eps\ne0$ small, the determinant of $\mathbb{M}^{\eps}$ does not vanish when $k>0$. As a consequence, Problem (\ref{Problem1D1})--(\ref{Problem1D2}) set in $\Om^{\eps}$ has a unique solution. One obtains
\[
R^{\eps}=\cfrac{\cos k\cos(k(1+\eps))+i\sin(k(2+\eps))}{\cos k\cos(k(1+\eps))-i\sin(k(2+\eps))}\,.
\]
Again, we have $|R^{\eps}(k)|=1$ (conservation of energy) so we can write $R^{\eps}(k)=e^{i\theta^{\eps}(k)}$ for some $\theta^{\eps}(k)\in\R/(2\pi\Z)$.  Note that  $\theta^{0}=\theta$ where $\theta$ appears after (\ref{DefR1D}). The map $k\mapsto\theta^{\eps}(k)$ is displayed in Figure \ref{FigPhase} for several values of $\eps$ (see also the alternative representation of Figure \ref{FigFanoSpace1D}). We observe that for $\eps\ne0$, the curve $k\mapsto\theta^{\eps}(k)$ has a fast variation for $k$ close to $\pi/2$. The variation is even faster as $\eps\ne0$ gets small. On the other hand, for $\eps=0$, as represented in Figure \ref{FigPhaseRef}, the curve $k\mapsto\theta^{0}(k)$ has a very smooth behavior. We emphasize that for $(\eps,k)=(0,\pi/2)$, as mentioned above, trapped modes exist for Problem (\ref{Problem1D1})--(\ref{Problem1D2}).		
\begin{figure}[!ht]
\centering
\includegraphics[width=0.47\textwidth]{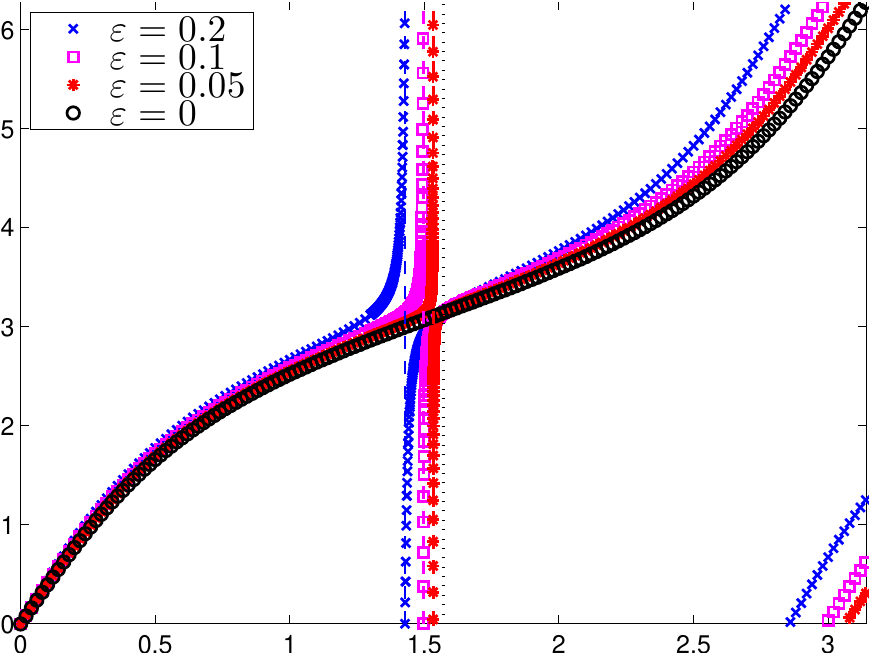}\quad\includegraphics[width=0.47\textwidth]{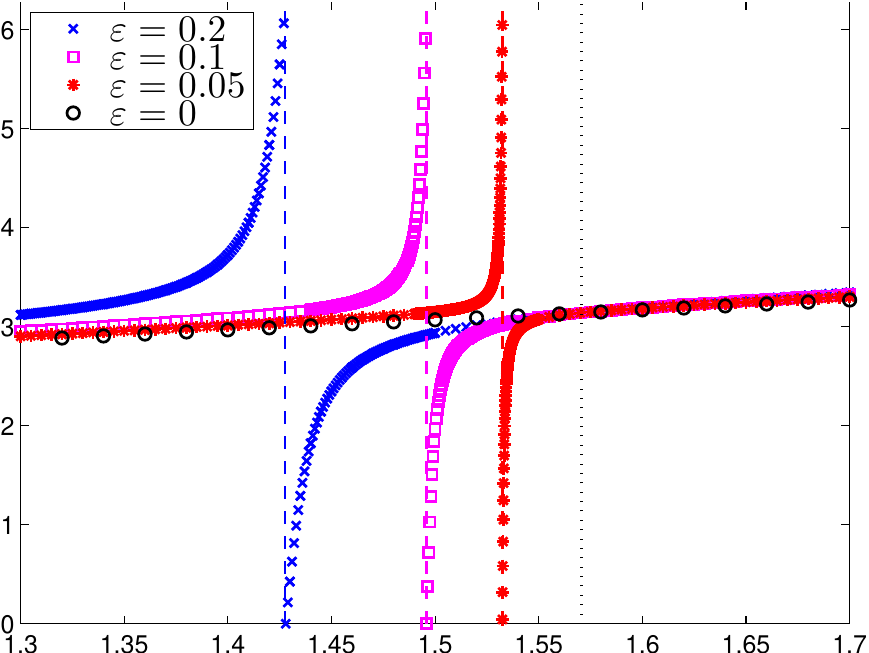}
\caption{Maps $k\mapsto\theta^{\eps}(k)$ for several values of $\eps$. The right picture is a zoom on the left picture around $k=\pi/2$ (marked by the vertical black dotted line). The vertical colored dashed lines indicate the values of $k$ such that $\theta^{\eps}(k)=0$. \label{FigPhase}}
\end{figure}

\begin{figure}[!ht]
\centering
\includegraphics[width=0.5\textwidth]{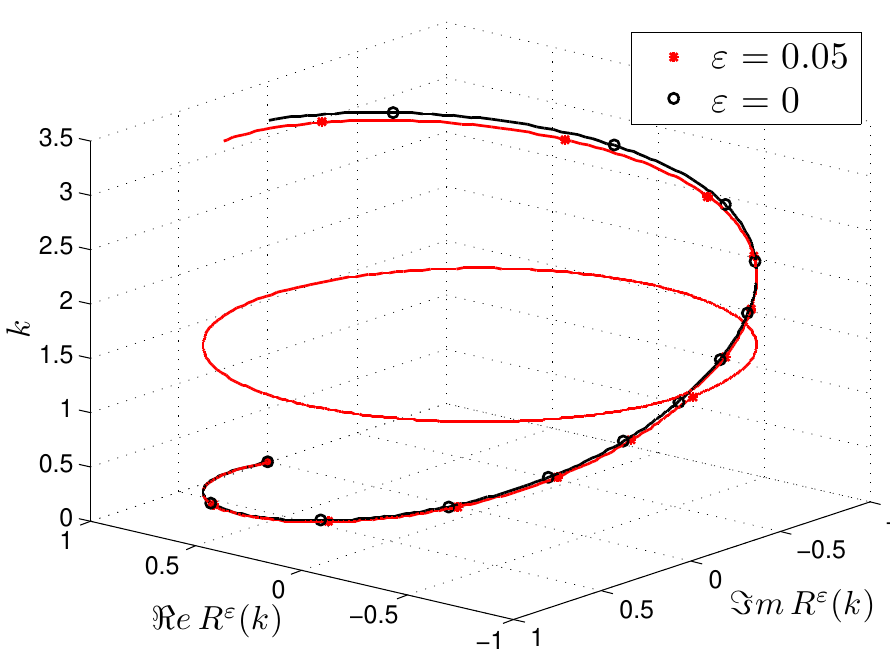}
\caption{Parametric curves $k\mapsto(\Re e\,R^{\eps}(k),\Im m\,R^{\eps}(k))$ for $k\in(0;\pi)$. \label{FigFanoSpace1D}}
\end{figure}
\noindent In order to study the variations of the reflection coefficient with respect to the frequency and the geometry, we define the map $\mathcal{R}:\R^2\to\Cplx$ such that 
\begin{equation}\label{R2variables}
\mathcal{R}(\eps,k)=\cfrac{\cos(k)\cos(k(1+\eps))+i\sin(k(2+\eps))}{\cos(k)\cos(k(1+\eps))-i\sin(k(2+\eps))}\,.
\end{equation}
With such a notation, we have $R^{\eps}(k)=\mathcal{R}(\eps,k)$ and $R(k)=\mathcal{R}(0,k)$. For all $k\in(0;\pi)$, there holds $\lim_{\eps\to0}\mathcal{R}(\eps,k)=\mathcal{R}(0,k)$. Now assume that the frequency and the geometry are related by some prescribed law in a neighborhood of the point $(\eps,k)=(0,\pi/2)$ corresponding to a setting supporting trapped modes. For example, assume that 
\[
k=\pi/2+\eps k'
\]
for a given $k'\in\R$. Then for $k'\ne-\pi/4$, starting from expression (\ref{R2variables}), we find as $\eps\to0$ the expansion
\begin{equation}\label{Fano01}
\mathcal{R}(\eps,\pi/2+\eps k')=-1+\eps\,\Big(\,\cfrac{-2ik'(\pi+2k')}{\pi+4k'}\,\Big)+O(\eps^2).\hspace{4.2cm}
\end{equation}
Note that we have $\mathcal{R}(0,\pi/2)=-1$.\\
\newline
For $k'=-\pi/4$, the asymptotic behavior as $\eps$ tends to zero is more surprising. Indeed, for
\[
k=\pi/2-\eps\pi/4+\eps^2\mu
\]
with $\mu\in\R$ (we take $k'=-\pi/4$ but allow for some freedom at higher order), we obtain
\begin{equation}\label{Resultat1DMobius}
\mathcal{R}(\eps,\pi/2-\eps\pi/4+\eps^2\mu)=g(\mu)+O(\eps)\qquad\mbox{ with }\quad g(\mu)=\cfrac{\pi^2+i(32\mu-4\pi)}{\pi^2-i(32\mu-4\pi)}\,.
\end{equation}
\begin{figure}[!ht]
\centering
\raisebox{1.3cm}{\begin{tikzpicture}[scale=0.95]
\draw[->] (-2.9,0) -- (3.1,0) node[right] {$\eps$};
\draw[->] (0,-0.2) -- (0,3) node[right] {$k$};
\node at (0,3.14/2-0.1) [left] {$\pi/2$};
\begin{scope}
\clip(-2.5,-0.5) rectangle (2.5,3);
\draw[domain=-2.7:2.7,smooth,variable=\x,blue] plot ({\x},{3.14/2-\x*3.14/4+0.2*\x*\x});
\draw[domain=-2.7:2.7,smooth,variable=\x,blue] plot ({\x},{3.14/2-\x*3.14/4+0.3*\x*\x});
\draw[domain=-2.7:2.7,smooth,variable=\x,blue] plot ({\x},{3.14/2-\x*3.14/4+0.1*\x*\x});
\draw[domain=-2.7:2.7,smooth,variable=\x,blue] plot ({\x},{3.14/2-\x*3.14/4+0.05*\x*\x});
\draw[domain=-2.7:2.7,smooth,variable=\x,blue] plot ({\x},{3.14/2-\x*3.14/4-0.05*\x*\x});
\draw[domain=-2.7:2.7,smooth,variable=\x,blue] plot ({\x},{3.14/2-\x*3.14/4-0.2*\x*\x});
\draw[domain=-2.7:2.7,smooth,variable=\x,blue] plot ({\x},{3.14/2-\x*3.14/4-0.3*\x*\x});
\draw[domain=-2.7:2.7,smooth,variable=\x,blue] plot ({\x},{3.14/2-\x*3.14/4-0.1*\x*\x});
\end{scope}
\draw[red,dashed,very thick] (0.6,-0.2) -- (0.6,2.8);
\node at (0.6,-0.2) [below] {\textcolor{red}{$\eps_0$}};
\end{tikzpicture}}\qquad\includegraphics[width=0.47\textwidth]{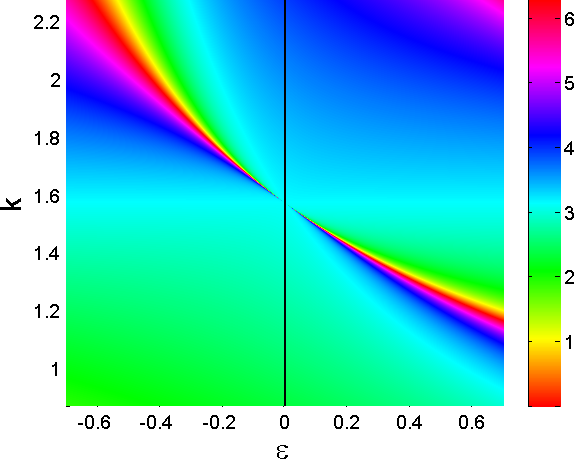}
\caption{Left: parabolic paths $\{(\eps,\pi/2-\eps\pi/4+\eps^2\mu),\,\eps\in(-1;1)\}\subset\R^2$ for several values of $\mu$. According to $\mu$, the limit of the coefficient $\mathcal{R}(\eps,\pi/2-\eps\pi/4+\eps^2\mu)$ defined in (\ref{R2variables}) as $\eps\to0$ is different. Right: the colors indicate the phase of $\mathcal{R}(\eps,k)$. This phase is valued in $[0;2\pi)$.\label{FigFano1DParabola}}
\end{figure}

\noindent Classical results concerning the M\"{o}bius transform (see \textit{e.g.}  \cite[Chap. 5]{Henr74}) guarantee that $g$ is a bijection between $\R$ and $\mathscr{C}(0,1)\setminus\{-1+0i\}$ ($\mathscr{C}(0,1)$ is the unit circle of the complex plane). Thus for any $z_0\in\mathscr{C}(0,1)$, we can find $\mu\in\R$ such that 
\[
\lim_{\eps\to0}\mathcal{R}(\eps,\pi/2-\eps\pi/4+\eps^2\mu)=z_0
\]
(see Figure \ref{FigFano1DParabola} right). In other words, by tuning cleverly the frequency and the amplitude of the perturbation, we can get any desired value for the reflection coefficient on the unit circle. 
Besides, this proves that the map $\mathcal{R}(\cdot,\cdot):\R^2\to\Cplx$ is not continuous at $(0,\pi/2)$. Finally, this shows also that for $\eps_0\ne0$ small fixed (see the vertical red dashed line in Figure \ref{FigFano1DParabola} left), the curve $k\mapsto \mathcal{R}(\eps_0,k)$ must exhibit a rapid change. Indeed, $\mu$ varying in $[-C\eps_0^{-1};C\eps_0^{-1}]$ for some arbitrary $C>0$ (which is only a small change for $k$) leads to a large change for $\mathcal{R}(\eps_0,\pi/2-\eps_0\pi/4+\eps_0^2\mu)$. This is exactly what we observed in Figure \ref{FigPhase}.\\
\newline
Let us come back to the spectral description of the properties  of the problem. Denote by $A^\eps$ the analog of the operator $A$ introduced in (\ref{DefA1})--(\ref{DefA2}) in the perturbed geometry $\Om^\eps$. Since we made a perturbation in a bounded region, it can be shown that $A^\eps$ has the same essential spectrum as $A$, \textit{i.e.} $\sigma_{\mrm{ess}}(A^\eps)=[0;+\infty)$ (again this is directly due to the existence of the propagating modes $e^{\pm ikx}$ for all $k>0$). On the other hand, the above computations guarantee that for $\eps>0$ small, $A^\eps$ has no eigenvalue, \textit{i.e.} $\sigma_{\mrm{p}}(A^\eps)=\emptyset$. This gives the picture of Figure \ref{SpectrumAeps} for $\sigma(A^\eps)$. A natural question then is: what happened to the eigenvalues of $A$ when the geometry has been perturbed? The answer is that they became so-called complex resonances. These are values of $k^2\in\Cplx$ with $\Im m\, k<0$ such that there is a solution to 
\begin{equation}\label{PbCplxRes}
-\varphi_j''=k^2\varphi_j\quad\mbox{ in }\Om^\eps_j,\qquad j=1,2,3,
\end{equation}
with the conditions (\ref{Problem1D2}), admitting the expansion $\varphi_1(x)=e^{-ikx}$ in $\Om_1$ (generalized outgoing behavior). Note that such generalized eigenfunctions are exponentially growing as $x\to-\infty$ and so do not belong to $\mL^2(\Om^\eps)$. In the literature, they are often refereed to as quasi-normal modes or leaky modes. Observe that they cannot exist for $\Im m\,k>0$ because they would be exponentially decaying as $x\to-\infty$ and so would belong to $\mL^2(\Om^\eps)$. But this is impossible because the operator is selfadjoint and therefore cannot have eigenvalues in $\Cplx\setminus\R$.

\begin{figure}[!ht]
\centering
\tikz \node[rectangle,inner sep=5pt,fill=none,draw=black,thick]{
\begin{tikzpicture}[scale=2]
\draw[->] (-1.1,0)--(3.2,0);
\draw[-] (-1,-0.1)--(-1,0.1);
\node at (-1,-0.25){ $O$};
\draw[-,line width=1mm,blue] (-1,0)--(3,0);
\fill[red] (2.1,-0.2) circle (1.5pt) ;
\fill[red] (0.1,-0.2) circle (1.5pt) ;
\draw[-,line width=1mm,blue] (-3.3,0.125)--+(0.2,0)node [anchor=west,black]{\,$\sigma_{\mrm{ess}}(A^\eps)$};
\fill[red] (-3.2,-0.125) circle (1.5pt) node [anchor=west,black]{\hspace{0.25cm}\mbox{Complex resonances}};
\end{tikzpicture}\qquad
};
\caption{Spectrum of $A^\eps$ in the complex plane. \label{SpectrumAeps}} 
\end{figure}

\noindent Let us stress that this conversion of eigenvalues into  complex resonances when one perturbs the geometry is crucially related to the fact that we are considering eigenvalues which are embedded in the essential spectrum. In particular, it does not occur in bounded domains when essential spectrum do not exist. Let us illustrate this with a concrete example. Denote by $B$ the Neumann Laplacian on the interval $(0;1)$ such that
\[
\begin{array}{|rcl}
D(B)&=&\{\varphi\in\mH^2(0;1)\,|\,\partial_x\varphi(0)=\partial_x\varphi(1)=0 \}\\[2pt]
B\varphi&=&-\varphi''.
\end{array}
\]
The operator $B$ has only discrete spectrum (no essential spectrum), namely 
\[
\sigma(B)= \{n^2\pi^2,\,n\in\N\}
\]
and the corresponding eigenfunctions are the $\varphi_n$ such that $\varphi_n(x)=\cos(n\pi x)$. Now if we denote by $B^\eps$ the same Neumann Laplacian in the perturbed geometry $(0;1+\eps)$, we find 
\[
\sigma(B^\eps)= \{n^2\pi^2/(1+\eps)^2,\,n\in\N\}
\]
and the corresponding eigenfunctions are the $\varphi^\eps_n$ such that $\varphi^\eps_n(x)=\cos(n\pi x)$. In other words, the eigenvalues of $B$ have been only slightly shifted on the real axis (see Figure \ref{SpectrumNeumannLap}), they did not turn into complex resonances.

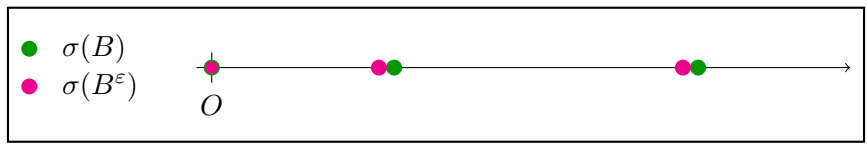
\begin{figure}[!ht]
\centering
\tikz \node[rectangle,inner sep=5pt,fill=none,draw=black,thick]{
\begin{tikzpicture}[scale=2]
\draw[->] (-1.1,0)--(3.2,0);
\draw[-] (-1,-0.1)--(-1,0.1);
\node at (-1,-0.25){ $O$};
\fill[green!60!black] (2.2,0) circle (1.5pt) ;
\fill[green!60!black] (0.2,0) circle (1.5pt) ;
\fill[green!60!black] (-1,0) circle (1.5pt) ;
\fill[magenta] (2.1,0) circle (1.5pt) ;
\fill[magenta] (0.1,0) circle (1.5pt) ;
\fill[magenta] (-1,0) circle (1.2pt) ;
\fill[green!60!black] (-2.2,0.125) circle (1.5pt) node [anchor=west,black]{\hspace{0.25cm}$\sigma(B)$};
\fill[magenta] (-2.2,-0.125) circle (1.5pt) node [anchor=west,black]{\hspace{0.25cm}$\sigma(B^\eps)$};
\end{tikzpicture}\qquad
};
\caption{Spectra of $B$ and $B^\eps$ in the complex plane. \label{SpectrumNeumannLap}} 
\end{figure}

\noindent On the other hand, not all geometric perturbations of Problem (\ref{DefA1})--(\ref{DefA2}) convert embedded eigenvalues into complex resonances. For example, if both $\Om_2$, $\Om_3$ are perturbed into $\Om^\eps_2=\{0\}\times(0;1+\eps)$, $\Om^\eps_3=(0;1+\eps)\times\{0\}$,  one can check that embedded eigenvalues remain embedded eigenvalues for the perturbed problem. Somehow, we need to break the symmetry as in (\ref{BreakingSYm}), to create some coupling between the trapped modes and the scattering properties of the problem. In that case, complex resonances located closed to the real axis have a strong impact on the scattering.

\subsection{Fano resonance in the 2D waveguide}\label{ParaFano2D}

Let us return to the problem (\ref{WaveguidePbFano}) in 2D. We assume that $\Om^0$ is such that the Neumann Laplacian has a simple eigenvalue $\lambda_0\in(0;\pi)$ (geometric multiplicity equal to one). We perturb the geometry from some smooth compactly supported profile function $H$ with amplitude $\eps\ge0$ as in Figure \ref{GeometriesFano} right. We denote by $\Om^{\eps}$ the new waveguide and $\mathbb{S}(\eps,\lambda)$, $T(\eps,\lambda)$, $R_\pm(\eps,\lambda)$ the scattering matrix/coefficients in the geometry $\Om^{\eps}$ at frequency $\lambda$. For short, we set $\mathbb{S}^0=\mathbb{S}(0,\lambda^0)$, $T^0=T(0,\lambda^0)$, $R^0_\pm=R_\pm(0,\lambda^0)$. Introduced $u_0$ an eigenfunction associated with $\lambda^0$ such that $\|u_0\|_{\mL^2(\Om)}=1$. Decomposition in Fourier series as in Chapter \ref{ChapWaveguides} guarantees that as $|x|\to+\infty$, we have the expansion
\[
u_0(x,y)=K_{\pm}e^{-\sqrt{\pi^2-\lambda^0}|x|}\cos(\pi y)+\dots
\]
where $K_{\pm}\in\Cplx$. In \cite{ChNa18}, the following theorem is proved. 
\begin{theorem} \label{TheoremFano}
Assume that $(K_+,K_-)\ne(0,0)$. There is a quantity $\ell(H)\in\R$, which depends linearly on $H$, such that when $\eps\to0$,
\begin{equation}\label{Fano1}
\mathbb{S}(\eps,\lambda^0+\eps\lambda')=\mathbb{S}^0+O(\eps)\qquad\mbox{ for $\lambda'\ne\ell(H)$},\hspace{0.7cm}
\end{equation}
and, for any $\mu\in\R$,
\begin{equation}\label{Fano2}
\mathbb{S}(\eps,\lambda^0+\eps\ell(H)+\eps^2\mu)=\mathbb{S}^0+\cfrac{\tau^{\top}\tau}{i\tilde{\mu}-|\tau|^2/2}+O(\eps).
\end{equation}
In this expression $\tau=(a,b)\in\Cplx\times\Cplx$ depends only on $\Om$ and $\tilde{\mu}=c\mu+C$ for some unessential real constants $c$, $C$ with $c\ne0$.
\end{theorem}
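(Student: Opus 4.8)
The plan is to view $\lambda^0$ as an eigenvalue embedded in the essential spectrum $[0;+\infty)$ and to follow, under the boundary perturbation of amplitude $\eps$, its conversion into a complex resonance of the kind met in the $\mathrm{1D}$ toy model; the two regimes in the statement will then be the off-resonance and the resolved-resonance views of the single Breit--Wigner lineshape carried by that resonance. To set things up I would first rectify the geometry, pulling $\Om^\eps$ back onto the fixed waveguide $\Om^0$ by a diffeomorphism equal to the identity outside $\mathrm{supp}\,H\times I$ and depending analytically on $\eps$, so that (\ref{WaveguidePbFano}) becomes a problem on $\Om^0$ with coefficients analytic in $\eps$. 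A Hadamard-type shape-derivative computation, obtained by differentiating the eigenvalue relation and testing against $u_0$, then gives the first variation of the real part of $\lambda^0$ (the radiative imaginary correction being of higher order): a real number depending linearly on $H$, which is exactly $\ell(H)$. Recall that $u_0$ decays on the evanescent mode $\cos(\pi y)$ only, carries no propagating component (Proposition~\ref{PropoTrappedModes}), and that its far field is recorded by $K_\pm$.

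For the scattering solution in $\Om^\eps$ at frequency $\lambda=\lambda^0+O(\eps)$ I would use the ansatz $u^\eps=v^\eps+A^\eps\,U^\eps$, where $U^\eps$ is a suitable extension of the deformed trapped mode, $v^\eps$ a regular remainder, and $A^\eps$ a scalar amplitude that blows up near resonance. Substituting this into the variational formulation of the type (\ref{PbNumBounded}) and imposing the solvability condition against $U^\eps$ collapses the problem, at leading order, to a single scalar equation for $A^\eps$. Its left-hand side is a resonance factor whose real part is the detuning $\lambda-\lambda^0-\eps\ell(H)$ from the shifted eigenvalue and whose imaginary part is the radiative width, quadratically small in $\eps$; after the rescaling introduced below this factor becomes proportional to $i\tilde\mu-|\tau|^2/2$. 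The width is nonzero precisely because $u^\eps$ now leaks into the propagating mode $w_\pm$, with an amplitude governed by a coupling vector $\tau=(a,b)\in\Cplx^2$ assembled from the far-field data $K_\pm$ of $u_0$; the hypothesis $(K_+,K_-)\ne(0,0)$ ensures $\tau\ne(0,0)$, hence a genuine resonance rather than a surviving embedded eigenvalue.

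Reading the scattering coefficients off the radiating tail of $A^\eps\,U^\eps$ through the representation formulas of Proposition~\ref{PropoRepresentationN} produces the rank-one correction $\tau^{\top}\tau/(i\tilde\mu-|\tau|^2/2)$, once the detuning is put on the natural width scale by setting $\lambda=\lambda^0+\eps\ell(H)+\eps^2\mu$ and $\tilde\mu=c\mu+C$ with $c\ne0$. For a fixed real $\lambda'\ne\ell(H)$ in $\lambda=\lambda^0+\eps\lambda'$ the detuning stays of order $\eps$, i.e. large compared with the $O(\eps^2)$ width, so $A^\eps$ remains bounded and the correction disappears, yielding $\mathbb{S}=\mathbb{S}^0+O(\eps)$ and (\ref{Fano1}); along the parabola detuning and width are comparable and the full profile (\ref{Fano2}) survives. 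As a consistency check the rank-one form $\tau^{\top}\tau$ together with the denominator $i\tilde\mu-|\tau|^2/2$ is exactly a unitary perturbation of $\mathbb{S}^0$, so $\mathbb{S}\,\overline{\mathbb{S}}^{\top}=\mathrm{Id}$ is preserved to the order considered, in line with Proposition~\ref{PropoStructureS}.

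The hard part is the rigorous justification of $\eps$-uniform error estimates. Off resonance the uniform invertibility follows from the Fredholm framework of Chapter~\ref{ChapWaveguides} (Theorem~\ref{ThmFredholm} and its Neumann analogue) together with the Fredholm alternative; but in the parabolic regime the resonance denominator $i\tilde\mu-|\tau|^2/2$ may be of size $O(1)$ while the discarded terms must be shown to be $o(1)$ uniformly in $\mu\in\R$. This forces one to control the remainder $v^\eps$ and the amplitude $A^\eps$ simultaneously, through an a priori estimate for the reduced boundary-value problem that degenerates only in the explicitly identified rank-one direction. Proving these $\mu$-uniform bounds, equivalently showing that the sole small singular value of the problem is the one captured by the scalar resonance equation, is the main obstacle; the remaining ingredients are regular perturbation theory combined with the shape-derivative and the representation formulas already established in Chapter~\ref{ChapWaveguides}.
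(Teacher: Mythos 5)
You should know at the outset that these notes contain no proof of Theorem \ref{TheoremFano} to compare against: the text states it is proved in \cite{ChNa18} and that the justification ``requires adapted tools\ldots that we will not present here''. In the cited literature the argument is not a quasi-mode reduction but goes through an augmented scattering matrix built in weighted spaces with detached asymptotics: one constructs an enlarged unitary matrix, containing artificial channels attached to the first evanescent mode, which remains \emph{smooth} in $(\eps,\lambda)$ across the resonance, and the abrupt behaviour of the physical $\mathbb{S}$ is then extracted algebraically. Your Lyapunov--Schmidt/resonance-pole plan (pull back to a fixed domain, expand around the trapped mode, reduce to a scalar resonance factor, read the lineshape off the representation formulas) is the other classical route, closer to the Shanin--Weinstein-type references the notes also cite, so the strategy is not unreasonable. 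But as written it is a program rather than a proof, and it has gaps beyond the one you flag.

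Concretely: (i) you define $\ell(H)$ by ``differentiating the eigenvalue relation'', but for $\eps\ne0$ there is generically no eigenvalue to differentiate --- the embedded eigenvalue dissolves into a complex resonance, which is precisely the phenomenon being quantified; one must instead \emph{define} $\ell(H)$ by the formal first-order solvability quantity and prove a posteriori that the resonance sits at $\lambda^0+\eps\ell(H)+O(\eps^2)$, so your presentation is circular on this point. (ii) In your construction $\tau$ is assembled from coupling integrals between $u_0$, the scattering states and the perturbation, hence a priori depends on $H$; the theorem asserts $\tau$ depends \emph{only} on $\Om$, and this is essential downstream (the proof of Theorem \ref{MainThmPart1} uses the $H$-independent relations $R^0_+\overline{a}+T^0\overline{b}=a$, $T^0\overline{a}+R^0_-\overline{b}=b$). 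You would have to show that the \emph{direction} of your coupling vector is $H$-independent, fixed by $(K_+,K_-)$ and $\mathbb{S}^0$, with only a real scalar factor (absorbable into $c$, $C$) depending on $H$; nothing in the sketch addresses this, and it is not obvious. (iii) Your reading of the hypothesis is off: you claim $(K_+,K_-)\ne(0,0)$ ensures $\tau\ne 0$ ``hence a genuine resonance rather than a surviving embedded eigenvalue''. The non-vanishing of your coupling integrals does not follow from $(K_+,K_-)\ne(0,0)$ --- indeed for symmetry-preserving perturbations the eigenvalue can survive, exactly as in the 1D example of the notes where perturbing both ligaments keeps the embedded eigenvalue --- and conversely the notes point out that when $(K_+,K_-)=(0,0)$ one still gets a (faster) Fano resonance; the hypothesis sets the \emph{scale} of the transition, it does not decide resonance versus persistence. (iv) The closing ``consistency check'' that $\mathbb{S}^0+\tau^{\top}\tau/(i\tilde\mu-|\tau|^2/2)$ is unitary is not automatic: it holds only once the relations $\mathbb{S}^0\overline{\tau}=\tau$ of point (ii) are established. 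Finally, the $\mu$-uniform resolvent and error estimates that you yourself identify as ``the main obstacle'' are exactly where the analytic content of \cite{ChNa18} lies, and they are entirely absent here.
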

\noindent The justification of this result is more difficult than in 1D because one cannot perform explicit calculations. It requires adapted tools (see \cite{ShVe05,ShTu12,ShWe13,AbSh16,ChNa18}) that we will not present here. However the interpretation of the result is completely similar to that of the 1D case. Observe that (\ref{Fano1}) and  (\ref{Fano2}) are respectively the analogous of (\ref{Fano01}) and (\ref{Resultat1DMobius}). As explained above, Theorem \ref{TheoremFano} shows that the mapping $\mathbb{S}(\cdot,\cdot)$ is not continuous at $(0,\lambda^0)$ (setting where trapped modes exist). Moreover for $\eps_0$ small fixed, it proves that the scattering matrix $\lambda\mapsto \mathbb{S}(\eps_0,\lambda)$ exhibits a quick change in a neighborhood of $\lambda^0+\eps_0\ell(H)$: this is the Fano resonance phenomenon. When $(K_+,K_-)=(0,0)$ a faster Fano resonance phenomenon occurs.\subsection{Zero reflection and zero transmission}
In the sequel, to simplify we denote by $\mathbb{S}^{\eps}(\mu)$, $T^{\eps}(\mu)$, $R^{\eps}_\pm(\mu)$ the values of $\mathbb{S}$, $T$, $R_\pm$ in $\Om^{\eps}$ at the frequency $\lambda=\lambda^0+\eps\ell(H)+\eps^2\mu$.\\
\newline
Assume now that $\Om^{\eps}$ is symmetric with respect to the vertical axis, \textit{i.e.} such that 
\[
\Om^{\eps}=\{(-x,y)\,|\,(x,y)\in\Om^{\eps}\}.
\] 
Then we can decompose the problem into two half-waveguide problems with Neumann/Dirichlet boundary conditions at $x=0$. 

\begin{figure}[!ht]
\centering
\begin{tikzpicture}[scale=1.4]
\draw[fill=gray!30,draw=none](-2,0) rectangle (2,1);
\draw (-2,0)--(2,0);
\draw (-2,1)--(2,1);
\draw[dashed] (-2.5,1)--(-2,1); 
\draw[dashed] (-2.5,0)--(-2,0); 
\draw[dashed] (2.5,1)--(2,1); 
\draw[dashed] (2.5,0)--(2,0); 
\node at (-1.5,0.2){\small $\Om^\eps$};
\draw[fill=white](0,0.5) circle (0.4cm);
\end{tikzpicture}\qquad\quad
\begin{tikzpicture}[scale=1.4]
\draw[fill=gray!30,draw=none](-2,0) rectangle (0,1);
\draw[dashed] (-2.5,1)--(0,1); 
\draw[dashed] (-2.5,0)--(0,0); 
\node at (-1.5,0.2){\small $\om^\eps$};
\draw (-2,0)--(0,0)--(0,0.1);
\draw (-2,1)--(0,1)--(0,0.9);
\draw[fill=white,shift={(0.005,0.9)}]  (0:0) arc (90:270:0.4cm);
\end{tikzpicture}
\caption{Domain $\Om^\eps$ (left) and $\om^\eps$ (right).\label{LimitDomain}} 
\end{figure}
\noindent More precisely, define the half-waveguide 
\[
\om^\eps\coloneqq\{(x,y)\in\Om^\eps\,|\,x<0\}
\]
(see Figure \ref{LimitDomain} right). Introduce the problem with Neumann BCs
\begin{equation}\label{PbChampTotalSym}
\begin{array}{|rcll}
\Delta v +\lambda v & = & 0 & \mbox{ in }\om^\eps\\[3pt]
 \partial_\nu v  & = & 0  & \mbox{ on }\partial\om^\eps
\end{array}
\end{equation}
as well as the problem with mixed BCs
\begin{equation}\label{PbChampTotalAntiSym}
\begin{array}{|rcll}
\Delta V + \lambda V & = & 0 & \mbox{ in }\om^\eps\\[3pt]
 \partial_\nu V  & = & 0  & \mbox{ on }\partial\om^\eps\cap\partial\Om^\eps \\[3pt]
V  & = & 0  & \mbox{ on }\Sigma^\eps\coloneqq\partial\om^\eps\setminus\partial\Om^\eps.
\end{array}
\end{equation}
For $\lambda\in(0;\pi^2)$, Problems (\ref{PbChampTotalSym}) and (\ref{PbChampTotalAntiSym}) admit respectively the solutions 
\[
v^\eps = w_++R^\eps_N\,w_- + \tilde{v}^\eps,
\]
\[
V^\eps= w_++R^\eps_D\,w_- + \tilde{V}^\eps,
\]
where $R^\eps_N$, $R^\eps_D\in\Cplx$ and $\tilde{v}^\eps$, $\tilde{V}^\eps\in\mH^1(\om^\eps)$. Due to conservation of energy, one proves as in (\ref{ConservationNRJ_Dirichlet}) the identities
\[
|R^\eps_N|=|R^\eps_D|=1
\]
(since there is only one output in $\om^\eps$, all the energy propagated by the incident wave is backscattered).   
Now, direct inspection shows that if $u^\eps$ is a solution of Problem (\ref{WaveguidePbFano}) associated to an incident wave coming from left or right, then we have
\[
u^\eps(x,y)=\cfrac{v^\eps(x,y)+V^\eps(x,y)}{2}\quad\mbox{ in }\om^\eps,\qquad u^\eps(x,y)=\cfrac{v^\eps(-x,y)-V^\eps(-x,y)}{2}\quad\mbox{ in }\Om^\eps\setminus\overline{\om^\eps}
\]
(up possibly to a term which is exponentially decaying at $\pm\infty$ if there are trapped modes at the given $\lambda$). We deduce that the scattering coefficients $R_\pm^\eps$, $T^\eps$ for Problem (\ref{WaveguidePbFano}) are such that
\[
R_+^\eps=R_-^\eps=\frac{R^\eps_N+R^\eps_D}{2}\qquad\mbox{ and }\qquad T^\eps=\frac{R^\eps_N-R^\eps_D}{2}.
\]
If we indicate the dependence with respect to $\mu$ as in \S\ref{ParaFano2D}, this writes 
\begin{equation}\label{FormulaParity}
R^{\eps}_+(\mu)=R^{\eps}_-(\mu)=\frac{R_N^\eps(\mu)+R_D^\eps(\mu)}{2}\qquad\mbox{ and }\qquad T^\eps(\mu)=\frac{R_N^\eps(\mu)-R_D^\eps(\mu)}{2}\,.
\end{equation}
To set ideas assume that the trapped modes associated with $\lambda^0$ are even. In that case, $(\eps,\lambda)\mapsto R_D(\eps,\lambda)$ is smooth at $(0,\lambda_0)$. As a consequence, for $\eps$ small, $\mu\mapsto R_D^\eps(\mu)$ does not vary much on the unit circle $\mathscr{C}(0,1)$ for $\mu\in(-\eps^{-1/2};\eps^{-1/2})$. On the other hand, by adapting the result of Theorem \ref{TheoremFano}, one establishes that $\mu\mapsto R_N^\eps(\mu)$ runs once on $\mathscr{C}(0,1)$ for $\mu\in(-\eps^{-1/2};\eps^{-1/2})$. From formulas (\ref{FormulaParity}), this ensures that the curves $\mu\mapsto T^{\eps}(\mu)$, $\mu\mapsto R^{\eps}_+(\mu)$ for $\mu\in(-\eps^{-1/2};\eps^{-1/2})$, pass exactly through zero for $\eps$ small enough. This provides examples of geometries where we have either zero reflection or zero transmission. 

\begin{figure}[!ht]
\centering
\begin{tikzpicture}[scale=1.7]
\draw[fill=gray!30,draw=none](0,0) rectangle (3,1);
\draw (3,0)--(0,0)--(0,1)--(3,1); 
\draw[dashed] (3,1)--(3.5,1); 
\draw[dashed] (3,0)--(3.5,0);
\draw[fill=white] (1,0.6) circle (0.25cm);
\draw[dotted,>-<] (1,-0.05)--(1,0.65);
\draw[dotted,>-<] (-0.05,0.6)--(1.05,0.6);
\node at (1.5,0.3){\small $0.5+\eps$};
\node at (0.5,0.7){\small $1$};
\end{tikzpicture}\qquad\raisebox{0.1cm}{\includegraphics[width=0.58\textwidth]{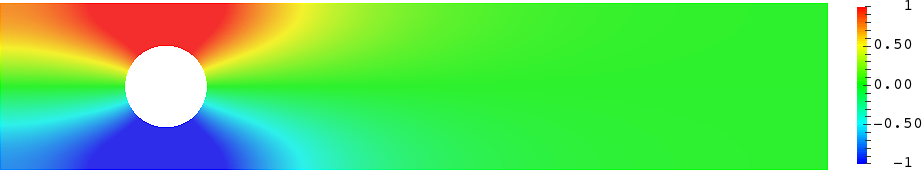}}
\caption{Left: geometry of the half waveguide. Right: real part of a trapped mode for $\eps=0$ and $k^0\coloneqq\sqrt{\lambda^0}\approx2.7403$. The computation has been realized by using Perfectly Matched Layers (see Chapter IV Section \ref{SectionClassical}).\label{GeomDisk}} 
\end{figure}

\noindent Let us illustrate this numerically. In Figure \ref{GeomDisk} right, we give an example of geometry supporting trapped modes for the Neumann problem (\ref{PbChampTotalSym}) at a particular $\lambda=\lambda_0\in(0;\pi)$. Note that the waveguide is symmetric with respect to the line of equation $y=1/2$ which can be used to give some proofs of existence of such trapped modes in certain circumstances. Then we perturb  the domain by slightly shifting vertically the disk\footnote{Observe that a horizontal shift of the position of the disk would maintain the decoupling between symmetric and skew-symmetric modes. As a consequence, the eigenvalue embedded in the continuous spectrum would remain an eigenvalue embedded in the continuous spectrum and no Fano resonance phenomenon would be observed.}. Then the symmetry is broken and from the analysis above, we know that when we sweep in $\lambda$ in a small neighborhood of $\lambda_0$, there is one $\lambda$ for which one has zero reflection (see Figure \ref {CercleRNull}) and one $\lambda$ for which one gets zero transmission (see Figure \ref{CercleTNull}). Let us stress that the smaller $\eps$, the more delicate the adjustment of $\lambda$.

\begin{figure}[!ht]
\centering
\hspace{-0.15cm}\scalebox{-1}[1]{\includegraphics[trim={0cm 1cm 2.8cm 1cm},clip,width=0.8\textwidth]{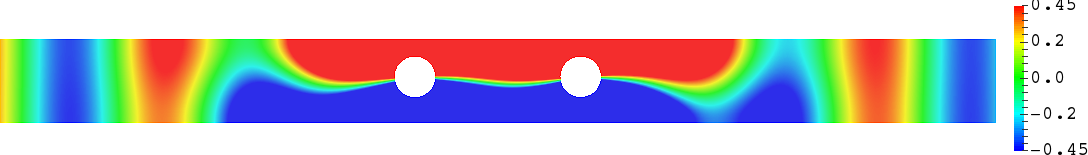}}\\[5pt]
\noindent\scalebox{-1}[1]{\includegraphics[trim={0cm 1cm 2.8cm 1cm},clip,width=0.8\textwidth]{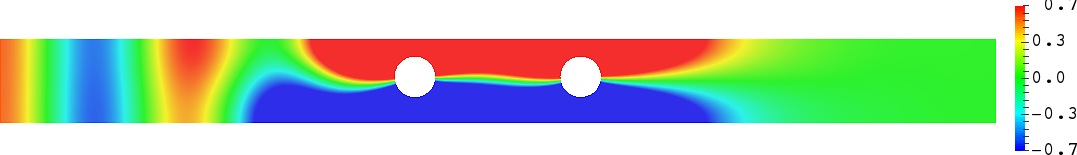}}
\caption{$\Re e\,u^{\eps}$ (top) and $\Re e\,(u^{\eps}-w_+)$ in a setting where $R^\eps_\pm=0$ ($\eps=0.05$ and $k=\sqrt{\lambda}=2.751$).\label{CercleRNull}}
\end{figure}
\begin{figure}[!ht]
\centering
\scalebox{-1}[1]{\includegraphics[trim={0cm 1cm 2.8cm 1cm},clip,width=0.8\textwidth]{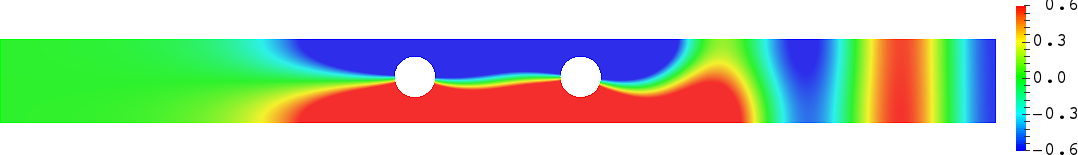}}
\caption{$\Re e\,u^{\eps}$ in a setting where $T^\eps=0$ ($\eps=0.05$ and $k=\sqrt{\lambda}=2.75495$).\label{CercleTNull}}
\end{figure}

\noindent When the domain $\Om^\eps$ is not symmetric with respect to the vertical axis, we cannot use the decomposition with the two half-waveguide problems. In that case, for a fixed small $\eps>0$, in general the curves $\mu\mapsto R^\eps_\pm(\mu)$ do not pass through zero in the complex plane and we do not observe zero reflection. However we can show, quite surprisingly, that $\mu\mapsto T^\eps(\mu)$ always vanishes for some particular $\mu$. Let us give the main ingredients of the proof.
\begin{theorem}\label{MainThmPart1}
Assume that $T^0=T(0,\lambda^0)\ne0$. Then there is $\eps_0>0$ such that for all $\eps\in(0;\eps_0]$, there is $\mu\in\R$, depending on $\eps$, such that $T^\eps(\mu)=0$.
\end{theorem}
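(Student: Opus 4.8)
The plan is to feed the Fano normal form of Theorem~\ref{TheoremFano} into the algebraic rigidity of the scattering matrix, the decisive point being that the symmetry and unitarity of $\mathbb{S}^\eps(\mu)$ (valid for \emph{every} $\eps$, by conservation of energy) turn the search for a complex zero $T^\eps(\mu)=0$ into the sign change of a single \emph{real} function, which is robust. First I would freeze $d(\tilde\mu):=i\tilde\mu-|\tau|^2/2$ and set
\[
\mathbb{S}^{\infty}(\tilde\mu):=\mathbb{S}^0+\cfrac{\tau^{\top}\tau}{d(\tilde\mu)},
\]
so that Theorem~\ref{TheoremFano} reads $\mathbb{S}^\eps(\mu)=\mathbb{S}^{\infty}(\tilde\mu)+O(\eps)$ on a fixed $\mu$-window, with $\tilde\mu=c\mu+C$. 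Writing $\tau=(a,b)$, the transmission entry is $T^{\infty}(\tilde\mu)=T^0+ab/d(\tilde\mu)$. Two preliminary facts I would record: (i) $\mathbb{S}^\eps(\mu)$ is symmetric and unitary for all $\eps\ge0$ (Proposition~\ref{PropoStructureS}), so $\mathbb{S}^{\infty}(\tilde\mu)$, being a limit of such matrices, is symmetric unitary for every real $\tilde\mu$; (ii) imposing $\mathbb{S}^{\infty}(\tilde\mu)\,\overline{\mathbb{S}^{\infty}(\tilde\mu)}^{\top}=\mrm{Id}$ for all $\tilde\mu$ forces the compatibility relation $\mathbb{S}^0\,\overline{\tau}^{\top}=\tau^{\top}$. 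Reading off its first component $R_+^0\overline{a}+T^0\overline{b}=a$, one sees that $a=0$ would give $T^0\overline{b}=0$, hence $T^0=0$ (since $\tau\neq0$); thus $T^0\neq0$ implies $a\neq0$, and symmetrically $b\neq0$, so $ab\neq0$.

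The linchpin is a Takagi-type factorization: any symmetric unitary $2\times2$ matrix can be written $\mathbb{S}=\mathcal{O}_\phi\,\mrm{diag}(e^{i\theta_1},e^{i\theta_2})\,\mathcal{O}_\phi^{\top}$ with a \emph{real} rotation $\mathcal{O}_\phi\in SO(2)$, whence its off-diagonal (transmission) entry is $\tfrac12\sin(2\phi)\,(e^{i\theta_1}-e^{i\theta_2})$. Applying this to $\mathbb{S}^\eps(\mu)$ yields continuous real data $\phi^\eps(\mu),\theta_1^\eps(\mu),\theta_2^\eps(\mu)$ (chosen continuously away from eigenphase coincidences) with
\[
T^\eps(\mu)=\tfrac12\sin\!\big(2\phi^\eps(\mu)\big)\,\big(e^{i\theta_1^\eps(\mu)}-e^{i\theta_2^\eps(\mu)}\big).
\]
Consequently, as long as the two eigenphases stay distinct, $T^\eps(\mu)=0$ is equivalent to the vanishing of the real continuous function $\mu\mapsto\sin(2\phi^\eps(\mu))$ — a condition that is stable under small perturbations precisely when it occurs with a sign change.

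Next I would analyse the leading order. Using $\mathbb{S}^0\overline{\tau}^{\top}=\tau^{\top}$ together with $d+|\tau|^2=-\overline{d}$ one computes $\det\mathbb{S}^{\infty}(\tilde\mu)=-(\overline{d}/d)\det\mathbb{S}^0$, which runs once around the unit circle as $\tilde\mu$ goes from $-\infty$ to $+\infty$: this is the genuine Fano loop. More to the point, $T^{\infty}(\tilde\mu)=T^0+ab/d(\tilde\mu)$ describes a circle of centre $T^0-ab/|\tau|^2$ and radius $|ab|/|\tau|^2$, and the identity $|\text{centre}|=\text{radius}$ reduces, after squaring, to $2\,\Re e\,(T^0\overline{a}\,\overline{b})=|\tau|^2|T^0|^2$. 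I would obtain this last identity from the component $R_+^0\overline{a}+T^0\overline{b}=a$ by forming both $\times\overline{a}$ and $|\cdot|^2$ and using $|R_+^0|^2=1-|T^0|^2$; it shows that the circle passes through the origin. Since $ab\neq0$ (nondegenerate circle) and $T^0\neq0$ (so $0\neq T^0$ are two distinct points of the circle), the injective parametrization $\tilde\mu\mapsto T^{\infty}(\tilde\mu)$ meets $0$ exactly once, at some $\tilde\mu_\ast$, and transversally because $\tfrac{d}{d\tilde\mu}T^{\infty}=-iab/d^2\neq0$. In the factorization this means that $\sin(2\phi^{\infty})$ changes sign at $\tilde\mu_\ast$ (the eigenphases being distinct there, as $\mathbb{S}^{\infty}(\tilde\mu_\ast)=\mrm{diag}(R_+^{\infty},R_-^{\infty})$ with $R_+^{\infty}\neq R_-^{\infty}$ generically).

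Finally I would conclude by robustness. The factorization holds for the true $\mathbb{S}^\eps(\mu)$ for every small $\eps$; near $\tilde\mu_\ast$ the eigenphases of $\mathbb{S}^{\infty}$ are distinct, so the Takagi data depend continuously (indeed smoothly) on the matrix, and perturbation theory gives $\sin(2\phi^\eps(\mu))=\sin(2\phi^{\infty}(\tilde\mu))+O(\eps)$ uniformly on a fixed interval around $\mu_\ast:=(\tilde\mu_\ast-C)/c$. A transversal (sign-changing) zero of a real continuous function survives an $O(\eps)$ perturbation, so for $\eps$ small enough $\mu\mapsto\sin(2\phi^\eps(\mu))$ still changes sign and, by the intermediate value theorem, vanishes at some $\mu$; there $T^\eps(\mu)=0$. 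The main obstacle is exactly this passage: the planar zero of the complex $T^\eps$ is \emph{not} robust by itself (the limiting circle only touches the origin on its boundary), and the whole argument hinges on recasting it, via the symmetric–unitary structure, as the stable sign change of $\sin(2\phi^\eps)$; the technical care lies in establishing that sign change at leading order and in the uniform continuity of the factorization data, including the degenerate case $R_+^{\infty}(\tilde\mu_\ast)=R_-^{\infty}(\tilde\mu_\ast)$ (where $\mathbb{S}^{\infty}(\tilde\mu_\ast)$ is scalar and $T^{\infty}$ already vanishes).
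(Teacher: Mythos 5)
Your first half is sound and in places cleaner than the paper's own argument. Deriving the compatibility relation $\mathbb{S}^0\overline{\tau}^{\top}=\tau^{\top}$ directly from the unitarity of the limiting family (rather than quoting the intermediate computation of \cite{ChNa18}), and obtaining the circle-through-zero identity $2\,\Re e\,(T^0\overline{a}\,\overline{b})=|\tau|^2|T^0|^2$ from the single component $R_+^0\overline{a}+T^0\overline{b}=a$ by writing $|a-T^0\overline{b}|^2=|R_+^0|^2|a|^2$ and using $|R_+^0|^2=1-|T^0|^2$, is correct and is exactly the paper's identity (\ref{MainIdentity}); your remark that $T^0\ne0$ together with $\tau\ne0$ forces $ab\ne0$ is a useful point the paper leaves implicit. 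Where you genuinely depart from the paper is the concluding step. The paper argues by contradiction using only unitarity: if $T^\eps$ avoided zero on the window, then $T^{\eps}(\mu)/\overline{T^{\eps}(\mu)}=-R^{\eps}_+(\mu)/\overline{R^{\eps}_-(\mu)}$; the left-hand side is the squared phase of a curve that passes $O(\eps)$-close to the origin while its endpoints have modulus of order $\sqrt{\eps}$ and nearly opposite arguments, so it must wind around the whole unit circle, whereas the right-hand side stays in a small neighbourhood of a constant. You instead reduce everything, via a Takagi-type factorization, to a sign change of the real function $\sin(2\phi^{\eps})$.

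That reduction is where the gap lies: it works only when the two eigenphases of $\mathbb{S}^{\infty}(\tilde\mu_\ast)$ are distinct, and the degenerate case you relegate to a parenthesis is not exotic. It occurs for every mirror-symmetric obstacle (there $a=\pm b$ and $R_+^0=R_-^0$, so $\mathbb{S}^{\infty}(\tilde\mu_\ast)$ is scalar), and nothing in the hypothesis $T^0\ne0$ excludes it. In that case your central equivalence fails outright: for a symmetric obstacle the eigenvectors of $\mathbb{S}^{\eps}(\mu)$ are $(1,\pm1)/\sqrt{2}$ for \emph{all} $\mu$ and $\eps$, hence $\phi^{\eps}\equiv\pi/4$ and $\sin(2\phi^{\eps})\equiv1$ never vanishes, and zeros of $T^{\eps}=\tfrac12(e^{i\theta_1^{\eps}}-e^{i\theta_2^{\eps}})$ come exclusively from coincidence of the two eigenphases. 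Such coincidences are a codimension-two event in the set of symmetric unitary $2\times2$ matrices, so the transversal crossing present at leading order is \emph{not} automatically inherited by the $O(\eps)$-perturbed family: an avoided crossing is a priori possible, and ``$T^{\infty}$ already vanishes'' is a statement about the limit, not about $T^{\eps}$. What saves the theorem there is a global winding fact rather than local transversality: across the Fano window one eigenphase runs once around the unit circle while the other is essentially constant, so they must cross — this is precisely the paper's $R_N^{\eps}$/$R_D^{\eps}$ argument for symmetric geometries (see (\ref{FormulaParity})), and the paper's phase-comparison contradiction above handles the degenerate and non-degenerate cases simultaneously. To complete your proof you would need to add such a winding argument in the degenerate case, or simply replace the intermediate-value step by the paper's unitarity contradiction, which your own estimate (\ref{MainAsymptoT}) already supports.
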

\begin{proof}
Theorem \ref{TheoremFano} provides the estimate
\begin{equation}\label{MainAsymptoT}
|T^{\eps}(\mu)-T^{\mrm{asy}}(\mu)|\le C\,\eps
\end{equation}
\[
\hspace{-0.7cm}\mbox{with }\quad T^{\mrm{asy}}(\mu)=T^0+\cfrac{ab}{i\tilde{\mu}-(|a|^2+|b|^2)/2}.
\]
For any compact set $I\subset\R$, the constant $C>0$ in (\ref{MainAsymptoT}) can be chosen independent of $\mu\in I$.\\[2pt]
$\star$ First, we study the set $\{T^{\mrm{asy}}(\mu),\,\mu\in\R\}$. Classical results concerning the M\"{o}bius transform guarantee that $\{T^{\mrm{asy}}(\mu),\,\mu\in\R\}$ coincides with $\mathscr{C}^{\mrm{asy}}\setminus\{T^0\}$ where $\mathscr{C}^{\mrm{asy}}$ is a circle passing through $T^0$. Let us show that $\mathscr{C}^{\mrm{asy}}$ also passes through zero. One finds that $T^{\mrm{asy}}(\mu)=0$ for some $\mu\in\R$ if and only if there holds 
\begin{equation}\label{MainIdentity}
\cfrac{|a|^2+|b|^2}{2}=\Re e\left(\cfrac{ab}{T^0}\right).
\end{equation}
An intermediate calculus of \cite{ChNa18} implies $R^0_+\,\overline{a}+T^0\,\overline{b}=a$ and $T^0\,\overline{a}+R^0_-\,\overline{b}=b$. From this and the unitarity of $\mathbb{S}^{0}$ which imposes $R^0_-=-\overline{R^{0}_+} T^{0}/\overline{T^{0}}$, we can obtain (\ref{MainIdentity}). Denote $\mu_{\star}$ the value of $\mu$ such that $T^{\mrm{asy}}(\mu_{\star})=0$ and for $\eps>0$, define the interval $I^{\eps}=(\mu_{\star}-\sqrt{\eps};\mu_{\star}+\sqrt{\eps})$.
From (\ref{MainAsymptoT}), for $\eps>0$ small, we know that the curve $\{T^{\eps}(\mu),\,\mu\in I^{\eps}\}$ passes close to zero. Now, using the unitary structure of $\mathbb{S}^{\eps}(\mu)$, we show that this curves passes exactly through zero for $\eps$ small. \\[2pt]
$\star$ 
Assume by contradiction that for all $\eps>0$, $\mu\mapsto T^{\eps}(\mu)$ does not pass through zero in $I_\eps$. Since $\mathbb{S}^{\eps}(\mu)$ is unitary, there holds $R^{\eps}_+(\mu)\,\overline{T^{\eps}(\mu)}+T^{\eps}(\mu)\,\overline{R^{\eps}_-(\mu)}=0$ and so
\[
-R^{\eps}_+(\mu)/\overline{R^{\eps}_-(\mu)}=T^{\eps}(\mu)/\overline{T^{\eps}(\mu)}\qquad\mbox{$\forall\mu\in I^{\eps}$}.
\]
But if $\mu\mapsto T^{\eps}(\mu)$ does not pass through zero on $I^{\eps}$, one can verify that the point $T^{\eps}(\mu)/\overline{T^{\eps}(\mu)}=e^{2i\mrm{arg}(T^{\eps}(\mu))}$ must run rapidly on the unit circle for $\mu\in I_\eps$ as $\eps\to0$. On the other hand, $R^{\eps}_+(\mu)/\overline{R^{\eps}_-(\mu)}$ tends to a constant on $I_\eps$ as $\eps\to0$. This way we obtain a contradiction.
\end{proof}
\begin{remark}
The fact that $\mathscr{C}^{\mrm{asy}}$ passes through zero is quite mysterious. It is related to the rigid structure of the scattering matrix. 
\end{remark} 
\noindent We illustrate this result in Figure \ref{Figure3}. First we find that trapped modes exist for $\eps=0$ and $\sqrt{\lambda^0}\approx1.2395\in(0;\pi)$. Then we compute  $T(\eps,\lambda)$ (\textcolor{blue}{$\times$}) and $R_+(\eps,\lambda)$ (\raisebox{0.5mm}{\protect\tikz \fill[red] (0,3) circle (0.6mm);}) for $\sqrt{\lambda}\in(1.2;1.3)$ and $\eps=0.05$. As predicted, we observe that $\lambda\mapsto T(\eps,\lambda)$  passes through zero around $\lambda^0$. Finally, we display the real part of $u_+$ in $\Om^\eps$ for $\eps=0.05$ and $\sqrt{\lambda}=1.2449$, a configuration where $T(\eps,\lambda)\approx0$.

\begin{figure}[!ht]
\centering
\hspace{-0.0cm}\begin{tikzpicture}[scale=1.8]
\node at (1.2,-0.2){\includegraphics[scale=0.5]{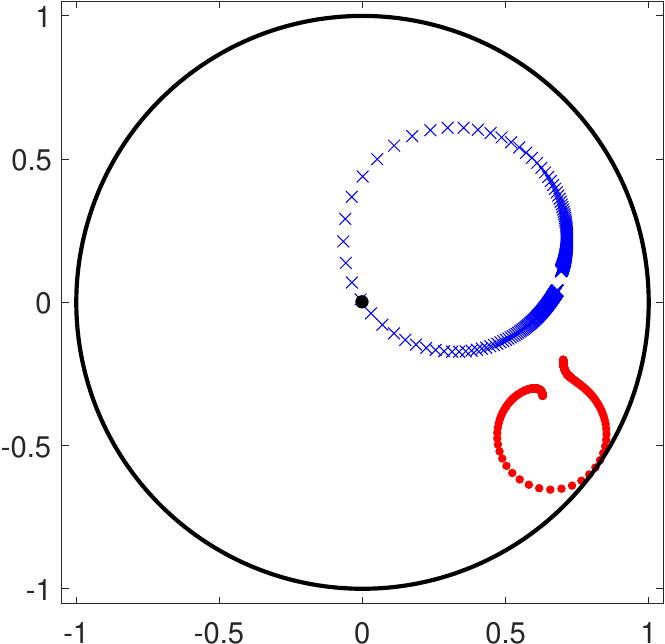}};
\begin{scope}[xshift=-3cm].
\draw[fill=gray!30,draw=none](-2,0) rectangle (2,1);
\draw[fill=white,draw=none](-1,0.35) rectangle (1,0.7);
\draw[fill=white,draw=none](0,0.15) rectangle (1,0.85);
\draw (-2,0)--(2,0); 
\draw (-2,1)--(2,1); 
\draw (-1,0.35)--(0,0.35)--(0,0.15)--(1,0.15)--(1,0.85)--(0,0.85)--(0,0.7)--(-1,0.7)--(-1,0.35); 
\draw[dashed] (2,1)--(2.3,1); 
\draw[dashed] (-2,1)--(-2.3,1); 
\draw[dashed] (2,0)--(2.3,0); 
\draw[dashed] (-2,0)--(-2.3,0); 
\draw[dotted,>-<] (0.4,-0.05)--(0.4,0.6);
\node at (1,0.3){\footnotesize  $0.5+\eps$};
\node at (1,-1){\includegraphics[scale=0.34,trim={0 0 11.4cm 0},clip]{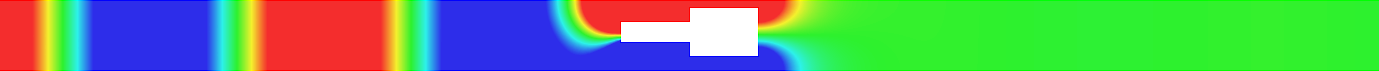}};
\end{scope}
\end{tikzpicture}
\caption{Zero transmission in a waveguide via the Fano resonance mechanism.\label{Figure3}}
\end{figure}

\noindent In this study concerning the exploitation of the Fano resonance mechanism to obtain zero reflection/zero transmission, we considered the case of Neumann BCs. Let us mention that Dirichlet BCs can be studied completely similarly. 

\section{Cloaking of a given obstacle by using thin resonant ligaments}\label{ParagCloaking}

\begin{figure}[!ht]
\centering
\begin{tikzpicture}[scale=2]
\draw[fill=gray!30] (0.3,1) circle (0.4) ;
\draw[fill=gray!30,draw=none](-1.4,0) rectangle (1.4,1);
\draw (-1.4,0)--(1.4,0);
\draw (-1.4,1)--(-0.1,1);
\draw (0.7,1)--(1.4,1);
\draw[fill=white] (-0.5,0.2)--(0.1,0.8)--(0.4,0.3)--cycle;
\draw[dashed] (-1.4,0)--(-1.7,0);
\draw[dashed] (-1.4,1)--(-1.7,1);
\draw[dashed] (1.4,0)--(1.7,0);
\draw[dashed] (1.4,1)--(1.7,1);
\node at (-1.2,0.1){\small $\Om^0$};
\end{tikzpicture}\qquad
\begin{tikzpicture}[scale=2]
\draw[fill=gray!30] (0.3,1) circle (0.4) ;
\draw[fill=gray!30,draw=none](-1.4,0) rectangle (1.4,1);
\draw (-1.4,0)--(1.4,0);
\draw (-1.4,1)--(-0.1,1);
\draw (0.7,1)--(1.4,1);
\draw[dotted,<->] (-0.74,1.02)--(-0.74,1.6);
\draw[dotted,>-<] (-0.7,1.7)--(-0.5,1.7);
\node at (-0.6,1.8){\small $\eps$};
\node at (-0.9,1.3){\small $\ell^\eps$};
\node at (-0.6,0.8){\small $A$};
\draw[fill=gray!30,draw=none](-0.66,0.9) rectangle (-0.54,1.6);
\draw (-0.66,1)--(-0.66,1.6)--(-0.54,1.6)-- (-0.54,1);
\draw[fill=white] (-0.5,0.2)--(0.1,0.8)--(0.4,0.3)--cycle;
\draw[dashed] (-1.4,0)--(-1.7,0);
\draw[dashed] (-1.4,1)--(-1.7,1);
\draw[dashed] (1.4,0)--(1.7,0);
\draw[dashed] (1.4,1)--(1.7,1);
\draw (-0.63,0.97)--(-0.57,1.03);
\draw (-0.63,1.03)--(-0.57,0.97);
\node at (-1.2,0.1){\small $\Om^{\eps}$};
\end{tikzpicture}
\caption{Left: initial setting. Right: geometry with one thin outer resonator. \label{DomainWithOneLigament}} 
\end{figure}
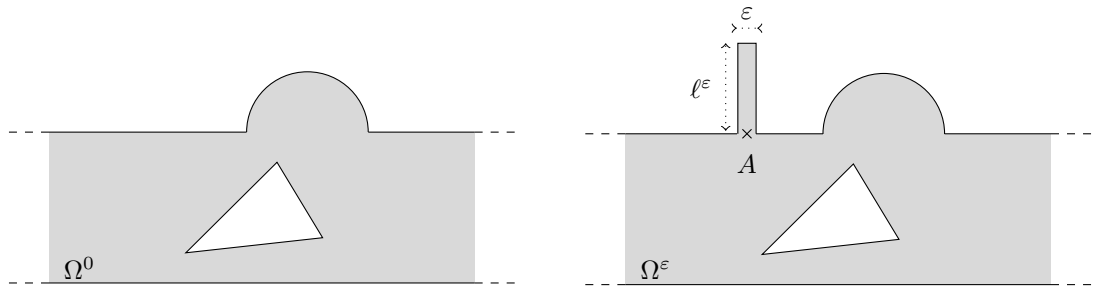

\noindent In this section, we change the point of view. Instead of constructing invisible objects, we assume that some obstacle is given and explain how to hide it (or to cloak it if we use the terminology of physicists). More precisely, starting from a setting where $T\ne1$, we show how to perturb the initial geometry by working with ligaments as pictured in Figure \ref{DomainWithOneLigament} to obtain a new waveguide where $T\approx 1$. Let us mention that we do not work with \textit{ad hoc} penetrable materials as in transformation optics. Additionally, we do not add active sources in the system as people do in active cloaking \cite{Mill06,CDGG21}. What we realize is passive cloaking at infinity by perturbing the shape of the waveguide. \\
\newline
As in Chapter \ref{ChapWaveguides}, the main difficulty of the problem lies in the fact that the dependence of the scattering coefficients with respect to the geometry is not explicit and not linear. In order to address it, techniques of optimization have been considered. We refer the reader in particular to \cite{AlLS17,LDOHG19,LGHDO19}. However, due to the features of the Helmholtz equation, the functionals involved in the analysis are non convex and unsatisfying local minima exist. Moreover, these methods do not allow the user to control the main features of the shape compare to the approach we present and which has been developed in \cite{ChHN22}.\\
\newline
To cloak obstacles, we work with thin outer resonators, that we also call ligaments, of width $\eps>0$ small compared to the wavelength (see again Figure \ref{DomainWithOneLigament} right). These ligaments are interesting because they are almost 1D objects, which allows us to explicit their influence on the fields and so on the scattering coefficients. However in general, \textit{i.e.} for most lengths, they produce only perturbations of order $\eps$ which is not sufficient to compensate for the scattering due to the initial object. But by working around the resonance lengths (see (\ref{DefResLength})) of the resonators, we can get effects of order one. This is a key aspect in our approach which makes it in particular different from the technique presented in Chapter II, Section \ref{SectionTOne}. Note that thin ligaments around resonance lengths have been studied for example in \cite{Krieg,BoTr10,LiZh17,LiSZ19} in a context close to ours, namely in the analysis of the scattering of an incident wave by a periodic array of subwavelength slits. The core of our approach is based on an asymptotic expansion of the scattering solutions with respect to $\eps$ as $\eps$ tends to zero. This allow us to derive formula for the scattering coefficients with a relatively explicit dependence on the geometrical features. To obtain the expansions, we apply again techniques of matched asymptotic expansions. For related methods, we refer the reader to \cite{Beal73,KoMM94}.\\
\newline
Let us describe the general strategy. We consider the acoustic problem (\ref{WaveguidePbNeumann}) with Neumann BCs and assume that $k\in(0;\pi)$ (the height of the guide is still one outside of some compact region). Denote by $u^\eps_+$ the solution of (\ref{WaveguidePbNeumann}) corresponding to the scattering of the rightgoing plane wave $w_+$ in $\Om^\eps$. The first step is to compute an asymptotic expansion of $u^\eps_+$ as $\eps$ tends to zero. As usual in asymptotic analysis, we work with different ansatz for $u^\eps_+$, depending on the region. More precisely, we consider the outer expansions
\begin{equation}\label{ExpanCloaking}
\begin{array}{|ll}
u^\eps_+(x,y)=u^0(x,y) +\dots & \mbox{ in }\Om^0\\[4pt]
u^\eps_+(x,y)=\eps^{-1}v^{-1}(y)+v^0(y)+\dots & \mbox{ in the resonator}
\end{array}
\end{equation}
where $\Om^0$ denotes the initial waveguide without the ligament and $u^0$, $v^{-1}$, $v^0$ denote unknown functions which are independent of $\eps$. To begin with, suppose that the ligament has a length $\ell>0$ independent of $\eps$. Considering the restriction of Problem (\ref{WaveguidePb}) in $\Om^\eps$ to the thin resonator, when $\eps$ tends to zero, we find that $v^{-1}$ must solve the homogeneous 1D problem
\[
(\mathscr{P}_{\mrm{1D}})\ \begin{array}{|l}
\partial^2_yv+k^2v=0\qquad\mbox{ in }(1;1+\ell)\\[3pt]
v(1)=\partial_yv(1+\ell)=0.\\
\end{array}
\]
An important message is that the features of $(\mathscr{P}_{\mrm{1D}})$ play a key role in the physical phenomena  and so in the asymptotic analysis. We denote by $\ell_{\mrm{res}}$ (resonance lengths), the values of $\ell$, given by
\begin{equation}\label{DefResLength}
\ell_{\mrm{res}}\coloneqq\pi (m+1/2)/k,\qquad m\in\N,
\end{equation}
such that $(\mathscr{P}_{\mrm{1D}})$ admits a non zero solution. Note that the non zero functions solving $(\mathscr{P}_{\mrm{1D}})$ coincide, up to a multiplicative constant, with $\sin(k (y-1))$.\\
\newline
Assume first that $\ell\ne\ell_{\mrm{res}}$ so that the only solution of $(\mathscr{P}_{\mrm{1D}})$ is zero. Then we set $v^{-1}\equiv0$ in (\ref{ExpanCloaking}) and when $\eps\to0$, we can show that
\begin{equation}\label{ResultAsymptoOut}
\begin{array}{|ll}
u^\eps_{\pm}(x,y)=u_{\pm}+o(1) &\quad\mbox{ in }\Om^0\\[4pt]
u^{\eps}_{\pm}(x,y)=u_{\pm}(A)\,v_0(y)+o(1) &\quad\mbox{ in the resonator}
\end{array}
\end{equation}
where $u_{\pm}$ stands for the scattering solution corresponding to an incident plane wave coming from $\mp\infty$ and $A=(x_A,1)$ is the attachment point of the ligament. Moreover in (\ref{ResultAsymptoOut}),
\[
v_0(y)=\cos(k(y-1)) + \tan(k(y-\ell))\sin(k(y-1))
\]
(observe that we have $\partial^2_yv+k^2v=0$ in $(1;1+\ell)$ as well as $v_0(1)=1$ and $\partial_xv_0(1+\ell)=0$). From this, we deduce that 
\[
R^\eps_{\pm}=R_{\pm}+o(1),\qquad\qquad T^\eps= T+o(1),
\]
where $R_{\pm}$, $T$ are the scattering coefficients in the geometry without the ligament. In that situation, we see that the resonator has no influence at order $\eps^0$, which is not interesting for our purpose.\\
\newline 
Assume now that $\ell=\ell_{\mrm{res}}$. In that case, the asymptotic analysis is more involved. At the end of the (long) procedure, see \cite{ChHN22} for the details, we obtain, when $\eps\to0$,
\begin{equation}\label{ResultAsymptoOut1}
\begin{array}{|ll}
u^{\eps}_+(x,y)=u_+(x,y)+ak\gamma(x,y)+o(1) &\mbox{ in }\Om^0\\[4pt]
u^{\eps}_+(x,y)=\eps^{-1}a\sin(k (y-1))+O(1) &\mbox{ in the resonator}
\end{array}
\end{equation}
where $\gamma$ denotes the outgoing Green function such that 
\[
\begin{array}{|rcll}
\Delta \gamma+ k^2\gamma  &=&  0&\mbox{ in }\Om \\ [2pt]
\partial_{\nu} \gamma  &=&  \delta_A&\mbox{ on }\partial\Om.
\end{array} 
\]
Here $\delta_A$ is the Dirac delta distribution supported at $A$. Moreover in (\ref{ResultAsymptoOut1}), we find that $a$ is given by
\[
ak=-\cfrac{u_+(A)}{\Gamma+\pi^{-1}\ln|\eps|+C_{\Xi}}
\]
where $\Gamma$ and $C_{\Xi}$ are some constants which depend only on the initial geometry $\Om^0$. Observe that in (\ref{ResultAsymptoOut1}), the field blows up as $O(\eps^{-1})$ in the resonator, which is directly related to the fact that there is a complex resonance close to the considered real $k$. From (\ref{ResultAsymptoOut1}), we obtain 
\[
R_{+}^{\eps}=R_{+}+ia u_+(A)/2+o(1),\qquad T^{\eps}=T+iau_-(A)/2+o(1).
\]
This time the thin resonator has an influence at order $\eps^0$.  Let us make a small variant by assuming that the length of the ligament is equal to $\ell^\eps=\ell_{\mrm{res}}+\eps \eta$, where $\eta\in\R$ is a parameter that we set as we wish. In that situation the analysis is very similar to the previous case and when $\eps\to0$, we find
\begin{equation}\label{ResultAsymptoOut2}
\begin{array}{|ll}
u^{\eps}_+(x,y)=u_+(x,y)+a(\eta)k\gamma(x,y)+o(1) &\mbox{ in }\Om^0\\[4pt]
u^{\eps}_+(x,y)=\eps^{-1}a(\eta)\sin(k (y-1))+O(1) &\mbox{ in the resonator}
\end{array}
\end{equation}
with
\[
a(\eta) k=-\cfrac{u_+(A)}{\Gamma+\pi^{-1}\ln|\eps|+C_{\Xi}+\eta}\,.
\]
This gives
\begin{equation}\label{ExpansionsMainC}
R_{+}^{\eps}=R_{+}^{\mrm{asy}}(\eta)+o(1),\qquad T^{\eps}=T^{\mrm{asy}}(\eta)+o(1)
\end{equation}
with
\[
R_{+}^{\mrm{asy}}(\eta)\coloneqq R_{+}+ia(\eta)u_+(A)/2,\qquad T^{\mrm{asy}}(\eta)\coloneqq T+ia(\eta)u_-(A)/2+o(1).
\]
Thus, not only the resonator has an influence at order $\eps^0$, but additionally the latter depends on the choice made for $\eta$.  We get something similar to what has been shown in Section \ref{SectionFano} (see in particular Figure \ref{FigFano1DParabola}): for all $\eta\in\R$, the resonator tends to the 1D segment $(1;1+\ell_\mrm{res})$, but depending on the choice of $\eta$, the limit of the corresponding scattering coefficients is not the same. As a consequence, the scattering coefficients, considered as functions of the two variables $(\eps,\ell)$, are not continuous at the point $(0,\ell_\mrm{res})$. Moreover, for $\eps_0$ fixed small, varying slightly $\ell$ around $\ell_\mrm{res}$, which corresponds for example to sweep $\eta\in[-\eps_0^{-1/2};\eps_0^{-1/2}]$, we obtain a large variation for $R_{+}^{\eps_0}$, $T^{\eps_0}$ (again see the illustration of Figure \ref{FigFano1DParabola}). More precisely, working with the M\"{o}bius transform, one shows that the sets 
\[
\{R_{+}^{\mrm{asy}}(\eta),\,\eta\in\overline{\R}\},\qquad \{T^{\mrm{asy}}(\eta),\,\eta\in\overline{\R}\}
\]
where $\overline{\R}\coloneqq\R\cup\{+\infty\}\cup\{-\infty\}$, coincide with circles. We deduce that asymptotically, when $\eps\to0$, when perturbing the length of the ligament around $\ell_\mrm{res}$, $R_{+}^{\eps}$, $T^{\eps}$ run on circles. Interestingly for our purpose, the features of these circles depend on $A$, the attachment point of the ligament.\\
\newline
In view of achieving zero reflection, by using the expansions of $u_\pm(A)$ far from the obstacle, the following statement is established in \cite{ChHN22}:
\begin{proposition}
Assume that $R_{+}\ne0$ and $T\ne0$. There are some $A=(x_A,1)$ such that there exists $\eta$ such that $R_{+}^{\mrm{asy}}(\eta)=0$. In that case, when $\eps\to0$, we have $R_{+}^{\eps}=0+o(1)$.
\end{proposition}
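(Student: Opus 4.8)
The plan is to turn the zero-reflection condition $R_{+}^{\mrm{asy}}(\eta)=0$ into an explicit algebraic constraint on the attachment point $A$ and the length parameter $\eta$, and then to solve that constraint using the far-field form of $u_+$. First I would insert the formula for $a(\eta)$ from (\ref{ResultAsymptoOut2}) into the definition of $R_{+}^{\mrm{asy}}$ appearing in (\ref{ExpansionsMainC}). Writing $Z\coloneqq\eta+\Gamma+\pi^{-1}\ln|\eps|+C_{\Xi}$, this yields the compact expression
\[
R_{+}^{\mrm{asy}}(\eta)=R_{+}-\cfrac{i\,u_+(A)^2}{2k\,Z}.
\]
Thus $\eta\mapsto R_{+}^{\mrm{asy}}(\eta)$ is a M\"obius transform of $\eta$: as $\eta$ runs over $\overline{\R}$ the denominator $Z$ runs over the horizontal line $\{\Im m\,Z=\Im m\,(\Gamma+C_{\Xi})\}$ (the real shift $\pi^{-1}\ln|\eps|$ merely reparametrises this line, so the resulting image circle is independent of $\eps$). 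This recovers the stated fact that $\{R_{+}^{\mrm{asy}}(\eta)\}$ is a circle; here it always passes through $R_{+}$, the value attained at $\eta=\pm\infty$.

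Next I would characterise when this circle meets the origin. Imposing $R_{+}^{\mrm{asy}}(\eta)=0$ forces $Z=Z_\star\coloneqq i\,u_+(A)^2/(2kR_+)$, which is well defined and nonzero as soon as $u_+(A)\neq0$. Since $\eta$, $\pi^{-1}\ln|\eps|$ are real, a real $\eta$ realising $Z=Z_\star$ exists if and only if
\[
\Im m\,\Big(\cfrac{i\,u_+(A)^2}{2kR_+}\Big)=\Im m\,(\Gamma+C_{\Xi}),
\]
in which case $\eta=\Re e\,Z_\star-\Re e\,(\Gamma+C_{\Xi})-\pi^{-1}\ln|\eps|$. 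Hence the whole statement reduces to producing one attachment point $A=(x_A,1)$ solving this single real equation. Note that $\Im m\,(\Gamma+C_{\Xi})\neq0$ — a consequence of the radiation condition satisfied by the outgoing Green function $\gamma$ — is precisely what makes the image a genuine circle rather than a line, so that passing through $0$ is a nondegenerate condition.

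To solve the real equation I would invoke the far-field expansion of $u_+$. For $A$ in one of the leads, $u_+$ coincides with a combination of the propagating modes $w_\pm$ up to exponentially small evanescent terms, so $u_+(x_A,1)$ is an explicit oscillatory function of $x_A$; in the right lead $u_+(x_A,1)=T\,e^{ikx_A}+o(1)$, so $i\,u_+(A)^2/(2kR_+)$ sweeps, up to $o(1)$, the circle of radius $|T|^2/(2k|R_+|)$ about the origin as $x_A$ varies, while in the left lead $u_+(x_A,1)=e^{ikx_A}+R_+e^{-ikx_A}+o(1)$ gives a richer trace. The hypothesis $T\neq0$ forces $|R_+|<1$ (by $|R_+|^2+|T|^2=1$), which guarantees $u_+(x_A,1)\neq0$ on the lead walls so that $Z_\star$ stays finite and nonzero. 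The main obstacle is exactly this step: one must show that the range of $\Im m\,(i\,u_+(A)^2/(2kR_+))$, as $A$ ranges over admissible attachment points, actually contains the target $\Im m\,(\Gamma+C_{\Xi})$. This is a quantitative comparison between the far-field amplitudes and the Green-function constant; granting it, an intermediate-value argument in $x_A$ produces the desired $A$. This is the content drawn from \cite{ChHN22}.

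Finally, with $A$ and $\eta$ chosen as above one has $R_{+}^{\mrm{asy}}(\eta)=0$ exactly, so (\ref{ExpansionsMainC}) gives $R_{+}^{\eps}=R_{+}^{\mrm{asy}}(\eta)+o(1)=0+o(1)$ as $\eps\to0$, which is the assertion. The only verification I would still carry out is that the selected $\eta$ (which grows like $-\pi^{-1}\ln|\eps|$) keeps $Z=Z_\star$ a fixed $O(1)$ quantity bounded away from $0$ and $\infty$, so that the resonant expansion (\ref{ResultAsymptoOut2}) and its $o(1)$ remainder remain valid in this regime; this is automatic since $Z_\star$ is a constant determined by $A$ alone.
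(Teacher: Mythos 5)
Your reduction is the right one, and it matches how the argument the paper points to in \cite{ChHN22} begins: inserting $a(\eta)$ into $R_+^{\mrm{asy}}$, setting $Z=\eta+\Gamma+\pi^{-1}\ln|\eps|+C_{\Xi}$, and observing that $R_{+}^{\mrm{asy}}(\eta)=0$ forces $Z=Z_\star\coloneqq i\,u_+(A)^2/(2kR_+)$, hence the single real condition $\Im m\,Z_\star=\Im m\,(\Gamma+C_{\Xi})$. But the decisive step is exactly the one you leave as a hypothesis (``granting it, an intermediate-value argument\dots''), and as framed it cannot be completed. First, $\Gamma$ is not a fixed target: it is the regular part at $A$ of the Green function $\gamma$ whose source is $A$ itself, so it moves with the attachment point, and both sides of your equation depend on $x_A$; there is no fixed value for an intermediate-value argument to hit. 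Second — and this is the missing idea — $\Im m\,\Gamma$ is not an unknown constant to be ``compared'' with the far field: it is pinned down by an energy identity. Integrating $\Im m\,[(\Delta\gamma+k^2\gamma)\overline{\gamma}]$ over $\Om_l$ minus a small half-disk centered at $A$, and using the reciprocity relations $\gamma\sim \frac{i}{2k}u_\mp(A)e^{\mp ikx}$ as $x\to\mp\infty$ (the same relations that produce the terms $ia u_\pm(A)/2$ in (\ref{ExpansionsMainC})), one gets, with the normalization conventions of \cite{ChHN22},
\[
\Im m\,\Gamma=\cfrac{|u_+(A)|^2+|u_-(A)|^2}{4k}\,,\qquad C_{\Xi}\in\R.
\]

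With this identity your condition $\Im m\,Z_\star=\Im m\,\Gamma$ becomes the explicit equation $2\,\Re e\,(\overline{R_+}\,u_+(A)^2)=|R_+|^2\big(|u_+(A)|^2+|u_-(A)|^2\big)$. Taking $A$ in the left lead and inserting $u_+(A)=e^{ikx_A}+R_+e^{-ikx_A}+O(e^{-\delta|x_A|})$, $u_-(A)=T e^{-ikx_A}+O(e^{-\delta|x_A|})$ together with $|R_+|^2+|T|^2=1$, everything collapses (up to exponentially small errors, absorbed by a continuity argument) to
\[
\cos\big(2kx_A-\mrm{arg}\,R_+\big)=-|R_+|,
\]
which is solvable in $x_A$ precisely because $0<|R_+|<1$, \textit{i.e.} because $R_+\ne0$ \emph{and} $T\ne0$. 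This is the true role of the hypothesis $T\ne0$ — not the non-vanishing of $u_+(A)$ that you invoke. Indeed, when $T=0$ the equation degenerates to $\cos=-1$, which forces $u_+(A)=0$, hence $a(\eta)=0$ and no effect of the ligament at all: this is exactly why the remark following the proposition calls $T=0$ the case where the approach fails, a fact your reading of the hypotheses would not explain. The remainder of your argument (the $\eps$-independence of the image circle, the choice of $\eta$ absorbing $-\pi^{-1}\ln|\eps|$ so that $Z=Z_\star$ stays $O(1)$, and the conclusion $R_+^{\eps}=o(1)$ via (\ref{ExpansionsMainC})) is correct.
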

\begin{remark}
Note that we exclude the case $R_{+}=0$ because in this situation we already have zero reflection in the initial geometry and there is no need for adding a resonator. In the case $T=0$, \textit{i.e.} $|R_{+}|=1$ due to conservation of energy, the most challenging situation, our approach does not work. However, one possibility to get zero reflection is to add first one or several resonators to obtain a transmission coefficient quite different from zero. And then to add another well-tuned resonator to kill the reflection. Let us mention that this strategy is also interesting when $T$ is small but non zero because in this case achieving almost zero reflection with only one resonator is quite unstable. 
\end{remark}
\begin{remark}
It is important to emphasize that compared to what we presented in the previous sections, we do not reach exactly $R_{+}^{\eps}=0$ but simply get $R_{+}^{\mrm{asy}}(\eta)=0$. In other words, there is a residue due to the error in the expansion. This analysis encourages us to take $\eps$ as small as possible. However when $\eps$ becomes small, the amplitude of the field in the resonator gets very high and the tuning procedure of the features of the ligament is very sensitive. Thus one must find a compromise between small reflection and robustness with respect to perturbations of the geometry. 
\end{remark}

\noindent In Figures \ref{FigFish01}--\ref{FigFish001}, we consider the scattering of the rightgoing plane wave by a sound hard obstacle (the fish). We have added a well-tuned ligament to obtain almost zero reflection. In Figure \ref{FigFish001}, we observe that by working with a smaller $\eps$, as expected we can reduce the reflection.

\begin{figure}[!ht]
\centering
\includegraphics[width=0.45\textwidth]{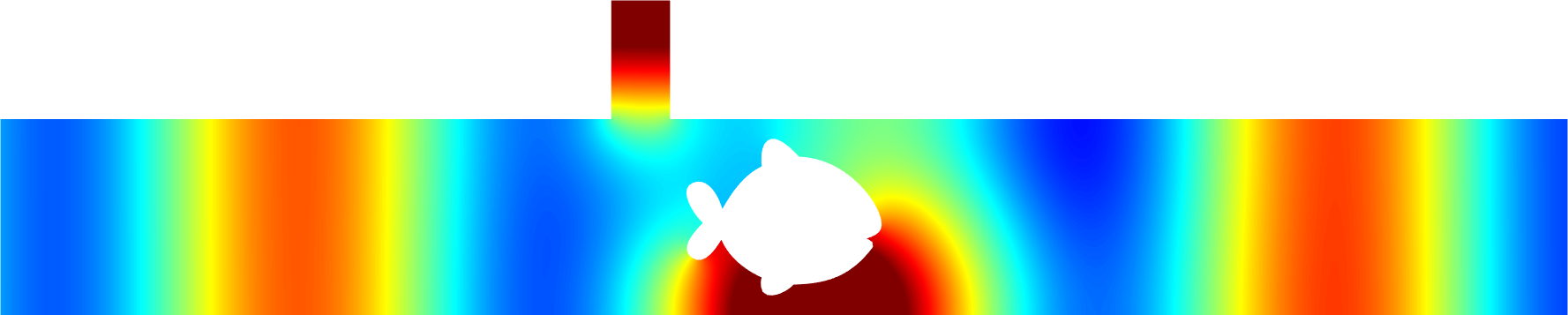}\quad\ \includegraphics[width=0.45\textwidth]{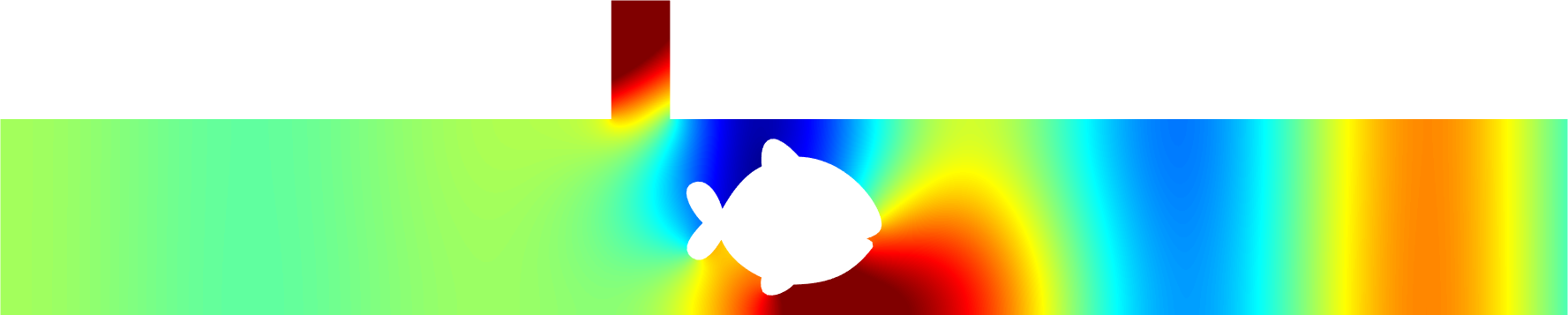}
\caption{Real parts of $u^\eps_+$ (left) and of $u^\eps_+-w_+$ (right). The length of the resonator is tuned to get almost zero reflection. Here $\eps=0.3$.\label{FigFish01}}
\end{figure}

\begin{figure}[!ht]
\centering
\includegraphics[width=0.45\textwidth]{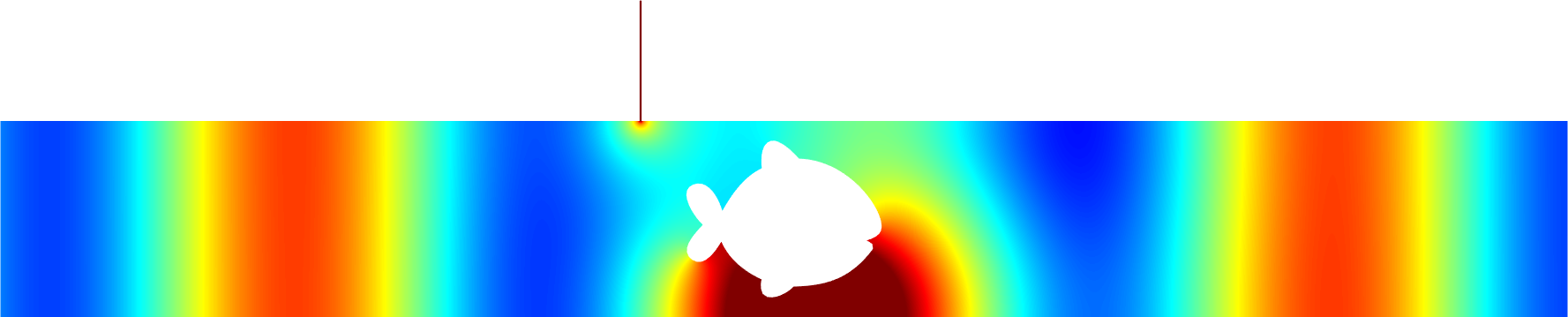}\quad\  \includegraphics[width=0.45\textwidth]{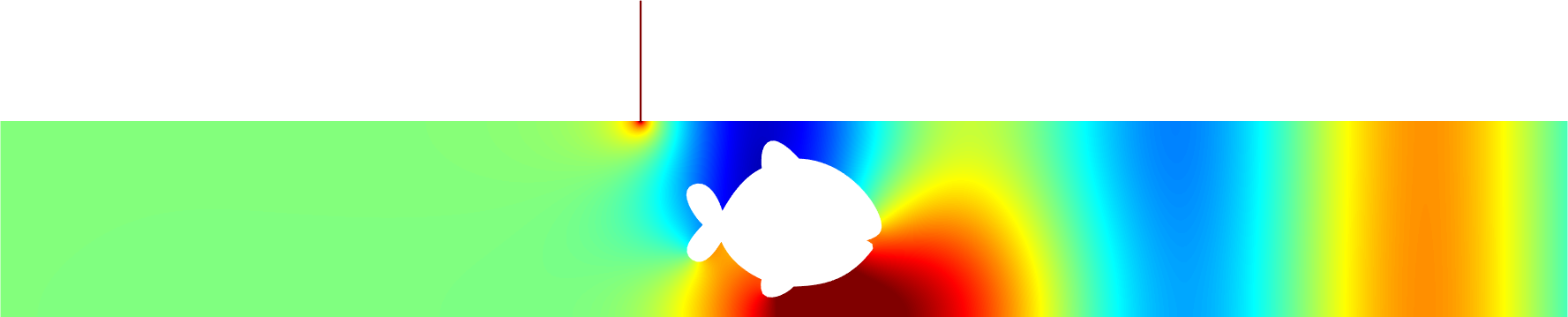}
\caption{Same quantities as in Figure \ref{FigFish01} but with a thinner resonator (here $\eps=0.01$).\label{FigFish001}}
\end{figure}

\noindent Once almost zero reflection has been obtained, it remains to compensate for the phase shift (recall that $R_{+}^{\mrm{asy}}(\eta)=0$ only implies $|T_{+}^{\mrm{asy}}(\eta)|=1$). To proceed, one method consists in coupling the previous waveguide with what we call a phase-shifter. This is a device where one has zero reflection and any prescribed phase. We have shown that such phase shifters can be designed by working with two well-tuned resonant ligaments added to the reference strip $\R\times(0;1)$. At the end, we obtain $T^\eps\approx1$ with three well-tuned resonant ligaments. \\
\newline
On the other hand, by exploiting again the results of the asymptotic analysis (\ref{ResultAsymptoOut2}), (\ref{ExpansionsMainC}), we have established that we can get $T^\eps\approx1$ with only two well-tuned ligaments. Let us assess the degrees of freedom which are involved. We wish to control two complex coefficients, $R^\eps$, $T^\eps$, and so four real parameters. The relation of conservation of energy imposes one constraint. As a consequence, there are three real degrees of freedom. In our strategy here, we play with the two lengths of the resonators and with the distance between them.\\
\newline
In Figures \ref{FigCloakElephant}, \ref{FigCloakTuyau}, we cloak two different obstacles/defects by working with two resonant ligaments. In each case, on the first line we display the field corresponding to the scattering of a rightgoing plane wave without the resonators. The setting of Figure \ref{FigCloakTuyau} is particularly challenging because the initial transmission coefficient is very small. To restore a good transmission, we observe that we have to excite strongly the resonances. Practically, this is probably a limitation because we can imagine that dissipation will then become important.

\begin{figure}[!ht]
\centering
\includegraphics[width=0.9\textwidth]{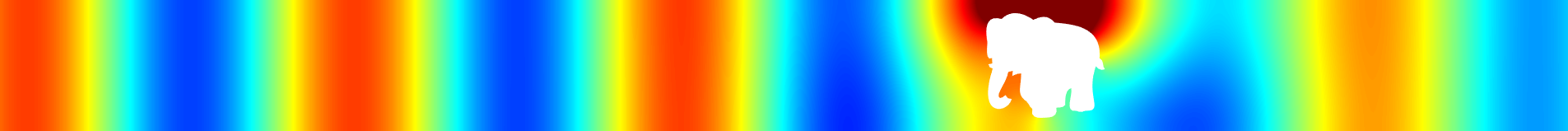}\\[5pt]
\includegraphics[width=0.9\textwidth]{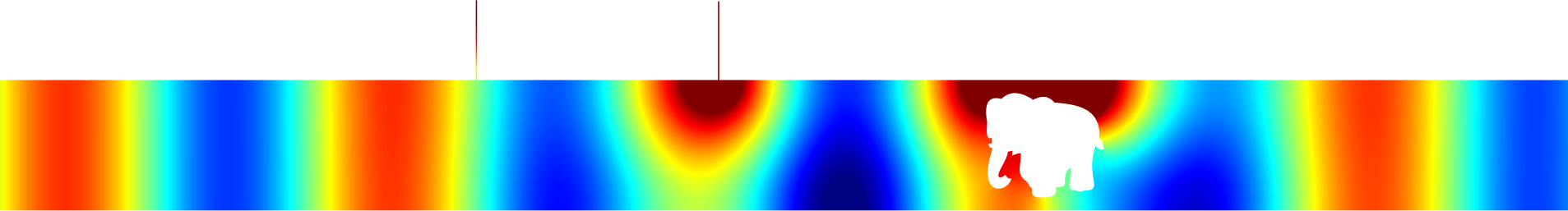}\\[5pt]
\includegraphics[width=0.9\textwidth]{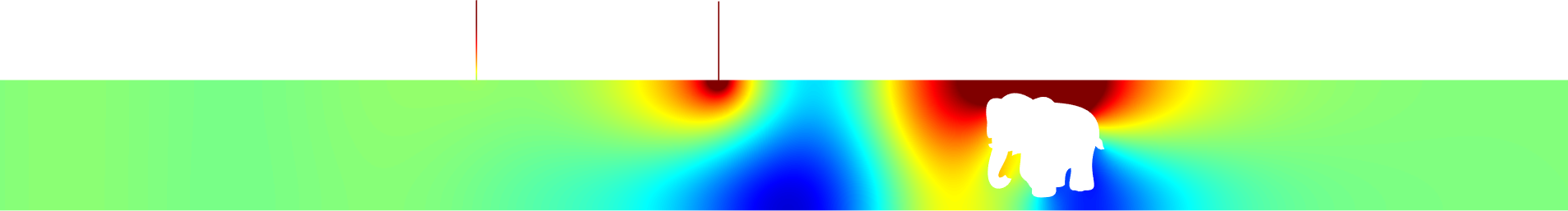}
\caption{Real parts of $u_+$ (top), $u^\eps_+$ (middle) and $u^\eps_+-w_+$ (bottom). The resonators are tuned to get $T^{\eps}\approx1$. Here $\eps=0.01$.\label{FigCloakElephant}}
\end{figure}

\begin{figure}[!ht]
\centering
\includegraphics[trim={5cm 0cm 6cm 0cm},clip,width=0.9\textwidth]{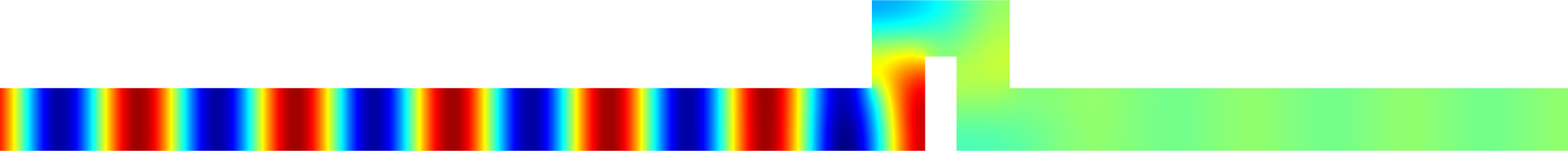}\\[5pt]
\includegraphics[trim={5cm 0cm 6cm 0cm},clip,width=0.9\textwidth]{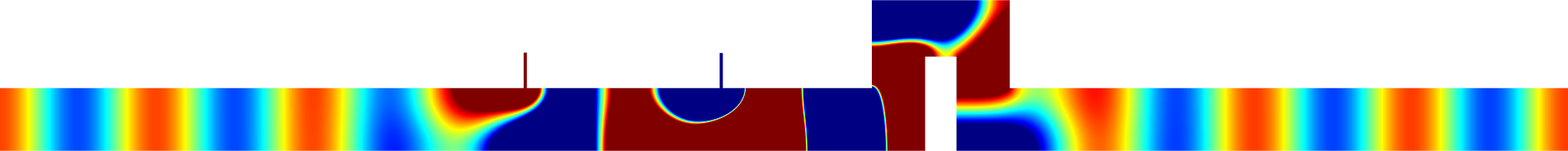}\\[5pt]
\includegraphics[trim={5cm 0cm 6cm 0cm},clip,width=0.9\textwidth]{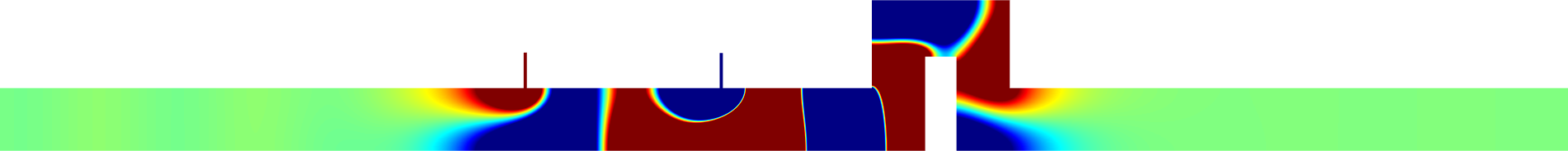}
\caption{Real parts of $u_+$ (top), $u^\eps_+$ (middle) and $u^\eps_+-w_+$ (bottom). The resonators are tuned to get $T^{\eps}\approx1$. Here $\eps=0.05$.\label{FigCloakTuyau}}
\end{figure}

\chapter{A spectral problem characterizing zero reflection}\label{SectionSpectral}
\setcounter {section} {0}
\begin{figure}[!ht]
\centering
\begin{tikzpicture}[scale=1.2]
\draw[fill=orange!5,draw=none](-2.7,0.3) rectangle (2.7,1.7);
\draw[samples=30,domain=-1:1,draw=black,fill=orange!5] plot(\x,{1.7+0.1*(\x+1)^4*(\x-1)^4*(\x+3)});
\begin{scope}[scale=0.7,yshift=0.4cm]
\draw [fill=orange!70,draw=none] plot [smooth cycle, tension=1] coordinates {(-0.6,0.9) (0,0.5) (0.7,1) (0.5,1.5) (-0.2,1.4)};
\end{scope}
\draw (-2.7,0.3)--(2.7,0.3);
\draw (-2.7,1.7)--(-1,1.7);
\draw (1,1.7)--(2.7,1.7);
\draw [dotted](-2.7,0.3)--(-3,0.3);
\draw [dotted](2.7,0.3)--(3,0.3);
\draw [dotted](-2.7,1.7)--(-3,1.7);
\draw [dotted](2.7,1.7)--(3,1.7);
\node at (-2.85,1.3){ $u_i$};
\node at (-2.85,0.7){$u_s$};
\node at (2.85,0.7){$u_s$};
\begin{scope}[xshift=0.9cm,yshift=0.7cm,scale=0.8]
\begin{scope}[xshift=-0.05cm]
\draw[green!60!black,line width=0.2mm,->] plot[very thick,domain=0:pi/4-0.2,samples=100] (\x,{-0.8+0.02*exp(-(\x-4))});
\end{scope}
\node at (0.85,0){\LARGE $\boldsymbol{+}$};
\begin{scope}[xshift=1.2cm]
\draw[RoyalBlue,line width=0.2mm,->] plot[domain=0:pi/4,samples=100] (\x,{0.25*sin(20*\x r)});
\end{scope}
\end{scope}

\begin{scope}[xshift=-2.5cm,yshift=0.7cm,scale=0.8]
\begin{scope}[xshift=0cm]
\draw[RoyalBlue,line width=0.2mm,<-] plot[domain=0:pi/4,samples=100] (0.2+\x,{0.25*sin(20*\x r)});
\node at (1.3,0){\LARGE $\boldsymbol{+}$};
\draw[red,line width=0.4mm] (0.2,-0.3)--(1,0.3);
\draw[red,line width=0.4mm] (1,-0.3)--(0.2,0.3);
\end{scope}
\begin{scope}[xshift=2.5cm,yshift=-0.8cm]
\draw[green!60!black,line width=0.2mm,<-] plot[very thick,domain=8-pi/4+0.2:8,samples=100] (\x-8.3,{0.02*exp((\x-4))});
\end{scope}
\end{scope}
\begin{scope}[xshift=-2.1cm,yshift=1.3cm,scale=0.8]
\draw[RoyalBlue,line width=0.2mm,->] plot[domain=0:pi/4,samples=100] (\x,{0.25*sin(20*\x r)});
\end{scope}
\end{tikzpicture}
\caption{Schematic picture of a reflectionless mode. The propagating wave (blue) is not reflected. The backscattered field is purely evanescent (green). \label{SchematicPicture}
} 
\label{figIntro}
\end{figure}
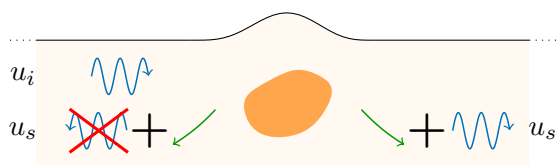

\noindent Let us adopt another point of view concerning questions of invisibility. Instead of considering the wavenumber $k$ fixed in the problem and try to find geometries where we have zero reflection or  perfect invisibility, we assume that the geometry is fixed and we look for $k$ such that there is an incident field whose energy is completely transmitted through the waveguide.  To proceed, we present the results of \cite{BoCP18} (see also the related works (\cite{HeKS11,Stone-et-al-2020}) where it is shown that such reflectionless modes can be characterized as eigenfunctions of an original non-selfadjoint spectral problem. The approach is based on the following basic observation: if for an incident wave, the backscattered field is evanescent, then the total field is ingoing in the input lead and outgoing in the output lead. To select ingoing waves on one side of the obstacle and outgoing  waves on the other side, we use complex scalings \cite{aguilar1971class,balslev1971spectral} (or Perfectly Matched Layers \cite{Bera94}) with imaginary parts of different signs. We prove that the real eigenvalues of the obtained spectrum correspond either to trapped modes (also called Bound States in the Continuum, BSCs or BICs, in quantum mechanics) or to reflectionless modes. Interestingly, complex eigenvalues also contain useful information on weak reflection cases. When the geometry has certain symmetries, the new spectral problem enters the class of $\mathcal{PT}$-symmetric problems. Let us describe this in more details.  

\section{Setting}\label{SectionSetting}

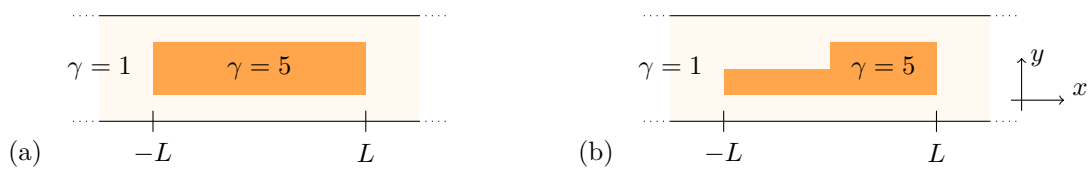
\begin{figure}[!ht]
\centering
\begin{tikzpicture}[scale=1.4]
\draw[fill=orange!5,draw=none](-1.5,0) rectangle (1.5,1);
\draw[fill=orange!70,draw=none](-1,0.25) rectangle (1,0.75);
\draw (-1.5,0)--(1.5,0);
\draw (-1.5,1)--(1.5,1);
\draw [dotted](-1.5,0)--(-1.8,0);
\draw [dotted](1.5,0)--(1.8,0);
\draw [dotted](-1.5,1)--(-1.8,1);
\draw [dotted](1.5,1)--(1.8,1);
\draw (-1,-0.1)--(-1,0.1);
\draw (1,-0.1)--(1,0.1);
\node at (-1,-0.3){\small $-L$};
\node at (1,-0.3){\small $L$};
\node at (0,0.5){\small $\gamma=5$};
\node at (-1.5,0.5){\small $\gamma=1$};
\node at (-2.2,-0.3){\small (a)};
\end{tikzpicture}\qquad\qquad\begin{tikzpicture}[scale=1.4]
\draw[fill=orange!5,draw=none](-1.5,0) rectangle (1.5,1);
\draw[fill=orange!70,draw=none](-1,0.25) rectangle (1,0.75);
\draw[fill=orange!5,draw=none](-1,0.5) rectangle (0,0.75);
\draw (-1.5,0)--(1.5,0);
\draw (-1.5,1)--(1.5,1);
\draw [dotted](-1.5,0)--(-1.8,0);
\draw [dotted](1.5,0)--(1.8,0);
\draw [dotted](-1.5,1)--(-1.8,1);
\draw [dotted](1.5,1)--(1.8,1);
\node at (0.5,0.5){\small $\gamma=5$};
\node at (-1.5,0.5){\small $\gamma=1$};
\begin{scope}[xshift=-1.3cm,yshift=-1cm]
\draw[->] (3,1.2)--(3.5,1.2);
\draw[->] (3.1,1.1)--(3.1,1.6);
\node at (3.65,1.3){\small $x$};
\node at (3.25,1.6){\small $y$};
\end{scope}
\draw (-1,-0.1)--(-1,0.1);
\draw (1,-0.1)--(1,0.1);
\node at (-1,-0.3){\small $-L$};
\node at (1,-0.3){\small $L$};
\node at (-2.2,-0.3){\small (b)};
\end{tikzpicture}\\[-6pt]
\caption{Symmetric (a) and non-symmetric (b) obstacles considered in the numerics below. \label{figsetting}} 
\end{figure}

\noindent  To make the presentation as simple as possible, we study the scattering of waves in 2D by a penetrable obstacle. The waveguide coincides with the region $\Om\coloneqq\{(x,y)\in\R^2\,|\,0<y<1\}$ and we consider the problem
\begin{equation}\label{PbInitial}
\begin{array}{|rcll}
\Delta u + k^2\gamma u & = & 0 & \mbox{ in }\Om\\[3pt]
 \partial_y u & = & 0  & \mbox{ on }\partial\Om
\end{array}
\end{equation}
with Neumann BCs. The coefficient $\gamma$ corresponds to the material index of the medium filling $\Om$. We assume that $\gamma$ is a positive and bounded function such that $\gamma=1$ for $|x|\ge L$ where $L>0$ is given. In other words, the obstacle is located in the region $\Om^\eps\coloneqq\{(x,y)\in\Om\,|\,|x|<L\}$ (see Figure \ref{figsetting}). Pick $k\in(N\pi;(N+1)\pi)$, with $N\in\N\coloneqq\{0,1,\dots\}$. 
Let us change a bit the definition of the modes (\ref{ModesNeumann}) by modifying the normalization and set
\begin{equation}\label{defModes}
w_n^{\pm}(x,y)=\cfrac{e^{\pm i\beta_n x}\varphi_n(y)}{(2|\beta_n|)^{1/2}}\,,\quad\beta_n\coloneqq\sqrt{k^2-n^2\pi^2},\quad \varphi_n(y)=\begin{array}{|ll}
1 & \hspace{-0.2cm}\mbox{if }n=0\\
\sqrt{2}\cos(n\pi y) & \hspace{-0.2cm}\mbox{for }n>0.
\end{array}\hspace{-0.2cm}
\end{equation}
For $n=0,\dots,N$, the wave $w_n^{\pm}$ propagates along the $(Ox)$ axis from $\mp\infty$ to $\pm\infty$. On the other hand, for $n> N$, $w_n^{\pm}$ is exponentially  growing at $\mp\infty$ and exponentially decaying at $\pm\infty$. For $n=0,\dots,N$, we consider the scattering of the wave $w_n^{+}$ by the obstacle located in $\Om$. By adapting Proposition \ref{PropoWPDirichletSca}, one shows that Problem (\ref{PbInitial}) admits a solution $u_n=w_n^{+}+u^s_n$ with the outgoing scattered field $u^s_n$ written as
\begin{equation}\label{RadiationCondition}
u^s_n=\sum_{p=0}^{+\infty} s^{\pm}_{np}w_p^{\pm}\qquad\mbox{ for }\pm x\ge L
\end{equation}
with $(s^{\pm}_{np})\in\Cplx^{\N}$. The solution $u_n$ is uniquely defined if and only if Trapped Modes (TMs) do not exist at the wavenumber $k$. We remind the reader that trapped modes are non zero  functions $u\in \mL^2(\Omega)$ satisfying (\ref{PbInitial}). We denote by $\mathscr{K}_{\mrm{t}}$ the set of $k^2$ such that TMs exist at the wavenumber $k$. On the other hand, as already mentioned after (\ref{RepresentationupD}), the scattering coefficients $s^{\pm}_{np}$ in (\ref{RadiationCondition}) are always uniquely defined, including for $k^2\in\mathscr{K}_{\mrm{t}}$. In the following, we will be particularly interested in the features of the reflection matrix  (whose size, determined by the number of propagating modes, depends on $k$)
\begin{equation}\label{reflection matrix} 
R(k)\coloneqq(s^{-}_{np})_{0\le n,p\le N}\in\Cplx^{N+1\times N+1}.
\end{equation} 
\begin{definition} We say that the wavenumber $k\in(0;+\infty)\setminus\pi\N$ is reflectionless if $\ker R(k)\ne\{0\}$.
\end{definition} 
\noindent Let us explain this definition. By linearity, for an incident field (coming from the left)
\begin{equation}\label{DefIncidentField}
u_i=\sum_{n=0}^Na_n w_n^{+},\qquad (a_n)_{n=0}^N\in\Cplx^{N+1},
\end{equation}
Problem (\ref{PbInitial}) admits a solution $u$ such that $u=u_i+u_s$ with
\begin{equation}\label{DefScatteredField}
u_s=\sum_{p=0}^{+\infty}b_p^{\pm} w_p^{\pm}\mbox{ for }\pm x\ge L\qquad\mbox{ and }\qquad b_p^{\pm}=\sum_{n=0}^Na_{n}s^{\pm}_{np}\in\Cplx.
\end{equation}
The above definition says that, if $k$ is reflectionless, then there is a vector $(a_n)_{n=0}^N\in\Cplx^{N+1}\setminus\{0\}$ such that the $b_p^{-}$ in (\ref{DefScatteredField}) satisfy $b_p^{-}=0$, $p=0,\dots,N$. In other words, the scattered field is exponentially decaying for $x\le -L$. Finally notice that the corresponding total field $u=u_i+u_s$ decomposes as
\begin{equation}\label{DefReflectionlessMode}
\begin{array}{ll}
\dsp u=\sum_{n=0}^{N}a_n w_n^{+}+\tilde{u}&\mbox{ for } x\le -L\\[13pt]
\dsp u=\sum_{n=0}^{N}t_n w_n^{+}+\tilde{u}&\mbox{ for } x\ge L\\
\end{array}
\end{equation}
where $t_n=a_n+b_n^+$ and where $\tilde{u}$ decays exponentially for $\pm x\ge L$. In other words, the total field is ingoing for $x\le-L$ and outgoing for $x\ge L$.\\
\newline
In the following,  we call Reflectionless Modes (RMs) the functions $u$ satisfying (\ref{PbInitial}) and admitting expansion (\ref{DefReflectionlessMode}). We denote by $\mathscr{K}_{\mrm{r}}$ the set of $k^2$ such that the wavenumber $k$ is reflectionless. Our objective is to explain how to determine directly the set $\mathscr{K}_{\mrm{r}}$ and the corresponding RMs by solving a linear eigenvalue problem, instead of computing the reflection matrix for all values of $k$.  

\section{Classical complex scaling}\label{SectionClassical}
As a first step, we remind briefly how to use a complex scaling to compute trapped modes. Define the unbounded operator $A$ of $\mL^2(\Om)$ such that
\[
Au=-\cfrac{1}{\gamma}\,\Delta u
\]
with Neumann boundary conditions $\partial_{y}u=0\mbox{ on }y=0\mbox{ and }y=1$. It is known that $A$ is a selfadjoint operator ($\mL^2(\Om)$ is endowed with the inner product $(\gamma\,\cdot,\cdot)_{\mL^2(\Om)}$) whose spectrum $\sigma(A)$ coincides with $[0;+\infty)$. More precisely, we have $\sigma_{\mrm{ess}}(A)=[0;+\infty)$ where $\sigma_{\mrm{ess}}(A)$ denotes the essential spectrum of $A$. By definition, $\sigma_{\mrm{ess}}(A)$ corresponds to the set of $\lambda \in\Cplx$ for which there exists a so-called singular sequence $(u^{(m)})$, that is an orthonormal sequence $(u^{(m)})\in\mL^2(\Omega)^{\N}$ such that $(u^{(m)})$ converges to 0 weakly in $\mL^2(\Omega)$ and $((A-\lambda)u^{(m)})$ converges to 0 strongly in $\mL^2(\Omega)$. 
Besides, $\sigma(A)$ may contain eigenvalues (at most a sequence accumulating at $+\infty$) corresponding to TMs. In order to reveal these eigenvalues which are embedded in $\sigma_{\mrm{ess}}(A)$, one can use a complex change of variables. For $0<\theta<\pi/2$, set $\eta=e^{i\theta}$ and define the function $\mathcal{I}_{\theta}:\R\to\Cplx $ such that 
\begin{equation}\label{defDilatationx}
\mathcal{I}_{\theta}(x)=\begin{array}{|ll}
-L+(x+L)\,\eta&\mbox{ for }x\le- L\\
x&\mbox{ for }|x|< L\\
+ L+(x-L)\,\eta&\mbox{ for } x\ge L.
\end{array}
\end{equation}
For the sake of simplicity, we  will use abusively the same notation $\mathcal{I}_{\theta}$ for the following map:
$\{\Om \to \Cplx\times(0;1), \;
(x,y) \mapsto (\mathcal{I}_{\theta}(x),y)\}$.
Note that with this definition, the left inverse $\mathcal{I}_{\theta}^{-1}$ of $\mathcal{I}_{\theta}$, acting from $\mathcal{I}_{\theta}(\Om)$ to $\Om$, is equal to  $\mathcal{I}_{-\theta}$. One can easily check that for all $n\geq 0$, 
$w_n^+\circ \mathcal{I}_{\theta}$ is exponentially decaying for $x\ge L$, while $w_n^-\circ \mathcal{I}_{\theta}$ is exponentially decaying for $x\le-L$. As a consequence, defining from expansion (\ref{DefScatteredField})  the function $v_{\theta}=u_s\circ \mathcal{I}_{\theta}$, one has $v_{\theta}=u_s$ for $|x|< L$ and $v_{\theta}\in \mL^2(\Omega)$ (which is in general not true for $u_s$). Moreover $v_{\theta}$ satisfies the following equation in $\Om$:
\begin{equation}\label{defEquationDilatation}
\alpha_{\theta}\frac{\partial}{\partial x}\Big(\alpha_{\theta}\frac{\partial v_{\theta}}{\partial x}\Big)+\frac{\partial^2 v_{\theta}}{\partial y^2}+k^2\gamma v_{\theta}=k^2(1-\gamma)u_i
\end{equation}
with $\alpha_{\theta}(x)=1$ for $|x|<L$ and $\alpha_{\theta}(x)=\eta^{-1}=\overline{\eta}$ for $\pm x\ge L$. In particular, for a TM, $v_{\theta}$ solves (\ref{defEquationDilatation}) with $u_i=0$. This leads us to consider the unbounded operator $A_{\theta}$ of $\mL^2(\Om)$ such that
\begin{equation}\label{defOpClassicalPMLs}
A_{\theta}v_{\theta}=-\cfrac{1}{\gamma}\left(\alpha_{\theta}\frac{\partial}{\partial x}\Big(\alpha_{\theta}\frac{\partial v_{\theta}}{\partial x}\Big)+\frac{\partial^2 v_{\theta}}{\partial y^2}\right)
\end{equation}
again with homogeneous Neumann boundary conditions. Since $\alpha_{\theta}$ is complex valued, the operator $A_{\theta}$ is not selfadjoint. However, we use the same definition as above for $\sigma_{\mrm{ess}}(A_{\theta})$, which is licit for this operator. We recall below the main spectral properties of $A_{\theta}$ \cite{Simo78}:
\begin{theorem}\label{thmUsualPMLs}
i) There holds
\begin{equation}\label{essClassicalPMLs}
\sigma_{\mrm{ess}}(A_{\theta})=\bigcup_{n\in\N,\,t\ge0}\{n^2\pi^2+te^{-2i\theta}\}.
\end{equation}
ii)  The spectrum of $A_{\theta}$ satisfies $\sigma(A_{\theta})\subset\mathscr{R}_{\mrm{\theta}}^-$ with 
\[
\mathscr{R}_{\mrm{\theta}}^-\coloneqq\{z\in\Cplx\,|\,-2\theta\le\mrm{arg}(z)\le 0\}.
\]
iii)  $\sigma (A_{\theta})\setminus \sigma_{\mrm{ess}}(A_{\theta})$ is discrete and contains only eigenvalues of finite multiplicity.\\[3pt]
iv) Assume that $k^2\in\sigma(A_{\theta})\setminus\sigma_{\mrm{ess}}(A_{\theta})$. Then $k^2$ is real if and only if $k^2\in\mathscr{K}_{\mrm{t}}$. Moreover if $v_{\theta} $ is an eigenfunction associated to $k^2$ such that $\Im m\,k^2<0$, then $v_{\theta}\circ\mathcal{I}_{-\theta}$ is a solution of the original problem (\ref{PbInitial}) whose amplitude is exponentially growing at $+\infty$ or at $-\infty$.
\end{theorem}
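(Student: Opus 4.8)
For the essential spectrum I would first isolate the behaviour at the two ends of the guide. Since $\gamma-1$ is supported in $\{|x|<L\}$ and the scaling only alters the coefficients there through $\alpha_\theta$, the operator $A_\theta$ agrees outside a compact set with the constant-coefficient operator $-(e^{-2i\theta}\partial_x^2+\partial_y^2)$ on the two half-strips. A singular-sequence (Weyl) construction, gluing test functions pushed out to $\pm\infty$ to the transverse eigenfunctions $\varphi_n$, together with the relative compactness of the perturbation supported in $\{|x|<L\}$, identifies $\sigma_{\mrm{ess}}(A_\theta)$ with the essential spectrum of this free operator; separating variables, mode $n$ carries the symbol $n^2\pi^2+e^{-2i\theta}\xi^2$, $\xi\in\R$, whose range is the ray $\{n^2\pi^2+te^{-2i\theta}:t\ge0\}$, and taking the union over $n$ gives (\ref{essClassicalPMLs}). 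For (ii) I would work with the form $a_\theta(v,v)=(\gamma A_\theta v,v)_{\mL^2(\Om)}$: integrating by parts (the interface contributions at $|x|=L$ cancelling through the transmission conditions built into the domain of $A_\theta$, and the boundary terms at $\pm\infty$ vanishing by $\mL^2$-decay) yields $a_\theta(v,v)=\int_{|x|<L}|\partial_x v|^2+e^{-2i\theta}\int_{|x|\ge L}|\partial_x v|^2+\int_{\Om}|\partial_y v|^2$. The three integrals being nonnegative and $e^{-2i\theta}$ having argument $-2\theta$, the numerical range of the form lies in $\mathscr{R}_\theta^-$; the form is sectorial, so $A_\theta$ is m-sectorial and $\sigma(A_\theta)$ is contained in the closure of the numerical range, which is $\mathscr{R}_\theta^-$. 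For (iii) I would note that on $\Cplx\setminus\sigma_{\mrm{ess}}(A_\theta)$ the operator $A_\theta-\lambda$ is Fredholm of index zero; by (ii) any $\lambda$ with $\Im m\,\lambda>0$ is in the resolvent set, so the analytic Fredholm theorem makes $(A_\theta-\lambda)^{-1}$ meromorphic with finite-rank principal parts on each relevant component, whence the spectrum off $\sigma_{\mrm{ess}}$ is discrete with eigenvalues of finite multiplicity.

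\textbf{Part (iv): un-scaling and identification of the modes.} This is where the substance lies. Given $A_\theta v_\theta=k^2 v_\theta$ with $v_\theta\in\mL^2(\Om)$, I would set $u\coloneqq v_\theta\circ\mathcal{I}_{-\theta}$. Because $\mathcal{I}_{\pm\theta}$ is the identity on $\{|x|<L\}$ and $A_\theta$ is the scaled Helmholtz operator, $u$ solves (\ref{PbInitial}) there, and a direct modal computation in the exterior shows it solves (\ref{PbInitial}) on all of $\Om$. The decisive step is to read off the behaviour of $u$ at $\pm\infty$: in each half-strip the constraint $v_\theta\in\mL^2$ forces its modal expansion to keep only the scaled modes $w_n^{+}\circ\mathcal{I}_\theta$ for $x\ge L$ and $w_n^{-}\circ\mathcal{I}_\theta$ for $x\le -L$ (the complementary family grows exponentially after scaling, by the sign of $\sin\theta$). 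Un-scaling then shows $u$ is outgoing, i.e. it expands on $\{w_n^{+}\}$ at $+\infty$ and on $\{w_n^{-}\}$ at $-\infty$, with $\beta_n$ understood as the branch of $\sqrt{k^2-n^2\pi^2}$ obtained by analytic continuation from the positive real root.

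\textbf{Part (iv): the real-eigenvalue dichotomy and the growth statement.} For the equivalence with trapped modes I would use a flux identity. If $k^2\in\mathscr{K}_{\mrm{t}}$, a trapped mode decays exponentially (Proposition \ref{PropoTrappedModes}), so $\mathcal{I}_\theta$ keeps it in $\mL^2$ and it is a real eigenvalue of $A_\theta$. Conversely, for a real eigenvalue I would apply Green's formula to the outgoing $u$ on $\{|x|<\ell\}$: since $\Delta u=-k^2\gamma u$ with $k^2,\gamma$ real, the interior term cancels and the flux through $\Sigma_{\pm\ell}$ vanishes; by orthonormality of the $\varphi_n$ this flux is a positive combination $\sum_{n\le N}\beta_n(\cdots)$ of the squared moduli of the propagating amplitudes, forcing them all to vanish, so $u$ is evanescent, lies in $\mL^2$, and is a trapped mode. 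Finally, if $\Im m\,k^2<0$ then $k^2$ is not real, hence not a trapped mode, so some propagating amplitude of $u$ is nonzero; continuing $\beta_n=\sqrt{k^2-n^2\pi^2}$ as $k^2$ drops below the real axis gives $\Im m\,\beta_n<0$ for that mode, making $e^{i\beta_n x}$ grow at $+\infty$ (symmetrically $e^{-i\beta_n x}$ at $-\infty$), so $u$ is exponentially growing at one end. The main obstacle I anticipate is precisely this last part: selecting exactly which modes survive the $\mL^2$ constraint after scaling, and carrying out the flux computation and the sign bookkeeping in the analytic continuation rigorously, so that the clean dichotomy --- real eigenvalue $\leftrightarrow$ trapped mode, complex eigenvalue $\leftrightarrow$ growing (leaky) solution --- emerges without ambiguity.
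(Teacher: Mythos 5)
The first thing to note is that the paper itself does \emph{not} prove Theorem \ref{thmUsualPMLs}: it is recalled as a known result with the citation \cite{Simo78}. So your proposal cannot be compared with a proof in the text, only with the standard complex-scaling arguments it alludes to. Measured that way, your parts (i) and (iv) are essentially the right route: Weyl sequences built on the transverse modes give the inclusion of the rays $\{n^2\pi^2+te^{-2i\theta}\}$ into $\sigma_{\mrm{ess}}(A_\theta)$, and for (iv) the mode selection imposed by the $\mL^2$ constraint after scaling, the flux (Green) identity showing that a real eigenvalue forces all propagating amplitudes of the unscaled field to vanish (hence a trapped mode), and the self-adjointness contradiction for complex eigenvalues are exactly the standard arguments. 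One caveat on (i): invoking ``relative compactness of the perturbation'' is not correct, since the perturbation changes the \emph{principal part} of the operator on $\{|x|<L\}$ and such perturbations are not relatively compact; what is actually used is the decomposition principle, i.e. local elliptic estimates plus Rellich allow one to cut off any singular sequence so that it lives near infinity, where $A_\theta$ coincides with the constant-coefficient operator.

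The genuine gap is part (ii), and it contaminates (iii). Your form identity is false. The domain of $A_\theta$ encodes the transmission conditions ``$v$ and $\alpha_\theta\partial_x v$ continuous at $x=\pm L$'', so pairing $\gamma A_\theta v$ with $\overline{v}$ and integrating by parts piecewise leaves the interface contribution
\begin{equation*}
\Big(-\partial_x v(L^-)+e^{-2i\theta}\partial_x v(L^+)\Big)\overline{v}(L)
=e^{-i\theta}\big(e^{-i\theta}-1\big)\,\partial_x v(L^+)\,\overline{v}(L)\ne 0
\end{equation*}
(and similarly at $x=-L$); these terms have no sign or argument structure, so your expression for $a_\theta(v,v)$ and the ensuing sector bound do not hold. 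The pairing that does cancel the interface terms is the weighted one used in the paper's formulation (\ref{PbVariaClassicalPmls}), leading to the form $\int_{\Om}\alpha_\theta|\partial_x v|^2+\alpha_\theta^{-1}|\partial_y v|^2\,dxdy$ against the mass $\int_{\Om}\gamma\alpha_\theta^{-1}|v|^2\,dxdy$; but there the exterior $y$-derivative and mass terms carry the factor $e^{+i\theta}$, so the numerical-range argument only yields $-2\theta\le\mrm{arg}\,\lambda\le+\theta$, not the claimed sector (this argument does work for a \emph{uniform} scaling, but not for the exterior scaling (\ref{defDilatationx}) used here). Since your (iii) quotes (ii) to produce a point of the resolvent set, it becomes circular as written. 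Both can be repaired with tools you already have: for (iii), observe that every term of the correct form has argument in $[-\theta;\theta]$, so for real $\lambda=-s<0$ one gets $|b(v,v)+s\,m(v,v)|\ge\cos\theta\,(\|\nabla v\|^2_{\mL^2(\Om)}+s\,c\|v\|^2_{\mL^2(\Om)})$, hence invertibility by Lax--Milgram; as $\Cplx\setminus\sigma_{\mrm{ess}}(A_\theta)$ is connected (the paper points this out in Section \ref{SectionUnusual}), the analytic Fredholm theorem gives discreteness. Then (ii) follows from (i)+(iii) by the very unscaling argument of your part (iv): if an isolated eigenvalue had $\mrm{arg}\,\lambda\in(0;\pi]$ or $\mrm{arg}\,\lambda\in(-\pi;-2\theta)$, then $\mrm{arg}(\lambda-n^2\pi^2)\notin[-2\theta;0]$ for every $n$, so each mode retained by the $\mL^2$ condition on $v_\theta$ unscales to a decaying exponential, and $v_\theta\circ\mathcal{I}_{-\theta}$ would be an $\mL^2$ eigenfunction of the self-adjoint operator $A$ with non-real eigenvalue --- a contradiction.
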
 
\noindent The interesting point is that now TMs correspond to isolated eigenvalues of $A_{\theta}$, and as such, they can be computed numerically as illustrated below. Note that the elements $k^2$ of $\sigma(A_{\theta})\setminus\sigma_{\mrm{ess}}(A_{\theta})$ such that $\Im m\,k^2<0$, if they exist, correspond to the complex resonances met in (\ref{PbCplxRes}) (whose corresponding generalized eigenfunctions are the quasi normal aka leaky modes). Let us point out that the complex scaling is just a technique to reveal them. Indeed, complex resonances are intrinsic objects defined as the poles of the meromorphic extension from $\{z\in\Cplx\,|\,\Im m\,z>0\}$ to $\{z\in\Cplx\,|\,\Im m\,z\le0\}$ of the operator valued map $z\mapsto (\Delta +z\gamma)^{-1}$. For more details, we refer the reader to \cite{AsPV00}.

\section{Conjugated complex scaling}\label{SectionUnusual}

Now, we show that by replacing the classical complex scaling by an unusual conjugated complex scaling, and proceeding  as in the previous section, we can define a new complex spectrum which contains the reflectionless values  $k^2\in \mathscr{K}_{\mrm{r}}$ we are interested in. We define the map $\mathcal{J}_{\theta}:\Om\to\Cplx\times(0;1)$ using the following  complex change of variables  
\begin{equation}\label{defDilatationConjugated}
\mathcal{J}_{\theta}(x)=\begin{array}{|ll}
-L+(x+L)\,\overline{\eta}&\mbox{ for }x\le- L\\
x&\mbox{ for }|x|< L\\
+ L+(x-L)\,\eta&\mbox{ for } x\ge L,
\end{array}
\end{equation}
with again $\eta=e^{i\theta}$ ($0<\theta<\pi/2$). Note the important difference in the definitions of $\mathcal{I}_{\theta}$ and $\mathcal{J}_{\theta}$ for $x\le-L$: $\eta$ has been replaced by the conjugated parameter $\overline{\eta}$ to select the ingoing modes instead of the outgoing ones in accordance with (\ref{DefReflectionlessMode}). Now, if $u$ is a RM associated to  $k^2\in \mathscr{K}_{\mrm{r}}$, setting  $w_{\theta}=u\circ \mathcal{J}_{\theta}$, one has $w_{\theta}=u$ for $|x|< L$ and $w_{\theta}\in \mL^2(\Omega)$ (which is not the case for $u$). The function $w_{\theta}$ satisfies the following equation in $\Om$:
\begin{equation}\label{DefPbFortCPMLs}
\beta_{\theta}\frac{\partial}{\partial x}\Big(\beta_{\theta}\frac{\partial w_{\theta}}{\partial x}\Big)+\frac{\partial^2 w_{\theta}}{\partial y^2}+k^2\gamma w_{\theta}=0
\end{equation}
with $\beta_{\theta}(x)=1$ for $|x|< L$, $\beta_{\theta}(x)=\eta$ for $x\le-L$ and $\beta_{\theta}(x)=\overline{\eta}$ for $x\ge L$. This leads us to define the unbounded operator $B_{\theta}$ of $\mL^2(\Om)$ such that
\begin{equation}\label{defOpConjugatedPMLs}
B_{\theta}w_{\theta}=-\cfrac{1}{\gamma}\left(\beta_{\theta}\frac{\partial}{\partial x}\Big(\beta_{\theta}\frac{\partial w_{\theta}}{\partial x}\Big)+\frac{\partial^2 w_{\theta}}{\partial y^2}\right)
\end{equation}
with homogeneous Neumann boundary conditions. As $A_{\theta}$, the operator $B_{\theta}$ is not selfadjoint. Its spectral properties are summarized in the following theorem.
\begin{theorem}\label{thmConjugatedPMLs} 
i) There holds 
\begin{equation}\label{essConjPMLs}
\sigma_{\mrm{ess}}(B_{\theta})=\bigcup_{n\in\N,\,t\ge0}\{n^2\pi^2+te^{-2i\theta},\,n^2\pi^2+te^{+2i\theta}\}.
\end{equation}
ii) The spectrum of $B_{\theta}$ satisfies $\sigma(B_{\theta})\subset\mathscr{R}_{\mrm{\theta}}$ with 
\begin{equation}\label{DefCPML}
\mathscr{R}_{\mrm{\theta}}\coloneqq\{z\in\Cplx\,|\,-2\theta\le\mrm{arg}(z)\le 2\theta\}.
\end{equation}
iii) Assume that $k^2\in\sigma(B_{\theta})\setminus\sigma_{\mrm{ess}}(B_{\theta})$. Then $k^2$ is real if and only if $k^2\in\mathscr{K}_{\mrm{t}}\cup\mathscr{K}_{\mrm{r}}$. Moreover if $w_{\theta}$ is an eigenfunction associated to $k^2$ such that $\pm\Im m\,k^2<0$, then $w_{\theta}\circ\mathcal{J}_{-\theta}$ is a solution of (\ref{PbInitial}) whose amplitude is exponentially growing at $\pm\infty$ and exponentially decaying at $\mp\infty$.
\end{theorem}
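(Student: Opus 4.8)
The plan is to transpose, step by step, the analytic--dilation argument already used for the classical operator $A_\theta$ in Theorem \ref{thmUsualPMLs}, keeping track of the single structural novelty of $\mathcal{J}_\theta$: on the left lead it uses $\overline{\eta}$ instead of $\eta$. For \emph{i}), I would localise at infinity. In each half-strip $\{\pm x>L\}$ the medium is homogeneous ($\gamma=1$) and $\beta_\theta$ is constant, equal to $\eta$ on the left and $\overline{\eta}$ on the right, so $B_\theta$ reduces to $-(\beta_\theta^2\partial_x^2+\partial_y^2)$ with $\beta_\theta^2=e^{+2i\theta}$ (left) or $e^{-2i\theta}$ (right). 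Separating variables in the transverse basis $\varphi_n$ (eigenvalues $n^2\pi^2$) and taking the Fourier symbol $\xi\in\R$ in $x$ gives the dispersion relation $\lambda=n^2\pi^2+e^{\mp2i\theta}\xi^2$, which sweeps exactly the two families of rays in (\ref{essConjPMLs}). The inclusion $\supseteq$ follows by constructing singular sequences: for a point $n^2\pi^2+te^{\mp2i\theta}$ take $\varphi_n(y)e^{i\xi_0x}$ with $\xi_0^2=t$, multiply by a cut-off slid to $\pm\infty$, and verify $\|(B_\theta-\lambda)u^{(m)}\|_{\mL^2}\to0$ while $\|u^{(m)}\|_{\mL^2}=1$ and $u^{(m)}\rightharpoonup0$. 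For the reverse inclusion I would write $B_\theta-\lambda$ as an invertible part plus a compact one, exactly as in the proof of Theorem \ref{ThmFredholm}: off the rays the lead symbol does not vanish, while the remainder is supported in the bounded region $\{|x|<L\}$ and is compact by Rellich, so $B_\theta-\lambda$ is Fredholm there.

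For \emph{ii}), the basic mechanism is the sesquilinear form. Testing (\ref{DefPbFortCPMLs}) against $\overline{w_\theta}$ and using the transmission conditions built into $D(B_\theta)$ gives $b_\theta(w_\theta,w_\theta)=k^2\int_\Om\gamma|w_\theta|^2$ with
\[
b_\theta(w,w)=e^{2i\theta}\int_{x<-L}|\partial_x w|^2+\int_{|x|<L}|\partial_x w|^2+e^{-2i\theta}\int_{x>L}|\partial_x w|^2+\int_{\Om}|\partial_y w|^2.
\]
Since $\int_\Om\gamma|w_\theta|^2>0$, we have $\arg k^2=\arg b_\theta(w_\theta,w_\theta)$, and each summand is a non-negative multiple of one of $e^{2i\theta},1,e^{-2i\theta}$, all of which lie in $\mathscr{R}_\theta$. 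When $\theta\le\pi/4$ this sector is convex, so the sum stays in $\mathscr{R}_\theta$; combined with $\sigma_{\mrm{ess}}(B_\theta)\subset\mathscr{R}_\theta$ from \emph{i}), the connectedness of $\Cplx\setminus\mathscr{R}_\theta$ and a single resolvent point, this upgrades to $\sigma(B_\theta)\subset\mathscr{R}_\theta$. The place I expect the main difficulty is the regime $\theta\in(\pi/4;\pi/2)$, where the numerical range of $b_\theta$ genuinely escapes $\mathscr{R}_\theta$ (take $w$ nearly independent of $y$ and equidistributed between the two leads, so that $b_\theta\approx 2\cos(2\theta)\,P<0$). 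There the tight inclusion must come from the dilation--analyticity structure instead: the discrete eigenvalues of $B_\theta$ are $\theta$-independent intrinsic objects (trapped, reflectionless or resonant values merely revealed by the scaling), and one confines them to the sheet $|\arg k^2|\le 2\theta$ by a Combes--Thomas type resolvent estimate in the leads, parallel to the classical theory of \cite{Simo78}.

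For \emph{iii}), which carries the novel content, the idea is to \emph{unscale}. Given $k^2\in\sigma(B_\theta)\setminus\sigma_{\mrm{ess}}(B_\theta)$, part \emph{i}) makes $B_\theta-k^2$ Fredholm of index $0$ and non-invertible, so there is $w_\theta\in\mL^2(\Om)\setminus\{0\}$ solving (\ref{DefPbFortCPMLs}); since $\mathcal{J}_\theta$ is holomorphic with left inverse $\mathcal{J}_{-\theta}$, the function $u:=w_\theta\circ\mathcal{J}_{-\theta}$ solves the genuine problem (\ref{PbInitial}) and coincides with $w_\theta$ on $\{|x|<L\}$. The heart is the lead-by-lead bookkeeping: expanding $w_\theta=\sum_n c_n^\pm(x)\varphi_n(y)$, the constant-coefficient ODEs force $c_n^\pm\propto e^{\pm i\beta_n\eta x}$ on the right lead and $e^{\pm i\beta_n\overline{\eta}x}$ on the left, and the $\mL^2$ requirement selects one exponential per mode. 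For real $k^2$ and $n\le N$ one has $\beta_n>0$, and checking the signs of $\Im m(\beta_n\eta)$ and $\Im m(\beta_n\overline{\eta})$ shows that the $\mL^2$-selected scaled mode is precisely $w_n^+\circ\mathcal{J}_\theta$ on \emph{both} leads; unscaling, $u$ carries only the rightgoing propagating modes $w_n^+$ on both sides plus exponentially decaying remainders, i.e.\ $u$ has the form (\ref{DefReflectionlessMode}). If its propagating part vanishes, $u\in\mL^2$ is a trapped mode ($k^2\in\mathscr{K}_{\mrm{t}}$); otherwise it is a reflectionless mode ($k^2\in\mathscr{K}_{\mrm{r}}$). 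Conversely, a trapped or reflectionless $u$ produces, via $w_\theta=u\circ\mathcal{J}_\theta\in\mL^2$, an eigenfunction of $B_\theta$ at the same $k^2$; since $\mathscr{K}_{\mrm{t}},\mathscr{K}_{\mrm{r}}\subset\R$, the realness equivalence follows.

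Finally, the ``moreover'' statement is obtained by rerunning the same bookkeeping while keeping the fixed branch $\Im m\,\beta_n\ge0$. When $\Im m\,k^2<0$, the constraint $k^2\in\mathscr{R}_\theta$ from \emph{ii}) moves $\beta_n$ into the second quadrant, which flips the $\mL^2$-selection on the right lead from $w_n^+\circ\mathcal{J}_\theta$ to $w_n^-\circ\mathcal{J}_\theta$ (because $\Im m(\beta_n\eta)<0$), and likewise selects $w_n^-\circ\mathcal{J}_\theta$ on the left. Unscaling then gives $u\propto w_n^-$ on each lead, and since $\Im m\,\beta_n>0$ the factor $e^{-i\beta_n x}=e^{\Im m\,\beta_n\,x}e^{-i\Re e\,\beta_n\,x}$ grows as $x\to+\infty$ and decays as $x\to-\infty$; the case $\Im m\,k^2>0$ is the mirror image. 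The delicate but elementary point here is precisely tracking the square-root branch together with the sector condition to confirm that the selection is the one claimed; this is the step I would write out most carefully, whereas the remaining sign checks and the Fredholm bookkeeping are routine.
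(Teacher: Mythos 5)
You should first be aware that the paper contains no proof of Theorem \ref{thmConjugatedPMLs}: like Theorem \ref{thmUsualPMLs} (quoted from \cite{Simo78}), it is imported from \cite{BoCP18}, so your proposal has to stand on its own. Your parts i) and iii) follow the natural route and are sound in outline: Weyl singular sequences plus a Fredholm/parametrix decomposition for (\ref{essConjPMLs}), and, for iii), mode-by-mode resolution of the constant-coefficient ODEs in the scaled leads, $\mL^2$-selection of one exponential per mode, and unscaling by $\mathcal{J}_{-\theta}$. Your key observation there — that for real $k^2$ the $\mL^2$-selected exponential unscales to the rightgoing mode $w_n^+$ on \emph{both} leads, because $\overline{\eta}$ on the left reverses the selection, so that the unscaled function has exactly the structure (\ref{DefReflectionlessMode}) and is a TM or a RM according to whether its propagating part vanishes — is precisely the heart of the theorem, and it is correct.

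The genuine gap is in part ii), and it is self-inflicted. The identity you write for $b_\theta(w,w)$ is false: testing (\ref{DefPbFortCPMLs}) directly against $\overline{w_\theta}$ does not produce it, because the transmission conditions encoded in $D(B_\theta)$ are continuity of $w_\theta$ and of $\beta_\theta\partial_x w_\theta$ (not of $\beta_\theta^2\partial_x w_\theta$), so the interface terms at $x=\pm L$ do not cancel; moreover your identity is missing the weights $\beta_\theta^{-1}$ on the $\partial_y$ and mass terms. The correct identity is obtained by first dividing the equation by $\beta_\theta$ (which depends only on $x$) and then testing with $\overline{w_\theta}$ — equivalently, taking $v=\overline{w_\theta}$ in the bilinear formulation used in the paper's numerical section (the analogue of (\ref{PbVariaClassicalPmls}) with $\beta_\theta$):
\begin{equation*}
\int_{\Om}\beta_\theta|\partial_x w_\theta|^2+\beta_\theta^{-1}|\partial_y w_\theta|^2\,dxdy \;=\; k^2\int_{\Om}\gamma\,\beta_\theta^{-1}|w_\theta|^2\,dxdy,
\end{equation*}
where now all interface terms vanish. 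Both sides are sums of non-negative multiples of $e^{i\theta}$, $1$, $e^{-i\theta}$; since $\theta<\pi/2$, the closed sector $\{z\,|\,|\mrm{arg}\,z|\le\theta\}$ is a convex cone, so each side lies in it and the quotient lies in $\mathscr{R}_\theta$ — for \emph{every} $\theta\in(0;\pi/2)$. Hence the obstruction you perceive for $\theta\in(\pi/4;\pi/2)$ (numerical range escaping $\mathscr{R}_\theta$) is an artifact of the wrong identity, and the rescue you sketch there ($\theta$-independence of eigenvalues plus a Combes--Thomas estimate "parallel to \cite{Simo78}") is a gesture, not a proof: it is exactly the hard part you would still owe. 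The corrected identity also gives the full inclusion $\sigma(B_\theta)\subset\mathscr{R}_\theta$, not merely the eigenvalue inclusion: for $\lambda\notin\mathscr{R}_\theta$ the cones containing $\int_{\Om}\beta_\theta|\partial_x w|^2+\beta_\theta^{-1}|\partial_y w|^2$ and $\lambda\int_{\Om}\gamma\beta_\theta^{-1}|w|^2$ are disjoint, which yields a coercivity estimate of the form $|a(w,w)-\lambda m(w,w)|\ge c(\lambda)\|w\|^2_{\mH^1(\Om)}$ and invertibility by Lax--Milgram.

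A secondary gap opens part iii): from $k^2\in\sigma(B_\theta)\setminus\sigma_{\mrm{ess}}(B_\theta)$ you conclude that $B_\theta-k^2$ is "Fredholm of index $0$", hence has an eigenfunction. With the Weyl-sequence definition of $\sigma_{\mrm{ess}}$ used in this chapter, absence of singular sequences only gives semi-Fredholmness (finite-dimensional kernel, closed range); getting index zero requires joining $k^2$ to a resolvent point inside the same connected component of $\Cplx\setminus\sigma_{\mrm{ess}}(B_\theta)$, and the paper warns immediately after the theorem that this set has countably many components, some of which may contain no resolvent point at all (when $\gamma\equiv1$, the whole sector $\mathscr{R}_\theta$ is filled with eigenvalues). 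So either you phrase the real/complex dichotomy for eigenpairs only — as the "moreover" clause of the statement implicitly does — or you must supply the invertibility-in-each-component argument the paper defers to \cite{BoCN15}.
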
 
\noindent The important result is that isolated real eigenvalues of $B_{\theta}$ correspond precisely to TMs and RMs. The following proposition provides a criterion to determine whether an eigenfunction associated to a real eigenvalue of $B_{\theta}$ is a TM or a RM. 
\begin{proposition}\label{PropositionCarac}
Assume that $(k^2,w_{\theta})\in\R\times \mL^2(\Om)$ is an eigenpair of $B_{\theta}$ such that $k\in(N\pi;(N+1)\pi)$, $N\in\N$. Set
\begin{equation}\label{DefAlephFunction}
\Indicator(w_{\theta})=\sum_{n=0}^N\Big|\int_{0}^1w_{\theta}(-L,y)\varphi_n(y)\,dy\Big|^2
\end{equation}
where $\varphi_n$ is defined in (\ref{defModes}). If $\Indicator(w_{\theta})=0$ then $w_{\theta}\circ\mathcal{J}_{-\theta}$ is a TM ($k^2\in\mathscr{K}_{\mrm{t}}$). If $\Indicator(w_{\theta})>0$ then $w_{\theta}\circ\mathcal{J}_{-\theta}$ is a RM ($k^2\in\mathscr{K}_{\mrm{r}}$). In this case, the incident field $u_i$ defined in (\ref{DefIncidentField}) with
\[
a_n=\int_{0}^1w_{\theta}(-L,y)\varphi_n(y)\,dy,\quad n=0,\dots,N,
\]
yields a scattered field which decays exponentially for $x\le -L$.
\end{proposition}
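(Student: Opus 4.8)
The plan is to work directly with $u:=w_{\theta}\circ\mathcal{J}_{-\theta}$. Undoing the conjugated complex scaling that defines $B_{\theta}$ in (\ref{DefPbFortCPMLs}), this $u$ solves the physical problem (\ref{PbInitial}) in $\Om$; the whole argument then reduces to pinning down the modal behaviour of $u$ in the two leads $\pm x\ge L$ (where $\gamma\equiv 1$) and reading off its propagating amplitudes at the section $x=-L$.

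First I would determine the expansion of $u$. On $x\le -L$ one has $\beta_{\theta}=\eta$ in (\ref{DefPbFortCPMLs}), so writing $w_{\theta}(x,y)=\sum_{n}c_n(x)\varphi_n(y)$ the coefficients satisfy $\eta^2 c_n''+\beta_n^2 c_n=0$ with $\beta_n=\sqrt{k^2-n^2\pi^2}$. Since $0<\theta<\pi/2$, for each $n$ the requirement $w_{\theta}\in\mL^2(\Om)$ selects the unique solution decaying as $x\to-\infty$. Composing with the left inverse $\mathcal{J}_{-\theta}$, whose branch on $x\le -L$ sends $x\mapsto -L+(x+L)\eta$, the $x$-dependence of the $n$-th mode of $u$ collapses to $e^{i\beta_n x}$ for $n\le N$ and to $e^{-i\beta_n x}$ for $n>N$. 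In other words, on $x\le -L$ the function $u$ is a superposition of the \emph{ingoing} propagating modes $w_n^{+}$ ($n\le N$) plus exponentially decaying evanescent terms $w_n^{-}$ ($n>N$); running the same computation on $x\ge L$ (where $\beta_{\theta}=\overline{\eta}$) shows $u$ is \emph{outgoing} there. This is exactly the structure (\ref{DefReflectionlessMode}), with ingoing amplitudes $a_n$ on the left and transmitted amplitudes $t_n$ on the right. Next I would relate $\Indicator(w_{\theta})$ to these amplitudes: since $\mathcal{J}_{-\theta}(-L)=-L$ one has $w_{\theta}(-L,\cdot)=u(-L,\cdot)$, and as the evanescent modes are $\mL^2(0;1)$-orthogonal to $\varphi_0,\dots,\varphi_N$, the projection in (\ref{DefAlephFunction}) isolates the propagating part, giving $\int_0^1 w_{\theta}(-L,y)\varphi_n(y)\,dy=a_n\,e^{-i\beta_n L}/(2\beta_n)^{1/2}$ for $n\le N$. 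Hence $\Indicator(w_{\theta})=\sum_{n=0}^N |a_n|^2/(2\beta_n)$, which vanishes if and only if every propagating amplitude $a_n$ on the left vanishes, i.e. if and only if $u$ decays exponentially as $x\to-\infty$.

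The decisive step is a flux identity that closes the trapped-mode case. Multiplying $\Delta u+k^2\gamma u=0$ by $\overline{u}$, integrating over $\Om_l=\{|x|<l\}$ and taking imaginary parts (the bulk term is real because $k$ and $\gamma$ are real) gives $\Im m\,\big(\int_{\Sigma_l}\overline{u}\,\partial_x u\,dy-\int_{\Sigma_{-l}}\overline{u}\,\partial_x u\,dy\big)=0$. Letting $l\to+\infty$ and using the orthogonality of the $\varphi_n$ to discard all evanescent and cross terms yields $\sum_{n=0}^N|t_n|^2=\sum_{n=0}^N|a_n|^2$.

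Consequently, if $\Indicator(w_{\theta})=0$ then all $a_n=0$, which forces all $t_n=0$, so $u$ decays exponentially at both ends, $u\in\mL^2(\Om)$, and $w_\theta\circ\mathcal{J}_{-\theta}$ is a trapped mode. If instead $\Indicator(w_{\theta})>0$ then some $a_n\ne 0$: $u$ carries a genuine ingoing propagating field while its reflected propagating part is absent, which is precisely a reflectionless mode; the scattered field $u-u_i$ attached to the incident field $u_i=\sum_{n=0}^N a_n w_n^{+}$ then reduces to the evanescent remainder and decays exponentially on $x\le -L$, the amplitudes $a_n$ being recovered from the trace projections of (\ref{DefAlephFunction}) up to the explicit factor $e^{-i\beta_n L}/(2\beta_n)^{1/2}$. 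I expect the main obstacle to be the rigorous justification of the modal expansion and of the unscaling — that $u$ genuinely solves (\ref{PbInitial}) with the claimed one-sided asymptotics, which is really the heart of the complex-scaling method already encoded in Theorem \ref{thmConjugatedPMLs} — together with the interchange of the limit $l\to+\infty$ with the modal series in the flux identity; the flux identity itself is the key idea that converts the vanishing of the incoming amplitudes into square-integrability.
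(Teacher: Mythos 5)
Your proof is correct. The notes state this proposition without proof (it is quoted from \cite{BoCP18}), so there is no in-paper argument to compare against; your route --- undoing the conjugated scaling, selecting in each lead the modal solutions compatible with $w_\theta\in\mL^2(\Om)$ (ingoing $w_n^+$ plus decaying evanescent terms on the left, outgoing plus decaying evanescent terms on the right), reading the propagating amplitudes off the trace at $x=-L$, and closing the trapped-mode case with the flux identity $\sum_{n=0}^N|t_n|^2=\sum_{n=0}^N|a_n|^2$ --- is precisely the intended one. You also correctly identified the step that is easy to overlook: $\Indicator(w_\theta)=0$ only gives exponential decay in the left lead, and it is the energy-conservation identity that forces the transmitted amplitudes $t_n$ to vanish as well, hence a trapped mode.

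One point should be made explicit instead of being folded into ``up to the explicit factor''. Your trace computation gives $\int_0^1 w_\theta(-L,y)\varphi_n(y)\,dy=a_n\,e^{-i\beta_n L}/(2\beta_n)^{1/2}$, where $(a_n)$ are the ingoing amplitudes of $u=w_\theta\circ\mathcal{J}_{-\theta}$ in the basis $w_n^+$ of (\ref{defModes}). Hence the incident field for which you actually prove that the scattered field is evanescent for $x\le-L$ is the one with coefficients $a_n=(2\beta_n)^{1/2}e^{i\beta_n L}\int_0^1 w_\theta(-L,y)\varphi_n(y)\,dy$, not the one built directly from the projections as in the literal statement of the proposition. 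In the monomode regime $N=0$ the distinction is immaterial, since $\ker R(k)\ne\{0\}$ then means $R(k)=0$ and every incident field is unreflected; but for $N\ge1$ the kernel of $R(k)$ is a subspace of $\Cplx^{N+1}$ which is in general not invariant under componentwise rescaling by $\mrm{diag}\big(e^{-i\beta_n L}/(2\beta_n)^{1/2}\big)$, so the two versions genuinely differ. Your derivation is the right one; state the corrected formula for the $a_n$ explicitly (or note that the proposition's formula must be multiplied by $(2\beta_n)^{1/2}e^{i\beta_n L}$) rather than leaving it as an unquantified ``up to a factor''.
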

\noindent The next proposition tells  that  $B_{\theta}$ satisfies the celebrated $\mathcal{PT}$ symmetry property when the obstacle is symmetric with respect to the $(Oy)$ axis. This ensures in particular the stability of simple real eigenvalues, with respect to perturbations of the obstacle satisfying the same symmetry constraint. 
\begin{proposition}\label{PropositionPTsym}
Assume that $\gamma$ satisfies $\gamma(x,y)=\gamma(-x,y)$ for all $(x,y)\in\Om$. Then the operator $B_{\theta}$ is $\mathcal{PT}$-symmetric ($\mathcal{PT}B_{\theta}=B_{\theta}\mathcal{PT}$) with $\mathcal{P}\varphi(x,y)=\varphi(-x,y)$, $\mathcal{T}\varphi(x,y)=\overline{\varphi(x,y)}$ for $\varphi\in\mL^2(\Om)$. Therefore, we have $\sigma(B_{\theta})=\overline{\sigma(B_{\theta})}$ (if $\lambda$ is an eigenvalue of $B_{\theta}$, $\overline{\lambda}$ as well).
\end{proposition}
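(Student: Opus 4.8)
The plan is to verify the $\mathcal{PT}$-symmetry relation $\mathcal{PT}B_\theta = B_\theta \mathcal{PT}$ directly on the operator, and then to deduce the spectral consequence. First I would record the elementary algebraic properties of the operators $\mathcal{P}$ and $\mathcal{T}$. The parity $\mathcal{P}\varphi(x,y) = \varphi(-x,y)$ is linear, while the time-reversal $\mathcal{T}\varphi(x,y) = \overline{\varphi(x,y)}$ is antilinear; both are involutions ($\mathcal{P}^2 = \mathcal{T}^2 = \mrm{Id}$) and they commute with one another, so $\mathcal{PT}$ is an antilinear involution. The key point is how each interacts with the two building blocks of $B_\theta$: the multiplication by $\gamma^{-1}$ and the scaled differential operator involving $\beta_\theta$.

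Next I would track the action of $B_\theta$ on a conjugated-and-reflected function. Applying $\mathcal{T}$ first replaces $w_\theta$ by $\overline{w_\theta}$ and, because $B_\theta$ has complex coefficients, conjugation sends $\beta_\theta$ to $\overline{\beta_\theta}$; crucially, from the definition (\ref{defDilatationConjugated})--(\ref{defOpConjugatedPMLs}) one reads off that $\overline{\beta_\theta}(x) = \overline{\eta}$ for $x\le -L$ and $\overline{\beta_\theta}(x)=\eta$ for $x\ge L$, i.e. $\overline{\beta_\theta}(x) = \beta_\theta(-x)$. The parity $\mathcal{P}$ then reflects $x\mapsto -x$, which sends $\beta_\theta(-x)$ back to $\beta_\theta(x)$ (after the chain rule absorbs the sign of $\partial_x$ harmlessly inside the squared first-order block $\beta_\theta\partial_x(\beta_\theta\partial_x\cdot)$, since each factor of $\partial_x$ contributes a sign and there are two of them). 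For the zeroth-order coefficient I would use the symmetry hypothesis $\gamma(x,y)=\gamma(-x,y)$, which guarantees that $\gamma^{-1}$ is invariant under $\mathcal{P}$ and real-valued hence invariant under $\mathcal{T}$. Putting these together shows that conjugating $B_\theta$ by $\mathcal{PT}$ returns exactly $B_\theta$, and I would also check that the Neumann boundary conditions $\partial_y w=0$ on $y=0,1$ are preserved by both $\mathcal{P}$ and $\mathcal{T}$, so that the domain $D(B_\theta)$ is $\mathcal{PT}$-invariant.

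Finally I would extract the spectral conclusion. If $\lambda$ is an eigenvalue of $B_\theta$ with eigenfunction $w$, then $B_\theta w = \lambda w$, and applying the antilinear intertwining relation gives $B_\theta(\mathcal{PT}w) = \mathcal{PT}(B_\theta w) = \mathcal{PT}(\lambda w) = \overline{\lambda}\,(\mathcal{PT}w)$, using antilinearity of $\mathcal{PT}$ to pull out $\overline{\lambda}$. Since $\mathcal{PT}w\ne 0$ (as $\mathcal{PT}$ is a bijection), $\overline{\lambda}$ is an eigenvalue; the same argument applied to the essential spectrum via singular sequences (or simply from (\ref{essConjPMLs}), which is manifestly symmetric under complex conjugation) yields $\sigma(B_\theta) = \overline{\sigma(B_\theta)}$.

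The main obstacle I anticipate is the careful bookkeeping of signs and conjugations in the scaled second-order term $\beta_\theta\,\partial_x(\beta_\theta\,\partial_x\cdot)$ under the combined reflection and conjugation: one must confirm that the two sign changes from $\partial_x\mapsto-\partial_x$ cancel while simultaneously verifying the coefficient identity $\overline{\beta_\theta}(x)=\beta_\theta(-x)$, which is precisely what the conjugated (rather than classical) scaling was designed to make work. This is where the hypothesis on $\gamma$ and the asymmetric definition of $\mathcal{J}_\theta$ conspire, and it is the only place where genuine care is needed.
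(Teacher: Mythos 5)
Your proof is correct and follows essentially the same route as the paper: the paper's entire proof is the one-line observation that $\beta_{\theta}(-x,y)=\overline{\beta_{\theta}(x,y)}$, which is exactly the coefficient identity you verify, combined with the symmetry and realness of $\gamma$. The rest of your write-up (sign cancellation in $\beta_\theta\partial_x(\beta_\theta\partial_x\cdot)$, invariance of the Neumann domain, and the antilinear intertwining argument giving $\overline{\lambda}$ as an eigenvalue) just makes explicit the bookkeeping the paper calls ``straightforward.''
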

\noindent The proof is straightforward observing that the $\beta_{\theta}$ defined after (\ref{DefPbFortCPMLs}) satisfies $\beta_{\theta}(-x,y)=\overline{\beta_{\theta}(x,y)}$.\\
\newline
Finally let us mention a specific difficulty which appears in the spectral analysis of $B_{\theta}$. While Theorem \ref{thmUsualPMLs} guarantees that $\sigma(A_{\theta})\setminus\sigma_{\mrm{ess}}(A_{\theta})$ is discrete, we do not write such a statement for the operator $B_{\theta}$ in Theorem \ref{thmConjugatedPMLs}. A major difference between both operators is that $\Cplx\setminus \sigma_{\mrm{ess}}(A_{\theta})$ is connected whereas $\Cplx\setminus \sigma_{\mrm{ess}}(B_{\theta})$ has a countably infinite number of connected components. As a consequence, to prove that $\sigma (B_{\theta})\setminus \sigma_{\mrm{ess}}(B_{\theta})$ is discrete using the Fredholm analytic theorem, it is necessary to find one $\lambda$ such that $B_{\theta}-\lambda$ is invertible in each of the components of $\Cplx\setminus \sigma_{\mrm{ess}}(B_{\theta})$. In general, in presence of an obstacle, such a $\lambda$ probably exists (proofs for certain classes of $\gamma$ can be obtained working as in \cite{BoCN15}). But for this problem, we can have surprising perturbation results. Thus, if there is no obstacle ($\gamma\equiv1$ in $\Om$), then there holds $\sigma(B_{\theta})=\mathscr{R}_{\mrm{\theta}}$ (see (\ref{DefCPML})): all connected components of $\Cplx\setminus \sigma_{\mrm{ess}}(B_{\theta})$, except the one containing the  complex half-plane $\Re e\,\lambda<0$, are filled with eigenvalues. 
 To show this result, observe that for $k^2\in\sigma(B_{\theta})\setminus\sigma_{\mrm{ess}}(B_{\theta})$, the function $u\circ \mathcal{J}_{\theta}$, with $u(x,y)=e^{ik x}$, is a non-zero element of $\ker B_{\theta}$. Notice that this pathological property is also true when $\Om$ contains a family of sound hard cracks (homogeneous Neumann boundary condition) parallel to the $(Ox)$ axis (see the illustration of Figure \ref{HorizontalCracks}).

\begin{figure}[!ht]
\centering
\raisebox{0.2cm}{\begin{tikzpicture}[scale=0.95]
\draw[fill=gray!10,draw=none](-2.5,0.4) rectangle (2.5,2);
\draw (-2.5,0.4)--(2.5,0.4);
\draw (-2.5,2)--(2.5,2);
\draw (-0.5,1)--(0,1);
\draw (1.2,1.4)--(0.3,1.4);
\draw (-1,1.7)--(-0.3,1.7);
\draw [dashed](-3,0.4)--(-2.5,0.4);
\draw [dashed](3,0.4)--(2.5,0.4);
\draw [dashed](-3,2)--(-2.5,2);
\draw [dashed](3,2)--(2.5,2);
\end{tikzpicture}}\quad\begin{tikzpicture}[scale=0.95]
\draw (0.5,0) arc (0:-30:0.5) ;
\node at (0.85,-0.2){\tiny $2\theta$};
\draw (0.5,0) arc (0:30:0.5) ;
\node at (0.85,0.2){\tiny $2\theta$};
\draw[->] (-1,0)--(8,0);
\draw[->] (0,-1)--(0,1);
\draw (0,-0.1)--(0,0.1);
\node at (-0.2,-0.2){\scriptsize $0$};
\node at (8.5,0){\scriptsize $\Re e\,k^2$};
\node at (-0.65,0.9){\scriptsize $\Im m\,k^2$};
\draw[fill=green!60!black,draw=none] (1.4,0)  circle[radius=0.08];
\draw[fill=green!60!black,draw=none] (3.8,0)  circle[radius=0.08];
\draw[fill=orange,draw=none] (0,0) -- (1.73,-1) -- (7.7,-1)--(7.7,1)--(1.73,1)--(0,0);
\draw[very thick,green!60!black] (0,0)--(7.7,0);
\draw[very thick,blue,rotate=-30] (0,0)--(2,0);
\draw[very thick,blue,shift={(2.5,0)},rotate=-30] (0,0)--(2,0);
\draw[very thick,blue,shift={(6,0)},rotate=-30] (0,0)--(2,0);
\draw[very thick,blue,rotate=30] (0,0)--(2,0);
\draw[very thick,blue,shift={(2.5,0)},rotate=30] (0,0)--(2,0);
\draw[very thick,blue,shift={(6,0)},rotate=30] (0,0)--(2,0);
\end{tikzpicture}
\caption{Left: waveguide with horizontal cracks. Right: the spectrum of the corresponding $B_{\theta}$ fills the whole sector delimited by $\sigma_{\mrm{ess}}(B_{\theta})$. Here the blue lines represent $\sigma_{\mrm{ess}}(B_{\theta})$.} 
\label{HorizontalCracks}
\end{figure}
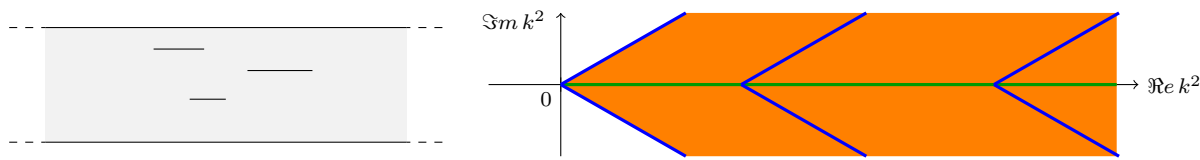

\section{Numerical experiments}\label{SectionNumerics}

\subsection{ Classical complex scaling: classical complex resonance modes} 
We first compute the spectrum of the operator $A_{\theta}$ defined in (\ref{defOpClassicalPMLs}) with a classical complex scaling (complex resonance spectrum).  For the numerical experiments, we truncate the computational domain at some distance of the obstacle and use finite elements. This corresponds to the so-called Perfectly Matched Layers (PMLs) method. We refer the reader to \cite{Kalv13} for the numerical analysis of the error due to truncation of the waveguide and discretization. The setting is as follows. We take $\gamma$ such that $\gamma=5$ in $\mathscr{O}=(-1;1)\times(0.25;0.75)$ and $\gamma=1$ in $\Om\setminus\overline{\mathscr{O}}$ (see Figure \ref{figsetting} (a)).  In the definition of the maps $\mathcal{I}_{\theta}$, $\alpha_{\theta}$ (see (\ref{defDilatationx}), (\ref{defEquationDilatation})), we take $\theta=\pi/4$ (so that $\eta=e^{i\pi/4}$) and $L=1$. In practice, we use a $\mrm{P2}$ finite element method in the bounded domain $\Om_{12}\coloneqq\{(x,y)\in\Om\,|\,-12<x<12\}$ with Dirichlet boundary condition at $x=\pm12$ (Neumann would work as well). This leads us to solve the spectral problem
\begin{equation}\label{PbVariaClassicalPmls}
\begin{array}{|l}
\mbox{Find }(\lambda,u^h)\in\Cplx\times\mV_h\setminus\{0\} \mbox{ such that for all }v^h\in\mV_h,\\[3pt]
\dsp\int_{\Om_{12}}\alpha_\theta\partial_xu^h\partial_xv^h+\alpha^{-1}_\theta\partial_yu^h\partial_yv^h\,dxdy=\lambda\int_{\Om_{12}}\gamma\alpha^{-1}_\theta u^hv^h\,dxdy,
\end{array}
\end{equation}
where, similarly to (\ref{DefVh}),  
\begin{equation}\label{DefVhBis}
\mV_h \coloneqq \left\{v\in \mH^1(\Om_{12})\mbox{ such that }v|_{\tau}\in \mathbb{P}_2(\tau)\mbox{ for all }\tau\in\mathcal{T}_h\mbox{ and }v=0\mbox{ at }x=\pm12\right\}.
\end{equation}
Note that to obtain (\ref{PbVariaClassicalPmls}), we exploited that $\alpha_\theta$ depends only on $x$. In Figure \ref{figClassicalPMLs} and below, we display the square root of the spectrum ($k$ instead of $k^2$). The vertical marks on the real axis correspond to the thresholds ($0$, $\pi$, $2\pi$, ...). In accordance with Theorem \ref{thmUsualPMLs}, we observe that $\sqrt{\sigma(A_{\theta})}$ is located in the region $\sqrt{\mathscr{R}_{\theta}^-}=\{z\in\Cplx\,|\,-\theta\le\mrm{arg}(z)\le 0\}$. Moreover, the discretisation of the essential spectrum $\sigma_{\mrm{ess}}(A_{\theta})$ defined in (\ref{essClassicalPMLs}) and forming branches starting at the threshold points appears clearly. Note that a simple calculation shows that $\sqrt{\{n^2\pi^2+te^{-2i\theta},\,t\ge0\}}$ is a half-line for $n=0$ and a piece of hyperbola for $n\ge1$. This is precisely what we get. 
Eigenvalues located on the real axis correspond to trapped modes ($k^2\in\mathscr{K}_{\mrm{t}}$).
 In the chosen setting, which is symmetric with respect to the axis $\R\times\{0.5\}$, one can prove that trapped modes exist \cite{EvLV94}. 
On the other hand, the eigenvalues in the complex plane which are not the discretisation of the essential spectrum correspond to complex resonances. 
\begin{figure}[!ht]
\centering
\includegraphics[width=.49\linewidth]{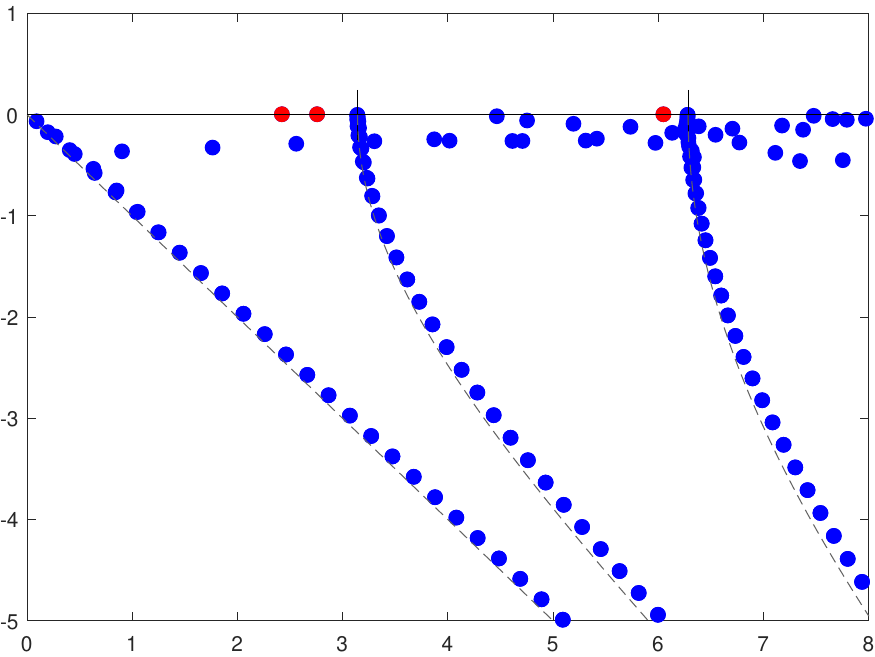}\ \includegraphics[width=.49\linewidth]{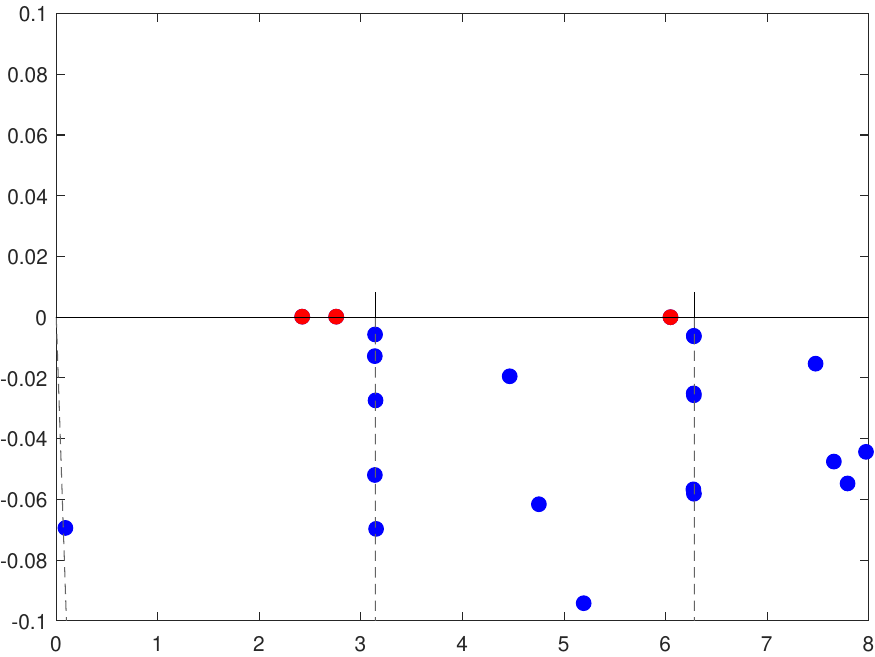}
\caption{Classical complex resonances in the complex $k$ plane corresponding to the spectrum of $A_{\theta}$ for a symmetric obstacle (Figure \ref{figsetting} (a)). The trapped modes are in red, the dashed lines represent the essential spectrum of  $A_{\theta}$ (see (\ref{essClassicalPMLs})). The picture on the right is a zoom-in of that on the left.} 
\label{figClassicalPMLs}
\end{figure}

\subsection{Conjugated complex scaling: reflectionless modes} 
Now we compute the spectrum of the operator $B_{\theta}$ defined in (\ref{defOpConjugatedPMLs}) with a conjugated complex scaling. To proceed, we solve the variational spectral problem
\[
\begin{array}{|l}
\mbox{Find }(\lambda,u^h)\in\Cplx\times\mV_h\setminus\{0\} \mbox{ such that for all }v^h\in\mV_h,\\[3pt]
\dsp\int_{\Om_{12}}\beta_\theta\partial_xu^h\partial_xv^h+\beta^{-1}_\theta\partial_yu^h\partial_yv^h\,dxdy=\lambda\int_{\Om_{12}}\gamma\beta^{-1}_\theta u^hv^h\,dxdy,
\end{array}
\]
with the same $\mV_h$ as in (\ref{DefVhBis}). Note that compared to (\ref{PbVariaClassicalPmls}), $\alpha_\theta$ has been replaced by $\beta_\theta$ (both functions are piecewise constant). First, we use exactly the same symmetric setting (see Figure \ref{figsetting} (a)) as in the previous paragraph. In Figure \ref{figConjugatedPMLs}, we display the square root of the spectrum $\sqrt{\sigma(B_{\theta})}$. Since $\gamma$ satisfies $\gamma(x,y)=\gamma(-x,y)$, according to Proposition \ref{PropositionPTsym} we know that $B_{\theta}$ is $\mathcal{PT}$-symmetric and that therefore its spectrum is stable by conjugation ($\sigma(B_{\theta})=\overline{\sigma(B_{\theta})}$). This is indeed what we obtain. Note that the mesh has been constructed so that $\mathcal{PT}$-symmetry is preserved at the discrete level. $\mathcal{PT}$-symmetry is an interesting property in our case because it guarantees that eigenvalues located close to the real axis which are isolated (no other eigenvalue in a vicinity) are real. Therefore, according to Theorem \ref{thmConjugatedPMLs}, they correspond to trapped modes or to reflectionless modes. 
Remark that, for the same geometry, the spectrum of $B_{\theta}$ (Figure \ref{figConjugatedPMLs}) contains more elements on the real axis than the spectrum of $A_{\theta}$ (Figure \ref{figClassicalPMLs}): the additional elements (the green points in Figure \ref{figConjugatedPMLs}) correspond to reflectionless modes. 

\begin{figure}[!ht]
\centering
\includegraphics[width=.49\linewidth]{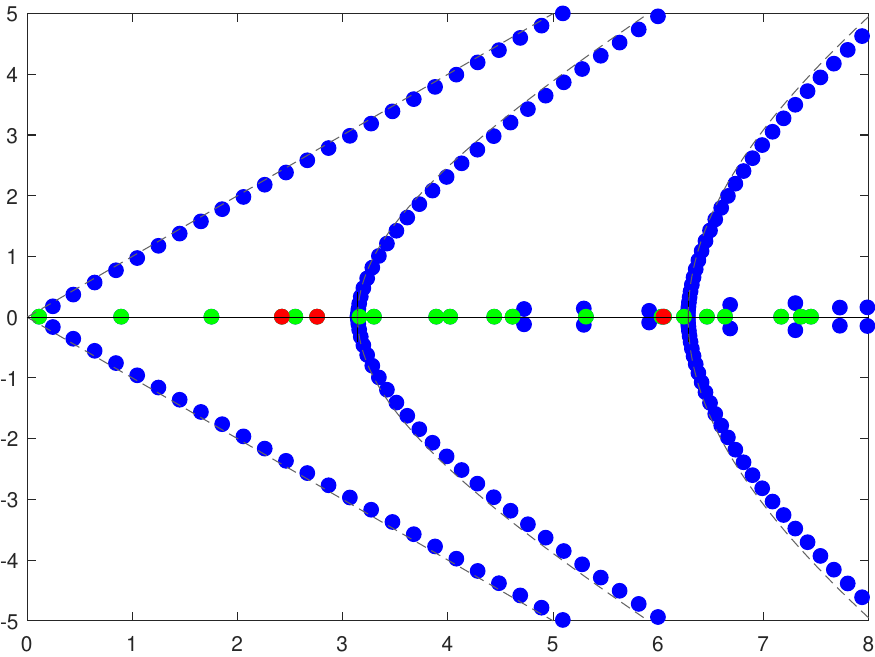}\ \includegraphics[width=.4805\linewidth]{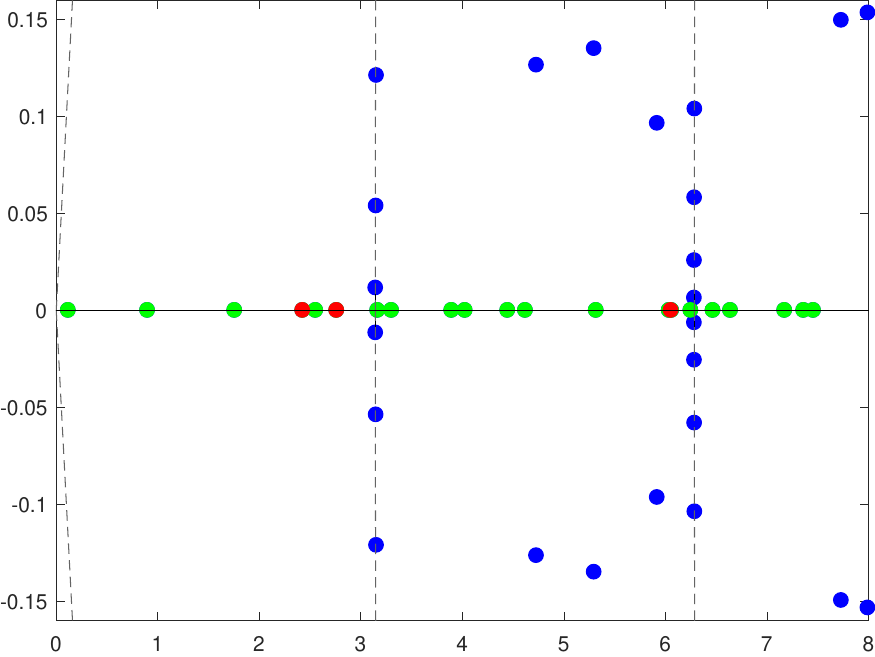}\\[-10pt]
\caption{Reflectionless eigenvalues in the complex $k$ plane corresponding to the spectrum of $B_{\theta}$ for a symmetric obstacle (Figure \ref{figsetting} (a)). The trapped modes in red are the same as in Figure \ref{figClassicalPMLs}. The reflectionless modes are in green and the dashed lines represent the essential spectrum of  $B_{\theta}$ (see (\ref{essConjPMLs})). The picture on the right is a zoom-in of that on the left.
}
\label{figConjugatedPMLs}
\end{figure}

\noindent In Figure \ref{figEigenfunctions} top, we represent the real part of eigenfunctions associated with seven real eigenvalues of $B_{\theta}$. 
To obtain these pictures, we take $L=4$ in the definition of $\mathcal{J}_{\theta}$ in (\ref{defDilatationConjugated})
and we display only the restrictions of the eigenfunctions to $\Om_{L}=\{(x,y)\in\Om\,|\,-L<x<L\}$. 
We recognize two trapped modes (images 3 and 5). 
The other modes are reflectionless modes. In Figure \ref{figEigenfunctions} bottom, we provide the value of the indicator function $\Indicator$ defined in (\ref{DefAlephFunction}) for the seven eigenmodes. We have to mention that eigenmodes are normalized so that their $\mL^2$ norm  is equal to one. The indicator function $\Indicator$ offers a clear criterion to distinguish between trapped modes and reflectionless modes. \\

\begin{figure}[!ht]
\centering
\includegraphics[width=.62\linewidth]{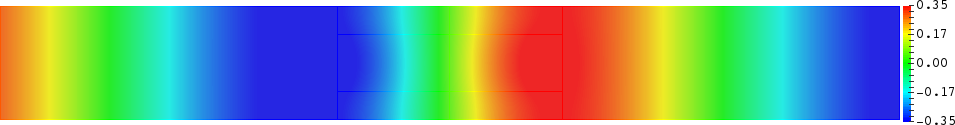}\\[1pt]
\includegraphics[width=.62\linewidth]{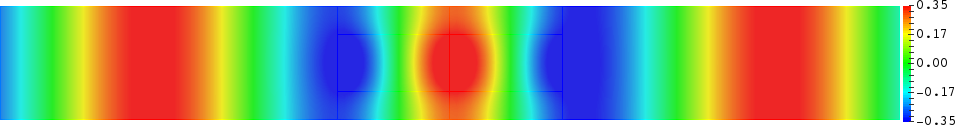}\\[1pt]
\includegraphics[width=.62\linewidth]{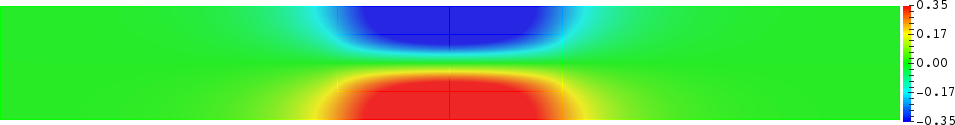}\\[1pt]
\includegraphics[width=.62\linewidth]{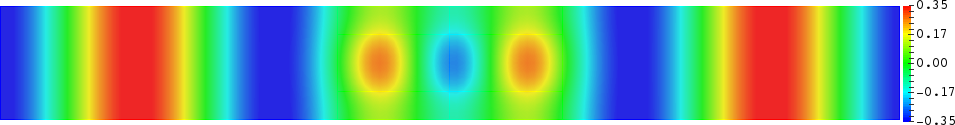}\\[1pt]
\includegraphics[width=.62\linewidth]{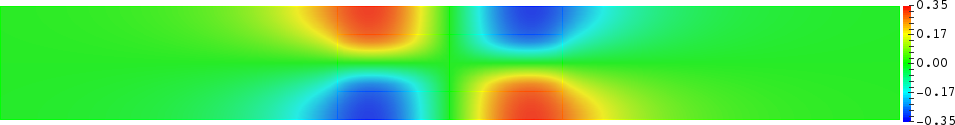}\\[1pt]
\includegraphics[width=.62\linewidth]{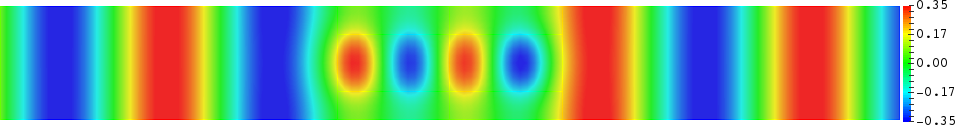}\\[1pt]
\includegraphics[width=.62\linewidth]{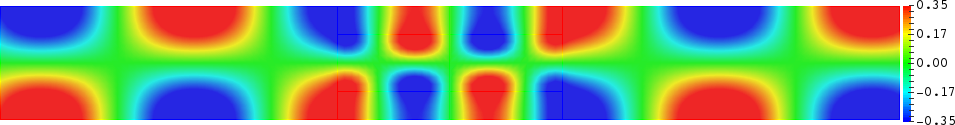}\\[8pt]
\renewcommand{\arraystretch}{1.2}
\begin{tabular}{|>{\centering\arraybackslash}m{1.2cm}|>{\centering\arraybackslash}m{0.7cm}|>{\centering\arraybackslash}m{0.7cm}|>{\centering\arraybackslash}m{1.5cm}|>{\centering\arraybackslash}m{0.7cm}|>{\centering\arraybackslash}m{1.5cm}|>{\centering\arraybackslash}m{0.7cm}|>{\centering\arraybackslash}m{0.7cm}|}
\hline 
\cellcolor{Gray} $k$ &   0.9 &  1.8 &  2.4 &  2.6 &  2.8 &  3.3 &  3.9 \\
\hline
\cellcolor{Gray} $\Indicator(w_{\theta})$  & 0.14 & 0.14 & 8.0\,$10^{-10}$ & 0.14 & 4.3\,$10^{-9}$ & 0.14& 0.14 \\
\hline
\end{tabular}
\caption{Top: real part of eigenmodes associated with real eigenvalues of $B_{\theta}$ from Figure \ref{figConjugatedPMLs}. 
Bottom: value of $k$ and of the indicator function $\Indicator$ for each of these $7$ eigenmodes. The 3rd and the 5th eigenmodes are trapped modes, the five others are reflectionless modes.}
\label{figEigenfunctions}
\end{figure}

\noindent Moreover, in order to inspect the scattering coefficient, we remark that for reflectionless modes associated with wavenumbers $k$ smaller than $\pi$, the incident field $u_i$ in (\ref{DefIncidentField}) decomposes only on the piston mode $w^{+}_0(x,y)=e^{ikx}/\sqrt{2k}$ (monomode regime). In this case the reflection matrix $R(k)$ in (\ref{reflection matrix}) is nothing but the usual reflection coefficient. In Figure \ref{figBalayage}, we thus display the modulus of this coefficient $R(k)$ with respect to $k\in(0.1;3.1)$. As expected, we observe that $R$ vanishes for the values of $k$ obtained in Figure \ref{figEigenfunctions} solving the spectral problem for $B_{\theta}$. 
Of course obtaining the curve $k\mapsto |R(k)|$ is relatively costly and it is precisely 
what we want to avoid by computing the reflectionless $k$ as eigenvalues. Here it is simply a way to check our results. 

\begin{figure}[!ht]
\centering
\includegraphics[width=.49\linewidth]{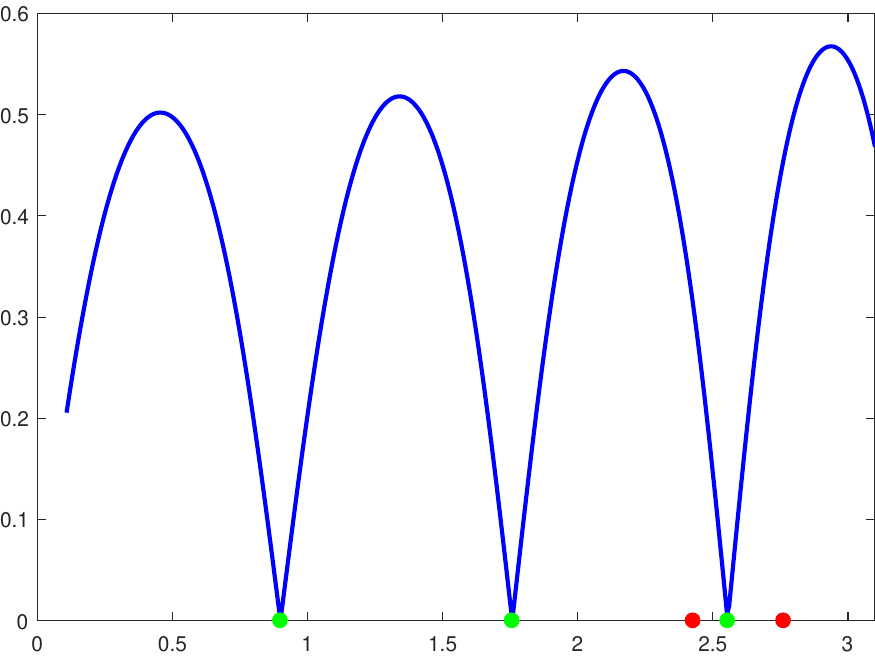}
\caption{Curve $k\mapsto |R(k)|$ (modulus of the reflection coefficient) for $k\in(0.1;3.1)$. The green and red dots represent respectively the reflectionless modes and the trapped modes computed in Figure \ref{figConjugatedPMLs}. We indeed observe that $R(k)$ is null for reflectionless $k$.}
\label{figBalayage}
\end{figure}

\noindent In Figure \ref{figBrokenSym}, we represent the modulus of reflectionless mode eigenfunctions of $B_{\theta}$ associated with one real eigenvalue and two complex conjugated eigenvalues. We observe, and this is true in general, a symmetry with respect to the $(Oy)$ axis for modes corresponding to real eigenvalues which disappears for complex ones. This is the so-called broken symmetry phenomenon which is well-known for $\mathcal{PT}$-symmetric operators (see \textit{e.g.} the review \cite{Bend07}).

\begin{figure}[!ht]
\centering
\includegraphics[width=.65\linewidth]{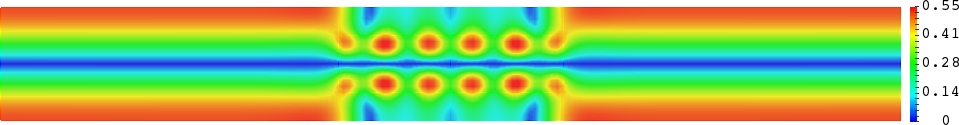}\\[1pt]
\includegraphics[width=.65\linewidth]{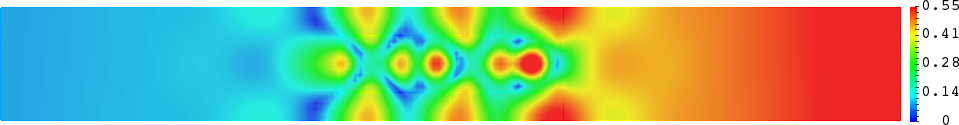}\\[1pt]
\includegraphics[width=.65\linewidth]{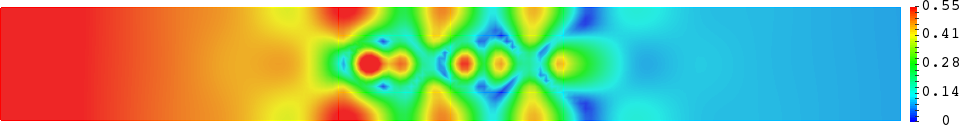}
\caption{Modulus of eigenfunctions of $B_{\theta}$ associated to the eigenvalues $k\approx 5.31$ (top), $k\approx 5.29 - 0.13i$ (middle) and $k\approx 5.29+0.13i$ (bottom) obtained in Figure \ref{figConjugatedPMLs}. The symmetry $x\to-x$ for real $k$ (due to $\mathcal{PT}$-symmetry) disappears for complex $k$. }
\label{figBrokenSym}
\end{figure}~

\noindent Now, we use the  non-symmetric setting (see Figure \ref{figsetting} (b)), 
and we display the square root of the spectrum of $B_{\theta}$ (in Figure \ref{figSpectrumNonSym})
for a coefficient $\gamma$ which is not symmetric in $x$ nor in $y$. 
More precisely, we take $\gamma$ such that $\gamma=5$ in $\mathscr{O}=(-1;0]\times(0.25;0.5)\cup[0;1)\times(0.25;0.75)$ and $\gamma=1$ in $\Om\setminus\overline{\mathscr{O}}$. We observe that the  spectrum is no longer stable by conjugation ($\sigma (B_{\theta})\ne\overline{\sigma (B_{\theta})}$) since
the operator $B_{\theta}$ is not  $\mathcal{PT}$-symmetric, and there is no ``help'' for the eigenvalues to be real.
However, a closer look shows the presence of eigenvalues close to the real axis,
in particular for $k\approx1.0 + 0.13i$, $k\approx1.9 + 0.005i$, $k\approx2.5 + 0.02i$, $k\approx2.8 + 0.08i$ and $k\approx3.0 - 0.008i$. In Figure \ref{figNonSymCurve}, we represent $k\mapsto|R(k)|$ for $k\in(0.1;3.1)$ where there is only one propagating mode in the leads.
 It is interesting to note that the above computed complex reflectionless modes (located close to the real axis)   have an influence on this curve. More precisely, $k\mapsto|R(k)|$ attains minima for $k\in(0.1;3.1)$ close to the real part of these complex reflectionless modes. Therefore complex reflectionless modes also have  significance for scattering at real frequncies.

\begin{figure}[!ht]
\centering
\includegraphics[width=.49\linewidth]{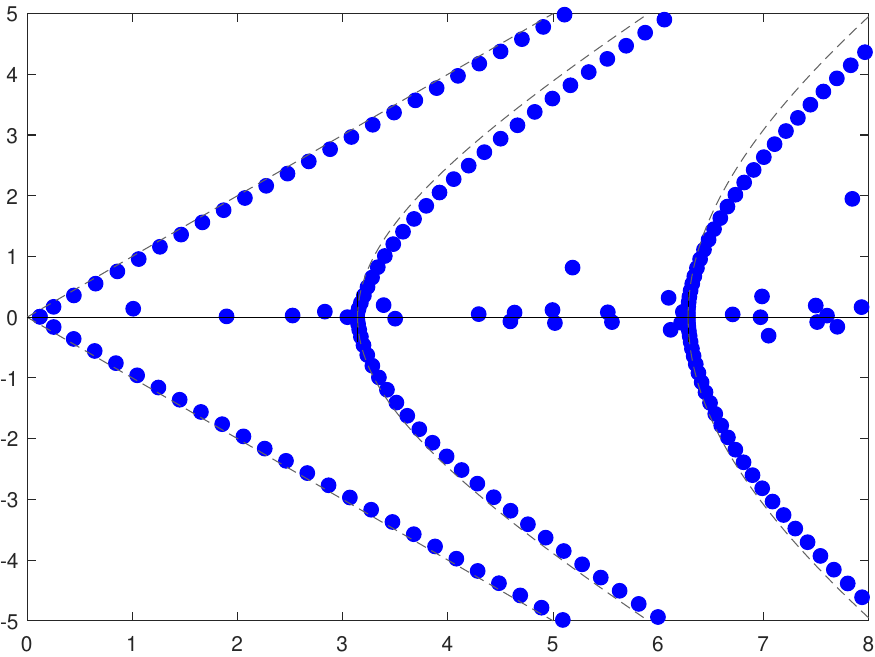}\ \includegraphics[width=.4805\linewidth]{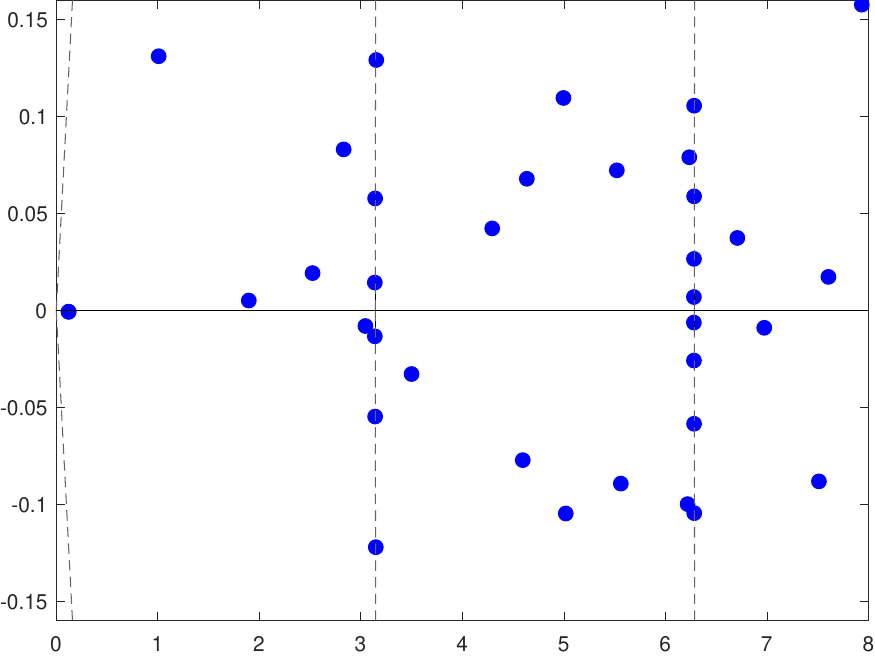}
\caption{Spectrum of $B_{\theta}$ in the complex $k$ plane for a non symmetric obstacle (Figure \ref{figsetting} (b)). The dashed lines represent the essential spectrum of  $B_{\theta}$ (see (\ref{essConjPMLs})). The spectrum is not stable by conjugation. The picture on the right is a zoom-in of that on the left.}
\label{figSpectrumNonSym}
\end{figure}

\begin{figure}[!ht]
\centering 
\includegraphics[width=.49\linewidth]{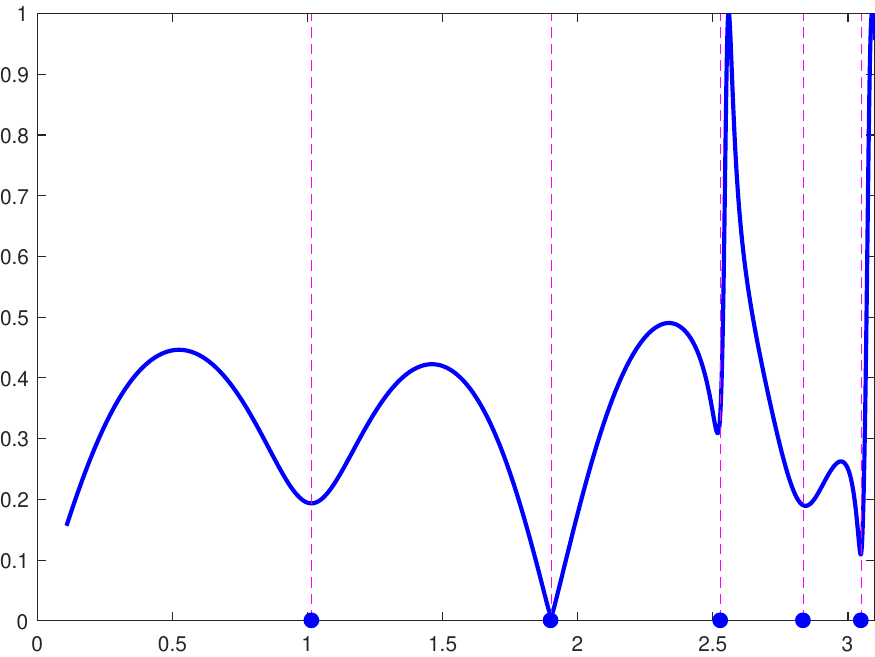}
\caption{Curve $k\mapsto |R(k)|$ (modulus of the reflection coefficient) for $k\in(0.1;3.1)$ and a non symmetric obstacle. The blue dots and the vertical dashed lines correspond to the real parts of the eigenvalues of $B_{\theta}$ located close to the real axis computed in Figure \ref{figSpectrumNonSym}. We observe that $|R(k)|$ is minimal for these particular $k$.}
\label{figNonSymCurve}
\end{figure}

\newpage

\section{Concluding remarks}\label{SectionConclusion}

\noindent Determining the scattering coefficients for a range of frequencies to identify the $k$ for which there are incident fields which produce zero reflection is a tedious work. This chapter shows that reflectionless frequencies can be directly computed as the eigenvalues of a non-selfadjoint operator $B_{\theta}$ (see (\ref{defOpConjugatedPMLs})) with conjugated complex scalings enforcing ingoing behavior in the incident lead and outgoing behavior in the other lead. The reflectionless spectrum of this operator $B_{\theta}$ provides a  complementary information to the one contained in the classical complex resonance spectrum associated with quasi normal  modes which decompose only on outgoing waves (see the operator $A_\theta$ in (\ref{defOpClassicalPMLs})). Note that eigenvalues corresponding to trapped modes belong to both the reflectionless spectrum and to the classical complex resonance spectrum because trapped modes do not excite propagating waves. Let us make a few additional comments and highlight future directions as well as open questions. \\[3pt]
$i)$ We have seen that the non-selfadjoint operator $B_{\theta}$ is $\mathcal{P}\mathcal{T}$ symmetric when the structure has mirror symmetry. Interestingly, a direct calculus shows that in the very simple case of a $\mrm{1D}$ transmission problem through a slab of constant index, reflectionless frequencies are all real. This gives an example of a non-selfadjoint $\mathcal{P}\mathcal{T}$ symmetric operator with only real eigenvalues. \\[3pt]
$ii)$ In this work, we investigated scattering problems in waveguides with $N=2$ leads for which two reflectionless spectra exist: one associated with incident waves propagating from the left and another corresponding to incident waves propagating from the right. The more general case with $N$ ($N \ge 2$) leads can be considered as well. Among the total of $2^N$ different spectra with an ingoing or an outgoing complex scaling in each lead, two spectra correspond to eigenmodes which decompose on waves which are all outgoing or all ingoing. As a consequence, there are $2^N-2$ reflectionless spectra.\\[3pt]
$iii)$ Above we computed reflectionless modes in waveguides containing penetrable obstacles. We can work completely similarly with perturbations of the geometry. In Figure \ref{figRire}, we give two examples of reflectionless modes in such structures. Note that in each case, $B_\theta$ is $\mathcal{P}\mathcal{T}$ symmetric due to the symmetry of the geometry. Other kinds of BCs (Dirichlet, ...) 	and higher dimension ($\mrm{3D}$) can be dealt with similarly.\\

\begin{figure}[!ht]
\centering 
\includegraphics[width=.8\linewidth]{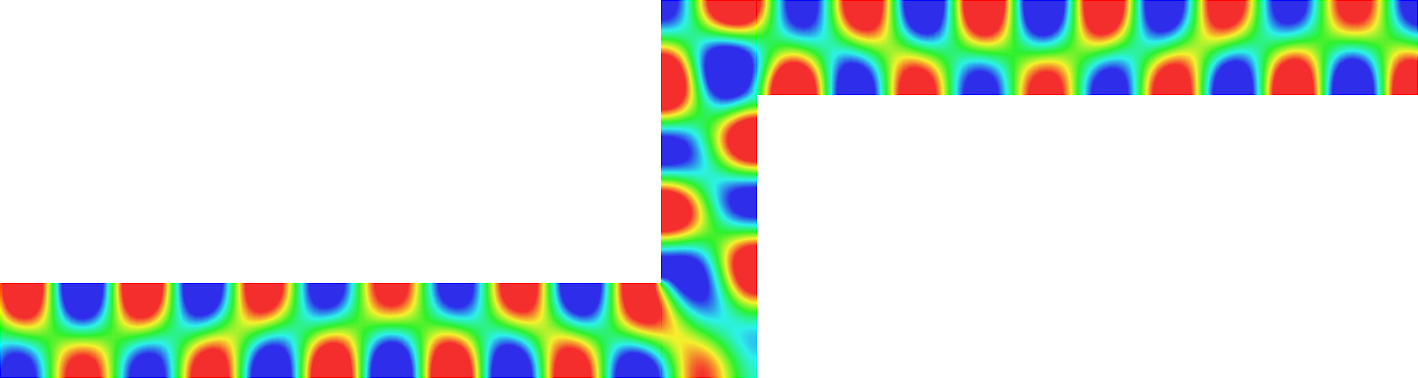}\\[12pt]
\includegraphics[width=.8\linewidth]{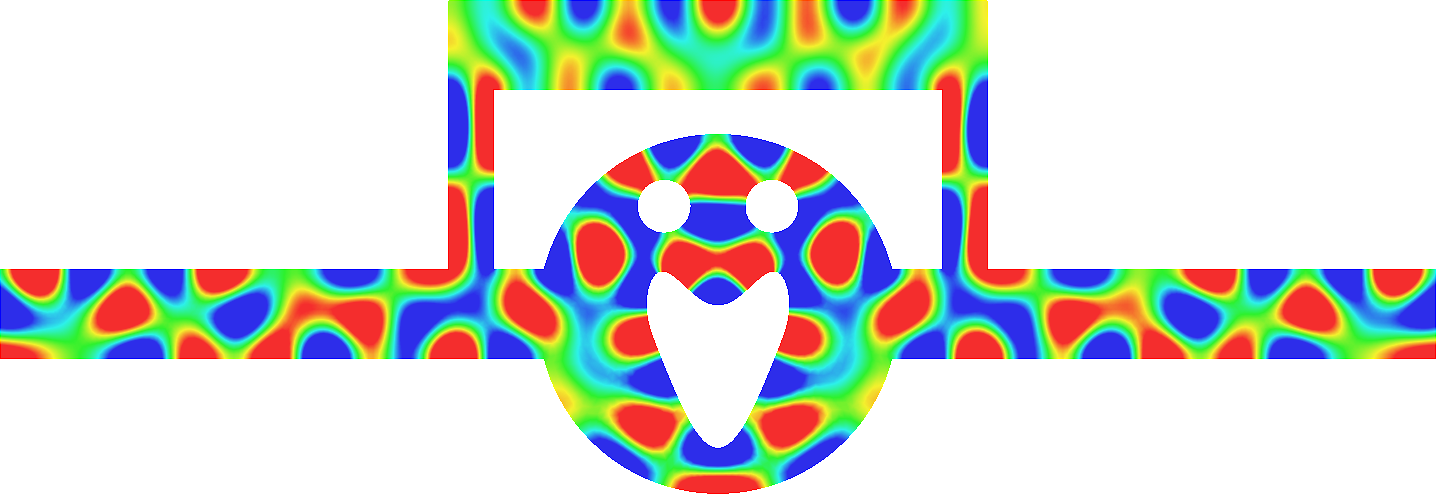}
\caption{Examples of reflectionless modes in waveguides with sound hard walls (Neumann).}
\label{figRire}
\end{figure}

\newpage

\noindent There are many open questions with this work. Here we list just of few of them.\\[3pt]
$iv)$ First, it would be nice to obtain criteria on the index material/geometry ensuring that the spectrum of $B_\theta$ is discrete outside of $\sigma_{\mrm{ess}}(B_\theta)$. In 1D, a rather general condition can be obtained. In higher dimension this is quite open. \\[3pt]
$v)$ On the other hand, could we show that $B_\theta$ has always real eigenvalues corresponding to reflectionless modes, at least for  $\mathcal{P}\mathcal{T}$ symmetric problems? \\[3pt]
$vi)$ The question of the approximation of the spectrum of $B_\theta$ is a field of research in itself, the reason being that $B_\theta$ is a non selfadjoint operator. Indeed, the theory of perturbations of non selfadjoint operators is not well developed and many phenomena can occur. Here it is easy to see in the situation where the waveguide contains only (Neumann) horizontal cracks. In that case we explained before Figure \ref{HorizontalCracks} that $\sigma(B_\theta)$ fills a whole sector. However when we truncate the domain at some distance $L$, we can show that the corresponding spectrum is discrete. This seems particularly pathological. But even when $\sigma(B_\theta)\setminus\sigma_{\mrm{ess}}(B_\theta)$ is discrete, proving that the spectrum of the problem in the truncated waveguide converges to the one of $B_\theta$ when $L$ tends to $+\infty$ is a challenging task. Additionally, it would be interesting to prove that spurious eigenvalues, which would converge to some non physical values, do not exist. In practice, we not only truncate the domain but also approximate the problem by working in finite dimension with finite elements. This is also an approximation which has to be studied. In this context, it seems that the notion of pseudo-spectrum, see \textit{e.g.} \cite{Tref20}, which has been developed to understand the properties of non normal operators, may provide useful information.

\chapter*{Acknowledgments}
The author wishes to express his sincere thanks to J\'er\'emi Dard\'e and Julien Royer for the organization of this warm summer school. He also wants to thank the Isaac Newton Institute for Mathematical Sciences, Cambridge, for support and hospitality during the programme \textit{Mathematical theory and applications of multiple wave scattering} where part of this document was conceived. This work was supported by EPSRC grant no EP/R014604/1.

\bibliography{Bibliography}
\bibliographystyle{plain}

\end{document}